\documentclass[english]{amsart}
\usepackage{amssymb,amsmath,amsfonts,amsthm}
\usepackage{stmaryrd}
\usepackage[all,cmtip,poly]{xy}
\usepackage{svn}
\usepackage{enumerate}

\SVN $LastChangedDate: 2013-08-28 15:15:23 -0700 (Wed, 28 Aug 2013) $
\SVN $LastChangedRevision: 395 $

\newcommand{\Ghat}{\hat G}
\newcommand{\hodgetateembedding}{{\kappa}}
\newcommand{\hodgetateembeddingbar}{{\overline{\kappa}}}
\newcommand{\Fr}{\operatorname{Fr}}

\theoremstyle{plain}
\newtheorem{conj}[equation]{Conjecture}
\newtheorem{cor}[equation]{Corollary}

\newtheorem{prop}[equation]{Proposition}
\newtheorem{lem}[equation]{Lemma}
\newtheorem{lemma}[equation]{Lemma}

\newtheorem{thm}[equation]{Theorem}

\newtheorem{ithm}{Theorem}

\theoremstyle{definition}
\newtheorem{defn}[equation]{Definition}
\theoremstyle{remark}

\newtheorem{ex}[equation]{Example}

\newtheorem{hypothesis}[equation]{Hypothesis}

\newtheorem{rem}[equation]{Remark}
\newtheorem{remark}[equation]{Remark}

\numberwithin{equation}{section}
\numberwithin{figure}{subsection}


\newcommand{\F}{\FF}

\newcommand{\Q}{\QQ}

\newcommand{\Z}{\ZZ}

\newcommand{\D}{\cD}

\renewcommand{\O}{\cO}

\newcommand{\m}{\frakm}

\newcommand{\rbar}{\bar{r}}
\newcommand{\Lbar}{\overline{L}}
\newcommand{\Wconj}{W^{\operatorname{BDJ}}}
\newcommand{\Fpbartimes}{{\overline{\F}_p^\times}}

\newcommand{\varepsilonbar  }{\overline{\varepsilon}}

 \newcommand{\rhobar   }{{\overline{\rho}}}

 \newcommand{\psibar   }{\overline{\psi}}

\newcommand{\betat         }{\widetilde{\beta}}

\newcommand{\FF}{{\mathbb F}}

\newcommand{\QQ}{{\mathbb Q}}

\newcommand{\ZZ}{{\mathbb Z}}

\newcommand{\cD}{{\mathcal D}}
\newcommand{\cE}{{\mathcal E}}

\newcommand{\cG}{{\mathcal G}}

\newcommand{\cO}{{\mathcal O}}

\newcommand{\frakf}{\mathfrak{f}}
\newcommand{\frakm}{\mathfrak{m}}

\newcommand{\frakL}{\mathfrak{L}}
\newcommand{\frakN}{\mathfrak{N}}

\newcommand{\Fbar}{\overline{\F}}
\newcommand{\Qbar}{\overline{\Q}}
\newcommand{\Zbar}{\overline{\Z}}

\newcommand{\Fp}{\F_p}

\newcommand{\Fpbar}{\Fbar_p}

\newcommand{\Zp}{\Z_p}

\newcommand{\Zpbar}{\Zbar_p}
\newcommand{\Zpbartimes}{\Zbar_p^\times}

\newcommand{\Ql}{\Q_{\ell}}
\newcommand{\Qp}{\Q_p}

\newcommand{\Qpbar}{\Qbar_p}

\DeclareMathOperator{\Ext}{Ext}
\DeclareMathOperator{\Fil}{Fil}
\DeclareMathOperator{\Gal}{Gal}
\DeclareMathOperator{\GL}{GL}

\DeclareMathOperator{\Hom}{Hom}

\DeclareMathOperator{\Mod}{Mod}

\DeclareMathOperator{\PGL}{PGL}

\DeclareMathOperator{\PSL}{PSL}
\DeclareMathOperator{\Rep}{Rep}

\DeclareMathOperator{\Sym}{Sym}

\newcommand{\ab}{\mathrm{ab}}
\newcommand{\cris}{\mathrm{cris}}

\newcommand{\Frob}{\mathrm{Frob}}

\newcommand{\HT}{\mathrm{HT}}

\newcommand{\st}{\mathrm{st}}

\newcommand{\ur}{\mathrm{ur}}

\newcommand{\llb}{\llbracket}
\newcommand{\rrb}{\rrbracket}
 
                               \newcommand{\into}{\hookrightarrow}

\newcommand{\lto}{\longrightarrow}

\newcommand{\ue}{\underline \epsilon}
\newcommand{\tor}{\mathrm{tor}}
\newcommand{\col}{\colon}

\newcommand{\fm}{\mathfrak{m}}
\newcommand{\fM}{{\mathfrak M}}
\newcommand{\fS}{{\mathfrak S}}

\newcommand{\gt}{{\mathfrak t}}
\newcommand{\Acris}{{A_{\textnormal{cris}}}}

\newcommand{\M}{{\mathfrak M}}
\newcommand{\Md}{{\rm{M}}_{d \times d}}
\newcommand{\e}{{\mathfrak e}}

\newcommand{\E}{{\mathcal E}}
\newcommand{\hR}{\widehat{\mathcal R}}

\newcommand{\mc}{\mathcal}
\newcommand{\mf}{\mathfrak}

\newcommand{\Art}{{\operatorname{Art}}}
\newcommand{\barM}{\overline{\M}}
\newcommand{\barN}{\overline{\mathfrak{N}}}
\newcommand{\barK}{\overline{{K}}}
\newcommand{\barQQ}{\overline{{\Q}}}
\newcommand{\ve}{\varepsilon}
\newcommand{\KisinFree}{\Mod_{\fS}^{\varphi, r}}
\newcommand{\KisinTor}{\Mod_{\fS,\tor}^{\varphi, r}}
\newcommand{\barMpp}{{\overline{\M''}}}
\newcommand{\barMp}{{\overline{\M'}}}

\begin{document}
\title{The Buzzard--Diamond--Jarvis conjecture for unitary groups}

\author{Toby Gee} \email{toby.gee@imperial.ac.uk} \address{Department of
  Mathematics, Imperial College London}\author{Tong
  Liu}\email{tongliu@math.purdue.edu}\address{Department of
  Mathematics, Purdue University}\author{David Savitt} \email{savitt@math.arizona.edu}
\address{Department of Mathematics, University of Arizona}
\thanks{The second author was partially
  supported by NSF grant  DMS-0901360; the third author was partially
  supported by NSF  grant DMS-0901049 and NSF CAREER grant
  DMS-1054032. This version of the paper is more recent than the
  published version and contains minor corrections.}  \subjclass[2010]{11F33, 11F80.}
\maketitle

\begin{abstract}Let $p>2$ be prime. We prove the weight part of
  Serre's conjecture for rank two unitary groups for mod $p$
  representations in the unramified case (that is, the
  Buzzard--Diamond--Jarvis conjecture for unitary groups), by proving
  that any Serre weight which occurs is a predicted weight. 
  Our methods are purely local, using
  the theory of $(\varphi,\hat{G})$-modules to determine the possible
  reductions of certain two-dimensional crystalline representations.
\end{abstract}

\section{Introduction}

Let $p$ be a prime number.  Classically, given a continous, odd, irreducible representation
$\rbar : G_{\Q} \to \GL_2(\Fpbar)$, the weight part of Serre's conjecture predicts the set of
weights $k$ such that $\rbar$ is isomorphic to the mod $p$
representation $\rbar_{f,p}$ attached to some eigenform of
weight $k$ (and prime-to-$p$ level).  In recent years, generalisations
of the weight part of Serre's conjecture have taken on an increasing
importance, at least in part because they can be viewed as statements about
local-global compatibility in a possible mod $p$ Langlands
correspondence, as we now (briefly) recall.  

Let $F$ be a number field and $\rbar : G_F \to
\GL_n(\Fpbar)$ a representation that is modular in a
suitable sense.  For simplicity, suppose
that $F$ has a single place $w$ lying above $p$; in this context a
Serre weight is an isomorphism class of irreducible mod $p$ representations of $\GL_n(\O_{F_w})$.
One may hope that there exists a mod~$p$ local Langlands
correspondence that attaches to $\rbar |_{G_{F_w}}$  a mod $p$
representation $\overline\Pi$ of $\GL_n(F_w)$.  Although our present understanding of
the putative representation $\overline\Pi$ is rather
limited, one ultimately
expects that $\rbar$ should be modular of Serre weight $a$ if and
only if $a$ is a subrepresentation of $\overline\Pi|_{\GL_n(\O_{F_w})}$.

In this paper we establish the weight part of Serre's conjecture for
rank two unitary groups in the case where
$F$ is unramified at $p$.  To be precise, we prove the
following. 
\begin{ithm}[Theorem \ref{thm: main global theorem - modular of a weight iff it is a
     predicted weight}]
  \label{thm: intro: the main result, modular if and only if
    predicted}Let $F$ be an imaginary CM field with maximal totally
  real subfield ~$F^+$, and suppose that $F/F^+$ is unramified at all
  finite places, that each place of $F^+$ above $p$ splits in $F$, and that $[F^+:\Q]$ is even. Suppose $p>2$, and
  that $\rbar\col G_F\to\GL_2(\Fpbar)$ is an irreducible modular
  representation with split ramification such that
  $\rbar(G_{F(\zeta_p)})$ is adequate. Assume that $p$ is unramified
  in~$F$.

 Let $a$ be a Serre weight. Then
 $a\in\Wconj(\rbar)$ if and only if $\rbar$ is modular of
 weight~$a$.
\end{ithm}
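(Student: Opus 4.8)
The plan is to prove the two implications separately; the forward direction --- that a Serre weight which occurs is a predicted weight --- is the new content, while the converse is essentially known. Throughout, one uses that, because $F$ is unramified at $p$ and every place of $F^+$ above $p$ splits in $F$, the predicted set factors as a tensor product $\Wconj(\rbar)=\bigotimes_{w\mid p}\Wconj(\rbar|_{G_{F_w}})$ of explicitly described sets of Serre weights of the groups $\GL_2(\O_{F_w})$, with each $F_w/\Qp$ finite unramified; hence the whole statement can be analysed one place above $p$ at a time.

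For ``predicted $\Rightarrow$ modular'', let $a=\bigotimes_{w\mid p}a_w\in\Wconj(\rbar)$. Since $\rbar$ is modular, has split ramification, and $\rbar(G_{F(\zeta_p)})$ is adequate, the Taylor--Wiles--Kisin modularity lifting machinery (with Thorne's adequacy hypothesis) is available; together with the construction, for each $a_w$, of an explicit potentially diagonalisable crystalline lift of $\rbar|_{G_{F_w}}$ with the Hodge--Tate weights prescribed by $a_w$, the known results on Serre weights for rank two unitary groups (Barnet-Lamb--Gee--Geraghty) yield that $\rbar$ is modular of weight $a$. I would invoke these results rather than reprove them.

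For ``modular $\Rightarrow$ predicted'', suppose $\rbar$ is modular of weight $a=\bigotimes_{w\mid p}a_w$. By the usual arguments --- lifting a mod $p$ algebraic modular form of weight $a$ and prime-to-$p$ level to a characteristic zero form of the corresponding algebraic weight and prime-to-$p$ level (after possibly enlarging the tame level), together with local-global compatibility at $p$ --- one obtains a lift $\rho$ of $\rbar$ with $\rho|_{G_{F_w}}$ crystalline of the Hodge--Tate weights determined by $a_w$, for every $w\mid p$. It therefore suffices to prove the purely local assertion: if $K/\Qp$ is finite unramified and $\barrho\colon G_K\to\GL_2(\Fpbar)$ admits a crystalline lift with Hodge--Tate weights $\{0,\lambda_\tau\}$, indexed by the embeddings $\tau$ of the residue field of $K$ and with $1\le\lambda_\tau\le p-1$, then the Serre weight attached to $(\lambda_\tau)_\tau$ lies in $\Wconj(\barrho)$. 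This is where the theory of $(\varphi,\hat G)$-modules enters: to such a lift $\rho$ one attaches its Kisin module $\fM$ over $\fS=W(k)[[u]]$, of height at most $\max_\tau\lambda_\tau\le p-1$, equipped with its $\hat G$-structure; one classifies the finitely many possible reductions $\barfM$, the possibilities being governed by the Hodge--Tate weights through the elementary divisors of $\varphi$; one recovers $\barrho$ from $\barfM$ together with the Galois action encoded by its $\hat G$-structure; and one matches the resulting list of possibilities for $\barrho$ against the Buzzard--Diamond--Jarvis recipe for $\Wconj(\barrho)$, including --- when $\barrho$ is reducible --- the peu ramifi\'ee condition, which is exactly the data recorded by the $\hat G$-structure.

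The main obstacle is this local computation. The delicate cases are those in which some Hodge--Tate gap $\lambda_\tau$ equals $p-1$, so that the crystalline representation falls outside the Fontaine--Laffaille range (which requires gaps at most $p-2$) and the classical dictionary with filtered $\varphi$-modules is unavailable, and the case in which $\barrho$ is reducible, where one must pin down exactly when the extension is peu ramifi\'ee. The $(\varphi,\hat G)$-module formalism is built precisely to handle Hodge--Tate weights in this wider range while still recovering the action of the full group $G_K$; the enumeration of the possible shapes of $\barfM$, and the translation of that enumeration into the combinatorial data underlying $\Wconj$, is the technical heart of the argument.
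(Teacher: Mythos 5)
Your global architecture is the same as the paper's: both directions are reduced to the results of \cite{blggU2} (predicted $\Rightarrow$ modular via potentially diagonalisable lifts and automorphy lifting; modular $\Rightarrow$ existence, for each $w\mid p$, of a crystalline lift of $\rbar|_{G_{F_w}}$ of Hodge type $a_w$), so everything rests on the purely local elimination statement. The genuine gap is in how you state that local statement. You require the Hodge--Tate gaps $\lambda_\tau$ to lie in $[1,p-1]$, but with the normalisation of Hodge type used here (a lift of Hodge type $a$ has $\HT_\hodgetateembedding=\{a_{\hodgetateembedding,1}+1,a_{\hodgetateembedding,2}\}$), a Serre weight with $a_{\sigma,1}-a_{\sigma,2}=p-1$ --- a perfectly admissible weight in the conjecture --- forces a gap equal to $p$. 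So as stated your local input does not cover all Serre weights, and the omitted case is exactly the hardest one: gaps $p-1$ \emph{and} $p$ both lie outside the Fontaine--Laffaille range $[1,p-2]$, the structure theorem for Kisin modules (Theorem~\ref{shape}) is proved precisely for weights in $[0,p]$ and is best possible there, and the genuinely exceptional phenomena (the extra degree-$p$ extension parameter in Theorem~\ref{thm:extension-crystalline}, the failure of uniqueness of the $\hat G$-action when all $r_i=h_i=p$ in Lemma~\ref{lem: hat G is determined by phi}, and the separate argument of Lemma~\ref{lem: we can always lift in the cyclotomic parallel weight p case} when $(\psibar_1\psibar_2^{-1})|_{I_K}=\varepsilonbar$ with all gaps $p$) occur only at the top of that range. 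Identifying $p-1$ as the only delicate gap and stopping there would leave the global theorem unproved for every weight with some component equal to $p-1$.

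Separately, your plan for the local step --- attach the Kisin module with $\hat G$-structure to the given lift, ``classify the finitely many possible reductions,'' recover $\rhobar$, and match the list against the Buzzard--Diamond--Jarvis recipe --- is essentially the direct-computation strategy that the paper explicitly sets aside as unworkable in general. What is actually done is indirect: Theorem~\ref{shape-redux-coeffs} constrains the shape of $\varphi$, Theorem~\ref{thm:extension-crystalline} pins down the possible rank-one subquotients and extension parameters, Lemma~\ref{lem: hat G is determined by phi} shows the $\tau$-action is (almost always) determined by the Kisin module, and Proposition~\ref{prop:maximal-model} replaces any such $(\varphi,\hat G)$-module by one of type $(\vec r,a,b,J_{\max})$ with the same Galois representation; one then \emph{counts}: there are at most $(\#k_E)^{|J_{\max}|}$ (or one more power of $\#k_E$ when $\psibar_1=\psibar_2$) possible extension classes, while Lemma~\ref{lem: dimension of H^1_f spaces} shows that reductions of reducible crystalline lifts with the prescribed weights already produce that many classes, so the two sets coincide. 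No individual reduction is ever computed, and the ``peu ramifi\'e'' question is never confronted directly except through this counting and Lemma~\ref{lem: we can always lift in the cyclotomic parallel weight p case}. The irreducible case also requires its own argument (restriction to the unramified quadratic extension plus the combinatorial balancing in Theorem~\ref{thm: reduction mod p for GL2 in the irreducible case}), which your sketch does not address. So beyond the range issue, the step that carries the real content --- passing from ``the reduction has a Kisin module of constrained shape'' to ``its extension class arises from a reducible crystalline lift of the same Hodge type'' --- is missing from the proposal.
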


Here $\Wconj(\rbar)$ is the set of Serre weights
in which $\rbar$ is predicted to be modular.  We will recall
the definition of $\Wconj(\rbar)$ in Section~\ref{sec: serre
  weight definitions} below (as well as what we mean for $\rbar$ to be
modular of weight $a$, and any other unfamiliar terminology in the
statement of the theorem), but for now we give some
motivation and context.  Theorem~\ref{thm: intro: the main result, modular if and only if
    predicted} is the natural variant of the Buzzard--Diamond--Jarvis
conjecture for unitary groups; recall that the original
conjecture~\cite{bdj} was formulated for automorphic forms on indefinite
quaternion algebras. Note that strictly speaking, this is not the most general result that
one could hope to prove, because of the (mild) assumption that
$\rbar(G_{F(\zeta_p)})$ is adequate. In fact
  we prove unconditionally that if
  $\rbar$ is modular of weight~$a$, then $a\in\Wconj(\rbar)$; see Proposition~\ref{prop: modular of some weight implies crystalline lifts
     exist} and Theorem~\ref{thm: crystalline lift implies explicit
     crystalline lift}. The
  assumption that $\rbar(G_{F(\zeta_p)})$ is adequate is needed for
  the converse, which is proved in~\cite{blggU2} via automorphy
  lifting theorems.

To explain this in greater depth, suppose for simplicity that $F^+$ has a single place $v$ above $p$,
write the factorization of $v$ in $F$ as $ww^c$, and assume now that
$F_w/\Qp$ is unramified.  
If $\rbar$ is modular of weight~$a$, then $\rbar \simeq \rbar_{\pi}$
for some cuspidal automorphic representation~$\pi$ whose infinitesimal
character is determined by the weight $a$.  In particular, the local
representation $\rbar|_{G_{F_w}}$ has a lift $r_{\pi}|_{G_{F_w}}$ that
is crystalline with specific Hodge--Tate weights: to be precise, the lift
$r_{\pi}|_{G_{F_w}}$ has Hodge type~$a$ in the sense of Definition~\ref{defn:
  Galois representation of Hodge type some weight} below.  

One plausible definition for the set of predicted weights
$\Wconj(\rbar)$ (which is not the definition that we will use, although the main result of this paper shows that it is
in fact equivalent to our definition) would be the set of Serre
weights $a$ such that $\rbar|_{G_{F_w}}$ has a crystalline lift of
Hodge type $a$.  (There is a natural modification of this definition in
the case where $F_w/\Qp$ is ramified.)  Under this description of the
set of predicted weights, it would be
essentially automatic that if $\rbar$ is modular of weight~$a$ then $a
\in \Wconj(\rbar)$, and the problem would be to prove that every predicted
weight actually occurs.  
Significant progress towards 
establishing this result was made (irrespective
of any ramification conditions on $F$) in~\cite{blggU2}.  In particular,~\cite{blggU2} show that under the hypotheses of Theorem~\ref{thm: intro: the main result, modular if and only if
    predicted}, if
$\rbar|_{G_{F_w}}$ has a crystalline lift of
Hodge type $a$ that furthermore is \emph{potentially diagonalisable}
in the sense of~\cite{BLGGT}, then $\rbar$ is modular of weight $a$.  

Temporarily adopting this definition of $\Wconj(\rbar)$, our task,
therefore, is to remove the  potential
diagonalisability hypothesis; or in other words, we are left with the purely local problem
of showing that if $\rbar|_{G_{F_w}}$ has a crystalline lift of
Hodge type $a$, then it has a potentially diagonalisable
such lift.   This is a consequence of the following theorem, which is
our main local result.

\begin{ithm}[Theorem~\ref{thm: elimination in the reducible case}]\label{thm: intro: main local result}
Suppose that $p > 2$ and $K/\Qp$ is a finite unramified extension.
Let $\rho \col G_{K} \to \GL_2(\Zpbar)$ be a crystalline representation
whose $\hodgetateembedding$-labeled Hodge--Tate weights  for each
embedding $\hodgetateembedding \col K \into \Qpbar$ are $\{0,r_{\hodgetateembedding}\}$ with
$r_{\hodgetateembedding} \in [1,p]$.
If $\rhobar$ is reducible, then there exists a reducible
crystalline representation $\rho' \col G_{K} \to \GL_2(\Zpbar)$ with the
same labeled Hodge--Tate weights as $\rho$ such that $\rhobar \simeq \rhobar'$.
\end{ithm}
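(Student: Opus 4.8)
The plan is to argue entirely on the level of integral $p$-adic Hodge theory, via Breuil--Kisin modules and $(\varphi,\Ghat)$-modules. Fix a sufficiently large finite extension $E/\Qp$ with ring of integers $\cO_E$, uniformiser $\varpi$, and residue field $k$, and write $\br=(r_\kappa)_\kappa$ for the tuple of Hodge--Tate weights. Since $K/\Qp$ is unramified and each $r_\kappa\le p$, the representation $\rho$ corresponds to a Breuil--Kisin module $\fM$ over $\fS=\cO_E\llbracket u\rrbracket$ of height $\le\br$, equipped with the extra datum upgrading it to a $(\varphi,\Ghat)$-module and detecting that $\rho$ is crystalline; its reduction $\barfM=\fM/\varpi$ is a $\varphi$-module over $k\llbracket u\rrbracket$ of height $\le\br$ whose \'etale realisation is $\rhobar$.

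First I would use the hypothesis that $\rhobar$ is reducible. By the theory of (saturated) subobjects of torsion Kisin modules, the rank-one $G_K$-stable subspace of $\rhobar$ is the \'etale realisation of a saturated rank-one $\varphi$-submodule $\barfM_1\subseteq\barfM$, with rank-one quotient $\barfM_2$; both inherit height $\le\br$, and their \'etale realisations are the Jordan--H\"{o}lder characters $\chibar_1,\chibar_2$ of $\rhobar$. Then I would classify rank-one torsion Kisin modules of height $\le\br$ together with their lifts: after a choice of basis and decomposition along the embeddings $\kappa$, each $\barfM_i$ is given by $\varphi(e)=\lambda_\kappa u^{t_\kappa}e$ with $0\le t_\kappa\le r_\kappa$; it lifts to a rank-one Breuil--Kisin module $\fM_i$ over $\fS$ of height $\le\br$, and since rank-one modules automatically carry a $\Ghat$-structure, $\fM_i$ corresponds to a crystalline character $\chi_i$ lifting $\chibar_i$ whose $\kappa$-labeled Hodge--Tate weight is governed by $t_\kappa$. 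The essential point at this stage --- using $r_\kappa\le p$, the relations $\omega_\kappa^{p}=\omega_{\kappa'}$ with $\kappa'=\kappa\circ\Fr$ among the fundamental characters, and the fact that $\rhobar$ is the reduction of a crystalline representation of Hodge type $\br$ --- is to check that the lifts can be chosen so that $\{\HT_\kappa(\chi_1),\HT_\kappa(\chi_2)\}=\{0,r_\kappa\}$ for every $\kappa$; this is a finite but somewhat delicate combinatorial analysis of the shapes $(t_\kappa)$.

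It then remains to lift the extension class. I would show that the natural reduction map from $\Ext^1$ in the category of Breuil--Kisin modules of height $\le\br$ over $\fS$ to the corresponding $\Ext^1$ over $k\llbracket u\rrbracket$ is surjective, so that the extension $0\to\barfM_1\to\barfM\to\barfM_2\to0$ lifts to an extension $0\to\fM_1\to\fM'\to\fM_2\to0$ of Breuil--Kisin modules. The module $\fM'$ has height $\le\br$, and by the $(\varphi,\Ghat)$-module formalism (it is a height-$\le\br$ extension of the modules attached to crystalline characters) it is the Kisin module of a $G_K$-stable lattice in a semistable representation $\rho'$ with the prescribed labeled Hodge--Tate weights; $\rho'$ reduces to $\rhobar$ by construction and is reducible since it contains $\chi_1$. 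Finally one verifies that $\rho'$ is crystalline, not merely semistable: when $\chi_1\chi_2^{-1}$ is neither the trivial nor the cyclotomic character one has $\Ext^1_{\st}=\Ext^1_{\cris}$ and there is nothing to prove, while in the two exceptional cases one instead takes $\rho'$ to be the split extension (when $\rhobar$ is semisimple) or an explicit peu ramifi\'{e}e crystalline extension realising the given reduction.

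The main obstacle is the coupling of the last two steps: one must produce a single reducible crystalline $\rho'$ carrying the \emph{exact} Hodge type $\{0,r_\kappa\}$ on its diagonal characters while still reducing to the given, possibly non-split, $\rhobar$. The value $r_\kappa=p$ is precisely the source of the difficulty --- there the torsion Kisin module can have length strictly smaller than expected, and the classical theories of Fontaine--Laffaille and of Breuil modules no longer apply, which is exactly why the $(\varphi,\Ghat)$-module machinery is needed --- and confirming that the extension $\fM'$ one builds really does give a crystalline (and not merely finite-height, i.e.\ semistable) representation is the most technical point of the argument.
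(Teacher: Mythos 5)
The step that breaks is the lifting of the extension class. A finite free Kisin module of finite height determines only a representation of $G_\infty$; by Theorem~\ref{thm: old facts on (phi hatG)}(2), to recover a $G_K$-stable lattice in a semistable representation you need the additional datum of a $\Ghat$-action satisfying the axioms of Definition~\ref{defn:phi-ghat-module}, and the existence of such a structure on a given Kisin module (compatible with the extension and reducing to the given one modulo $\varpi$) is not formal -- your parenthetical ``finite-height, i.e.\ semistable'' is exactly the misconception. So even granting that the reduction map on $\Ext^1$ of height-$\le\vec{r}$ Kisin modules is surjective, the lifted module $\fM'$ need not come from \emph{any} $G_K$-representation, let alone a crystalline one with the prescribed labelled weights; ``it is a height-$\le\vec{r}$ extension of modules attached to crystalline characters'' buys you nothing here. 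This is precisely why the paper never lifts Kisin-module extension classes. Instead it argues indirectly: using the (near-)uniqueness of the $\Ghat$-action on a reducible reduction (Lemma~\ref{lem: hat G is determined by phi}) and the passage to the maximal model (Proposition~\ref{prop:maximal-model}), it bounds the number of mod~$p$ extension classes that can arise from crystalline representations of the given Hodge type by $(\#k_E)^{|J_{\max}|}$ (or $(\#k_E)^{|J_{\max}|+1}$ when $\psibar_1=\psibar_2$), and then matches this against the number of classes already realised by reducible crystalline lifts, computed from $\dim H^1_f$ (Lemma~\ref{lem: dimension of H^1_f spaces}); equality of these finite counts forces every class to have a reducible crystalline lift. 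The exceptional case $(\psibar_1\psibar_2^{-1})|_{I_K}=\varepsilonbar$ with all $r_{\hodgetateembedding}=p$ is not reachable this way and requires the separate Galois-cohomology argument of Lemma~\ref{lem: we can always lift in the cyclotomic parallel weight p case}, which your sketch also does not supply (your ``explicit peu ramifi\'ee extension'' does not address the tr\`es ramifi\'e classes that do occur there).

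A second, earlier gap: pinning down the rank-one sub and quotient of $\barfM$, i.e.\ showing the exponents $t_\kappa$ may be taken in $\{0,r_\kappa\}$ embedding by embedding, is not ``a finite but somewhat delicate combinatorial analysis'' of finite-height shapes. Height bounds together with the determinant only give a congruence modulo $p^f-1$; the refined statement rests on the structure theorem (Theorems~\ref{shape} and~\ref{shape-redux-coeffs}) that the matrix of $\varphi$ on the Kisin module of a crystalline lattice with weights in $[0,p]$ is $X\Lambda Y$ with $Y\equiv I \bmod p$ -- an integral $p$-adic Hodge-theoretic fact proved by a close study of the monodromy operator, which fails already for weight $p+1$ (Example~\ref{ex:best-possible}) -- fed into Proposition~\ref{prop:rank-one-subs} and Theorem~\ref{thm:extension-crystalline}. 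That theorem, not combinatorics, is the technical heart of the argument, and your outline assumes its conclusion without providing a substitute.
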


Before discussing the proof of Theorem~\ref{thm: intro: main local
  result}, we make a few additional comments about the global setting
of our paper, and about the actual definition of $\Wconj(\rbar)$ with
which we work.

\subsection*{Remark on the definition of $\Wconj(\rbar)$.}
One often builds the potential
diagonalisability hypothesis into the definition of $\Wconj(\rbar)$.
In fact this is what is done in~\cite{blggU2}, and for consistency we will
adopt the same definition here.  In this optic, the results
of~\cite{blggU2} prove that if $a \in \Wconj(\rbar)$, then $\rbar$ is
modular of weight $a$ (assuming of course that $\rbar$ is modular to
begin with); but then it becomes nontrivial to show that if
$\rbar$ is modular of weight $a,$ then $a$ is a predicted weight, and
that is what is done in the present paper.   One advantage of this
alternative definition is that it is relatively easier to make 
completely explicit.  Such a  description of the set of Serre
weights in the unramified case was made in~\cite{bdj}, and it is in
these explicit terms that we define the set $\Wconj(\rbar)$ in Section~\ref{sec: serre
  weight definitions} below.  (In the case that
$\rbar$ is reducible but indecomposable, the description is in terms
of certain crystalline extension classes.) 

\subsection*{Remarks on related papers.}
Theorem~\ref{thm: intro: the main result, modular if and only if
    predicted} had previously been established in the case of \emph{generic} (or \emph{regular})  weights in
\cite{geebdj}, by a rather different method.  In particular, the
regularity hypothesis allowed the author to avoid the difficulties
that arise when dealing with Hodge--Tate weights outside the
Fontaine--Laffaille range, i.e., the Hodge--Tate weight range
$[0,p-2]$.  The main contribution of this paper is a method for
addressing these difficulties.  It is perhaps also worth emphasizing
that for many applications (for instance the
work of the first author and Kisin \cite{GeeKisin} on the Breuil-M\'ezard
conjecture for potentially Barsotti-Tate representations) it is
essential that one know the weight part of Serre's conjecture in its
entirety, rather than generically.

We also recall that our previous paper \cite{GLS11} established the
weight part of Serre's conjecture
for unitary groups in the totally ramified case.   In that paper we used a mixture
of local and global techniques to complete the proof. These techniques relied on a combinatorial relationship
between Serre weights and the existence of potentially Barsotti--Tate
lifts, which does not hold in general; in particular we were able to
avoid having to prove the analogue of Theorem~\ref{thm: intro: main local result} in that setting.

Finally, we remark that Theorem~\ref{thm: intro: the main result, modular if and only if
    predicted}  is rather more general than anything that has been
  proved directly  for inner
forms of $\GL_2$ over totally real fields, where there is a parity
obstruction due to the unit group: algebraic Hilbert modular forms
must have paritious weight, which prevents one from applying the
techniques of \cite{blggU2} for non-paritious mod $p$ weights.
However,  there are now two proofs (due to
Newton~\cite{Newton},  and to Gee--Kisin~\cite{GeeKisin}) that the weight part of Serre's conjecture for inner forms of
$\GL_2$ is equivalent to the conjecture for unitary groups.  In
combination with the results in this paper and in  \cite{blggU2}, the conjecture for inner
forms of $\GL_2$ (that is, the original Buzzard--Diamond--Jarvis
conjecture) has thus been established, under a mild Taylor--Wiles
hypothesis on the image of the global representation.

\subsection*{Discussion of our approach to proving Theorem~\ref{thm: intro: main local result}.}
In the special case that the Hodge--Tate
weights~$r_{\hodgetateembedding}$ are all contained in the
interval $[1,p-2]$, Theorem~\ref{thm: intro: main local result}
follows easily from Fontaine--Laffaille theory. However,
Fontaine--Laffaille theory cannot be extended to the required range,
and so new methods are required.   

Perhaps the most direct approach to Theorem~\ref{thm: intro: main
  local result} would be to write down all the filtered $\varphi$-modules
corresponding to crystalline representations
$\rho$ of the sort considered in the theorem, and attempt to compute each
$\rhobar$ explicitly, for instance using the theory of
$(\phi,\Gamma)$-modules and Wach modules.  Some partial results towards Theorem
\ref{thm: intro: main local result} have been obtained 
by other authors working along these lines (\textit{cf}.~\cite{MR2776609}, \cite{MR2745686},
\cite{0807.1078}; the results of \cite{MR2776609} are
limited primarily to the case that $[K:\Qp]=2$, whereas the other two
references consider only semisimple~$\rhobar$ and restricted classes
of representations).  However, the general case has so
far been
resistant to these methods.

Instead, our idea is to proceed indirectly, by characterising the mod $p$
representations $\rhobar$ that arise in Theorem~\ref{thm: intro: main
  local result} without actually computing the reduction mod $p$ of
any specific $\rho$.  The key technical innovation in our paper is that it is possible to carry
out such an approach using
 the theory of $(\varphi,\hat{G})$-modules introduced in
 \cite{LiuLattices}.  
In particular, we are able to prove a structure theorem for 
$(\varphi,\hat{G})$-modules attached to crystalline Galois
representations of \emph{arbitrary dimension} with Hodge--Tate weights in
$[0,p]$ (Theorem \ref{shape}); this result is best possible, in the
sense that it does not extend to any wider Hodge--Tate weight range.
 We expect this structure theorem to be of broader
interest.  For instance, it can be used to study the possible reductions mod
$p$ of $n$-dimensional crystalline representations with Hodge--Tate
weights in the range $[0,p]$; we hope to report on this in a
future paper.

The proof of the structure theorem is rather delicate
and relies on a close study of the monodromy operator; the result does
not extend to a wider range of Hodge--Tate weights, nor do
we know how to extend it to the ramified case. 

Now assume that $\rho$ is as in Theorem~\ref{thm: intro: main local result}.
We use our structure theorem and an elementary argument to
determine the list of possible subcharacters of $\rhobar$ (Corollary
\ref{cor:form of the characters in reducible crystalline reduction,
  nothing about extensions}). This essentially completes the proof in the
completely decomposable case, but in the indecomposable case we need
to show that we have a lift of $\rhobar$ to a particular crystalline extension of
characters. To do this, we begin by making a careful study of
the possible extensions of rank one Kisin modules. We then examine the
possibility of extending these Kisin modules to
$(\varphi,\hat{G})$-modules, and show that in most cases such an
extension is unique. Together with some combinatorial arguments, this
enables us to show that all of the Galois representations resulting
from these $(\varphi,\hat{G})$-modules have reducible crystalline
lifts with the desired Hodge--Tate weights, completing the proof of
Theorem~\ref{thm: intro: main local result}.  Finally, note that Theorem~\ref{thm: intro: main local result}
addresses only the case where $\rhobar$ is reducible; we conclude by
deducing the irreducible case of Theorem~\ref{thm: intro: the main result, modular if
  and only if predicted} from the reducible one, using the fact that an irreducible
$\rhobar$ becomes reducible after restriction to an unramified
quadratic extension, together with another combinatorial argument.

It is natural to ask whether our methods could be extended to handle
the general case, where $F_w/\Qp$ is an arbitrary
extension. Unfortunately we do not know how to do this, because
the proof of the key Theorem \ref{shape} relies on the assumption that the
base field is unramified.

\subsection*{Outline of the paper.}  In Section \ref{sec: serre
  weight definitions} we recall some material from \cite{blggU2}, and
in particular explain the precise local results that we will need to prove in
the remainder of the paper.  The next three sections are concerned with the
general theory of Kisin modules and $(\varphi,\Ghat)$-modules attached
to crystalline representations.
 In Section~\ref{sec:coeffs} we review
what we will need of the theory of Kisin modules from~\cite{KisinCrys}.
In Section \ref{sec:shape}, which is the technical heart of the paper, we prove our structure
theorem for the
$(\varphi,\hat{G})$-modules attached to crystalline Galois
representations (of arbitrary dimension) with Hodge--Tate weights in
$[0,p]$. Section \ref{sec: crystalline phi Ghat modules} proves a
variety of foundational results on the $(\varphi,\Ghat)$-modules
associated to crystalline representations.

With our technical foundations established, we then begin the proofs
of Theorems~\ref{thm: intro: the main result, modular if
  and only if predicted} and~\ref{thm: intro: main local result}.
Section~\ref{sec:kisin-modules-varphi} contains basic results about
rank one Kisin modules and $(\varphi,\Ghat)$-modules.
In Section
\ref{sec:extensions-rank-one} a detailed study of the possible
extensions of rank one torsion Kisin modules is carried out;
crucially, thanks to our
work in Section~\ref{sec:shape} we are able to specialize these
results for Kisin modules coming from the reduction mod $p$
of crystalline representations with Hodge--Tate weights in $[0,p]$.
This work is extended
to the case of $(\varphi,\Ghat)$-modules in Section~\ref{sec:extensions-rank-one-G-hat}. Finally, we deduce our main results in
Sections \ref{sec:reducible} and \ref{sec:irreducible}.

\subsection{Acknowledgments} We would like to thank Brian Conrad, Fred
Diamond, Matthew Emerton, Jean-Marc Fontaine, and Mark Kisin for
helpful conversations, and Matthew Emerton and Florian Herzig for
their comments on an early draft of this manuscript. We are grateful
to the anonymous referees for their comments, which have improved the
exposition of this paper. D.S.~would like
to thank the mathematics department of Northwestern University for its
hospitality during a sabbatical year.

\subsection{Notation and conventions}\label{subsec:notation}

\subsubsection{Galois theory}
\label{sec:galois-theory}
If $M$ is a field, we let $G_M$ denote its
absolute Galois group.
If~$M$ is a global field and $v$ is a place of $M$, let $M_v$ denote
the completion of $M$ at $v$.   If
~$M$ is a finite extension of $\Ql$ for some $\ell$, we let $M_0$
denote the maximal unramified extension of $\Ql$ contained in $M$, and
we write $I_M$
for the inertia subgroup of~$G_M$. If $R$ is a local ring we write
$\mf{m}_{R}$ for the maximal ideal of $R$.

 Let $p$ be a prime number.   Let $K$ be a finite extension of $\Qp$, with ring of integers $\cO_K$
 and residue field~$k$.   Fix a uniformiser $\pi$ of $K$, let
 $E(u)$ denote the minimal polynomial of $\pi$ over $K_0$, and set $e
 = \deg E(u)$.  We also fix an algebraic closure $\barK$ of $K$.  The
 ring of Witt vectors $W(k)$ is the ring of integers in $K_0$.

Our representations of $G_K$ will have coefficients in $\Qpbar$,
another fixed algebraic closure of $\Qp$, whose residue field we denote $\Fpbar$.   Let $E$ be a finite
extension of $\Qp$ contained in $\Qpbar$ and containing the image of every
embedding of $K$ into $\Qpbar$; let $\O_E$ be the ring of integers in
$E$, with uniformiser $\varpi$ and residue field $k_E \subset \Fpbar$.

We write $\Art_K \col K^\times\to W_K^{\ab}$ for
the isomorphism of local class field theory, normalised so that
uniformisers correspond to geometric Frobenius elements.  For each $\sigma\in \Hom(k,\Fpbar)$ we
define the fundamental character $\omega_{\sigma}$ corresponding
to~$\sigma$ to be the composite $$\xymatrix{I_K \ar[r] & W_K^{\ab} \ar[r]^{\Art_K^{-1}} &
  \O_{K}^{\times}\ar[r] & k^{\times}\ar[r]^{\sigma} &
  \Fpbar^{\times}.}$$
In the case that $k\simeq\Fp$, we will sometimes write $\omega$ for
$\omega_\sigma$. Note that in this case we have
$\omega^{[K:\Qp]}=\varepsilonbar$; here~$\varepsilon$ denotes the $p$-adic cyclotomic
character, and $\bar{\varepsilon}$ the mod~$p$ cyclotomic
character.

We fix a compatible system of $p^n$th roots of $\pi$: that is, we set
$\pi_0 = \pi$ and for all $n  > 0$ we fix a choice of $\pi_n$
satisfying $\pi_{n}^p = \pi_{n-1}$.   Similarly fix a compatible system
of primitive $p^n$th roots of unity $\zeta_{p^n}$.   Define the
following fields:
$$ K_\infty = \bigcup\limits_{n=0}^\infty K(\pi_n), \qquad K_{p^\infty}=  \bigcup \limits _{n=1}^\infty
K(\zeta_{p^n}), \qquad \hat K =\bigcup \limits_{n=1}^\infty K_\infty (\zeta_
{p^n}).$$
Note that $\hat K$ is the Galois closure of $K_\infty$ over $K$.
Write $G_\infty = \Gal (\barK / K_\infty)$,
$\hat G_{p^\infty} := \Gal(\hat K/ K_{p^\infty})$, $\hat G =\Gal (\hat K/K)$, and  $H_{K}:=
\Gal (\hat K/ K_\infty)$.

 If $p > 2$ then $\hat G \simeq \hat G_{p^\infty} \rtimes H_K $ and
 $\hat G_{p^\infty}\simeq  \Z_p(1)$ (see
 e.g.~\cite[Lem.~5.1.2]{MR2388556} for a proof), and so we can (and do) fix a
 topological generator $\tau \in \hat G _{p ^\infty}$.  In that case,
 we take our choice of $\zeta_{p^n}$ to be $\tau(\pi_n)/\pi_n$ for all
 $n$.

\subsubsection{Hodge--Tate weights}
\label{sec:p-adic-hodge}

If $W$ is a de Rham representation of $G_K$ over
$\barQQ_p$ and $\hodgetateembedding$ is an embedding $K \into \barQQ_p$ then the multiset
$\HT_\hodgetateembedding(W)$ of Hodge--Tate weights of $W$ with respect to $\hodgetateembedding$ is
defined to contain the integer $i$ with multiplicity $$\dim_{\barQQ_p} (W
\otimes_{\hodgetateembedding,K} \widehat{\barK}(-i))^{G_K},$$ with the usual notation
for Tate twists. (Here $\widehat{\barK}$ is the completion of $\barK$.)  Thus for example
$\HT_\hodgetateembedding(\varepsilon)=\{ 1\}$. We will refer to the
elements of $\HT_\hodgetateembedding(W)$  as the
``$\hodgetateembedding$-labeled Hodge--Tate weights of $W$'', or simply as the
``$\hodgetateembedding$-Hodge--Tate weights of $W$''.

\subsubsection{$p$-adic period rings}
\label{sec:p-adic-period-rings}
Define $\fS = W(k)\llbracket u\rrbracket$.  The ring $\fS$
is equipped with a Frobenius endomorphism $\varphi$ via $u\mapsto u^p$
along with the natural Frobenius on $W(k)$.

We denote by $S$  the $p$-adic completion of the divided power
envelope of $W(k)[u]$ with respect to the ideal generated by $E(u)$.
Let $\Fil^r S$ be the closure in $S$ of the ideal generated by
$E(u)^i/i!$ for $i \ge r$.   Write $S_{K_0} = S[1/p]$ and
$\Fil^r S_{K_0} = (\Fil^r S)[1/p]$.
There is a unique Frobenius map $\varphi\col S \to S$ which extends
the Frobenius on $\fS$. We write $N_S$ for the $K_0$-linear derivation on $S_{K_{0}}$ such that
$N_S(u)= -u$.

Let $R= \varprojlim \O_{\overline K }/p$ where the transition maps are
the $p$th power map.    The ring $R$ is a valuation ring with valuation defined by $v_R((x_n)_{n \ge 0}) =
 \lim_{n\to\infty} p^n v_p(x_n)$, where $v_p(p)=1$; the residue field
 of $R$ is $\overline{k}$, the residue field of $\barK$.

 By the universal property of  the Witt vectors
$W(R)$ of $R$, there is a unique surjective projection map $\theta
\col W(R) \to \widehat \O_{\overline K}$ to the $p$-adic completion of
$\O_{\overline K}$ which lifts the  projection $R \to \O_{\overline K}/ p$
onto the first factor in the inverse limit. We denote by $A_{\text{cris}}$ the $p$-adic completion
of the divided power envelope of $W(R)$ with respect to
$\ker(\theta)$.
Write $\underline \pi=(\pi_n)_{n\geq 0}\in R$
and let $[\underline \pi ]\in W(R)$ be the Teichm\"uller
representative. We embed the $W(k)$-algebra $W(k)[u]$ into
$W(R)\subset\Acris$ by the map $u\mapsto [\underline \pi]$. This
embedding extends to embeddings $\fS \hookrightarrow S \hookrightarrow \Acris$ which  are  compatible with
Frobenius endomorphisms. As usual, we write $B_\text{cris}^+=
\Acris[1/p]$.  As a subring of $\Acris$, the ring $S$ is not stable
under the action of $G$; however, $S$ is the subring of
$G_{\infty}$-invariants in $\Acris$ (see
  \cite[\S 4]{BreuilGriffith}).

  Let $\cO_{\cE}$ denote the $p$-adic completion of
  $\fS[\frac{1}{u}]$, a discrete valuation ring with residue field
  $k(\!(u)\!)$. Write $\cE$ for the field of fractions of $\cO_\cE$. The
  inclusion $\fS\into W(R)$ extends to an inclusion $\cO_\cE\into
  W(\Fr R)$, and thus to $\cE\into W(\Fr R)[1/p]$. We let
  $\cE^{\text{ur}}$ denote the maximal unramified extension of $\cE$
  in $W(\Fr R)[1/p]$, with ring of integers $\cO^{\text{ur}}$. Write
  $\widehat{\cE^{\text{ur}}}$ for the $p$-adic completion of
  $\cE^{\text{ur}}$, with ring of integers
  $\widehat{\cO^{\text{ur}}}$. Write
  $\fS^{\text{ur}}=\widehat{\cO^{\text{ur}}}\cap W(R)\subset W(\Fr
  R)$.

Set  $\ue:= (\zeta_{p^i}) _{i \geq
  0} \in R$ and $t = -\log([\ue])\in \Acris$.  For any $g\in G_K$,
write $\underline \varepsilon(g) = g(\underline \pi)/\underline \pi$,
which is a
cocycle from $G_K$ to  $R^\times$.  Note that $\underline \varepsilon(\tau) =\underline \varepsilon$.

By \cite[Ex. 5.3.3]{liu-Fontaine} (see also the discussion before
Theorem 3.2.2 of \emph{ibid.}) there exists an element $\mathfrak{t} \in W(R)$
such that $t = c \varphi(\mathfrak{t})$ with $c \in S^{\times}$.  It
is shown in the course of the proof of \cite[Lem 3.2.2]{LiuLattices}
that the image of $\mathfrak{t}$ in $R$ has valuation~$\frac{1}{p-1}$.
Following \cite[\S 5]{MR1293971} we define
$$ I^{[m]} B_{\cris}^+ = \{ x \in B_{\cris}^+ : \varphi^n(x) \in
  \Fil^m B_{\cris}^+ \ \text{for all} \ n > 0\}.$$
(See \cite[\S 5]{MR1293971} for the definition of the filtration on
$B_{\cris}^+$.)  For any subring $A \subset B_{\cris}^{+}$ write
$I^{[m]} A = A \cap I^{[m]} B_{\cris}^+$.  By \cite[Prop
5.1.3]{MR1293971} the ideal $I^{[m]} W(R)$ is principal, generated by $\varphi(\mathfrak{t})^m$.

\section{Serre weight conjectures}\label{sec: serre
  weight definitions}In this section we
explain the definition of the sets of weights $\Wconj(\rbar)$, and
recall some results from \cite{blggU2}. We refer the reader to Section
4 of \cite{blggU2} for a detailed discussion of these definitions and
their relationship with other definitions in the
literature.\subsection{Local definitions} Let $K$ be a finite
unramified extension of $\Qp$ of degree $f$ with residue field $k$,
and let $\rhobar\col G_K\to\GL_2(\Fpbar)$ be a continuous
representation.
\begin{defn}
  A \emph{Serre weight} is an isomorphism class of irreducible representations of
  $\GL_2(k)$ over $\Fpbar$. Up to isomorphism, any such representation is of the
  form \[F_a:=\otimes_{\sigma\col k\into\Fpbar}\det{}^{a_{\sigma,2}}\otimes\Sym^{a_{\sigma,1}-a_{\sigma,2}}k^2\otimes_{\sigma,k}\overline{\F}_p\]
  where $0\le a_{\sigma,1}-a_{\sigma,2}\le p-1$ for each $\sigma$.  
We recall that
$F_a\simeq F_b$ as representations of $\GL_2(k)$ if and only if we
have $a_{\sigma,1}-a_{\sigma,2}=b_{\sigma,1}-b_{\sigma,2}$ for all
$\sigma$, and the character $k^\times\to\Fpbartimes$,
$x\mapsto\prod_{\sigma\col k\into\Fpbar}\sigma(x)^{a_{\sigma,2}-b_{\sigma,2}}$
is trivial.
\end{defn}

Write $\Z^2_+$ for the set of pairs of integers $(n_1,n_2)$ with
$n_1\ge n_2$.  We also use the term Serre weight to refer to tuples 
 $a = (a_{\sigma,1},a_{\sigma,2})_\sigma \in
 (\Z^2_+)^{\Hom(k,\Fpbar)}$ with the property that
 $a_{\sigma,1}-a_{\sigma,2} \le p-1$ for all $\sigma \in \Hom(k,\Fpbar)$, and we identify
 the Serre weight $a \in (\Z^2_+)^{\Hom(k,\Fpbar)}$ with the
 Serre weight represented by $F_a$.  (Note that a Serre weight in the
 latter sense will be represented by infinitely many Serre weights in the
 former sense.)
Since there is a natural bijection
between $\Hom(k,\Fpbar)$ and $\Hom_{\Qp}(K,\Qpbar)$, we will feel free
to regard a Serre weight as an element of
$(\Z^2_+)^{\Hom_{\Qp}(K,\Qpbar)}$. (In the terminology of
\cite{blggU2} we are regarding the Serre weight as a {\em lift} of
itself; as such lifts are unique in the unramified case, we choose not to
use this terminology in this paper.)

\begin{defn}
  \label{defn: Galois representation of Hodge type some weight}Let
  $K/\Qp$ be a finite extension, let
  $\lambda\in(\Z^2_+)^{\Hom_{\Qp}(K,\Qpbar)}$, and let
  $\rho\col G_K\to\GL_2(\Qpbar)$ be a de Rham representation. Then we say
  that $\rho$ has \emph{Hodge type} $\lambda$ if for each
  $\hodgetateembedding\in\Hom_{\Qp}(K,\Qpbar)$ we have $\HT_\hodgetateembedding(\rho)=\{\lambda_{\hodgetateembedding,1}+1,\lambda_{\hodgetateembedding,2}\}$.
\end{defn}

Following \cite{bdj} (as explained in \cite[\S 4]{blggU2}), we
define an explicit set of Serre weights $\Wconj(\rhobar)$.
\begin{defn}
  \label{defn: W? niveau 1}If $\rhobar$ is reducible, then a Serre
  weight $a\in(\Z^2_+)^{\Hom(k,\Fpbar)}$ is in $\Wconj(\rhobar)$ if
  and only if $\rhobar$ has a crystalline lift of the
  form \[ \begin{pmatrix}\chi_1&*\\ 0& \chi_2
  \end{pmatrix}\] which has Hodge type $a$. 
In particular, if $a\in \Wconj(\rhobar)$ then by
 \cite[Lem.~6.2]{geesavitttotallyramified} (or by Lemma \ref{lem: lift rank-1
   object} and Proposition \ref{prop:calculation-on-inertia} below) it
 is necessarily the case that  there is a decomposition 
 $\Hom(k,\Fpbar)=J\coprod J^c$ such that  \[\rhobar|_{I_K}\simeq
  \begin{pmatrix}
 \prod_{ \sigma\in
      J}\omega_{\sigma}^{a_{\sigma,1}+1}\prod_{\sigma\in
      J^c}\omega_\sigma^{a_{\sigma,2}}&*\\ 0&  \prod_{\sigma\in
      J^c}\omega_\sigma^{a_{\sigma,1}+1}\prod_{\sigma\in
      J}\omega_\sigma^{a_{\sigma,2}}   \end{pmatrix}.\]
\end{defn}

 Let $K_2$ denote the quadratic unramified
extension of $K$ inside~$\barK$, with residue field $k_2$.
\begin{defn}
  \label{defn: W? niveau 2} If $\rhobar$ is irreducible, then a Serre
  weight $a\in(\Z^2_+)^{\Hom(k,\Fpbar)}$ is in $\Wconj(\rhobar)$ if
  and only if there is a subset $J\subset\Hom(k_2,\Fpbar)$ containing
  exactly one element extending each element of $\Hom(k,\Fpbar)$,
   such that if we write
  $\Hom(k_2,\Fpbar)=J\coprod J^c$, then  \[\rhobar|_{I_K}\simeq
  \begin{pmatrix}\prod_{\sigma\in
      J}\omega_{\sigma}^{a_{\sigma,1}+1}\prod_{\sigma\in
      J^c}\omega_\sigma^{a_{\sigma,2}}&0\\ 0& \prod_{\sigma\in
      J^c}\omega_\sigma^{a_{\sigma,1}+1}\prod_{\sigma\in
      J}\omega_\sigma^{a_{\sigma,2}}
  \end{pmatrix}.\]
\end{defn}
We remark that by \cite[Lem.~4.1.19]{blggU2}, if
$a\in\Wconj(\rhobar)$ and $\rhobar$ is irreducible then
$\rhobar$ necessarily has a crystalline lift of Hodge type $a$. 

It is worth stressing that in all cases, if $a \in (\Z_{+}^2)^{\Hom(k,\Fpbar)}$ is a Serre weight,
  then whether or not $a \in \Wconj(\rhobar)$ depends only on the
  representation $F_a$; this can be seen by twisting by suitable
  crystalline characters.  It is also worth remarking again (\textit{cf.}\ the discussion in the
introduction)
that there are other definitions one could
  make of a set of conjectural weights. For example, one could define
  the set of conjectural weights for $\rhobar$ to be the set of
  weights $a$ for which $\rhobar$ has a crystalline lift of Hodge
  type~$a$; 
this would be the most natural definition from the perspective
  of local-global compatibility, \textit{cf.}~Proposition~\ref{prop: modular of
    some weight implies crystalline lifts exist}, which shows that any
  set of conjectural weights should be contained in this set.  We choose
  our definition of $\Wconj(\rhobar)$ in order to be consistent with
  \cite{blggU2}; ultimately, it follows from the results of this paper that
  these two definitions are equivalent.

 
  \subsection{Global definitions}\label{ss:global} The point of the local definitions
  above is to allow us to formulate global Serre weight
  conjectures. Following \cite{blggU2}, we work with rank two unitary
  groups which are compact at infinity. As we will not need to make
  any arguments that depend on the particular definitions made in
  \cite{blggU2}, and our methods are purely local, we simply
  recall some notation and basic properties of the definitions,
  referring the reader to \cite{blggU2} for precise formulations.

We emphasise that our conventions for Hodge--Tate weights are the
opposite of those of \cite{blggU2}; for this reason, we must introduce
a dual into the definitions.

Fix an imaginary CM field $F$ in which $p$ is unramified, and let
$F^+$ be its maximal totally real subfield. We define a global notion
of Serre weight by taking a product of local Serre weights in the
following way.

For each place $w|p$ of $F$, let $k_w$ denote the residue
field of $F_w$. If $w$ lies over a place $v$ of $F^+$, write
$v=ww^c$. Write $S:=\coprod_{w|p}\Hom(k_w,\Fpbar)$, and let $(\Z^2_+)_0^S$ denote the
subset of $(\Z^2_+)^S$ consisting of
elements $a$ such that for each $w|p$, if $\sigma\in\Hom(k_w,\Fpbar)$
then \[a_{\sigma,1}+a_{\sigma c,2}=0.\]
If $a \in (\Z^2_+)^S$ and $w|p$ is a
place of $F$, then let
$a_w$ denote the element $(a_\sigma)_{\sigma\in
  \Hom(k_w,\Fpbar)}$ of $(\Z^2_+)^{\Hom(k_w,\Fpbar)}$.

\begin{defn}
  \label{defn:global-serre-wts}
  We say that an element $a\in(\Z^2_+)_0^{S}$ is a \emph{Serre weight}
  if for each $w | p$ and $\sigma \in \Hom(k_w,\Fpbar)$ we have \[p-1\ge a_{\sigma,1}-a_{\sigma,2}.\]
\end{defn}

Let $\rbar\col G_F\to\GL_2(\Fpbar)$ be a continuous irreducible
representation.  We refer the reader to
\cite[Def.~2.1.9]{blggU2} for an explanation of what it
means for $\rbar$ to be modular, and more precisely for $\rbar$ to be
modular of some Serre weight $a$; roughly speaking, $\rbar$ is modular
of weight $a$ if there is a cohomology class on some unitary group
with coefficients in a certain local system corresponding to $a$ whose
Hecke eigenvalues are determined by the characteristic polynomials of
$\rbar$ at Frobenius elements. Since our conventions for Hodge--Tate
weights are the opposite of those of \cite{blggU2}, we make the
following definition.

\begin{defn}
   Suppose that $\rbar\col G_F\to\GL_2(\Fpbar)$ is a continuous
  irreducible modular representation. Then we say that $\rbar$ \emph{is modular
  of weight} $a\in(\Z^2_+)_0^S$ if
$\rbar^\vee$ is modular of weight $a$ in the sense
of \cite[Def.~2.1.9]{blggU2}.
\end{defn} We remark that if $\rbar$ is modular then
 $\rbar^c\simeq\rbar^\vee\otimes\varepsilonbar$. We  globalise the definition of the set
$\Wconj(\rhobar)$ in the following natural fashion.
\begin{defn}
  If $\rbar\col G_F\to\GL_2(\Fpbar)$ is a continuous representation, then
  we define $\Wconj(\rbar)$ to be the set of Serre weights
  $a\in(\Z^2_+)_0^S$ such that for each place $w|p$ the corresponding
  Serre weight $a_w\in(\Z^2_+)^{\Hom(k_w,\Fpbar)}$ is an element of
  $\Wconj(\rbar|_{G_{F_w}})$.
\end{defn}

One then has the following conjecture.
\begin{conj}\label{conj: global Serre weight explicit conjecture}
  Suppose that $\rbar\col G_F\to\GL_2(\Fpbar)$ is a continuous irreducible
  modular representation, and that
  $a\in(\Z^2_+)_0^S$ is a Serre
  weight. Then $\rbar$ is modular of weight $a$ if and only if
  $a\in\Wconj(\rbar)$.
\end{conj}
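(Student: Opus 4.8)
The plan is to reduce Conjecture~\ref{conj: global Serre weight explicit conjecture} to purely local statements at the places of $F$ above $p$, and then to prove the two implications separately; the implication ``$\rbar$ modular of weight $a$ $\Rightarrow$ $a\in\Wconj(\rbar)$'' can be made unconditional, while its converse will rest on the automorphy lifting theorems of \cite{blggU2} (and so will need the adequacy hypothesis present in Theorem~\ref{thm: intro: the main result, modular if and only if predicted}). By the definitions of $\Wconj(\rbar)$ and of modularity of weight $a$, it is enough to treat, for each $w\mid p$ separately, the local representation $\rhobar:=\rbar|_{G_{F_w}}$ and the local weight $a_w$.

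For the forward implication: if $\rbar$ is modular of weight $a$ then $\rbar\simeq\rbar_\pi$ for a cuspidal automorphic $\pi$ whose attached Galois representation is crystalline of Hodge type $a_w$ at each $w\mid p$, which is Proposition~\ref{prop: modular of some weight implies crystalline lifts exist}. So it suffices to show that if $\rhobar$ has \emph{any} crystalline lift of Hodge type $a_w$, then it has one of the explicit shape appearing in Definition~\ref{defn: W? niveau 1} or Definition~\ref{defn: W? niveau 2}, whence $a_w\in\Wconj(\rhobar)$; this is Theorem~\ref{thm: crystalline lift implies explicit crystalline lift}. Its proof splits on the reducibility of $\rhobar$. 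If $\rhobar$ is reducible the content is exactly Theorem~\ref{thm: elimination in the reducible case}: one must produce a \emph{reducible} crystalline lift with the same labeled Hodge--Tate weights, and any such lift automatically has the predicted form, since its restriction to $I_K$ is of the shape predicted in Definition~\ref{defn: W? niveau 1} by Lemma~\ref{lem: lift rank-1 object} and Proposition~\ref{prop:calculation-on-inertia}. If $\rhobar$ is irreducible, one restricts to the unramified quadratic extension of $F_w$, on which $\rhobar$ becomes reducible, applies the reducible case there, and then runs a combinatorial descent matching the niveau $2$ characters with a decomposition $J\coprod J^c$ as in Definition~\ref{defn: W? niveau 2} (Section~\ref{sec:irreducible}).

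For the converse one appeals to \cite{blggU2}: since $\Wconj(\rhobar)$ has been \emph{defined} so that each weight in it is witnessed by a crystalline lift of a special, potentially diagonalisable shape, the automorphy lifting results of \cite{blggU2} apply directly and give that $\rbar$ is modular of weight $a$, provided $\rbar(G_{F(\zeta_p)})$ is adequate. This is the only point at which the adequacy hypothesis is used.

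The main obstacle is the local Theorem~\ref{thm: elimination in the reducible case}, and within it the structure theorem for $(\varphi,\hat{G})$-modules attached to crystalline representations with Hodge--Tate weights in $[0,p]$ (Theorem~\ref{shape}) --- a range strictly wider than the Fontaine--Laffaille range $[0,p-2]$ in which the result would be routine. Theorem~\ref{shape} is proved by a delicate study of the monodromy operator $N_{\nabla}$ and uses essentially that $F_w/\Qp$ is unramified; granting it, one reads off the possible pairs of subcharacters of a reducible $\rhobar$ (Corollary~\ref{cor:form of the characters in reducible crystalline reduction, nothing about extensions}), which already settles the completely decomposable case. The remaining, genuinely delicate, case is $\rhobar$ reducible and non-split, where one must realise a \emph{specific} crystalline extension class; this is handled by classifying rank one Kisin modules and their extensions (Section~\ref{sec:extensions-rank-one}), showing that such an extension lifts, essentially uniquely, to a $(\varphi,\hat{G})$-module (Section~\ref{sec:extensions-rank-one-G-hat}), and a combinatorial check that every Galois representation so produced does have a reducible crystalline lift of the prescribed Hodge type (Section~\ref{sec:reducible}).
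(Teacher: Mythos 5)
Your proposal is correct and follows essentially the paper's own route: the paper resolves this conjecture (as Theorem~\ref{thm: main global theorem - modular of a weight iff it is a predicted weight}) by combining Proposition~\ref{prop: modular of some weight implies crystalline lifts exist} and Theorem~\ref{thm: crystalline lift implies explicit crystalline lift} for the forward direction and Theorem~\ref{thm: explicit local lifts implies Serre weight} (the automorphy lifting results of \cite{blggU2}, under adequacy) for the converse, and your sketch of the local elimination step (Theorem~\ref{shape}, Theorem~\ref{thm: elimination in the reducible case}, and the irreducible-case combinatorics) matches Sections~\ref{sec:shape}--\ref{sec:irreducible}. As you note, the converse direction requires $\rbar(G_{F(\zeta_p)})$ adequate, so what is actually proved (both by you and in the paper) is the conjecture under that additional hypothesis.
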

If $\rbar\col G_F\to\GL_2(\Fpbar)$ is a continuous representation,
  then we say that $\rbar$ has \emph{split ramification} if any finite
  place of $F$ at which $\rbar$ is ramified is split over $F^+$.
For the remainder of this section, we
 place ourselves in the following situation.
  \begin{hypothesis}\label{hypothesis: usual TW-type conditions}
    Let $F$ be an imaginary CM field with maximal totally real
    subfield~$F^+$, and let $\rbar\col G_F\to\GL_2(\Fpbar)$ be a
    continuous representation. Assume that:
    \begin{itemize}
    \item $p>2$,
  \item  $[F^+:\Q]$ is even,
    \item $F/F^+$ is unramified at all finite places,
 \item $p$ is unramified in $F$,
 \item each place of $F^+$ above $p$ splits in $F$, and
    \item $\rbar$ is an irreducible modular
  representation with split ramification.
\end{itemize}
  \end{hypothesis}

The  following result is \cite[Thm.~5.1.3]{blggU2}, one of the
main theorems of that paper, specialised to the case of interest to us
where $p$ is unramified in $F$.  (Note that in \cite{blggU2}, the set
of Serre weights $\Wconj(\rbar)$ is often denoted
$W^{\operatorname{explicit}}(\rbar)$. Note also that the assumption
that $p$ is unramified in $F$ implies that $\zeta_p \not\in F$.)
\begin{thm}
  \label{thm: explicit local lifts implies Serre weight}Suppose that
  Hypothesis \ref{hypothesis: usual TW-type conditions} holds.  Suppose further that $\rbar(G_{F(\zeta_p)})$ is adequate.   Let $a\in(\Z^2_+)_0^S$ be a Serre weight. Assume that $a\in
  \Wconj(\rbar)$. Then $\rbar$ is modular of weight $a$.
\end{thm}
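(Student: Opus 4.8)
The plan is to deduce this directly from \cite[Thm.~5.1.3]{blggU2}; there is no new local or global mathematics here, only a translation between conventions. That theorem asserts, for a modular $\rbar'$ satisfying the Taylor--Wiles type hypotheses of \cite{blggU2} together with the adequacy of $\rbar'(G_{F(\zeta_p)})$, that if $a$ lies in the explicit weight set $W^{\operatorname{explicit}}(\rbar')$ defined there, then $\rbar'$ is modular of weight $a$ in the sense of \cite[Def.~2.1.9]{blggU2}. So the first step is to check that Hypothesis~\ref{hypothesis: usual TW-type conditions}, specialised to the case at hand, supplies exactly the running hypotheses of \cite{blggU2}: the conditions that $F/F^+$ is unramified at all finite places, that $[F^+:\Q]$ is even, that each place of $F^+$ above $p$ splits in $F$, that $p>2$, and that $\rbar$ is irreducible, modular and of split ramification are all among those imposed there, while $\rbar(G_{F(\zeta_p)})$ being adequate is the adequacy hypothesis of the theorem. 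One also needs $\zeta_p\notin F$, which is automatic here: if $\zeta_p\in F$ then $\Q(\zeta_p)\subseteq F$, and $p$ ramifies in $\Q(\zeta_p)$, contradicting the assumption that $p$ is unramified in $F$.

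The only real content is the bookkeeping of dualities. Because our Hodge--Tate weight normalisation is opposite to that of \cite{blggU2}, we have \emph{defined} ``$\rbar$ is modular of weight $a$'' to mean that $\rbar^\vee$ is modular of weight $a$ in the sense of \cite[Def.~2.1.9]{blggU2}, and the condition $a_{\sigma,1}+a_{\sigma c,2}=0$ in the definition of $(\Z^2_+)_0^S$ builds the same duality into the global weight. So I would verify that, with these conventions in force, $a\in\Wconj(\rbar)$ (Definition~\ref{defn: W? niveau 1}/\ref{defn: W? niveau 2}, globalised as above) implies $a\in W^{\operatorname{explicit}}(\rbar^\vee)$ in the notation of \cite{blggU2}. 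This comes down to the compatibility recorded in \cite[\S~4]{blggU2} between the Buzzard--Diamond--Jarvis description of the weight set and the one used there, combined with the elementary facts that $(\rbar|_{G_{F_w}})^\vee=(\rbar^\vee)|_{G_{F_w}}$, that $\rbar|_{G_{F_{w^c}}}$ is identified with $(\rbar^\vee\otimes\varepsilonbar)|_{G_{F_w}}$ via complex conjugation (using $\rbar^c\simeq\rbar^\vee\otimes\varepsilonbar$), and that dualising a crystalline lift $\left(\begin{smallmatrix}\chi_1 & * \\ 0 & \chi_2\end{smallmatrix}\right)$ of Hodge type $a$ produces a crystalline lift of the appropriate dual Hodge type. (We only need one inclusion of weight sets here, since the theorem is the ``if'' direction of Conjecture~\ref{conj: global Serre weight explicit conjecture}.)

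Granting this, the deduction is immediate. Assuming $a\in\Wconj(\rbar)$, we obtain $a\in W^{\operatorname{explicit}}(\rbar^\vee)$; since $\rbar^\vee$ is again irreducible, modular (by $\rbar^c\simeq\rbar^\vee\otimes\varepsilonbar$), of split ramification, and satisfies the same conditions on $F$, and since $\rbar^\vee(G_{F(\zeta_p)})=\rbar(G_{F(\zeta_p)})$ is adequate, \cite[Thm.~5.1.3]{blggU2} applies to $\rbar^\vee$ and shows that $\rbar^\vee$ is modular of weight $a$ in the sense of \cite[Def.~2.1.9]{blggU2}, i.e.\ that $\rbar$ is modular of weight $a$ in our sense. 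The main obstacle, such as it is, is purely one of \textbf{organisation}: keeping the several dualities consistent and confirming that no hypothesis of \cite[Thm.~5.1.3]{blggU2} has been silently dropped when restricting to the case that $p$ is unramified in $F$. The substantive content of this statement lies entirely in \cite{blggU2}.
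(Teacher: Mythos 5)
Your proposal matches the paper exactly: the paper offers no proof of this theorem, stating it as \cite[Thm.~5.1.3]{blggU2} specialised to the case where $p$ is unramified in $F$, with precisely the two observations you make (that $\Wconj(\rbar)$ is the set denoted $W^{\operatorname{explicit}}(\rbar)$ there, and that $p$ unramified in $F$ forces $\zeta_p\notin F$). Your additional bookkeeping of the duality built into the definitions of ``modular of weight $a$'' and of $(\Z^2_+)_0^S$ is consistent with how the paper sets up its conventions, so this is the same argument, just spelled out in slightly more detail.
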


Here \emph{adequacy} is a group-theoretic condition, introduced in
\cite{jack}. For subgroups of $\GL_2(\Fpbar)$ with $p > 5$, adequacy is
equivalent to the usual condition that the action is
irreducible; for $p=3$ it is equivalent to irreducibility and the
requirement that the projective image is not conjugate to
$\PSL_2(\F_3)$, and for $p=5$ it is equivalent to irreducibility and the
requirement that the projective image is not conjugate to
$\PSL_2(\F_5)$ or $\PGL_2(\F_5)$. (See
\cite[Prop.~A.2.1]{blggU2}.)  We also remark that the hypotheses that $F/F^+$
is unramified at all finite places, that every place of $F^+$ dividing
$p$ splits in $F$, and that $[F^+:\Q]$ is even, are in fact part of
the definition of ``modular'' made in
\cite{blggU2}. 

  Theorem~\ref{thm: explicit local lifts implies Serre weight}
  establishes one direction of Conjecture \ref{conj: global Serre
    weight explicit conjecture}, and we are left with the problem of
  ``elimination,'' i.e., the problem of proving that if $\rbar$ is
  modular of weight $a$, then $a\in \Wconj(\rbar)$. The following is
\cite[Cor.~4.1.8]{blggU2}.

\begin{prop}
  \label{prop: modular of some weight implies crystalline lifts
    exist}Suppose that Hypothesis \ref{hypothesis: usual TW-type
    conditions} holds. Let $a\in(\Z^2_+)_0^S$ be a Serre
  weight. If $\rbar$ is modular of weight $a$, then for each place
  $w|p$ of $F$, there is a crystalline representation
  $\rho_w\col G_{F_w}\to\GL_2(\Qpbar)$ lifting $\rbar|_{G_{F_w}}$, such
  that $\rho_w$ has Hodge type $a_w$.
\end{prop}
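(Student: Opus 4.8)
The plan is to observe that this proposition is essentially \cite[Cor.~4.1.8]{blggU2}, so that the only real work is to unwind our (dualised) definition of modularity and to match Hodge--Tate weight normalisations with those of~\cite{blggU2}. First I would recall that, by our conventions, $\rbar$ being modular of weight $a \in (\Z^2_+)_0^S$ means precisely that $\rbar^\vee$ is modular of weight $a$ in the sense of \cite[Def.~2.1.9]{blggU2}. Thus there is an automorphic representation $\pi$ on one of the rank two unitary groups considered in~\cite{blggU2}, of level prime to $p$, whose associated coefficient system (local system on the relevant locally symmetric space) is the one attached to $a$, and whose Hecke eigenvalues away from a finite set of places are determined by the characteristic polynomials of $\rbar^\vee$ at the corresponding Frobenius elements.

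Next I would invoke the standard construction of the Galois representation $r_\pi \col G_F \to \GL_2(\Qpbar)$ attached to such a $\pi$, together with local--global compatibility, both of which are already packaged for us in~\cite{blggU2}. The crucial input is that $\pi$ is unramified at every place $w \mid p$ of $F$ (this is built into the notion of ``modular'' in the Serre weight setting, where the level is prime to $p$), so $r_\pi|_{G_{F_w}}$ is crystalline, and its $\hodgetateembedding$-labeled Hodge--Tate weights for each $\hodgetateembedding \col F_w \into \Qpbar$ are read off from the component $a_w$ of the weight by the usual recipe. This produces a crystalline lift of $\rbar^\vee|_{G_{F_w}}$ of Hodge type $a_w$ (for $\rbar^\vee$), for each $w \mid p$.

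Finally I would pass back from $\rbar^\vee$ to $\rbar$: the dual $r_\pi^\vee|_{G_{F_w}}$ is a crystalline lift of $\rbar|_{G_{F_w}}$, and after twisting by a suitable power of the cyclotomic character (using $\HT_{\hodgetateembedding}(\varepsilon) = \{1\}$, the relation $\rbar^c \simeq \rbar^\vee \otimes \varepsilonbar$, and the defining constraint $a_{\sigma,1} + a_{\sigma c,2} = 0$ of $(\Z^2_+)_0^S$), one arrives at a crystalline representation $\rho_w \col G_{F_w} \to \GL_2(\Qpbar)$ lifting $\rbar|_{G_{F_w}}$ with $\HT_{\hodgetateembedding}(\rho_w) = \{a_{\hodgetateembedding,1}+1, a_{\hodgetateembedding,2}\}$, i.e.\ of Hodge type $a_w$ in the sense of Definition~\ref{defn: Galois representation of Hodge type some weight}. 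The main (and essentially only) obstacle is bookkeeping: tracking the interaction of our Hodge--Tate weight sign conventions, the dual, and the cyclotomic twist against the conventions of~\cite{blggU2}; there is no new mathematical content here, as all of the analytic and geometric input is already contained in the cited results.
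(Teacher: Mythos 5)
Your proposal matches the paper exactly: the paper gives no independent argument, but simply records this statement as \cite[Cor.~4.1.8]{blggU2}, with the dual and opposite Hodge--Tate weight conventions absorbed into the definition of ``modular of weight $a$''. Your unwinding of the citation (automorphy of $\rbar^\vee$, crystallinity of $r_\pi|_{G_{F_w}}$ at prime-to-$p$ level, and the duality/convention bookkeeping) is the intended content and is consistent with what the paper relies on.
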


We stress that Proposition~\ref{prop: modular of some weight implies
  crystalline lifts exist} does not already complete the proof of
Conjecture \ref{conj: global Serre weight explicit conjecture},
because the representation $\rho_w$ may for example be irreducible
when $\rhobar_w$ is reducible (compare with Definition~\ref{defn: W?
  niveau 1}).  However, in light of this result, it is natural to
conjecture that the following result holds.
  \begin{thm}
    \label{thm: crystalline lift implies explicit crystalline lift}
    Let $K/\Qp$ be a finite  unramified extension, and let
    $\rhobar\col G_K\to\GL_2(\Fpbar)$ be a continuous representation. Let
    $a\in(\Z^2_+)^{\Hom(k,\Qpbar)}$ be a Serre weight, and suppose that there is a
    crystalline representation $\rho\col G_{K}\to\GL_2(\Qpbar)$ lifting
    $\rhobar$, such that $\rho$ has Hodge type $a$.  Then $a \in \Wconj(\rhobar)$.
  \end{thm}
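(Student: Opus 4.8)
The plan is to bootstrap from Theorem~\ref{thm: elimination in the reducible case}: first dispose of the reducible case essentially directly, and then deduce the irreducible case by restriction to the unramified quadratic extension $K_2$ of $K$. Throughout I would use two elementary flexibilities. First, whether $a\in\Wconj(\rhobar)$ is unchanged under twisting by a crystalline character: if $\chi\col G_K\to\Zpbarx$ is crystalline, then $\rho$ is a crystalline lift of $\rhobar$ of Hodge type $a$ if and only if $\rho\otimes\chi$ is a crystalline lift of $\rhobar\otimes\overline{\chi}$ of the correspondingly shifted Hodge type, and the conditions in Definitions~\ref{defn: W? niveau 1} and~\ref{defn: W? niveau 2} are manifestly compatible with this (the subcharacters and the $\omega_\sigma$-exponents simply shift). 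Since by Lemma~\ref{lem: lift rank-1 object} and Proposition~\ref{prop:calculation-on-inertia} there is a crystalline character of $G_K$ with any prescribed tuple of labeled Hodge--Tate weights, I would first normalize so that $a_{\sigma,2}=0$ for every $\sigma$; then $\rho$ has $\hodgetateembedding$-Hodge--Tate weights $\{0,r_{\hodgetateembedding}\}$ with $r_{\hodgetateembedding}=a_{\hodgetateembedding,1}+1$, and $r_{\hodgetateembedding}\in[1,p]$ precisely because $a$ is a Serre weight. This is exactly the hypothesis of Theorem~\ref{thm: elimination in the reducible case}.

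\emph{The reducible case.} Suppose $\rhobar$ is reducible. Applying Theorem~\ref{thm: elimination in the reducible case} to $\rho$ produces a reducible crystalline $\rho'\col G_K\to\GL_2(\Zpbar)$ with the same labeled Hodge--Tate weights as $\rho$ (hence of Hodge type $a$ after undoing the normalizing twist) and with $\rhobar'\simeq\rhobar$. Now any reducible two-dimensional crystalline representation is automatically an extension $\begin{pmatrix}\chi_1 & * \\ 0 & \chi_2\end{pmatrix}$ of crystalline characters, since a stable line and the corresponding quotient are each one-dimensional crystalline representations. Thus $\rho'$ has exactly the shape demanded in Definition~\ref{defn: W? niveau 1}, and $a\in\Wconj(\rhobar)$.

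\emph{The irreducible case.} Suppose $\rhobar$ is irreducible. Then $\rho$ is irreducible (a stable line would reduce modulo $\varpi$ to one), and since $\rho$ is crystalline over the \emph{unramified} field $K$, its restriction $\rho|_{G_{K_2}}$ is reducible; in particular $\rhobar|_{G_{K_2}}$ is reducible. I would now restrict everything to $K_2$: $\rho|_{G_{K_2}}$ is a crystalline lift of $\rhobar|_{G_{K_2}}$ of Hodge type $\widetilde a$, where $\widetilde a\in(\Z^2_+)^{\Hom(k_2,\Fpbar)}$ is the $\Gal(k_2/k)$-invariant tuple with $\widetilde a_{\widetilde\sigma}=a_{\widetilde\sigma|_k}$. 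By the reducible case applied over $K_2$, we get $\widetilde a\in\Wconj(\rhobar|_{G_{K_2}})$, so $\rhobar|_{G_{K_2}}\simeq\overline{\chi}_1\oplus\overline{\chi}_2$ for crystalline characters whose restrictions to $I_{K_2}=I_K$ are, by Proposition~\ref{prop:calculation-on-inertia}, given by products of fundamental characters indexed by a partition $\Hom(k_2,\Fpbar)=J\sqcup J^c$ with exponents read off from $\widetilde a$. Using that $\overline{\chi}_2\simeq\overline{\chi}_1^{\sigma}$ (as $\rhobar$ is the restriction of a representation of $G_K$) together with the $\Gal(k_2/k)$-invariance of $\widetilde a$, one compares the two resulting formulas for $\overline{\chi}_2|_{I_K}$ and concludes that $J$ can be chosen so that $\sigma(J)=J^c$, i.e.\ so that $J$ contains exactly one lift of each element of $\Hom(k,\Fpbar)$ --- which is precisely the condition in Definition~\ref{defn: W? niveau 2}. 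Hence $a\in\Wconj(\rhobar)$.

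The substantive work is entirely in Theorem~\ref{thm: elimination in the reducible case} itself (resting on the structure theorem of Section~\ref{sec:shape}), which I am taking as given; within the present argument the only delicate point is the final combinatorial step in the irreducible case --- reconciling the partition $J$ produced by the $K_2$-descent with the ``one lift of each $\sigma$'' requirement, where the boundary exponents (those with $r_{\widetilde\sigma}=p$, so that $\omega_{\widetilde\sigma}^{a_{\widetilde\sigma,1}+1}$ and $\omega_{\widetilde\sigma}^{a_{\widetilde\sigma,2}}$ can coincide) must be handled using the freedom to replace $a$ by any weight with the same $F_a$. I would carry this bookkeeping out in full in Section~\ref{sec:irreducible}.
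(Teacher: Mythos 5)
Your overall route coincides with the paper's: the reducible case is exactly Theorem~\ref{thm: elimination in the reducible case} (plus the observation that a reducible crystalline representation is an extension of crystalline characters), and the irreducible case is handled by restricting to the unramified quadratic extension $K_2$ and then balancing the resulting set $J$. But two points in your irreducible case need attention. First, the assertion that ``since $\rho$ is crystalline over the unramified field $K$, its restriction $\rho|_{G_{K_2}}$ is reducible'' is false: an irreducible two-dimensional crystalline representation need not become reducible over any unramified extension (non-induced examples exist already for $K=\Qp$). What is true, and what you actually need, is that the irreducible residual representation $\rhobar$ becomes reducible over $K_2$, because any irreducible two-dimensional $\Fpbar$-representation of $G_K$ is induced from a character of $G_{K_2}$ (wild inertia acts trivially and the two tame inertial characters are swapped by Frobenius). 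With that correction your application of the reducible case over $K_2$ is legitimate, though it is more than is needed: since $\rhobar|_{G_{K_2}}$ is split, the extension-class information is vacuous, and Corollary~\ref{cor:form of the characters in reducible crystalline reduction, nothing about extensions} already gives the inertial characters; this is what the paper uses, so that its irreducible case is independent of the harder extension analysis of Sections~\ref{sec:extensions-rank-one-G-hat} and~\ref{sec:reducible}.

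Second, and more seriously, the step you defer --- showing that $J\subset\Hom(k_2,\Fpbar)$ can be chosen balanced --- is precisely the content of the paper's Theorem~\ref{thm: reduction mod p for GL2 in the irreducible case}, and your sketch of it points in the wrong direction. The mechanism is not the freedom to replace $a$ by another weight representing $F_a$: the $J$ produced by the $K_2$-analysis may genuinely fail to be balanced, and one must modify $J$ without changing $\rhobar|_{I_K}$, using the multiplicative relations among the fundamental characters (each $\omega_\sigma^p$ is again a fundamental character). Concretely, comparing $\overline{\chi}_2|_{I_K}$ with the Frobenius conjugate of $\overline{\chi}_1|_{I_K}$ yields a relation $\prod_\sigma\omega_\sigma^{x_\sigma}=1$ with exponents $x_\sigma\in[-p,p]$, and with $x_\sigma=0$ for at least one $\sigma$ (this uses irreducibility); Lemma~\ref{lem:minus-p-to-p} then shows that the nonzero $x_\sigma$ break into cyclic strings of the form $\pm(-1,p-1,\dots,p-1,p)$, and flipping the membership in $J$ along each such string preserves $\rhobar|_{I_K}$ and produces a balanced $J$. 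Without this argument (or an equivalent), ``one compares the two resulting formulas and concludes that $J$ can be chosen so that $\sigma(J)=J^c$'' is an assertion rather than a proof; that combinatorial lemma and the flipping construction are where the remaining work of the irreducible case lies.
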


 Theorem \ref{thm: crystalline lift implies explicit crystalline
    lift} is the main local result of this paper, and the remainder of
  the paper is concerned with its proof.      In the case that $\rhobar$ is irreducible, this is Theorem
    \ref{thm: reduction mod p for GL2 in the irreducible case} below;
    and in the reducible case it follows immediately from Theorem
    \ref{thm: elimination in the reducible case}.   Our methods are purely
  local.  We have the
  following global consequence, which essentially resolves Conjecture
  \ref{conj: global Serre weight explicit conjecture}.

\begin{thm}
  \label{thm: main global theorem - modular of a weight iff it is a
    predicted weight}Suppose that Hypothesis \ref{hypothesis: usual
    TW-type conditions} holds.  Suppose further that
  $\rbar(G_{F(\zeta_p)})$ is adequate.  Let $a\in(\Z^2_+)_0^S$ be a
  Serre weight. Then $\rbar$ is modular of weight $a$ if and only if
  $a\in\Wconj(\rbar)$.
\end{thm}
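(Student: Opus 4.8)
The plan is to deduce this global statement by combining the main \emph{local} result of the paper, Theorem~\ref{thm: crystalline lift implies explicit crystalline lift} (whose proof occupies Sections~\ref{sec:coeffs}--\ref{sec:irreducible}), with the two global inputs already recalled from~\cite{blggU2}, namely Theorem~\ref{thm: explicit local lifts implies Serre weight} and Proposition~\ref{prop: modular of some weight implies crystalline lifts exist}. So at this stage there is no new argument to make: the theorem is a formal consequence of results stated above, and the only thing to check is the mild bookkeeping matching the local and global notions of Serre weight.

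For the ``if'' direction, suppose $a\in\Wconj(\rbar)$. Since Hypothesis~\ref{hypothesis: usual TW-type conditions} holds and $\rbar(G_{F(\zeta_p)})$ is adequate, Theorem~\ref{thm: explicit local lifts implies Serre weight} applies verbatim and shows that $\rbar$ is modular of weight $a$. For the ``only if'' direction, suppose $\rbar$ is modular of weight $a$. By Proposition~\ref{prop: modular of some weight implies crystalline lifts exist}, for each place $w\mid p$ of $F$ there is a crystalline representation $\rho_w\col G_{F_w}\to\GL_2(\Qpbar)$ lifting $\rbar|_{G_{F_w}}$ and having Hodge type $a_w$. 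Because $p$ is unramified in $F$, each completion $F_w$ is a finite unramified extension of $\Qp$, so Theorem~\ref{thm: crystalline lift implies explicit crystalline lift} applies with $K=F_w$, $\rhobar=\rbar|_{G_{F_w}}$, and the local Serre weight $a_w\in(\Z^2_+)^{\Hom(k_w,\Qpbar)}$ (which is indeed a Serre weight in the local sense, by Definition~\ref{defn:global-serre-wts}); it gives $a_w\in\Wconj(\rbar|_{G_{F_w}})$. As this holds for every $w\mid p$, the definition of the global set $\Wconj(\rbar)$ yields $a\in\Wconj(\rbar)$, completing the proof.

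The only genuine content here is therefore Theorem~\ref{thm: crystalline lift implies explicit crystalline lift}, which in turn is proved by splitting into the irreducible case (Theorem~\ref{thm: reduction mod p for GL2 in the irreducible case}) and the reducible case (Theorem~\ref{thm: elimination in the reducible case}); all the difficulty of the paper lives there, and at the global level there is no obstacle beyond assembling the two citations. The one point requiring a little care is the dual that was inserted into the definition of ``modular of weight $a$'' in order to reconcile our Hodge--Tate weight conventions with those of~\cite{blggU2}; since that dual is already incorporated into Proposition~\ref{prop: modular of some weight implies crystalline lifts exist} and Theorem~\ref{thm: explicit local lifts implies Serre weight} as stated above, the two arrows fit together with no further adjustment, and Theorem~\ref{thm: main global theorem - modular of a weight iff it is a predicted weight} (equivalently, Conjecture~\ref{conj: global Serre weight explicit conjecture} under the adequacy hypothesis) follows.
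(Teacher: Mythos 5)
Your proposal is correct and is exactly the paper's argument: the theorem is proved by citing Theorem~\ref{thm: explicit local lifts implies Serre weight} for one direction and combining Proposition~\ref{prop: modular of some weight implies crystalline lifts exist} with the main local result, Theorem~\ref{thm: crystalline lift implies explicit crystalline lift}, for the other. Your extra bookkeeping about unramifiedness of $F_w$ and the duality convention is harmless and consistent with what the paper leaves implicit.
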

\begin{proof}
  This is an immediate consequence of Theorem \ref{thm: explicit local
    lifts implies Serre weight}, Proposition \ref{prop: modular of some weight implies crystalline lifts
    exist}, and Theorem \ref{thm: crystalline lift implies explicit crystalline lift}.
\end{proof}

\section{Kisin modules with coefficients} \label{sec:coeffs}

We begin to work towards the proof of Theorem~\ref{thm: crystalline
  lift implies explicit crystalline lift} by recalling some facts
about the theory of Kisin modules (or Breuil--Kisin modules) as
initiated by \cite{BreuilAnnals,BreuilUnpublished} and
developed in \cite{KisinCrys}, and giving some (essentially formal)
extensions of these results in order to allow for nontrivial coefficients.
Throughout this section we allow $K$ to be an arbitrary finite
extension of $\Qp$, and recall that $e = e(K/\Qp)$ is the ramification index
of $K$.  Recall also that our coefficient field $E$ is a finite extension of $\Qp$ contained in
$\Qpbar$ and containing the image of every embedding of $K$ into
$\Qpbar$.

\begin{defn}
  \label{defn:Kisin-module}
 A \emph{$\varphi$-module
over $\fS$} is an $\fS$-module $\M$ equipped with a
$\varphi$-semi-linear map $\varphi_\M\col \M \to \M $.
The subscript
on $\varphi_{\M}$ will generally be omitted.
A  morphism between two $\varphi$-modules $(\M_1, \varphi_1)$ and $(\M_2,
\varphi_2)$ is an $\fS$-linear morphism compatible with the maps
$\varphi_i$.  The map
$1\otimes \varphi\col \fS \otimes_{\varphi, \fS}\M \to \M$ is
$\fS$-linear, and we say that
$(\M,\varphi)$ has \emph{height $r$} if the cokernel of $1\otimes \varphi$ is
killed by $E(u)^r$; we say that $(\M,\varphi)$ has finite height if it
has height $r$ for some $r \ge 0$.

Denote by $'\Mod^{\varphi, r}_{\fS}$ the category of
$\varphi$-modules of  height $r$. By definition, a \emph{finite free
  Kisin module} (of height $r$) is a $\varphi$-module (of height $r$)
$\M$ such that the underlying $\fS$-module is finite free.  \emph{A
  torsion Kisin module} $\M$ is a $\varphi$-module of height $r$ which
is killed by $p^n$ for some $n \ge 0$, and such that the natural map
$\M \to \M [\frac 1 u]$ is injective. By
\cite[Prop 2.3.2]{liu-Fontaine}, this is equivalent to asking that $\M$ can be
written as the quotient of two finite free Kisin modules of equal
$\fS$-rank.

Throughout this article, a \emph{Kisin module} $\M$ is
either a finite free  Kisin module or a torsion Kisin module, of some
height $r$. We denote by $\Mod_{\fS}^{\varphi, r}$ the category of
finite free Kisin modules, and $\Mod_{\fS,\tor}^{\varphi, r}$ the
category of torsion Kisin modules.
 \end{defn}

Define contravariant functors $T_\fS$ from $\KisinFree$
and $\KisinTor$ to the category $\Rep_{\Zp}(G_\infty)$ of $\Z_p[G_\infty]$-modules as follows:
$$ T_\fS (\M):= \Hom_{\fS, \varphi} (\M, W(R)) \text{ if } \M \text{ is a finite free Kisin module}$$ and
$$ T_\fS(\M):= \Hom_{\fS, \varphi} (\M, \Q_p / \Z_p
\otimes_{\Z_p}W(R)) \text{ if } \M \text{ is a torsion Kisin module.}$$
These definitions are slightly different from the ones that are sometimes
given (e.g. \cite[Lem.~2.1.2, Cor.~2.1.4]{KisinCrys}), but in fact
the various definitions are equivalent by \cite[Prop.~B.1.8.3]{fo4}.
We summarize some important properties of the functor~$T_{\fS}$.

\begin{thm} $($\cite{KisinCrys,MR2388556}$)$
  \label{thm:kisin-module-results}
  \begin{enumerate}
\item The functor $T_{\fS}$ from $\KisinFree$ to
  $\Rep_{\Zp}(G_\infty)$ is exact and fully faithful.

 \item For any $\M \in \KisinFree$ of rank $d$, the functor $T_{\fS}$ restricts to a bijective equivalence of categories between
  the set of $\varphi$-stable $\fS$-submodules $\frakN \subset
  \mathcal{E} \otimes_{\fS} \M$
  of finite height
and rank $d$, and the set of $G_{\infty}$-stable finite free $\Zp$-sublattices of $V =
T_{\fS}(\fM)[1/p]$.

  \item If $V$ is a semi-stable representation of $G_{K}$ with non-negative
     Hodge--Tate weights in some range $[0,r]$, and $L \subset V$ is a
    $G_K$-stable $\Zp$-lattice, there exists $\M \in \KisinFree$ such that
    $T_{\fS}(\M) \simeq L |_{G_\infty}$.

  \item With notation as in (3), if $D$ is the filtered $(\varphi,N)$-module corresponding to
    the representation $V$, then there is a canonical isomorphism
$$S_{K_0} \otimes_{\varphi,\fS} \fM \cong S_{K_0} \otimes_{K_0} D$$
compatible with $\varphi$ and filtrations, as well as with the monodromy
operator (whose definition on the left-hand side we will not discuss).
\end{enumerate}
\end{thm}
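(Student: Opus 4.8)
The plan is to assemble the four assertions from \cite{KisinCrys} and \cite{MR2388556}, the only genuine work being to reconcile the contravariant functor $T_\fS$ as normalised above with the functors used in those papers --- and, as the text notes, that reconciliation is exactly \cite[Prop.~B.1.8.3]{fo4}. For (1), I would factor $T_\fS$ through base change $\M\mapsto\cO_\cE\otimes_\fS\M$ into the category of \'etale $\varphi$-modules over $\cO_\cE$, followed by Fontaine's classical equivalence between \'etale $\varphi$-modules over $\cO_\cE$ and continuous $\Zp$-representations of $G_\infty$ (here $\cE$ plays the role of the field of norms of $K_\infty/K$, and the ambient period ring inside $W(\Fr R)[1/p]$ is $\widehat{\cE^{\ur}}$). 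Fontaine's functor is an exact equivalence of categories, and base change to $\cO_\cE$ is exact and fully faithful on the subcategory of $\varphi$-modules over $\fS$ of finite height by a Frobenius-descent argument exploiting that $E(u)$ becomes a unit in $\cO_\cE$; composing yields the exactness and full faithfulness of $T_\fS$. Equivalently, one cites \cite[Cor.~2.1.4]{KisinCrys} together with \cite[Prop.~B.1.8.3]{fo4}.

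For (2), under Fontaine's equivalence the $\varphi$-stable $\cO_\cE$-lattices in the \'etale $\varphi$-module $\cE\otimes_\fS\M$ correspond bijectively to the $G_\infty$-stable $\Zp$-lattices in $V=T_\fS(\M)[1/p]$; the substantive point, once the single finite-height $\fS$-lattice $\M$ is known to exist, is that every $\varphi$-stable $\cO_\cE$-lattice in $\cE\otimes_\fS\M$ contains a unique $\varphi$-stable $\fS$-lattice of finite height and rank $d$ having it as generic fibre. This propagates the finite-height property using that $E(u)^r$ kills $\coker(1\otimes\varphi_{\M})$ together with a commensurability argument; I would cite the corresponding statement in \cite{KisinCrys}, and \cite{MR2388556} for the torsion variants needed later in the paper.

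For (3) and (4), these come out of Kisin's construction together: from the filtered $(\varphi,N)$-module $D$ attached to $V$ one builds a finite free module over Kisin's ring $\mathcal{O}$ of analytic functions on the open unit disc, descends it to $\fS[1/p]$, and intersects it with the \'etale $\varphi$-module attached to $L$ to obtain a finite free Kisin module $\M$ of height $r$ with $T_\fS(\M)\simeq L|_{G_\infty}$; base-changing $\M$ along $\varphi\colon\fS\to S$ (using the fixed embeddings $\fS\hookrightarrow S\hookrightarrow\Acris$ and the derivation $N_S$) then recovers $S_{K_0}\otimes_{K_0}D$ together with its Frobenius, filtration, and monodromy operator. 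For crystalline $V$ this is \cite{KisinCrys}; for semistable $V$ --- which is the generality asserted here, and the reason the monodromy operator must be carried along in (4) --- it is \cite{MR2388556}, which I would cite for both (3) and (4).

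The step I expect to be the only real obstacle is not mathematical but a matter of bookkeeping: keeping the contravariant/covariant normalisations and the Hodge--Tate-weight sign conventions consistent across \cite{KisinCrys}, \cite{MR2388556}, \cite{fo4}, and the conventions fixed in Section~\ref{subsec:notation}, so that the Kisin module produced in (3) is literally the one whose $T_\fS$ is $L|_{G_\infty}$ and whose base change in (4) is $S_{K_0}\otimes_{K_0}D$ for the matching filtered $(\varphi,N)$-module $D$.
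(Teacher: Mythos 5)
Your proposal is correct and takes essentially the same route as the paper, whose proof is likewise a citation assembly: exactness and full faithfulness in (1) from \cite[Lem.~(2.1.2), Cor.~(2.1.4), Prop.~(2.1.12)]{KisinCrys} (with the definitional reconciliation via \cite[Prop.~B.1.8.3]{fo4}, exactly as you say), (2) from \cite[Lem.~(2.1.15)]{KisinCrys} together with full faithfulness, (3) from \cite[Cor.~(1.3.15), Lem.~(2.1.15)]{KisinCrys}, and (4) from \cite[Cor.~3.2.3]{MR2388556}. The only discrepancy is bookkeeping: the paper attributes (3) to \cite{KisinCrys} (whose construction already covers semi-stable representations) and reserves \cite{MR2388556} for the comparison isomorphism in (4), whereas you cite \cite{MR2388556} for both, which is harmless.
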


\begin{proof} Exactness in (1) is \cite[Lem.~(2.1.2),
 Cor.~(2.1.4)]{KisinCrys}, while full faithfulness is
 \cite[Prop.~(2.1.12)]{KisinCrys} or \cite[Cor.~4.2.6]{liu-Fontaine}.
Part (2) follows  from \cite[Lem.~(2.1.15)]{KisinCrys} together with the full
 faithfulness of (1).  Part (3) is \cite[Cor.~(1.3.15),
 Lem.~(2.1.15)]{KisinCrys}.  Finally, part (4) is \cite[Cor.~3.2.3]{MR2388556}.
\end{proof}

\begin{defn}
  \label{defn:kisin-module-assoc-to-lattice}
  With notation as in Theorem~\ref{thm:kisin-module-results}(3), we
  say that $\fM$ is the Kisin module attached to the lattice $L$;
  by Theorem~\ref{thm:kisin-module-results}(1) this is well-defined up
  to isomorphism.
\end{defn}

Let $A$ be a finite commutative $\Z_p$-algebra, by which we mean a
commutative $\Zp$-algebra that is finitely generated as a $\Zp$-module. We say $\M$ has a
\emph{natural $A$-action} (or \emph{$A$-coefficients}) if $\M$ is an
$A$-module such that the $A$-action commutes with the $\fS$-action and
$\varphi$-action on $\M$, and such that the $\Zp$-module structures on
$\M$ arising from $\Zp \subset \fS$ and $\Zp \to A$ are the same.  If $\M$ has a natural $A$-action then it is
easy to see that $T_{\fS}(\M)$ is an $A[G_\infty]$-module.

\begin{prop}  Let $A$ be a finite commutative $\Z_p$-algebra.
  \label{prop:kisin-natural-action-stuff}
\begin{enumerate}
\item Suppose $V$ is a semi-stable representation of $G_K$ with
  non-negative Hodge--Tate weights and $L \subset V$ is a $G_K$-stable
  $\Zp$-lattice.  If $L$ is an $A$-module such that the $A$-action
  commutes with the action of $G_K$, then the Kisin module attached
  to $\M$ has a natural $A$-action.

\item If $L_1, L_2$ are lattices with $A$-action as in (1) and $f \col
  L_1 \to L_2$ is an $A[G_\infty]$-module homomorphism, then the map
   $g \col \fM_2 \to \fM_1$ such that $T_{\fS}(g) = f$ is a morphism of
   Kisin modules with natural $A$-action.

\item   If $\fM \in \KisinFree$ has a natural $\O_E$-action, then
    $\fM$ is free as a $\fS\otimes_{\Zp}\O_E$-module.  Furthermore there
    is a natural isomorphism of $\Zp[G_{\infty}]$-modules
$$ T_{\fS}(\fM) = \Hom_{\varphi,\fS}(\fM,\fS^{\ur}) \simeq
\Hom_{\varphi,\fS\otimes_{\Zp}\O_E}(\fM,\fS^{\ur} \otimes_{\Zp} \O_E).$$
\end{enumerate}

\end{prop}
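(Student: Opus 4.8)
The plan is to obtain (1) and (2) formally from the full faithfulness of $T_\fS$ on finite free Kisin modules (Theorem~\ref{thm:kisin-module-results}(1)), and to prove (3) by combining the idempotent decomposition of $\fS\otimes_{\Zp}\O_E$ with the height hypothesis (via Theorem~\ref{thm:kisin-module-results}) and a little commutative algebra.

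For (1), Theorem~\ref{thm:kisin-module-results}(3) attaches to $L$ a Kisin module $\fM$ with $T_\fS(\fM)\cong L|_{G_\infty}$, unique up to isomorphism by full faithfulness. Writing $m_a\colon L\to L$ for multiplication by $a\in A$, each $m_a$ is $\Zp[G_K]$-linear, hence $\Zp[G_\infty]$-linear, hence equals $T_\fS(\widetilde a)$ for a unique endomorphism $\widetilde a$ of $\fM$ in $\Mod_{\fS}^{\varphi,r}$. Since $T_\fS$ is a (contravariant) functor and $A$ is commutative, faithfulness promotes the relations $m_am_b=m_{ab}$, $m_a+m_b=m_{a+b}$, $m_1=\mathrm{id}$ to the same relations among the $\widetilde a$, so $a\mapsto\widetilde a$ is a ring homomorphism $A\to\End(\fM)$. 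This makes $\fM$ a $\varphi$-module over $\fS$ with an $A$-action commuting with $\varphi$ and the $\fS$-action (each $\widetilde a$ is a morphism of Kisin modules), and the two $\Zp$-structures on $\fM$ coincide because they induce the same action on $T_\fS(\fM)$. Part (2) is the same mechanism: $g$ is the unique morphism with $T_\fS(g)=f$, and $T_\fS$ applied to $a_{\fM_1}\circ g$ and $g\circ a_{\fM_2}$ yields $f\circ m_a$ and $m_a\circ f$ on the lattices, which agree as $f$ is $A$-linear; faithfulness then gives $a_{\fM_1}\circ g=g\circ a_{\fM_2}$.

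For (3), since $W(k)/\Zp$ is unramified and $\O_E$ contains the image of every embedding of $K_0$ into $\Qpbar$, the minimal polynomial over $\Zp$ of a generator of $W(k)$ splits into distinct linear factors in $\O_E$; hence $W(k)\otimes_{\Zp}\O_E\cong\prod_{\sigma}\O_E$ over $\sigma\in\Hom_{\Zp}(W(k),\O_E)$, so $\fS_{\O_E}:=\fS\otimes_{\Zp}\O_E\cong\prod_\sigma\O_E\llb u\rrb$ with a corresponding decomposition $\fM=\bigoplus_\sigma\fM_\sigma$. Each $\fM_\sigma$ is an $\fS$-module summand of the finite free $\fS$-module $\fM$, hence finite free over the local ring $\fS$. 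Because $\varphi_\fM$ is $\varphi_{\fS}$-semilinear and $\O_E$-linear it carries $\fM_\sigma$ into $\fM_{\pi(\sigma)}$ for a permutation $\pi$ of the index set which, as the Frobenius of $W(k)$ generates $\Gal(K_0/\Qp)$, is a single cycle; and $\coker(1\otimes\varphi_\fM)=\bigoplus_\sigma\coker(\fS\otimes_{\varphi,\fS}\fM_{\pi^{-1}(\sigma)}\to\fM_\sigma)$ is killed by $E(u)^r$ summand-wise, so each of these maps has torsion cokernel and $\rank_\fS\fM_\sigma\le\rank_\fS\fM_{\pi^{-1}(\sigma)}$; running around the cycle, all $\fM_\sigma$ have a common $\fS$-rank. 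Finally $\O_E\llb u\rrb$ is finite over $\fS$ and is a regular local ring of dimension $2$; the $\fS$-free module $\fM_\sigma$ has depth $2$, which is preserved under this finite local extension, so Auslander--Buchsbaum makes $\fM_\sigma$ free over $\O_E\llb u\rrb$, of rank independent of $\sigma$, and hence $\fM$ is free over $\prod_\sigma\O_E\llb u\rrb=\fS_{\O_E}$. For the displayed isomorphism, the first equality is the description of $T_\fS$ through $\fS^{\ur}$, equivalent to the chosen definition by \cite[Prop.~B.1.8.3]{fo4}; for the second, $\fS^{\ur}\otimes_{\Zp}\O_E=\fS^{\ur}\otimes_{\fS}\fS_{\O_E}$ carries a Frobenius and a $G_\infty$-action through the $\fS^{\ur}$-factor, and since $\O_E$ is a discrete valuation ring the $\O_E$-module $\Hom_{\Zp}(\O_E,\Zp)$ is invertible (the inverse different), hence free of rank one, giving an $\fS_{\O_E}$-linear, $\varphi$- and $G_\infty$-equivariant isomorphism $\fS^{\ur}\otimes_{\Zp}\O_E\cong\Hom_{\fS}(\fS_{\O_E},\fS^{\ur})$; tensor--Hom adjunction then identifies $\Hom_{\varphi,\fS_{\O_E}}(\fM,\fS^{\ur}\otimes_{\Zp}\O_E)$ with $\Hom_{\varphi,\fS}(\fM,\fS^{\ur})$ compatibly with the $\Zp[G_\infty]$-actions.

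The main work is the freeness over $\fS_{\O_E}$ in (3): the idempotent decomposition by itself only makes the $\fM_\sigma$ individually $\fS$-free, of a priori unequal ranks, and a bare $\O_E$-action would not suffice to equalize them — one genuinely needs the finite-height Frobenius, through the $E(u)^r$-bound, to do so; the subsequent step from $W(k)\llb u\rrb$-freeness to $\O_E\llb u\rrb$-freeness also needs the depth/Auslander--Buchsbaum input rather than any formal argument. The rest, including the coefficient-ring bookkeeping in the $\fS^{\ur}$-isomorphism, is essentially formal.
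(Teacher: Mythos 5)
Your proof is correct; the interesting divergence from the paper is in part (3). For (1) and (2) the difference is cosmetic: the paper obtains (1) from the equivalence in Theorem~\ref{thm:kisin-module-results}(2) and (2) from full faithfulness, while you transport the $A$-action purely through full faithfulness of $T_\fS$ (using commutativity of $A$ to handle the contravariance); this is an equally valid, essentially formal variant. For the freeness assertion in (3) the paper simply cites \cite[Lem.~(1.2.2)]{KisinModularity}, whereas you give a self-contained proof via the idempotent decomposition $\fS\otimes_{\Zp}\O_E\simeq\prod_\sigma\O_E\llbracket u\rrbracket$, the rank equalities forced by the height-$r$ Frobenius permuting the factors in a single cycle, and a depth/Auslander--Buchsbaum step to upgrade $\fS$-freeness of each factor to $\O_E\llbracket u\rrbracket$-freeness; this is sound, and is in substance the cited argument. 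The genuine difference is the displayed isomorphism: the paper fixes $\eta\colon\O_E\simeq\Hom_{\Zp}(\O_E,\Zp)$, passes to the \'etale $\varphi$-module $M=\O_\E\otimes_\fS\fM$, constructs a $\varphi$-compatible isomorphism $M^\vee_E\simeq M^\vee$, uses \cite[Prop.~B.1.8.3]{fo4} to replace $\fS^{\ur}$ by $\widehat{\O^{\ur}}$, and compares $\varphi$-invariants; you instead apply the coinduction adjunction $\Hom_{\fS}(\fM,\fS^{\ur})\simeq\Hom_{\fS\otimes_{\Zp}\O_E}(\fM,\Hom_{\fS}(\fS\otimes_{\Zp}\O_E,\fS^{\ur}))$ together with $\Hom_{\fS}(\fS\otimes_{\Zp}\O_E,\fS^{\ur})\simeq\fS^{\ur}\otimes_{\Zp}\Hom_{\Zp}(\O_E,\Zp)\simeq\fS^{\ur}\otimes_{\Zp}\O_E$, the last step from freeness of rank one of $\Hom_{\Zp}(\O_E,\Zp)$ over $\O_E$; since $\varphi_{\fM}$ is $\O_E$-linear and $G_\infty$ acts only through $\fS^{\ur}$, the adjunction does match $\varphi$- and $G_\infty$-equivariant homomorphisms on the two sides, as you indicate. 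Your route is shorter and avoids the second appeal to B.1.8.3 (the first equality $T_\fS(\fM)=\Hom_{\varphi,\fS}(\fM,\fS^{\ur})$ still requires it, in both proofs); what the paper's detour buys is a formulation entirely within the \'etale $\varphi$-module framework it reuses elsewhere, not additional content. One point worth making explicit, as the paper does in Remark~\ref{rem:natural-not-canonical}: your isomorphism depends on the chosen $\O_E$-generator of $\Hom_{\Zp}(\O_E,\Zp)$, so it is natural but not canonical, which is consistent with the statement being proved.
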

\begin{proof}
  The existence of the natural $A$-action on $\fM$ in (1) follows from
  the equivalence of categories in
  Theorem~\ref{thm:kisin-module-results}(2), and then
  the full faithfulness of $T_{\fS}$ gives (2).
  The first part of (3) follows from the fact that $\fS \otimes_{\Zp} \O_E$ is a semilocal
  ring whose maximal ideals are permuted transitively by $\varphi$
  together with the injectivity of the map $(1\otimes \varphi) \col \fS
  \otimes_{\varphi,\fS} \fM \to \fM$.
  See \cite[Lemma (1.2.2)]{KisinModularity} for details. 

The remainder of the proof concerns the last part of (3).   The
argument that we give is motivated by the proof of \cite[Lem.~(1.4.1)]{MR2373358}.  Fix once
and for all an isomorphism $\eta \col \O_E \simeq \O_E^\vee :=
\Hom_{\Z_p} (\O_E, \Z_p)$ of $\O_E$-modules; our natural isomorphism
will depend on this choice.  Write $\fS_{\O_E} :=\fS  \otimes
_{\Z_p} \O_E$, $\fS ^\ur_{\O_E} :=\fS^\ur  \otimes
_{\Z_p} \O_E $, and
$\O_{\E, E} := \O_\E  \otimes
_{\Z_p} \O_E$.   Further define
$$M = \O_\E \otimes_\fS\M, \qquad \varphi^* M = \O_\E
\otimes_{\varphi, \O_\E } M \simeq  \O_{\E, E} \otimes_{\varphi, \O_{\E, E}} M$$
and 
$$ M ^\vee  = \Hom_{\O_\E} (M, \O_\E), \qquad M^\vee_E =
\Hom_{\O_{\E, E}} (M, \O_{\E, E}).$$ 
Define a $\varphi$-action on $M^\vee_E$ as follows.  For any $f \in
M_E^{\vee}$, let $f^* \in \Hom_{\O_{\E,E}}(\varphi^* M,\O_{\E,E})$ be
the map sending the basic tensor $a\otimes m$ to $a \varphi(f(m))$.  
Note that $\varphi^*= 1 \otimes
  \varphi\col \varphi^*M \to M$ is an $\O_{\E, E}$-linear bijection,
  since $E(u) \in \O_{\E}^{\times}$.
Then we can define $\varphi (f) = f^* \circ (\varphi^*)^{-1}$. 

It is routine to check that $\varphi$ on $M^\vee _E$ is a
$\varphi$-semi-linear map and that $\varphi (f)  \circ \varphi =
\varphi \circ f$.  (In particular, beware that $\varphi(f) \neq
\varphi \circ f$.)
Similarly, we have a $\varphi$-action on $M^\vee$ that also satisfies
$\varphi(f) \circ \varphi = \varphi \circ f$.

Extend our fixed isomorphism $\eta$ to isomorphisms $\eta_{\E}\col \O_{\E,
  E}\simeq \O_{\E} \otimes_{\Z_p} \O_E ^\vee$ and 
$\eta^*\col \Hom_{\O_{\E, E}} (M , \O_{\E, E}) \simeq \Hom_{\O_{\E, E}}
(M , \O_{\E} \otimes_{\Zp}  \O_E^\vee)$ of $\O_{\E, E}$-modules.  If
$g = \sum_i x_i \otimes \lambda_i \in \O_{\E} \otimes_{\Zp}
\O_E^\vee$, we write $\theta(g) = \sum_i x_i \lambda_i(1) \in \O_{\E}$.  
Now we can construct a map $\iota\col M_E ^ \vee \to M^\vee$ as follows: for
each $f \in  M_E^{\vee}$ we set $$\iota(f) (m) = \theta(\eta^* (f)
(m))$$ 
for all  $m \in M.$ 
Equivalently, $\iota(f) = \theta \circ \eta_{\E} \circ f$.   It is easy to see that $\iota(f)$ is
$\O_{\E}$-linear.  
We claim that $\iota$ is an isomorphism of $\O_{\E, E}$-modules,
compatible with $\varphi$-actions.  To see the former, it suffices to
assume that $M = \O_{\E, E}$ because  $M$ is a finite free $\O_{\E,
  E}$-module.  Identifying $\O_E \simeq \Hom_{\O_E}(\O_E,\O_E)$
identifies $\eta$ with an isomorphism $\Hom_{\O_E}(\O_E,\O_E) \simeq
\Hom_{\Zp}(\O_E,\Zp)$ sending $a$ to $\theta \circ \eta \circ a$
(where $\theta$ again denotes evaluation at $1$), and so the special
case $M = \O_{\E,E}$ follows by tensoring this isomorphism with $\O_{\E}$ over $\Zp$.
Checking that $\iota$ is $\varphi$-compatible boils down to checking
that $\iota(f)^* = \theta \circ \eta_{\E} \circ f^*$, which follows
directly from the definition since $\varphi$ commutes with $\theta$
and $\eta_{E}$.

Set $\widehat {\O^\ur_E} := \widehat {\O^\ur} \otimes_{\Z_p} \O_E.  $
We claim that the injection $\Hom_{\varphi, \fS_{\O_E}} (\M,
\fS^\ur_{\O_E}) \hookrightarrow \Hom_{\varphi, \O_{\E, E}} (M ,
\widehat {\O^\ur_E})$ is a bijection.  To see this, first observe that the
$\O_E$-linear map 
\begin{equation}\label{eq:fon-bij}\Hom_{\varphi, \fS} (\M, \fS^\ur_{\O_E}) \into  \Hom_{\varphi, \O_{\E}}
(M , \widehat {\O^\ur_E})\end{equation}
is a bijection: if $g$ is an element of the right-hand side, then the
image of
$g(\M)$ under any 
$\widehat {\O^\ur}$-linear projection  $\widehat {\O^\ur_E} \to
\widehat {\O^\ur}$ must lie in $\fS^{\ur}$ by \cite[Proposition B
1.8.3]{fo4}, hence $g(\M) \subset \fS^{\ur}_{\O_E}$.
Then the claim follows by taking $\O_E$-invariants on both sides of \eqref{eq:fon-bij}. Similarly, we have
$\Hom_{\varphi, \fS} (\M, \fS^\ur) =  \Hom_{\varphi, \O_{\E}} (M , \widehat {\O^\ur})$.

Since $M_E^\vee$ is finite $\O_{\E, E}$-free, we have a canonical
isomorphism $\widehat{\O^\ur _E}\otimes_{\O_{\E, E}}  M_E^\vee  \simeq
\Hom_{\O_{\E, E}} (M , \widehat {\O^\ur_E}) $ sending $\sum_i{a_i }
\otimes f_i \mapsto \sum _i a_i f_i$.  We will now check that this
isomorphism identifies $ (\widehat{\O^\ur _E}\otimes_{\O_{\E,
    E}}M^\vee_E ) ^{\varphi = 1}$ with  $\Hom_{\varphi, \O_{\E, E}} (M, \widehat {\O^\ur_E})$.  The element
 $\lambda = \sum_i a_i \otimes f_i  \in \widehat{\O^\ur _E}\otimes_{\O_{\E,
      E}}M^\vee_E$ is $\varphi$-invariant if and only if
$$\sum_i \varphi(a_i) \otimes \varphi(f_i)= \sum_{i} a_i \otimes f_i,  $$
 and this is equivalent to the identity $\sum_i \varphi(a_i)
 (\varphi(f_i)) (\varphi(x)) = \sum _i {a_i } f_i (\varphi (x))$ for
 all $x \in M$; it suffices to test equality on elements of the form
 $\varphi(x)$ since $\varphi(M)$ spans~$M$.  Recalling that
 $\varphi(f) \circ \varphi = \varphi \circ f$, we see that $\lambda$
 is $\varphi$-invariant if and only if $\sum_i {\varphi(a_i) \varphi
   (f_i(x))} = \sum_i a_i f_i (\varphi (x))$; but this is precisely the
 condition that $f = \sum _i a_i f_i$ is in $\Hom_{\varphi, \O_{\E,
     E}} (M, \widehat{\O^\ur_E})$, as desired.  Similarly, we obtain
 an identification of $ (\widehat{\O^\ur }\otimes_{\O_{\E}}M^\vee )
 ^{\varphi = 1}$ with $\Hom_{\varphi, \O_{\E}} (M , \widehat {\O^\ur})$ as $\O_E$-modules.

From what we have proved above, it suffices to
show that there is a natural isomorphism $(\widehat{\O^\ur_E
}\otimes_{\O_{\E, E }}M_E^\vee ) ^{\varphi = 1} \simeq (\widehat{\O^\ur
}\otimes_{\O_{\E}}M^\vee ) ^{\varphi = 1} $ of
$\O_E[G_\infty]$-modules. But since we have constructed a natural
$\O_{\E,E}$-module isomorphism $\iota\colon M^\vee_E \simeq M^\vee$
compatible with $\varphi$, we see that $$M^\vee_E \otimes_{\O_{\E, E}} \widehat{\O^\ur _E}\simeq M^\vee\otimes_{\O_{\E, E}}\widehat{\O^\ur _E}\simeq M^\vee \otimes_{\O_{\E, E}} (\O_{\E, E} \otimes_{\O_\E} \widehat{\O^\ur})  \simeq M^\vee \otimes_{\O_\E} \widehat{\O^\ur} $$
and the result follows.
\end{proof}

\begin{rem}
  \label{rem:natural-not-canonical}
We stress that because of the choice of isomorphism $\eta \col \O_E
\simeq \O_E^{\vee}$, the isomorphism of
Proposition~\ref{prop:kisin-natural-action-stuff}(3) is natural but
not canonical.   
In fact the functor $T_{\fS,\O_E} \col \fM \leadsto
\Hom_{\varphi,\fS\otimes_{\Zp}\O_E}(\fM,\fS^{\ur} \otimes_{\Zp} \O_E)$
is in some sense the correct version of $T_{\fS}$ for use with
coefficients; for instance it is evidently compatible with extension
of the coefficient field, whereas $T_{\fS}$ is not.  It will be
convenient for us to use $T_{\fS}$ for the most part, e.g.~so that we can
directly apply results from certain references.  Thanks to
Proposition~\ref{prop:kisin-natural-action-stuff}(3), on the occasions
when we need to calculate $T_{\fS}$ we can use $T_{\fS,\O_E}$ instead
(see e.g. Lemmas~\ref{lem: lift rank-1 object} and~\ref{lem:product-of-characters}).
 \end{rem}

\section{The shape of Kisin modules with Hodge--Tate weights in $[0,p]$}\label{sec:shape}

Let $T$ be a $G_K$-stable $\Z_p$-lattice in a  semi-stable
representation $V$ of dimension~$d$ with Hodge--Tate weights in $[0, r]$, and $\fM$ the Kisin
module attached to $T$.  Write $0 \le  r_1
\leq \dots \leq r_d \le r$ for the Hodge--Tate weights of $V$.

We will write $[x_1,\ldots,x_d]$ for the $d \times
d$ diagonal matrix  with
diagonal entries $x_1,\ldots,x_d$.  The aim of this section is to prove the
following:

\begin{thm} \label{shape}
Assume that $K$ is unramified, $V$ is crystalline, $r \le p$, and $p \ge
3$. Then
there exists an $\fS$-basis  $e_1, \dots, e_d$ of $\fM$ such that the
matrix of $\varphi$ is $X\Lambda Y$ where $X$ and $Y$ are invertible
matrices such that $Y$
is congruent to the identity matrix modulo $p$, and where $\Lambda$ is the
matrix $[E(u)^{r_1}, \dots, E(u)^{r_d}]$.
\end{thm}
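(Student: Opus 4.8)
The plan is to determine the matrix of $\varphi$ by combining two inputs: the comparison of $\fM$ with the filtered $\varphi$-module $D$ of $V$ furnished by Theorem~\ref{thm:kisin-module-results}(4), which produces the diagonal factor $\Lambda=[E(u)^{r_1},\dots,E(u)^{r_d}]$ and thereby encodes the Hodge filtration of $V$; and a close analysis of the monodromy operator, which is what forces the residual factor $Y$ to be congruent to the identity modulo $p$ rather than merely invertible. Both the unramifiedness of $K$ and the bound $r\le p$ enter essentially in the second input, and the theorem fails if either is dropped.

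First I would extract the ``generic'' shape. Since $K$ is unramified we may take $\pi=p$, so that $E(u)=u-p$, $\fS=W(k)\llbracket u\rrbracket$, and $E(u)\equiv u\pmod p$. The height-$r$ hypothesis gives $E(u)^{r}\fM\subseteq(1\otimes\varphi)(\varphi^{*}\fM)\subseteq\fM$, and combining this with the comparison isomorphism of Theorem~\ref{thm:kisin-module-results}(4) and the explicit shape of the Hodge filtration on $D$, one computes that the elementary divisors of $1\otimes\varphi$ over the discrete valuation ring $\fS_{(E(u))}$ are exactly $E(u)^{r_1},\dots,E(u)^{r_d}$; equivalently, the cokernel of $1\otimes\varphi$ is isomorphic as an $\fS$-module to $\bigoplus_{i}\fS/(E(u)^{r_i})$. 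The classical theory of elementary divisors over $\fS_{(E(u))}$, and over the discrete valuation ring $k\llbracket u\rrbracket$ obtained after reduction mod $p$, then produces a basis of $\fM$ in which $\varphi$ has the form $X\Lambda Y$ with $\Lambda$ as in the statement but with $X$ and $Y$ a priori only invertible over suitable localisations; since the mod $p$ reduction lives over the discrete valuation ring $k\llbracket u\rrbracket$, where a clean Smith normal form is available, $Y\equiv\Id\pmod p$ is the natural normalisation to aim for. The whole content of the theorem is to arrange that $X,Y\in\GL_{d}(\fS)$ with $Y\equiv\Id\pmod p$.

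This upgrade is the technical heart, and the step I expect to be the main obstacle; it is carried out by a $p$-adic successive approximation in which the monodromy operator is decisive. Since $V$ is crystalline, $N$ vanishes on $D$, so under the isomorphism of Theorem~\ref{thm:kisin-module-results}(4) the monodromy operator on $S_{K_0}\otimes_{\varphi,\fS}\fM$ is identified with $N_{S}\otimes\Id$; passing to the associated $(\varphi,\Ghat)$-module of~\cite{LiuLattices}, or equivalently extending scalars along $\fS\to\Acris$, one obtains a monodromy operator $N_{\nabla}$ whose expression relative to an $\fS$-basis of $\fM$ is a matrix over $\Acris$ satisfying a commutation relation with $\varphi$, with $E(u)$, and with $\varphi(\mathfrak{t})$. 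Beginning with a basis realising the shape above, say $\varphi=X_{n}\Lambda Y_{n}+p^{n}(\text{error})$ with $Y_{n}\equiv\Id\pmod p$, one attempts to annihilate the error by a change of basis $\Id+p^{n}Z$; the commutation with $N_{\nabla}$ is precisely what renders the resulting linear equation for $Z$ solvable while preserving the congruence $Y\equiv\Id$. Solving it amounts to inverting, on the relevant $\Acris$-module, an operator built from $E(u)^{r_i}$ and $N_{\nabla}$, which forces one to divide by quantities whose $R$-valuation is governed by $v_{R}(\mathfrak{t})=\tfrac{1}{p-1}$ and by factorials up to $r!$ coming from the divided-power structure on $S$; the bound $r\le p$ is exactly the range in which these divisions stay integral, so the iteration converges $p$-adically, whereas for $r=p+1$ a genuine denominator of $p$ appears and the conclusion fails. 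The unramifiedness of $K$ is used throughout, the estimates resting on the clean interaction of $\varphi(u)=u^{p}$ and $E(u)=u-p$ with $N_{\nabla}$, and no substitute is known when $e>1$.

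Finally, passing to the limit of the approximations yields the desired $\fS$-basis $e_{1},\dots,e_{d}$ in which $\varphi$ has matrix $X\Lambda Y$ with $X,Y\in\GL_{d}(\fS)$ and $Y\equiv\Id\pmod p$; verifying that the successive steps are compatible, and that the limiting $X$ and $Y$ are invertible over $\fS$ itself rather than only over the localisations used in the first step, is routine once the monodromy estimate is in hand.
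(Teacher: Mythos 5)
Your overall plan is not the paper's (the paper does not run a $p$-adic successive approximation on the matrix of $\varphi$), and as written it has a decisive gap: the base case of your iteration is the theorem itself. Writing $\varphi = X_1\Lambda Y_1 + p(\text{error})$ with $Y_1\equiv\Id\pmod p$ already asserts that in some basis the reduction of the matrix of $\varphi$ is $\bar X_1\bar\Lambda$, i.e.\ that the mod~$p$ elementary divisors over $k\llbracket u\rrbracket$ are exactly $u^{r_1},\dots,u^{r_d}$. Your first paragraph does not provide this: the comparison with $D$ identifies the cokernel of $1\otimes\varphi$ over the localisation $\fS_{(E(u))}$, and the Smith normal form of the reduction over $k\llbracket u\rrbracket$ produces diagonal entries $u^{s_1},\dots,u^{s_d}$ whose exponents need not be the $r_i$ --- that they are (precisely when $r\le p$) is the whole content of Theorem~\ref{shape}, and it genuinely fails for $r=p+1$ (see Example~\ref{ex:best-possible}). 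The induction step fares no better: no linear equation for $Z$ is written down, no reason is given that ``commutation with $N_\nabla$'' makes it integrally solvable while preserving $Y\equiv\Id$, and the heuristics about $r!$ and $v_R(\mathfrak{t})=\tfrac1{p-1}$ are not an argument. Note also that unramifiedness of $K$ never enters your sketch at an identifiable point, whereas it is used in an essential and very specific way in the actual proof.

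For comparison, the paper's route is quite different. One works with $\M^*=\fS\otimes_{\varphi,\fS}\M$ inside the Breuil module $\D$ and with the filtration $\Fil^i\M^*=\{x: (1\otimes\varphi)(x)\in E(u)^i\M\}$; the key statement is the surjectivity $f_\pi(\Fil^i\M^*)=\Fil^iM_K$ for all $i$ (Proposition~\ref{prop:surjectivity}). This is proved by induction on the filtration degree $i$ (not on powers of $p$): starting from a $\varphi(\fS)$-basis of $\M$ adapted to $\Fil^iM_K$ --- which exists because $K$ is unramified, Lemma~\ref{unramified-only} --- one corrects it by the transport operator $\sum_{l\le i}H(u)^lN^l(\cdot)/l!$ with $H(u)=(u-\pi)/\pi$, and the denominators are controlled by the crystalline monodromy bound $U\in u^p\Md(\tilde S[\frac1p]\cap S)$ (Proposition~\ref{imageofN}) together with the elementary estimates of Corollary~\ref{cor:technical1} and Lemmas~\ref{technical2},~\ref{technical3}; the hypothesis $r\le p$ is exactly what lets this induction reach $i=r$, and it breaks at $i=p+1$ (Remark~\ref{rem:why-not-p+1}). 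The corrected elements satisfy $\e'_j\equiv\e_j\pmod p$, and Proposition~\ref{needsurjection} then shows $\Fil^{r_d}\M^*$ is generated by $(\e'_1,\dots,\e'_d)\Lambda^*$; comparing this with the generating set $(\e_1,\dots,\e_d)B$, where $AB=E(u)^{r_d}I_d$, yields $A=X\Lambda Y$ with $Y\equiv\Id\pmod p$. If you want to salvage your proposal you would need, at minimum, a genuine argument for the mod~$p$ statement; as it stands the two hard points are assumed rather than proved.
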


We proceed in several (progressively less general) steps.

\subsection{General properties of the Hodge filtration}

Let $\D = S_{K_0} \otimes_{\varphi, \fS} \fM$ be the Breuil module
attached to $\fM$. Unless explicitly stated otherwise, we will regard
$\fM$ as a $\varphi(\fS)$-submodule of $\D$ from now on. By
Theorem~\ref{thm:kisin-module-results}(4) (i.e., by
\cite[Cor.~3.2.3]{MR2388556}), $\D$ comes from the
weakly admissible filtered $(\varphi,N)$-module $D_{\st}(V) = (D, \varphi, N, \Fil ^i
D_K)$, in the sense that there is a canonical isomorphism $\D \cong
S_{K_0} \otimes_{K_0} D$ compatible with all structures.
We write $f_\pi \col \D \to D_K$ for
the map induced by $u \mapsto  \pi$.  By \cite[\S 6]{BreuilGriffith}, $\Fil^i \D$ is inductively defined by $\Fil ^0 \D= \D$ and
\begin{equation}\label{formula-filtration}
\Fil^{i} \D= \{x \in \D :  f _\pi(x) \in \Fil ^{i} D_K , N(x) \in \Fil ^{i-1}\D\}.
\end{equation}
Then the filtration $\Fil^i D_K$
coincides with $f_\pi (\Fil ^i \D)$, again by  \cite[\S 6]{BreuilGriffith}.

 Let $\fM^*$ be the
$\fS$-submodule  $\fS \otimes_{\varphi, \fS} \fM\subset \D$. Recall that we have an $\fS$-linear map $ 1 \otimes \varphi \col \M^* \to \M$. Define
$$ \Fil ^i\M^*= \{ x \in \M^* |  (1 \otimes \varphi) (x) \in E(u)^ i \M \}.   $$
\begin{lemma}\label{intersection}  The filtration on $\M^*$ has the
  following properties.
\begin{enumerate}
 \item $\Fil ^i \M^* = \M ^* \cap \Fil ^i \D$.
\item ${\rm{gr}} ^i \M^*$ is finite $\mathcal O_K$-free.
\item ${\rm{rank}}_{\mathcal O_K} {\rm{gr}}^i \M^* = \dim_{K} {\rm{gr}}^i \D. $
\end{enumerate}
\end{lemma}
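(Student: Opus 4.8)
The plan is to unwind the inductive definition \eqref{formula-filtration} of $\Fil^i\D$ and compare it with the definition of $\Fil^i\M^*$ via the map $1\otimes\varphi\colon \M^*\to\M$. The essential point is that $1\otimes\varphi$ carries $\M^*$ into $\M$ viewed inside $\D$, and that $E(u)^i\M$ is exactly the intersection $\M\cap\Fil^i\D$ once we know things about how $f_\pi$ and $N$ interact with the lattice $\M$ (recall $\M$ sits inside $\D$ as a $\varphi(\fS)$-submodule, so $N$ and $f_\pi$ make sense on it). For part (1), I would argue by induction on $i$: the case $i=0$ is trivial since $\Fil^0\D=\D$ and $\M^*\subset\D$. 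For the inductive step, given $x\in\M^*$, I would show $f_\pi((1\otimes\varphi)(x))\in\Fil^iD_K$ iff $(1\otimes\varphi)(x)\in E(u)^i\M$ modulo a correction controlled by $N$, and then feed in the inductive hypothesis $\Fil^{i-1}\M^*=\M^*\cap\Fil^{i-1}\D$ to handle the $N$-term. The key compatibility is that $f_\pi$ kills $E(u)$ (since $E(\pi)=0$), so $f_\pi(E(u)^i\M)=0\subset\Fil^iD_K$ automatically, giving one inclusion; the reverse inclusion requires showing that if $f_\pi((1\otimes\varphi)(x))\in\Fil^iD_K=f_\pi(\Fil^i\D)$ and $N(x)\in\Fil^{i-1}\D$, then in fact $(1\otimes\varphi)(x)$ is divisible by $E(u)^i$ inside $\M$.

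For parts (2) and (3), once (1) is established, I would pass to the associated graded. Part (3) should follow by comparing ranks: $\mathrm{gr}^i\D$ has dimension over $K$ equal to $\dim_K\mathrm{gr}^iD_K$ (this is a standard fact about Breuil modules, again from \cite[\S6]{BreuilGriffith}), while on the $\M^*$ side, $\mathrm{gr}^i\M^*=\Fil^i\M^*/\Fil^{i+1}\M^*$ and the definition via divisibility by $E(u)^i$ versus $E(u)^{i+1}$ of $(1\otimes\varphi)(x)$ identifies $\mathrm{gr}^i\M^*$ with a submodule of $\M/E(u)\M\cong \O_K\otimes$ (something), whose $\O_K$-rank I would compute to match. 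For part (2), freeness of $\mathrm{gr}^i\M^*$ over $\O_K$: since $\mathrm{gr}^i\M^*$ injects into $\mathrm{gr}^i\D$, which is a $K$-vector space, $\mathrm{gr}^i\M^*$ is $\O_K$-torsion-free, hence free as $\O_K$ is a DVR and $\mathrm{gr}^i\M^*$ is finitely generated; alternatively, realize $\mathrm{gr}^i\M^*$ as the image of $\Fil^i\M^*$ in $\M/E(u)\M$, which is finite $\O_K$-free, and note the image is saturated by the definition of the filtration.

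The main obstacle I expect is the reverse inclusion in part (1): proving that $f_\pi((1\otimes\varphi)(x))\in\Fil^iD_K$ together with $N(x)\in\Fil^{i-1}\D$ forces $(1\otimes\varphi)(x)\in E(u)^i\M$, i.e.\ that the lattice $\M$ is ``saturated'' with respect to this filtration. This is where the height-$r$ hypothesis and the precise structure of the Kisin module (finite free, with $\coker(1\otimes\varphi)$ killed by $E(u)^r$) must be used, and where one has to be careful that the divided-power/completion subtleties in passing between $\fS$, $S$, and $S_{K_0}$ do not cause trouble; I would look to \cite{KisinCrys} (around Lemma 1.2.2 / Proposition 1.3.x) and to the argument in \cite[\S6]{BreuilGriffith} for the template, since this kind of ``$\Fil^i\M^*=\M^*\cap\Fil^i\D$'' statement is essentially their compatibility of filtrations adapted to our setting.
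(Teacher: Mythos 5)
There is a genuine gap in your plan for part (1). The inductive definition \eqref{formula-filtration} imposes conditions on $f_\pi(x)$ and $N(x)$ for $x\in\D$ itself, whereas your argument applies $f_\pi$ to $(1\otimes\varphi)(x)$; these are different elements living in different modules, and the observation that $f_\pi$ kills $E(u)$ therefore does not give the inclusion $\Fil^i\M^*\subset\M^*\cap\Fil^i\D$. Showing $f_\pi(x)\in\Fil^iD_K$ for $x\in\Fil^i\M^*$ is not automatic: it compares divisibility of $(1\otimes\varphi)(x)$ by $E(u)^i$ with the Hodge filtration on $D_K$, which is exactly the nontrivial compatibility at stake. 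Likewise your reverse inclusion needs control of $N(x)$ for $x\in\M^*$, but $N$ does not preserve $\M^*$, and estimating $N$ on the lattice is precisely the delicate point treated later (Proposition~\ref{imageofN}); it cannot be taken for granted here. The paper sidesteps the inductive definition entirely: since $\D=S_{K_0}\otimes_{\varphi,\fS}\M$, one has the alternative description $\Fil^i\D=\{x\in\D:(1\otimes\varphi)(x)\in\Fil^iS_{K_0}\D\}$, imported from \cite[\S3.2]{MR2388556}, and then (1) is immediate from the elementary facts that $\M$ is finite $\fS$-free and $\Fil^iS_{K_0}\cap\fS=E(u)^i\fS$. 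Your plan would in effect have to re-prove this comparison of the two descriptions of $\Fil^i\D$, and no argument for it is supplied; note also that your parenthetical claim that $E(u)^i\M$ equals $\M\cap\Fil^i\D$ is not the statement that is needed (nor obviously true), since $\M$ sits inside $\D$ only as a $\varphi(\fS)$-submodule.

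Part (2) is fine and agrees with the paper: ${\rm gr}^i\M^*$ injects into the $K$-vector space ${\rm gr}^i\D$, so it is torsion-free and finitely generated over the discrete valuation ring $\O_K$, hence free. For (3), however, ``compute the $\O_K$-rank of the image in $\M/E(u)\M$ to match'' is not an argument: that rank computation is the content of the statement, and carrying it out directly would seem to require the structure results for $\Fil^{r_d}\M^*$ proved later, which themselves rely on this lemma. The paper's mechanism is different and short: set $M=\M^*\otimes_{\Zp}\Qp$, observe that any $s\in S_{K_0}$ is a polynomial plus an element of $\Fil^{i+1}S_{K_0}$, so $\D=M+(\Fil^{i+1}S_{K_0})\M^*$; intersecting with $\Fil^i\D$ and using (1) gives $\Fil^i\D=\Fil^iM+(\Fil^{i+1}S_{K_0})\M^*$, whence ${\rm gr}^iM\simeq{\rm gr}^i\D$ and (3) follows. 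You need this decomposition (or some substitute for it) to close the argument.
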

\begin{proof}
Since $\D= S_{K_0} \otimes_{\varphi, \fS}\M$,  one can
prove (see for example, \cite[\S3.2]{MR2388556}) that
$$\Fil ^i \D= \{ x \in \D : (1\otimes \varphi)(x) \in \Fil ^i S_{K_0} \D \}. $$
Since $\M$  is finite $\fS$-free, (1) then follows from the fact that $\Fil^i S_{K_0}\cap \fS = E(u) ^i \fS. $

From (1) it follows that ${\rm gr}^i \M ^* $ injects in ${\rm gr}^i
\D$, which is a $K$-vector space; this gives (2).

Finally, set $M: = \M ^* \otimes_{\Z_p}\Q_p$ and $\Fil^i M:= \Fil ^i
\M ^* \otimes_{\Z_p }\Q_p= M \cap \Fil ^i \D$.  Observe that $\D = M +
(\Fil^{i+1} S_{K_0})\M^*$,  since $M \subset \D$ is finite $\fS [\frac
1 p]$-free and any $s \in S_{K_0}$ can be written as $s_0 + s_1$ with $s_0 \in K_0 [u] \subset \fS [\frac 1 p]$ and $s_1 \in \Fil ^{i+1} S_{K_0}$.
From this  we deduce that $\Fil ^i \D = \Fil ^i M + (\Fil ^{i+1}
S_{K_0})\M^*$, so ${\rm gr}^i M \simeq {\rm gr}^i \D$  and (3)
follows.
\end{proof}

 Set $M_K : = f_\pi (\M^*)\subset D_K$ and
$\Fil ^i M_K = M_K \cap  \Fil ^i D_K$, so that $\Fil ^i M_K$ is an
$\O_K$-lattice in $\Fil ^i D_K$. By Lemma \ref{intersection}(1),
$f_\pi (\Fil ^i\M^*) \subset \Fil ^ i M_K$ for $i \in \Z_{\ge 0}$.

Consider the positive integers $1 = n_0  \leq  n_1 \leq n_2 \leq \cdots \leq
n_{r_d} \le d $ such that $\dim _K\Fil ^i D_K = d-n_i
+1$. Choose an $\O_K$-basis  ${ e_1 , \dots , e_d }$ of $M_K$ such
that $ e_ {n_i}, \dots , e_d $ forms a $\O_K$-basis of $\Fil ^i M_K$;
the existence of such a basis follows by repeated application of  the following lemma.

\begin{lemma} \label{subbasis} Let $D_K$ be a finite $K$-vector space, $M_K$ an $\O_K$-lattice in $D_K$ and $D'_K \subset D_K$ a $K$-subspace. Then there exists an $\O_K$-basis $e_1 , \dots , e_d$ of $M_K$ such that $\{e_{m},\dots , e_d\} $ is a $ K$-basis of $D'_K$ for some integer $m$.
\end{lemma}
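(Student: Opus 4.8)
The plan is to produce the required basis as one adapted to the $\O_K$-sublattice $N := D'_K \cap M_K$ of $M_K$, using that $\O_K$ is a discrete valuation ring, hence a principal ideal domain.

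First I would establish the basic properties of $N$. Since $M_K$ is finite free over the Noetherian ring $\O_K$, the submodule $N$ is finitely generated, and being a submodule of the torsion-free module $M_K$ it is torsion-free, hence free. Moreover $N$ is a full $\O_K$-lattice in $D'_K$: given $v \in D'_K$, since $M_K \otimes_{\O_K} K = D_K$ there is a nonzero $c \in \O_K$ with $cv \in M_K$, and then $cv \in D'_K \cap M_K = N$; thus $N \otimes_{\O_K} K = D'_K$, and in particular $\rank_{\O_K} N = \dim_K D'_K =: d'$. Consequently any $\O_K$-basis of $N$ is automatically a $K$-basis of $D'_K$.

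Next I would exhibit $N$ as a direct summand of $M_K$. The natural surjection $M_K \to D_K / D'_K$ has kernel precisely $N$, so $M_K/N$ embeds into the $K$-vector space $D_K / D'_K$ and is therefore torsion-free; being finitely generated over the PID $\O_K$ it is free, of rank $d - d'$. Hence the short exact sequence $0 \to N \to M_K \to M_K/N \to 0$ splits, and we may write $M_K = C \oplus N$ with $C$ a free $\O_K$-module of rank $d - d'$.

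Finally, I would assemble the basis: pick an $\O_K$-basis $e_1, \dots, e_{d-d'}$ of $C$ and an $\O_K$-basis $e_{d-d'+1}, \dots, e_d$ of $N$. Their union is an $\O_K$-basis of $M_K = C \oplus N$, and by the first step $\{e_{d-d'+1}, \dots, e_d\}$ is a $K$-basis of $D'_K$, so the lemma holds with $m = d - d' + 1$. There is no genuine difficulty here; the only point that calls for (minor) attention is the passage from ``$M_K/N$ is torsion-free'' to ``$M_K/N$ is free,'' which combines finite generation with $\O_K$ being a PID. Alternatively, one could apply the elementary divisor (stacked basis) theorem directly to the pair $N \subseteq M_K$ and then reverse the order of the resulting basis, since the elementary divisors are units in $K$.
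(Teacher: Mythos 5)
Your proof is correct and follows essentially the same route as the paper: both split the exact sequence $0 \to M_K \cap D'_K \to M_K \to M_K/(M_K \cap D'_K) \to 0$ by noting the quotient embeds in $D_K/D'_K$, hence is torsion-free and free over the discrete valuation ring $\O_K$, and then concatenate bases of the two summands. The only difference is that you spell out explicitly that $M_K \cap D'_K$ is a full lattice in $D'_K$, a point the paper leaves implicit.
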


\begin{proof}Consider the exact sequence of $K$-vector spaces $$0 \lto
  D'_K \lto D_K \overset f \lto D''_K \lto 0.$$ Then we get an exact
  sequence $0 \to M'_K \to M_K \to M''_K \to 0 $ where  $M'_K = M _K
  \cap D'_K$ and $M''_K =
  f(M_K)$. Since $D'_K$ is divisible, we see that $M''_K$ is
  torsion free and thus finite $\O_K $-free, so there exists a section
  $s \col M''_K \hookrightarrow M_K$ such that $M_K = M'_K \oplus
  s(M''_K)$.
\end{proof}

\begin{prop}\label{needsurjection} Assume that $f_\pi (\Fil ^i \M^*) = \Fil ^i
  M_K$ for all $i \in \Z_{\ge 0}$.
  \begin{enumerate}
  \item There exists an $\fS$-basis $\hat e_1 ,
  \dots , \hat e_d$ of $\M^*$ such that $f_\pi(\hat e_j) = e_j$ for
  all $j$ and $\hat e_j \in \Fil^i \M^*$ for $j \ge n_i$.

  \item  For any basis as in (1),  the module $\Fil ^{r_d} \M^* $ is generated by $(\hat
  e_1, \dots , \hat e_d)\Lambda^*$, where $\Lambda^*$ is the matrix $[
  E(u)^{r_d-r_1},\ldots,E(u)^{r_d-r_d}]$.
  \end{enumerate}

\end{prop}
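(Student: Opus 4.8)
The plan is to build the basis $\hat e_1, \dots, \hat e_d$ lifting $e_1, \dots, e_d$ step by step, working down from the largest filtration index. First I would observe that, since $\M^*$ is finite free over $\fS$ and $f_\pi \col \M^* \to M_K$ is surjective (because $M_K := f_\pi(\M^*)$ by definition), any lift of an $\O_K$-basis of $M_K$ to a family of elements of $\M^*$ is automatically an $\fS$-basis of $\M^*$: by Nakayama it suffices to check this modulo the maximal ideal of $\fS$, where $f_\pi$ induces an isomorphism $\M^*/\mf{m}_\fS \M^* \cong M_K/\mf{m}_{\O_K} M_K$ (here using that $K$ is unramified, so $f_\pi$ reduces to the identity on residue fields, and that the kernel of $f_\pi$ lies in $u\D + \dots$). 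So the only real content of (1) is arranging that $\hat e_j \in \Fil^i \M^*$ whenever $j \ge n_i$.

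For that, the key input is the hypothesis $f_\pi(\Fil^i \M^*) = \Fil^i M_K$ for all $i$, together with the basis of $M_K$ already chosen so that $e_{n_i}, \dots, e_d$ spans $\Fil^i M_K$ over $\O_K$. I would argue downward: for the top index $i = r_d$, the hypothesis gives elements $\hat e_{n_{r_d}}, \dots, \hat e_d \in \Fil^{r_d}\M^*$ with $f_\pi(\hat e_j) = e_j$. Then for $i = r_d - 1$, I pick lifts of $e_{n_{i}}, \dots, e_{n_{i+1}-1}$ inside $\Fil^{i}\M^*$ (possible by the surjectivity hypothesis at level $i$, after subtracting off an $\O_K$-combination — lifted to $\fS$ — of the already-constructed $\hat e_j$ with $j \ge n_{i+1}$ to hit exactly $e_j$), and continue. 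At the bottom, $n_0 = 1$ and $\Fil^0 \M^* = \M^*$, so the remaining $e_1, \dots, e_{n_1 - 1}$ are lifted arbitrarily. This produces the desired basis; the compatibility $\hat e_j \in \Fil^i\M^*$ for $j \ge n_i$ holds because $\Fil^i\M^* \supseteq \Fil^{i'}\M^*$ for $i' \ge i$ and each $\hat e_j$ was placed in the filtration step matching its index.

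For part (2), I would use Lemma~\ref{intersection} to control ranks. Set $\hat e'_j := E(u)^{r_d - r_j}\hat e_j$; I claim these generate $\Fil^{r_d}\M^*$. They certainly lie in $\Fil^{r_d}\M^*$: by Lemma~\ref{intersection}(1), $\Fil^{r_d}\M^* = \M^* \cap \Fil^{r_d}\D$, and since $\hat e_j \in \Fil^{r_j}\D$ (this is where I use that $r_j$ is the filtration jump associated to index $j$, i.e. $j \ge n_i \iff i \le r_j$, and that $\hat e_j \in \Fil^{r_j}\M^* \subseteq \Fil^{r_j}\D$), multiplying by $E(u)^{r_d - r_j}$ — which shifts the filtration of $\D$ up by $r_d - r_j$ since $E(u) \in \Fil^1 S_{K_0}$ and multiplication by $E(u)$ raises $\Fil^i\D$ into $\Fil^{i+1}\D$ — lands in $\Fil^{r_d}\D$. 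Conversely, the $\fS$-submodule $N$ they generate is free of rank $d$, and I would compute that $\M^*/N$ and $\M^*/\Fil^{r_d}\M^*$ have the same length in each graded piece: by Lemma~\ref{intersection}(2)–(3), $\mathrm{gr}^i\M^*$ is $\O_K$-free of rank $\dim_K \mathrm{gr}^i\D = n_i - n_{i-1}$ (with appropriate indexing), and a direct count shows $\bigoplus_i \mathrm{gr}^i\M^*$ over $i \le r_d$ matches $\M^*/N$ as $\O_K$-modules. Since $N \subseteq \Fil^{r_d}\M^*$ with equal finite colength, they coincide.

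The main obstacle I anticipate is part (2): proving the inclusion $\Fil^{r_d}\M^* \subseteq N$ rather than just the reverse. The cleanest route is the length/colength comparison sketched above, but it requires being careful that "$\Fil^i S_{K_0} \cap \fS = E(u)^i\fS$" (already used in Lemma~\ref{intersection}) interacts correctly with the basis, and that the $\hat e_j$ really do sit in $\Fil^{r_j}\D$ and no higher — i.e. that the filtration-jump combinatorics $n_i$ versus $r_j$ is bookkept correctly. An alternative, more hands-on argument would write a general element of $\Fil^{r_d}\M^*$ in the basis $\hat e_j$ and show each coefficient is divisible by $E(u)^{r_d - r_j}$ using $(1\otimes\varphi)(x) \in E(u)^{r_d}\M$ and the structure of $\M$; but the colength count is less computational and I would pursue it first.
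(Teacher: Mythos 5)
Part (1) of your proposal is fine and is essentially the paper's argument: since $e_j\in\Fil^iM_K$ precisely when $j\ge n_i$, the hypothesis $f_\pi(\Fil^i\M^*)=\Fil^iM_K$ directly produces, for each $j$, a lift $\hat e_j\in\Fil^{r_j}\M^*$ with $f_\pi(\hat e_j)=e_j$ on the nose (your correction terms are unnecessary), and Nakayama shows that any such lifts form an $\fS$-basis. The easy half of (2), namely $N:=\sum_j\fS\, E(u)^{r_d-r_j}\hat e_j\subseteq\Fil^{r_d}\M^*$, is also fine once one notes the compatibility $\Fil^1S_{K_0}\cdot\Fil^i\D\subset\Fil^{i+1}\D$.

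The genuine gap is in the rank count underlying your colength comparison for the hard inclusion $\Fil^{r_d}\M^*\subseteq N$. You assert $\dim_K\mathrm{gr}^i\D=n_i-n_{i-1}$; up to indexing that is $\dim_K\mathrm{gr}^iD_K$, not $\dim_K\mathrm{gr}^i\D$. The graded pieces of the Breuil filtration on $\D\cong S_{K_0}\otimes_{K_0}D$ are much larger: one has $\dim_K\mathrm{gr}^i\D=d-\dim_K\Fil^{i+1}D_K=n_{i+1}-1$, since every basis vector whose filtration jump is $\le i$ contributes a copy of $\mathrm{gr}^\bullet S_{K_0}$ in each higher degree. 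With your value, the total $\sum_{i=0}^{r_d-1}\mathrm{rank}_{\O_K}\mathrm{gr}^i\M^*$ telescopes to at most $d-1$, which in general is nowhere near the corresponding rank of $\M^*/N\cong\bigoplus_j\fS/E(u)^{r_d-r_j}\fS$, so the advertised ``direct count'' does not match and the argument as written fails. With the correct value the needed identity $\sum_{i=0}^{r_d-1}(n_{i+1}-1)=\sum_j(r_d-r_j)$ does hold, $\M^*/\Fil^{r_d}\M^*$ and $\M^*/N$ are finite free $W(k)$-modules of equal rank (the former using Lemma~\ref{intersection}(2)), and the surjection between them is then an isomorphism; so your strategy is salvageable and amounts to the paper's induction collapsed into a single rank comparison. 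But be aware that the computation of $\dim_K\mathrm{gr}^i\D$ is exactly the nontrivial input: it does not follow from Lemma~\ref{intersection} (which only equates the rank of $\mathrm{gr}^i\M^*$ with $\dim_K\mathrm{gr}^i\D$), and the paper obtains it from the existence of a \emph{base adapt\'ee} for $(\D,\Fil^i\D)$ in the sense of \cite[D\'ef.~A.1]{BreuilGriffith}. Some such input must be invoked or proved, and it is absent from your proposal.
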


\begin{proof}
Since $f_\pi (\Fil ^i \M ^*) = \Fil ^ i M_K$, there exist $\hat
e_1,\ldots,\hat e_d
\in \M^*$ such that $f_\pi (\hat e_j) = e_j $ for all $j$ and $ \hat e_j
\in \Fil ^i \M^* $ for $j \geq n _i $. One easily checks that $\{ \hat
e_i\}$ forms an $\fS$-basis of $\M^*$; this proves (1).
Now define $\widetilde \Fil ^i \M^*$ inductively as  follows: $\widetilde
\Fil ^0 \M^* = \M^*$ and $\widetilde \Fil^ {i}\M^*$ is the
$\fS$-submodule generated by $E(u)\widetilde \Fil ^{i-1}\M^* $ and
$\hat e_{n_i}, \dots , \hat e_d$. It is immediate from this description  that
\begin{equation}\label{tildefil}
\widetilde\Fil ^i \M^* =  \bigoplus_{j=0}^{i-1} (E(u)^{i- j }\fS
\hat e_{n_j} \oplus \cdots \oplus  E(u)^{i -j}\fS \hat e_{n_{j+1}-1 })  \oplus
\bigoplus_{j= {n_{i}}}^d \fS \hat e_j .
\end{equation}

Comparing \eqref{tildefil} with the statement of the Proposition, we
see that we will be done if we can prove that $\Fil ^{r_d} \M^* =
\widetilde\Fil ^{r_d} \M^*$.  In fact we now show
by induction on $i$  that
$\Fil ^ i \M^* = \widetilde\Fil ^ i \M^* $ for $0 \leq i \leq r_d$.
 The statement is clear for $i = 0$. Assume that the statement  is
 true for $i = l$, and let us consider the case $i = l+1$. From the
 construction of $\widetilde \Fil ^ {l+1} \M^*$ we see that
 $\widetilde \Fil ^ {l+1} \M^* \subset \Fil ^{l+1}\M^*$, and so we get
 a surjection $\alpha\col \Fil ^l \M^* / \widetilde \Fil ^ {l+1} \M^* \to
 \Fil ^l \M^*/ \Fil ^{l+1} \M^*$. By \eqref{tildefil} it is
 clear that $\Fil ^l \M^* / \widetilde \Fil ^ {l+1} \M^*=\widetilde
 \Fil ^l \M^* / \widetilde \Fil ^ {l+1} \M^*$ is finite $\O_K $-free
 with rank $n_{l+1}-1= d - \dim_K (\Fil ^{l+1} D_K)$. By Lemma
 \ref{intersection}, we know that  ${\rm{gr}}^l \M^*$ is finite
 $\O_K$-free with ${\rm{rank}}_{\mathcal O_K} {\rm{gr}}^l \M^* =
 \dim_{K} {\rm{gr}}^l \D$, so $\alpha$ is an isomorphism if and only
 if $\dim_{K} {\rm{gr}}^l \D=d - \dim_K (\Fil ^{l+1} D_K)$. But this
 is immediate from the fact that $(\D , \Fil ^i \D)$ has a
 \textit{base adapt\'ee} in the sense of
 \cite[D\'ef. A.1]{BreuilGriffith}, and indeed a  \textit{base
   adapt\'ee}  given as in the display equation
 in the middle of page 223 of \emph{ibid.} Therefore~$\alpha$ is an isomorphism and we have
 $\widetilde \Fil^{l+1}\M^* = \Fil ^{l+1}\M^*.$\end{proof}

\subsection{The range of monodromy}  We retain the notation of the
previous subsection, except that we now let $N$ denote the monodromy
operator on $\D$. In this subsection,  we always regard $\M$ as an
$\varphi(\fS)$-submodule of $\D$. Select a $\varphi(\fS)$-basis $\hat e_1 , \dots, \hat e_d$ of
$\M$ (not necessarily related to the basis of Proposition~\ref{needsurjection}). We have $N(\hat e_1 , \dots, \hat e_d)= (\hat e_1, \dots ,\hat e
_d) U$ with $U$ a matrix with coefficients in $S_{K_0}$. In this
subsection, we would like to control the coefficients of~$U$. Let
$\tilde S = W(k)\llb u ^p, \frac{u ^{ep}}{p}\rrb $, so that $\varphi
(\fS) \subset \tilde S \subset S$ and $N(\tilde S) \subset \tilde S$.  Note that unlike $S$, the ring
$\tilde S$ has the property that if $u^p x \in \tilde S$ for some $x
\in K_0\llbracket u \rrbracket$ then $x \in
\tilde{S}[\frac 1 p]$.

\begin{prop} \label{imageofN} We have $U \in \Md (\tilde S[{\frac 1 p}])$.
 If $V$ is crystalline and $p \ge 3$ then furthermore $U\in u ^p(\Md(\tilde S[\frac 1 p] \cap S) ) $.
\end{prop}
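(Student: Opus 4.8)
The plan is to analyze the monodromy operator $N$ on $\D$ using the basic structural fact that $\varphi \circ N = pN \circ \varphi$ on $\D$, together with the filtration computations of the previous subsection. First I would recall that $\D \cong S_{K_0} \otimes_{K_0} D$, where $K$ unramified means $K_0 = K$ and $E(u) = u - \pi$; write $\lambda = \varphi(E(u))/E(u) \cdot (\text{unit})$ or work directly. The key relation is that since $\M$ has height $r \le p$, we have $\varphi(\M) \subset E(u)^{-r}$ times... more precisely $(1\otimes\varphi)(\Fil^i \M^*) \subset E(u)^i \M$, so $\varphi$ applied to a basis of $\M$ has a controlled denominator. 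The identity $N\varphi = p^{-1}\varphi N$ on $\D$ (with appropriate sign/normalization coming from $N_S(u) = -u$, hence $N\varphi = p \varphi N$ when acting on $S_{K_0}$), combined with $\varphi(\M) = \M$ viewed inside $\D$ via $1\otimes\varphi$, lets one bound the $u$-adic and $p$-adic valuations of the entries of the matrix $U$ of $N$.

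For the first assertion, $U \in \Md(\tilde S[1/p])$, I would argue as follows. Since $\hat e_1,\dots,\hat e_d$ is a $\varphi(\fS)$-basis of $\M$ and $N(\M) \subset \D = S_{K_0} \otimes_{\varphi,\fS}\M$, a priori $U$ has entries in $S_{K_0}$. To improve this to $\tilde S[1/p]$, apply $\varphi$: from $\varphi(N(\hat e_j)) = p^{-1} N(\varphi(\hat e_j))$ (up to the normalization) and the fact that $\varphi(\hat e_j) = \sum_k A_{kj}\hat e_k$ with $A \in \Md(\fS)$ of determinant dividing $E(u)^{r_1 + \dots + r_d}$, one gets that $\varphi(U)$ has entries that are $p^{-1}$ times an $S_{K_0}$-combination of entries of $U$ and of the matrix $N(A)A^{-1}$ or similar. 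Since $\varphi(S_{K_0})$ lands in $\tilde S[1/p]$ (indeed $\varphi(u) = u^p$ and $\varphi(u^{ep}/p!\cdots)$ expressions land in $W(k)\llb u^p, u^{ep}/p\rrb[1/p]$), and $\varphi$ is injective, the entries of $U$ are forced into $\tilde S[1/p]$: concretely, $u^p x \in \tilde S[1/p]$ together with $x \in K_0\llb u\rrb[1/p]$ implies $x \in \tilde S[1/p]$ by the property of $\tilde S$ highlighted in the remark just before the proposition.

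For the refinement in the crystalline case with $p \ge 3$: crystallinity means $N$ is topologically nilpotent in a strong sense — equivalently $N \equiv 0$ on $D = f_\pi$-reduction in the way that forces $N(\M) \subset (\text{positive } u\text{-power})\cdot \D$. More precisely, I would use that for $V$ crystalline, $N$ on $D$ is nilpotent and the filtered module is such that $N(\Fil^i\D) \subset \Fil^{i-1}\D$ with $N$ vanishing after $f_\pi$; combined with \eqref{formula-filtration} and Proposition~\ref{needsurjection}, one shows $N(\M) \subset u^p \D$ — the exponent $p$ (rather than $1$) comes from the interaction of $\varphi \circ N = p N \circ \varphi$ with $\varphi(u) = u^p$: if $N(\hat e_j) \in u \D$ then $p^{-1}\varphi(N(\hat e_j)) \in u^p \D$, and bootstrapping gives entries of $U$ in $u^p \tilde S[1/p]$; intersecting with the integrality statement $U \in \Md(S)$ coming from the fact that $\M$ is an honest $\varphi(\fS)$-submodule (so $N$ preserves integral structures up to $S$) yields $U \in u^p \Md(\tilde S[1/p]\cap S)$. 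The condition $p \ge 3$ is needed so that the divided-power denominators in $S$ interact correctly with the factor of $p^{-1}$ and the bound $r \le p$ stays within range.

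The main obstacle I expect is pinning down the integrality $U \in \Md(\tilde S[1/p] \cap S)$ — i.e., clearing the $1/p$ — rather than the $u^p$-divisibility. The $u$-adic bound follows fairly mechanically from iterating $\varphi N = p N \varphi$ and using $\varphi(u) = u^p$, but controlling $p$-denominators requires knowing that $N$ genuinely preserves the lattice $\M$ inside $\D$ up to bounded denominators, which uses the finite-height property (height $\le p$) in an essential way and is where the hypothesis $r \le p$ and $p \ge 3$ both get used. I would handle this by a careful induction on the $u$-adic filtration of $U$, at each stage using that $\varphi^{-1}$ of an element of $\tilde S \cap S$ that is divisible by the appropriate power of $p$ lies back in $\tilde S \cap S$, exploiting the explicit description $\tilde S = W(k)\llb u^p, u^{ep}/p\rrb$.
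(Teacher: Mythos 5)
Your proposal breaks down at the one point that is genuinely hard, namely the integrality $U' := u^{-p}U \in \Md(S)$ in the crystalline case. You obtain it by ``intersecting with the integrality statement $U\in \Md(S)$ coming from the fact that $\M$ is an honest $\varphi(\fS)$-submodule (so $N$ preserves integral structures up to $S$)'' --- but nothing of the sort is automatic: $N$ is only defined on $\D=S_{K_0}\otimes_{\varphi,\fS}\M$, and the functional equation you invoke, $\varphi(U)=p^{-1}A^{-1}\bigl(UA+N(A)\bigr)$, manifestly produces $p$-denominators (already $N(A)A^{-1}\in p^{1-r}u^{p}\Md(\tilde S)$, since $p^{r}A^{-1}\in\Md(\tilde S)$ is all that finite height gives), so $U'\in\Md(S)$ is exactly what must be proved and cannot come from formal manipulation of $N\varphi=p\varphi N$ together with the filtration results of Proposition~\ref{needsurjection}. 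The paper's argument for this step is of a completely different nature: it uses the $(\varphi,\hat G)$-module structure, i.e.\ the operator $\tau(x)=\sum_i \gamma_i(t)\otimes N^i(x)$, the fact (special to crystalline $V$ and to $p\ge 3$, via \cite[Prop.~2.4.1]{liulattice2}) that $(\tau-1)^n(x)$ lies both in $u^{p}B^+_{\cris}\otimes_{\varphi,\fS}\M$ and in $I^{[n]}W(R)\otimes_{\varphi,\fS}\M$, hence in $u^{p}I^{[n]}W(R)\otimes_{\varphi,\fS}\M$, so that $N(x)/u^{p}=\sum_{n\ge 1}(-1)^{n-1}(\tau-1)^n(x)/(ntu^{p})$ converges in $\Acris\otimes_{\varphi,\fS}\M$; integrality then follows from $\Acris\cap \tilde S[\tfrac1p]\subset \Acris\cap S[\tfrac1p]=S$. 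This is the only place $p\ge 3$ is used, and (contrary to your last paragraph) the hypothesis $r\le p$ is not used in this proposition at all --- it enters only later, in Proposition~\ref{prop:surjectivity}. Your fallback plan (``induction on the $u$-adic filtration of $U$, using $\varphi^{-1}$ of suitably $p$-divisible elements of $\tilde S\cap S$'') would not supply the missing input, because the statement is false without crystallinity-specific information of the above kind.

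The first assertion is also not established as written: from $\varphi(U)\in\Md(\tilde S[\tfrac1p])$ and injectivity of $\varphi$ you cannot conclude $U\in\Md(\tilde S[\tfrac1p])$ (for instance $\varphi(u/p)=u^{p}/p\in\tilde S$ while $u/p\notin\tilde S[\tfrac1p]$), and the property of $\tilde S$ quoted before the proposition does not apply in the way you use it. One could in principle solve $U=pA\varphi(U)A^{-1}-N(A)A^{-1}$ by iteration, but then the entire content is the convergence of the resulting series, where the divided powers $u^{p^{n+1}}$ coming from $\varphi^n(N(A))$ must dominate the denominators $p^{rn}$ coming from the $A^{-1}$'s; this is precisely the analysis the paper imports from the proof of \cite[Prop.~2.4.1]{liufiltration}, applied there to the transition matrix $X$ between the $\hat e_i$ and the constant basis given by the $(\varphi,N)$-equivariant section $D\to\D$, yielding $X,X^{-1}\in\Md(\tilde S[\tfrac1p])$ and then $U=X^{-1}BX+X^{-1}N(X)$ with $N(X)\in u^{p}\Md(\tilde S)$ (and $B=0$ in the crystalline case, which is where the $u^{p}$-divisibility actually comes from). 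Your sketch gestures at the right functional equation but omits the convergence argument entirely, so both halves of the proposition remain unproved.
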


\begin{proof}  Note that  $\{\hat e_1, \dots, \hat e_d\}$ forms an
  $S_{K_0}$-basis of $\D$. Let  $e_i$ be the image of~$\hat e_i$ under
  the natural map $\D \to \D / I_+S\D= D$, where $I_+S= u K_0
  \llb u\rrb \cap S$. Since $D$ has a unique $(\varphi,N)$-equivariant
  section $s\col D \to \D$ (see \cite[Prop.~6.2.1.1]{BreuilGriffith}) we just write
  $e_i$ for $s(e_i)$; obviously $\{e_1, \dots , e_d\}$ forms an
  $S_{K_0}$-basis for $\D$. Let $X \in \Md(S_{K_0})$ be the matrix
  such that $(\hat e_1, \dots, \hat e_d)= (e_1, \dots , e_d )X$.

  We claim that both $X$ and $X^{-1}$ are in $\Md(\tilde S[\frac 1
  p])$. In fact this is a consequence of the proof of
  \cite[Prop.~2.4.1]{liufiltration}, as we now explain.  As in that proof, let $\tilde A \in \Md(\fS)$ denote the
  matrix such that $\varphi(\hat e_1, \dots, \hat e_d) = (\hat e_1,
  \dots, \hat e_d)\tilde A$ in $\fM$; then the matrix of $\varphi$
  on~$\D$
with respect to the same basis is $A = \varphi(\tilde A) \in
\Md(\varphi(\fS))$.   Again as in \emph{loc. cit.} let $A_0 \in
\Md(W(k))$ be the matrix of $\varphi$ on~$\D$ with respect to the
basis $e_1,\ldots,e_d$.  Since $\varphi(E(u))/p \in \tilde{S}^\times$,
observe that the next-to-last paragraph of
\emph{loc. cit.} actually shows that $p^r A^{-1} \in \Md(\tilde S)$
and that $A_0 A^{-1} = I_d +
\frac{u^p}{p^r} Y$ with $Y \in  \Md(\tilde S)$ (note that the matrix
$Y'$ in \emph{loc. cit.} is
actually in $\Md(\varphi(\fS))$).

The main part of the argument in \emph{loc. cit.} shows that $X = X_0 +
\sum_{i=0}^{\infty} \frac {u^{p^{i+1}}}{p^r} Z_{i}$ where $X_0 = A_0
A^{-1}$ and $Z_{i}$ is
defined by the formula
$$Z_{i} = A_0 \varphi(A_0) \cdots \varphi^{i}(A_0) \varphi^{i+1}(Y)
\varphi^{i}(A^{-1}) \cdots \varphi(A^{-1}) A^{-1}.$$
From the previous paragraph the matrices $p^r X_0$ and
$p^{r(i+1)} Z_{i}$ are all in $\Md(\tilde S)$.
Choose any $i_0 \ge 1$ such that
$p^{i}\geq er(i+2-i_0) $ for all $i \geq 0$.  Then $p^{r i_0} \cdot \frac{u ^{p^{i+1}}}{p
  ^{r}} Z_i \in \tilde S$ for $i \geq 0$, and $p ^{ri_0} X \in
\Md(\tilde S)$, as desired.  The argument for $X^{-1}$ is essentially the same,
beginning from an analysis of $AA_0^{-1}$ instead of $A_0 A^{-1}$,
\textit{cf.} the last paragraph of \emph{loc. cit.}

Since $N(\hat e_1, \dots, \hat e_d) = N((e_1, \dots, e_d )X)$ we
compute that $$U = X^{-1} B X + X^{-1} N(X)$$ where $B \in \Md(K_0)$ is the matrix of
$N$ acting on $e_1,\ldots,e_d$.  Since $N(X) \in \Md(\tilde S)$
(indeed it is contained in $u^p \Md(\tilde S)$) this
completes the argument in the semi-stable case.

Suppose for the rest of the argument that $V$ is crystalline, so that
$B=0$, $U = X^{-1} N(X)$, and $U \in u^p \Md (\tilde S[\frac{1}{p}])$. Write $U
= u ^p U'$; we have to show that $U' \in \Md (S)$. Here we use the
argument in the proof of \cite[Prop.~2.4.1]{liulattice2}\footnote{The hypothesis that $p \ge 3$ is
  required by the argument in \cite[Prop.~2.4.1]{liulattice2}.  In
  fact this is the only place in the proof of Theorem~\ref{shape} that
the hypothesis $p \neq 2$ is used.}, and we freely use
the notation of that item; in particular for any $x \in \D$ we define
\begin{equation}\label{eq:tau-on-D}
\tau(x) = \sum_{i=0}^{\infty} \gamma_i(t) \otimes N^i(x)
\end{equation}
Recall that the element $t$ is defined in  Section
\ref{sec:p-adic-period-rings}; since the topological generator $\tau \in  \hat G_{p^{\infty}}$
acts trivially on $t$, one can recursively define $\tau^n(x)$.

Suppose that $x \in \fM$.
The formula (2.4.2) of \emph{ibid.} and the comments
immediately following it show that $(\tau -1)^n (x) \in u
^p B^+_\cris \otimes _{\varphi,\fS} \M$ 
and  $(\tau-1)^n (x) \in
I^{[n]}W(R) \otimes_{\varphi , \fS} \M$. We claim that $(\tau -1)^n
(x) \in u^p I^{[n]}W(R) \otimes_{\varphi , \fS} \M $. In fact, if $y
\in u^p B^+_\cris \cap I^{[n]}W(R)$ then by
\cite[Lem.~3.2.2]{liu2-divisible} we have $y = u ^p z$ with $z \in W(R)$.
Since $uw \in \Fil^n W(R)$ with $w \in W(R)$ implies $w \in \Fil^n W(R)$,
it follows from $u ^p z \in I^{[n]}W(R)$ that $z \in I
^{[n]}W(R)$,   and this proves the claim.

Since ${(\tau -1)^n }(x)/u^p$ is in $ I ^{[n]} W(R)$, it follows
exactly as in the final paragraph of the proof of
\cite[Prop.~2.4.1]{liulattice2} that the elements  $(\tau -1)^n(x)/
(n t u ^p)$ lie in $A_\cris \otimes_{\varphi, \fS}\M$ and tend to
$0$ as $n \to \infty$.   (Recall from Section~\ref{sec:p-adic-period-rings} that $I^{[n]} W(R)$
is a principal ideal generated by $(\varphi(\gt))^n$.)
 Therefore the sum
$$ \sum\limits_{n=1}^\infty (-1)^{n-1} \frac{(\tau -1)^n }{n t u ^p}(x)$$
converges in $A_{\cris} \otimes_{\varphi,\fS} \fM$.  But by (2.4.3)
and (2.4.4) of \emph{ibid.} this sum is precisely $N(x)/u^p$.   Since $A_{\cris} \cap \tilde
  S[\frac 1 p] \subset A_\cris \cap S[\frac 1 p] =S$ (e.g. by recalling that $S$ is the subring of
$G_{\infty}$-invariants in $\Acris$), we are done.
\end{proof}

\begin{rem}
  \label{rem:question-about-U}
  It is possible that the matrices $U$ and $U'$ in the preceding proof
  are in $\Md(\tilde S)$, but we do not know how to show it.
\end{rem}

For later use, we record the conclusion of the next-to-last paragraph
of the preceding proof (with $n=1$) as a separate corollary.

\begin{cor}
  \label{cor:tau-corollary}
  If $V$ is crystalline and $p \ge 3$, then for any $x \in \M$ there
  exists $y \in W(R) \otimes_{\varphi,\fS} \M$ such that $\tau(x) - x
  = u^p \varphi(\mathfrak{t}) y$, with $\tau(x)$ as
  in~\eqref{eq:tau-on-D} and $\mathfrak{t}$ the element defined in the
  last paragraph of Section~\ref{sec:p-adic-period-rings}.
\end{cor}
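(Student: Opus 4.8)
The plan is simply to isolate the $n=1$ instance of the chain of containments that was already established in the next-to-last paragraph of the proof of Proposition~\ref{imageofN}; no new input is required beyond what appears there. Concretely, fix an $\fS$-basis $m_1,\dots,m_d$ of $\M$, which is then also a $W(R)$-basis of $W(R)\otimes_{\varphi,\fS}\M$, and write $\tau(x)-x=\sum_j a_j\otimes m_j$ with $a_j\in W(R)$ and $\tau$ as in \eqref{eq:tau-on-D}.

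First I would recall from that paragraph (which rests on formula (2.4.2) of \cite{liulattice2} and the comments immediately following it) that, specialised to $n=1$,
\[
(\tau-1)(x)\in u^p B^+_{\cris}\otimes_{\varphi,\fS}\M,\qquad (\tau-1)(x)\in I^{[1]}W(R)\otimes_{\varphi,\fS}\M,
\]
so that each coefficient $a_j$ lies in $u^p B^+_{\cris}\cap I^{[1]}W(R)$. Next I would invoke the two elementary facts used in \emph{loc.\ cit.}: by \cite[Lem.~3.2.2]{liu2-divisible} we may write $a_j=u^p z_j$ with $z_j\in W(R)$, and since $uw\in\Fil^1 W(R)$ with $w\in W(R)$ forces $w\in\Fil^1 W(R)$, the membership $u^p z_j\in I^{[1]}W(R)$ yields $z_j\in I^{[1]}W(R)$. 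Finally, recalling (from the end of Section~\ref{sec:p-adic-period-rings}, via \cite[Prop.~5.1.3]{MR1293971}) that $I^{[1]}W(R)$ is the principal ideal generated by $\varphi(\mathfrak{t})$, I would write $z_j=\varphi(\mathfrak{t})y_j$ with $y_j\in W(R)$ and set $y=\sum_j y_j\otimes m_j\in W(R)\otimes_{\varphi,\fS}\M$, giving $\tau(x)-x=u^p\varphi(\mathfrak{t})y$ as claimed.

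I do not expect a real obstacle here: the entire content of the corollary is contained in the proof of Proposition~\ref{imageofN}, and $n=1$ is the simplest case of that argument. The only point requiring a moment's care is the last step — passing from "$z_j\in I^{[1]}W(R)$ coefficientwise" to the single factorisation $\tau(x)-x=u^p\varphi(\mathfrak{t})y$ — which is exactly why the principal generation of $I^{[n]}W(R)$ by $(\varphi(\mathfrak{t}))^n$ was recalled in the parenthetical remark at the end of that proof. Everything else is bookkeeping.
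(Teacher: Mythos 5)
Your proposal is correct and is exactly the paper's intended argument: the paper states the corollary precisely as "the conclusion of the next-to-last paragraph of the proof of Proposition~\ref{imageofN} (with $n=1$)," i.e.\ $(\tau-1)(x)\in u^p I^{[1]}W(R)\otimes_{\varphi,\fS}\M$ combined with the principality of $I^{[1]}W(R)=\varphi(\mathfrak{t})W(R)$. Your coefficientwise bookkeeping (legitimate since $\M$ is finite free over $\fS$, so the relevant tensor products are free with a common basis) just makes explicit what the paper leaves implicit.
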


Write $S' = \tilde S[\frac 1 p]\cap S$,  and let $\mathcal{I}_l$ denote the ideal $\sum_{m=1}^{l}
  p^{l-m} u^{pm} S'$ in $S'$.  If $x \in \fM$, write $x =
(\hat e_1 , \dots, \hat e_d) \cdot v$ with $v$ a column vector whose
entries lie in $\varphi(\fS)$, and with $(\hat e_1, \dots ,\hat e
_d)$ viewed as a row vector.  Let $v_l$ be the column vector such
that $N^l(x) = (\hat e_1, \dots ,\hat e
_d) v_l$.  

\begin{cor}
  \label{cor:technical1}
  Suppose that $V$ is crystalline and $p \ge 3$ .Then $v_l$ has
  entries in  $\mathcal I_l$.
\end{cor}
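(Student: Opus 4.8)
The plan is to induct on $l\ge 1$, using the recursion that $N$ induces on coordinates with respect to the basis $\hat e_1,\dots,\hat e_d$. Since $N$ is a derivation on $\D$ extending $N_S$ on $S_{K_0}$, the identity $N(\hat e_1,\dots,\hat e_d)=(\hat e_1,\dots,\hat e_d)U$ gives, for $x\in\M$ with $N^l(x)=(\hat e_1,\dots,\hat e_d)v_l$, that $v_{l+1}=Uv_l+N_S(v_l)$ (with $N_S$ applied to each entry); and by Proposition~\ref{imageofN} we may write $U=u^pU'$ with $U'\in\Md(S')$, so that $v_{l+1}=u^pU'v_l+N_S(v_l)$. (For $l=0$ the entries of $v_0=v$ lie in $\varphi(\fS)$, and for $l\ge 1$ they lie in $S_{K_0}$ and, as the corollary asserts, in $S'$.)

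The combinatorial core consists of three facts about the ideal $\mathcal I_l\subseteq S'$: first, $\mathcal I_l$ really is an ideal of $S'$, since $u^{pm}=(u^p)^m\in\varphi(\fS)\subseteq S'$; second, $u^p\mathcal I_l\subseteq\mathcal I_{l+1}$, which is immediate from the definition ($u^p\cdot p^{l-m}u^{pm}S'=p^{(l+1)-(m+1)}u^{p(m+1)}S'$); and third — the key point — $N_S(\mathcal I_l)\subseteq\mathcal I_{l+1}$. For this last one, from the Leibniz identity $N_S(p^{l-m}u^{pm}s)=-m\,p^{l-m+1}u^{pm}s+p^{l-m}u^{pm}N_S(s)$ the first summand lies in $p^{(l+1)-m}u^{pm}S'\subseteq\mathcal I_{l+1}$, so it suffices to know $N_S(S')\subseteq pS'$ (then $p^{l-m}u^{pm}N_S(s)\in p^{(l+1)-m}u^{pm}S'\subseteq\mathcal I_{l+1}$ as well). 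This inclusion $N_S(S')\subseteq pS'$ is where the actual work lies: in the unramified case $S$ is the $p$-adic completion of $W(k)[u,u^i/i!:i\ge 1]$, so every element of $S$ is uniquely $\sum_{j\ge 0}\beta_j\,u^j/j!$ with $\beta_j\in W(k)$, $\beta_j\to 0$, and it lies in $S'=S\cap\tilde S[1/p]$ precisely when $\beta_j=0$ for all $j$ with $p\nmid j$; since $N_S(u^j/j!)=-j\,u^j/j!$, we get $N_S\bigl(\sum_j\beta_j\,u^j/j!\bigr)=\sum_j(-j\beta_j)\,u^j/j!$, and when only indices divisible by $p$ occur every coefficient $-j\beta_j$ is divisible by $p$, so $N_S(S')\subseteq pS\cap\tilde S[1/p]=pS'$.

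Granting these facts, the induction is short. For $l=1$: since $v_0$ has entries in $\varphi(\fS)$ we have $U'v_0\in (S')^d$, hence $u^pU'v_0\in u^p(S')^d=\mathcal I_1^{\,d}$; and $N_S(v_0)\in pu^p\varphi(\fS)^d\subseteq\mathcal I_1^{\,d}$, because $N_S$ kills the constant term of a power series in $u^p$ and multiplies its $u^{pk}$-coefficient by $-pk$. So $v_1\in\mathcal I_1^{\,d}$. For the inductive step, assume $v_l\in\mathcal I_l^{\,d}$; then $u^pU'v_l\in u^p\mathcal I_l^{\,d}\subseteq\mathcal I_{l+1}^{\,d}$ (using that $\mathcal I_l$ is an ideal of $S'$ and $u^p\mathcal I_l\subseteq\mathcal I_{l+1}$), while $N_S(v_l)\in N_S(\mathcal I_l)^d\subseteq\mathcal I_{l+1}^{\,d}$ (a general element of $\mathcal I_l^{\,d}$ is a finite $S'$-combination of the generators, so no convergence issue arises); therefore $v_{l+1}=u^pU'v_l+N_S(v_l)\in\mathcal I_{l+1}^{\,d}$.

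The step I expect to be the main obstacle is precisely the inclusion $N_S(S')\subseteq pS'$, i.e. the fact that $N_S$ raises the $p$-adic level by one on $S'$. It is this that accounts for the powers of $p$ built into $\mathcal I_l$, reflecting the principle that each application of $N$ to the coordinates either multiplies by $u^pU'$ (raising the $u$-level) or applies $N_S$ (raising the $p$-level), so that $l$ applications land in $\mathcal I_l$. One must therefore have a sufficiently explicit hold on the structure of $S$, which the unramified hypothesis supplies; the same bookkeeping would need modification in the ramified setting.
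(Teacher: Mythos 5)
Your argument is essentially the paper's own proof: the same recursion $v_{l+1}=U\cdot v_l+N(v_l)$, the same two reductions $u^p\mathcal I_l\subseteq\mathcal I_{l+1}$ and $N(\mathcal I_l)\subseteq\mathcal I_{l+1}$ via the Leibniz rule, and the same key input $N(S')\subseteq pS'$ established by a coefficientwise computation, so the structure and conclusion are correct.

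One inaccuracy is worth flagging, though it is repairable on the spot. Your characterisation of $S'$ is false as a biconditional: an element $\sum_j \beta_j u^j/j!$ of $S$ supported in degrees divisible by $p$ need not lie in $\tilde S[\tfrac1p]$, since in $S$ the coefficient of $u^{pm}$ may have valuation as low as $-v_p((pm)!)\approx -\tfrac{p}{p-1}m$, whereas membership in $\tilde S[\tfrac1p]$ forces it to be at least $-m-N$ for a single $N$; thus $S\cap K_0\llbracket u^p\rrbracket$ strictly contains $S'=S\cap\tilde S[\tfrac1p]$. Fortunately your argument only uses the correct inclusion $S'\subseteq S\cap K_0\llbracket u^p\rrbracket$ to see that $N(z)\in pS$, and the missing half of the target $pS\cap\tilde S[\tfrac1p]=pS'$ --- namely that $N(z)\in\tilde S[\tfrac1p]$ --- is immediate from $N_S(\tilde S)\subseteq\tilde S$, which is recorded when $\tilde S$ is introduced. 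The paper's proof packages exactly this as the observation that the coefficientwise valuations of $N(z)/p$ dominate those of $z\in K_0\llbracket u^p\rrbracket$; that phrasing also avoids your appeal to the explicit unramified description of $S$ (the corollary carries no unramified hypothesis, though only the unramified case is used later). With that small repair your proof is complete and coincides with the paper's.
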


\begin{proof}
  We proceed by induction on $l$.  For $l=1$ we have $v_1 = U \cdot v +
  N(v)$, and since $U \cdot v$ and $N(v)$ both have entries in $u^p
  S'$ (the former by   Proposition~\ref{imageofN}) the
base case follows.

Suppose the statement is true for $l$,
  and consider the case $l+1$.  We have the recursion formula
$$ v_{l+1} = U \cdot v_{l} + N(v_l),$$
and it suffices to show that the two terms on the right-hand side of
the recursion both have entries in $\mathcal I_{l+1}$.  This is immediate for $U
\cdot v_l$ since $u^p \mathcal{I}_l \subset \mathcal{I}_{l+1}$ and $U \in u^p \Md(S')$.
For the other term we must show that $N(\mathcal{I}_l) \subset \mathcal I_{l+1}$.

Observe that if $z \in S'$ then
  $N(z) \in pS'$.  Indeed,  since $z \in K_0 \llbracket u^p \rrbracket$ and
$N(u^{pi}) = -piu^{pi}$, the valuation of the coefficient of $u^{j}$ in
$N(z)/p$ is at least the valuation of the coefficient of $u^{j}$ in
$z$ for any $j \ge 0$, and we have $N(z)/p \in S'$.  As a
consequence we see that
$$ N(p^{l-m} u^{pm} z) = p^{l-m} (-pm u^{pm} z + u^{pm} N(z)) \in
p^{l+1-m} u^{pm} S' \subset
\mathcal I_{l+1}$$ and the induction is complete.
\end{proof}

In the remainder of this subsection, we prove two  technical lemmas for the next
subsection.  In Lemma~\ref{technical2} we assume for simplicity that $K=K_0$
is unramified, although the analogue for general $K$ can be proved by
exactly the same argument.


\begin{lemma}\label{technical2} Assume that $K$ is unramified.  Suppose that $y \in \mathcal I_l$ for
  some $1 \le l \le p$, and
write $y   = \sum\limits_{i =0} ^\infty a_{i} (u -\pi)^i $ with $a_{i}\in K$.
Then we have $a_{i} \in W(k)$ for $0 \leq i \leq p$. More precisely we
have
$p ^{p+ l-1 } \mid a_{0}$, $p ^{p+l -i} \mid  a_{i}$ for $1 \leq
i \leq p-1$ and $p ^{l-1} \mid a_{p}$.
\end{lemma}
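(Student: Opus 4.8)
The plan is to reduce the statement to an explicit binomial expansion of $u^{pm}$ around $u=\pi$, exploiting two facts: that $K$ unramified forces $E(u)=u-\pi$ with $v_p(\pi)=1$, and that $S$ (hence $S'\subseteq S$) is the $p$-adic completion of $\bigoplus_{j\ge 0}W(k)\cdot\frac{(u-\pi)^j}{j!}$. The latter means every $s\in S'$ can be written $s=\sum_{j\ge 0}b_j(u-\pi)^j$ with $b_j\in K_0$ and $v_p(b_j)\ge -v_p(j!)$; in the relevant range this gives $b_j\in W(k)$ for $j\le p-1$ and $v_p(b_p)\ge -1$. Since $y\in\mathcal{I}_l$, we may also write $y=\sum_{m=1}^{l}p^{l-m}u^{pm}s_m$ as a finite sum with $s_m\in S'$.

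Next I would expand, working in the formal power series ring $K_0\llbracket u-\pi\rrbracket$: write $u^{pm}=(\pi+(u-\pi))^{pm}=\sum_{k}\binom{pm}{k}\pi^{pm-k}(u-\pi)^k$, form the Cauchy product with the expansion $s_m=\sum_j b^{(m)}_j(u-\pi)^j$, and extract coefficients. Because $i\le p\le pm$, for $0\le i\le p$ the coefficient of $(u-\pi)^i$ in $y$ is
\[
a_i \;=\; \sum_{m=1}^{l}p^{l-m}\sum_{k=0}^{i}\binom{pm}{k}\pi^{pm-k}\,b^{(m)}_{i-k}.
\]

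It then remains to estimate valuations termwise. The inputs are $v_p(\pi^{pm-k})=pm-k$; the elementary bounds $v_p\binom{pm}{k}\ge 1$ for $1\le k\le p-1$ and $v_p\binom{pm}{p}=v_p(m)\ge 0$ (immediate from $\binom{pm}{k}=\tfrac{pm}{k}\binom{pm-1}{k-1}$ together with the fact that $pm-1,\dots,pm-k+1$ are prime to $p$ for $k\le p$); and the bound $v_p(b^{(m)}_{i-k})\ge -v_p((i-k)!)$ recalled above, which costs a single power of $p$ only when $i=p$ and $k=0$. A short case split on $k\in\{0\}$, $\{1,\dots,p-1\}$, $\{p\}$ shows that the contribution to $a_i$ coming from a fixed $m$ has $p$-adic valuation at least $l+m(p-1)$ if $i=0$, at least $l+m(p-1)+1-i$ if $1\le i\le p-1$, and at least $l+m(p-1)-p$ if $i=p$. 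Since these lower bounds are increasing in $m$, the minimum over $1\le m\le l$ is attained at $m=1$ and equals $p+l-1$, $p+l-i$, and $l-1$ respectively; as $l\ge 1$ all these exponents are $\ge 0$, so $a_i\in W(k)$ for $0\le i\le p$, which finishes the proof.

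I do not anticipate a serious obstacle here. The only delicate points are not losing a power of $p$ in the binomial coefficients at the two boundary values $k=p-1$ and $k=p$ (so that the estimates do not degrade), and correctly accounting for the lone $p$ permitted in the denominator of $b_p$ — this is exactly the reason the bound at $i=p$ is weaker by $p-1$ than the pattern for $i\le p-1$ would suggest.
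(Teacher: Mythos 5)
Your proof is correct, and it takes a genuinely different (and slightly more economical) route than the paper's. The paper exploits the $u^p$-structure of $S'$: it expands each $z_m\in S'$ as $\sum_j b_{j,m}\,u^{pj}/(pj)!$ with $b_{j,m}\in W(k)$, binomially expands the full power $u^{p(j+m)}=((u-\pi)+\pi)^{p(j+m)}$, and controls the resulting infinite sum over $j$ for each coefficient $a_i$ using $p^{pj}/(pj)!\in\Z_p$ together with $p\mid\binom{p(j+m)}{i}$ for $1\le i\le p-1$. You instead use only the containment $S'\subseteq S$ and the divided-power description of $S$ relative to $E(u)=u-\pi$ (valid precisely because $K$ is unramified), expanding $s_m=\sum_j b_j^{(m)}(u-\pi)^j$ with $v_p(b_j^{(m)})\ge -v_p(j!)$ and binomially expanding only the prefactor $u^{pm}$; this makes each $a_i$ a finite sum and reduces the estimates to the elementary valuations of $\binom{pm}{k}$ for $k\le p$ plus the single denominator $p!$ at $j=p$. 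What this buys: no infinite rearrangement per coefficient, and in fact your argument proves the marginally stronger statement with $S$ in place of $S'$ (it never uses that $s_m$ is a series in $u^p$, only that the prefactor's exponent $pm$ is divisible by $p$); what the paper's version buys is that it works directly with the presentation of $S'$ inherited from $\tilde S$, which is the form in which these elements arise in Corollary~\ref{cor:technical1}. One cosmetic remark: in your closing sentence the weaker bound at $i=p$ is attributed to the denominator of $b_p$, but in your own case split the binding term at $i=p$, $m=1$ is the $k=p$ term (where $\binom{pm}{p}$ is a unit and $\pi^{p(m-1)}$ contributes nothing), which gives $l-1$, while the $b_p$ term gives $l+p-2$; the displayed bounds themselves are correct, so this does not affect the proof.
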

\begin{proof}  By hypothesis we have $y =  \sum\limits_{m =1}^l p
  ^{l-m} u ^{pm}z_{m} $ with $z_{m } \in S'$. We can write $z_m  = \sum\limits_{j =0}^\infty \frac{b_{j, m}u ^{p j}}{(pj)!}$ with $b_{j, m }\in W(k)$. Then
\begin{eqnarray*}
 y &= & \sum_{m=1}^l p ^{l-m} \left(\sum\limits_{j =0}^\infty b_{j, m  } \frac{u ^{p (j+m) }}{(pj)!}\right) \\
&= & \sum_{m=1}^l\sum\limits_{j =0}^\infty p ^{l-m} b _{j, m }\frac{(u-\pi +\pi)  ^{p (j+m)}}{(pj)!} \\
&= & \sum_{m=1}^l\sum_{j =0}^\infty p ^{l-m}\frac{b_{j, m }}{(pj)!} \left ( \sum_{i=0}^{p(j+m) }  {{p(j+m)}\choose {i}} (u-\pi)^i \pi ^{p (j+m)-i}  \right )\\
&= & \sum _{i =0}^\infty  \left (\sum _{m=1}^l \sum_{j \geq s_{i , p ,
      m }}\frac{b_{j, m } \pi^{p(j+m )-i} p^{l -m }}{(pj)!} {{p(j+m )}\choose {i}} \right )  (u-\pi)^i,
\end{eqnarray*}
where $s _{i, p , m}= \max \{0, i/p-m\}$. Since we only consider $a_i$
for $0 \leq i \leq  p$, we have $s_{i, p, m }= 0 $ in all our cases.
Note that $p^{pj}/(pj)! \in \Zp$ for all $j \ge 0$.  We first observe that  $v_p (a_0) \geq (p-1)m +l\geq p -1 +l $ because $m \geq 1$. If $1 \leq i \leq p-1$ then $p$ divides  ${{p(j+m)}\choose {i}}$. So we get $v_p (a_i)\geq pm-i +l -m +1\geq p+l -i $. Finally, we have $v_p (a_p)\geq pm +l -m -p \geq l-1 .$
\end{proof}

\begin{lemma}\label{technical3} We have $N^l ((u -\pi)^k)  = \sum \limits_{m
    =0}^k c_m \pi^m (u -\pi)^{k-m}$ for some $c_m \in \Z$.

\end{lemma}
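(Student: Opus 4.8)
The plan is to prove the identity by induction on $l$, the essential input being the formula $N(u-\pi)=-(u-\pi)-\pi$. Since $K$ is unramified we have $\pi\in K_0$, and $N=N_S$ is $K_0$-linear, so $N(\pi)=0$ and more generally $N(\pi^m)=0$ for all $m\ge 0$; combined with $N(u)=-u$ this gives $N(u-\pi)=N(u)=-u=-(u-\pi)-\pi$. Because $N$ is a derivation, the Leibniz rule then yields, for every $j\ge 1$,
\[ N\bigl((u-\pi)^j\bigr)=j(u-\pi)^{j-1}N(u-\pi)=-j(u-\pi)^j-j\pi(u-\pi)^{j-1}, \]
while $N(1)=0$. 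In particular $N$ maps the $K_0$-span of the monomials $\pi^m(u-\pi)^n$ into itself, with integer structure constants.

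First I would check the base case $l=0$, where the claim holds trivially with $c_0=1$ and all other $c_m=0$. For the inductive step, assume $N^l\bigl((u-\pi)^k\bigr)=\sum_{m=0}^{k}c_m\pi^m(u-\pi)^{k-m}$ with $c_m\in\Z$. Applying $N$, using $K_0$-linearity and $N(\pi^m)=0$ to move $N$ past the scalars $\pi^m$, and substituting the Leibniz formula above for each $N\bigl((u-\pi)^{k-m}\bigr)$, one obtains
\[ N^{l+1}\bigl((u-\pi)^k\bigr)=\sum_{m=0}^{k}\bigl(-(k-m)c_m\bigr)\pi^m(u-\pi)^{k-m}+\sum_{m=0}^{k}\bigl(-(k-m)c_m\bigr)\pi^{m+1}(u-\pi)^{k-m-1}. \]
Reindexing the second sum by $m\mapsto m-1$, the putative term with $(u-\pi)^{-1}$ occurs with coefficient $-(k-k)c_k=0$ and so drops out, and everything collects as $\sum_{m=0}^{k}c'_m\pi^m(u-\pi)^{k-m}$ with $c'_m=-(k-m)c_m-(k-m+1)c_{m-1}$ (setting $c_{-1}:=0$). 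Since the $c_m$ are integers, so are the $c'_m$, which completes the induction.

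I do not expect a genuine obstacle here: this is a routine computation with a derivation on a power series ring. The only two points deserving care are (i) the vanishing $N(\pi)=0$, which is precisely where the unramified hypothesis (equivalently $\pi\in K_0$) is used --- in the ramified case $N(\pi)$ need not vanish and the clean statement would break --- and (ii) the bookkeeping in the reindexing step, where one must verify that no true negative power of $(u-\pi)$ is produced, which holds because the offending coefficient is forced to be zero. (Alternatively the induction could be run on $k$ via $N(v^k)=N(v)v^{k-1}+vN(v^{k-1})$, but induction on $l$ using the Leibniz rule is the most direct route.)
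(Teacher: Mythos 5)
Your proof is correct and follows essentially the same route as the paper: induction on $l$, using the derivation property of $N$ together with $N(u-\pi)=-u=-(u-\pi)-\pi$ (the paper writes this as $-u+\pi-\pi$) to split the result back into the asserted form with integer coefficients. Your explicit remarks on $N(\pi)=0$ and the vanishing of the would-be $(u-\pi)^{-1}$ term merely spell out details the paper leaves implicit.
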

\begin{proof} We induct on $l$, with trivial base case
  $l=0$. Assume that the statement is true for $l$. Then
\begin{eqnarray*}
N^{l+1} ((u -\pi)^k) &=&  N \left(\sum \limits _{m=0}^k c_m \pi^m (u -\pi) ^{k-m}\right)\\
&=& \sum_{m=0}^k c_m \pi^m (k-m) (u-\pi)^{k-m-1}(-u+\pi-\pi)
\end{eqnarray*}
which rearranges to
$$\sum_{m=0}^k c_m(m-k) \pi^m (u-\pi)^{k-m} + \sum_{m=0}^k c_m (m-k)\pi ^{m+1} (u-\pi)^{k-m -1}.$$
The induction follows.
\end{proof}

\subsection{The proof of Theorem~\ref{shape}}
Retain the notation of the previous subsections, but assume now that
$K=K_0$ is unramified, $V$ is crystalline, and $r \le p$.  Recall that
$\pi$ denotes our fixed choice of uniformiser in $W(k)$.  The
essential remaining input that we need for the proof of
Theorem~\ref{shape} is the statement that
$f_\pi (\Fil ^i \M^*) = \Fil ^i M_K  $ for $i \in \Z$
when $p \ge 3$.  The proof of that statement is the key point where
the hypothesis $r \le p$ is used (see Remark~\ref{rem:why-not-p+1} below).  
We begin the proof with the following lemma.

\begin{lemma}\label{unramified-only} Assume that $K$ is unramified.  There exists a $\varphi(\fS)$-basis $\e_1 , \dots , \e_d\in \M$ such that for $0 \leq i \leq r$,
$f_\pi( \e_{n_i}), \dots , f_\pi (\e_d)$ forms a $\O_K$-basis for $\Fil ^ i M_K. $
\end{lemma}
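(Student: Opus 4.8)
The plan is to lift the filtration-compatible $\O_K$-basis $e_1,\dots,e_d$ of $M_K$ constructed just before the lemma to a $\varphi(\fS)$-basis of $\M$, by a Nakayama-type argument; crystallinity and the hypothesis $r\le p$ play no role here.

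First I would check that $f_\pi\colon\M\to M_K$ is surjective. Recall that $\M$ sits inside $\D$ as a $\varphi(\fS)$-submodule via $m\mapsto 1\otimes m$, and that $\M^*=\fS\otimes_{\varphi,\fS}\M$ is the $\fS$-span of $\M$ in $\D$. Since $K$ is unramified we have $\O_K=W(k)$ and $v_p(\pi)\ge 1$, and $f_\pi$ sends any element of the form $a\cdot x$ with $a\in\fS$ and $x\in\M$ to $\bar a\, f_\pi(x)$, where $\bar a\in\O_K$ is the image of $a$ under $u\mapsto\pi$; hence $M_K=f_\pi(\M^*)=\O_K\cdot f_\pi(\M)$. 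But $f_\pi(\M)$ is already an $\O_K$-submodule of $D_K$, because the image of $\varphi(\fS)$ under $u\mapsto\pi$ is all of $\O_K$ (it contains the constants), so in fact $M_K=f_\pi(\M)$.

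Next I would note that $f_\pi$ carries $(p,u^p)\M$ into $pM_K$ --- this is the one place unramifiedness is used, via $\pi^p\in p\O_K$ --- and hence induces a surjection $\bar f\colon\M/(p,u^p)\M\twoheadrightarrow M_K/pM_K$ of $k$-vector spaces, noting that $(p,u^p)$ is the maximal ideal of the local ring $\varphi(\fS)\cong W(k)\llbracket u^p\rrbracket$, with residue field $k$. Both sides have dimension $d$ --- the source because $\M$ is free of rank $d$ over $\varphi(\fS)$, the target because $M_K$ is free of rank $d$ over $\O_K$ --- so $\bar f$ is an isomorphism.

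Finally, using the surjectivity of $f_\pi$ I would lift each $e_j$ to some $\e_j\in\M$ with $f_\pi(\e_j)=e_j$. The reductions of $\e_1,\dots,\e_d$ in $\M/(p,u^p)\M$ are carried by the isomorphism $\bar f$ to the $k$-basis $\bar e_1,\dots,\bar e_d$ of $M_K/pM_K$, hence form a $k$-basis of $\M/(p,u^p)\M$; by Nakayama's lemma $\e_1,\dots,\e_d$ is then a $\varphi(\fS)$-basis of $\M$. Since $e_{n_i},\dots,e_d$ was chosen to be an $\O_K$-basis of $\Fil^i M_K$ for each $0\le i\le r$, so is $f_\pi(\e_{n_i}),\dots,f_\pi(\e_d)=e_{n_i},\dots,e_d$, which is precisely what the lemma asserts. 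The argument is elementary; the only points that require care are pinning down the identification of $\M$ with a $\varphi(\fS)$-lattice in $\D$, the surjectivity of $f_\pi$ onto $M_K$, and the fact that $f_\pi$ reduces to an isomorphism modulo the maximal ideal of $\varphi(\fS)$.
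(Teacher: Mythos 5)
Your proof is correct, and it reaches the lemma by a slightly different mechanism than the paper. The paper lifts the adapted basis $e_1,\dots,e_d$ of $M_K$ to an $\fS$-basis $\e'_1,\dots,\e'_d$ of $\M^*$, picks an arbitrary $\varphi(\fS)$-basis $\tilde\e_1,\dots,\tilde\e_d$ of $\M$, writes $(\e'_1,\dots,\e'_d)=(\tilde\e_1,\dots,\tilde\e_d)B$ with $B\in\GL_d(\fS)$, and then replaces $B$ by its specialization $B_0=f_\pi(B)\in\GL_d(W(k))$; since $W(k)\subset\varphi(\fS)$, the tuple $(\tilde\e_1,\dots,\tilde\e_d)B_0$ is again a $\varphi(\fS)$-basis of $\M$, and it has the same $f_\pi$-images as the $\e'_j$. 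You instead prove directly that $f_\pi|_{\M}\colon\M\to M_K$ is surjective, lift the $e_j$ elementwise, and verify you have a basis by reducing modulo the maximal ideal $(p,u^p)$ of $\varphi(\fS)\cong W(k)\llb u^p\rrb$ and applying Nakayama. The two arguments exploit the identical key fact — that $\O_K=W(k)$ lies inside $\varphi(\fS)$ (equivalently, $f_\pi(\varphi(\fS))=\O_K$) — which is exactly the point that breaks down over a ramified base, as the remark following the lemma emphasizes; your approach trades the change-of-basis matrix for a surjectivity-plus-Nakayama argument and delivers exact lifts $f_\pi(\e_j)=e_j$, which the paper's construction also achieves, so neither route is stronger. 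One small inaccuracy in your commentary: the step $f_\pi\bigl((p,u^p)\M\bigr)\subset pM_K$ is not ``the one place unramifiedness is used'' — the containment $\pi^p\in p\O_K$ would hold whenever $e\le p$ — whereas your surjectivity step (the image of $\varphi(\fS)$ under $u\mapsto\pi$ being all of $\O_K$, so that $f_\pi(\M)$ is an $\O_K$-module equal to $M_K$) and the identification of $M_K/pM_K$ as a $k$-vector space are where unramifiedness genuinely enters; you do invoke $\O_K=W(k)$ there, so the proof itself is unaffected.
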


\begin{proof} There
  exists an $\fS$-basis $\e'_1 , \dots, \e'_d $ of~$\M^*$ such that
  $f_\pi (\e'_{n_i}), \dots,  f_\pi (\e'_d)$ forms an $\O_K$-basis of
  $\Fil ^i M_K$ for all $0 \le i \le r$.  (Choose any basis of $M_K$ as
  in the sentence preceding
  Lemma \ref{subbasis}, and lift it to $\M^*$.)
Select any $\varphi (\fS)$-basis $\tilde \e_1 ,\dots,
  \tilde \e_d$ of $\M$. We have $(\e'_1 , \dots , \e'_d) = (\tilde
  \e_1, \dots, \tilde \e_d) B$ where $B \in \Md(\fS)$ is an
  invertible matrix.  Let $B_0= f_\pi (B)\in \GL_d (W(k))$ and set
  $(\e_1, \dots , \e_d):  = (\tilde \e_1, \dots, \tilde \e_d) B_0
  $.  Evidently $\e_1,\dots, \e_d$ is a $\varphi(\fS)$-basis of $\M$. Also note that
$f_\pi (\e_i)= f_\pi (\e'_i)$, so $\e_1,\dots, \e_d$ is just the basis we need.
\end{proof}
\begin{remark} The fact that $B_0$ has entries in $W(k)$ in the above lemma makes essential use of the hypothesis that
  $K$ is unramified. We are not aware of any way to extend this lemma to the
  case of a ramified base.
\end{remark}

\begin{prop}
  \label{prop:surjectivity}
  Suppose that $K$ is unramified, $V$ is crystalline, $r \le p$ and $p
  \ge 3$.  Let $\e_1,\ldots,\e_d$ be a basis of $\fM$ as in
  Lemma~\ref{unramified-only} .  Then there exists an $\fS$-basis
  $\e'_1,\ldots,\e'_d$ of $\fM^*$ with the properties that
  $f_{\pi}(\e'_j) = f_{\pi}(\e_j)$ and
  $\e'_j - \e_j \in p \sum_{j'=1}^{d} \fS \e_{j'}$ for all $1 \le j \le d$, and moreover $\e'_j \in \Fil^i
  \fM^*$ whenever $n_i \le j < n_{i+1}$ (taking $i = r_d$ when $j \ge
  n_{r_d}$).

 In particular  $f_\pi (\Fil ^i \M^*) = \Fil ^i M_K$ for all $i \ge 0$.
\end{prop}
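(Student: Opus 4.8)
The plan is to produce $\e'_\bullet$ by correcting the basis $\e_\bullet$ of Lemma~\ref{unramified-only}, the corrections being controlled $p$-adically by the monodromy estimates of Corollary~\ref{cor:technical1} and Lemmas~\ref{technical2} and~\ref{technical3}. First I would rephrase membership in $\Fil^i\fM^*$ in a convenient form. By Lemma~\ref{intersection}(1) we have $\Fil^i\fM^*=\fM^*\cap\Fil^i\D$, and unwinding the recursion~\eqref{formula-filtration} shows that $x\in\Fil^i\D$ if and only if $f_\pi(N^l(x))\in\Fil^{i-l}D_K$ for all $0\le l\le i$ (with $\Fil^0 D_K=D_K$). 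Since $\fM^*$ is $\fS$-free on $\e_1,\dots,\e_d$, the $f_\pi(\e_j)$ form an $\O_K$-basis of $M_K$, and by the choice of $\e_\bullet$ the subfamily $f_\pi(\e_{n_{i'}}),\dots,f_\pi(\e_d)$ spans the $\O_K$-direct summand $\Fil^{i'}M_K=M_K\cap\Fil^{i'}D_K$; hence a vector $\sum_k c_k f_\pi(\e_k)\in M_K$ lies in $\Fil^{i'}D_K$ exactly when $c_k=0$ for all $k<n_{i'}$. Writing $i_j$ for the integer with $n_{i_j}\le j<n_{i_j+1}$ and taking $\e'_j=\e_j$ for $j<n_1$, the problem becomes: for each $j\ge n_1$, adjust $\e_j$ inside $\fM^*$, modulo $p\fM^*$ and without changing $f_\pi(\e_j)$, so that $f_\pi(N^m(\e'_j))$ has vanishing $f_\pi(\e_k)$-component for every $k<n_{i_j-m}$ and every $1\le m\le i_j-1$ (the cases $m=0$ and $m=i_j$ being automatic).

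Next I would extract the numerical input. If $x\in\fM$ and $N^l(x)=\sum_k(w_l)_k\e_k$ in the $\varphi(\fS)$-basis $\e_\bullet$, then Corollary~\ref{cor:technical1} gives $(w_l)_k\in\mathcal{I}_l$, and Lemma~\ref{technical2} — applicable because every level $l$ arising below satisfies $l\le i_j-1\le r-1\le p-1$ — shows that the coefficient $a_0$ in the expansion $(w_l)_k=\sum_m a_m(u-\pi)^m$ lies in $W(k)$ with $v_p(a_0)\ge p+l-1$. Applying $f_\pi$, this expresses $f_\pi(N^l(\e_j))$ as an $\O_K$-combination of the $f_\pi(\e_k)$ all of whose coefficients have $v_p\ge p+l-1$, and in particular $f_\pi(N^l(\e_j))\in M_K$. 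Using Lemma~\ref{technical3} together with the identity $N((u-\pi)^m)=-m(u-\pi)^m-m\pi(u-\pi)^{m-1}$ one computes that $f_\pi(N^a((u-\pi)^m))$ vanishes for $a<m$ and equals $(-1)^m m!\,\pi^m$ for $a=m$; hence, $N$ being a derivation, a correction term $c\,(u-\pi)^m\e_k$ with $c\in W(k)$ affects $f_\pi\circ N^l$ only for $l\ge m$, keeps all the resulting vectors in $M_K$, and at level $l=m$ contributes precisely $c(-1)^m m!\,\pi^m f_\pi(\e_k)$, where $m!$ is a unit because $m\le p-1$.

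Then I would build the correction by induction on the level $m=1,2,\dots,i_j-1$, setting $\e'_j=\e_j+\sum_{m=1}^{i_j-1}\sum_{k<n_{i_j-m}}c^{(m,k)}_j(u-\pi)^m\e_k$ with $c^{(m,k)}_j\in W(k)$ chosen at each stage to annihilate the accumulated $f_\pi(\e_k)$-component of $f_\pi(N^m(\cdot))$. By the previous paragraph that component has $v_p\ge p$: the contribution of $\e_j$ itself has $v_p\ge p+m-1$, and the contribution of each already-chosen lower-level term $c^{(m',k')}_j(u-\pi)^{m'}\e_{k'}$ has $v_p\ge v_p(c^{(m',k')}_j)+m'\ge p$ using the inductive bound $v_p(c^{(m',k')}_j)\ge p-m'$ (the terms with $a<m$ in the derivation expansion are even more divisible); dividing by $(-1)^m m!\,\pi^m$, of valuation $m$, gives $v_p(c^{(m,k)}_j)\ge p-m\ge 1$, so $\e'_j-\e_j\in p\fM^*$ and the inductive bound persists. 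Since $(u-\pi)$ maps to $0$ under $f_\pi$ we keep $f_\pi(\e'_j)=f_\pi(\e_j)$; the change-of-basis matrix from $\e_\bullet$ to $\e'_\bullet$ is $I_d$ plus a matrix with entries in $p\fS$, hence invertible over $\fS$, so $\e'_\bullet$ is an $\fS$-basis of $\fM^*$; and by construction $f_\pi(N^m(\e'_j))\in\Fil^{i_j-m}D_K$ for $0\le m\le i_j$, i.e.\ $\e'_j\in\Fil^{i_j}\fM^*$, whence $\e'_j\in\Fil^i\fM^*$ whenever $n_i\le j$. The final identity $f_\pi(\Fil^i\fM^*)=\Fil^i M_K$ then follows: ``$\subseteq$'' because $\Fil^i\fM^*\subseteq\fM^*\cap\Fil^i\D$ maps into $M_K\cap\Fil^i D_K$, and ``$\supseteq$'' because $\Fil^i\fM^*\supseteq\sum_{j\ge n_i}\fS\e'_j$ while the $f_\pi(\e'_j)=f_\pi(\e_j)$ with $j\ge n_i$ generate $\Fil^i M_K$.

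The main obstacle is the valuation bookkeeping in the third paragraph: one must verify, uniformly in $m$, that every lower-level correction term contributes to $f_\pi(N^m(\cdot))$ with valuation at least $p$, so that the recursion closes with room to spare, which requires unwinding the full product rule for $N$ and invoking the estimates of the second paragraph at all intermediate levels simultaneously. It is exactly here that the hypotheses are used: $p\ge 3$ enters through Corollary~\ref{cor:technical1} and hence through the exponent $p+l-1$, and $r\le p$ guarantees that only levels $m\le r-1\le p-1$ ever occur, keeping the factorials $m!$ invertible; see Remark~\ref{rem:why-not-p+1}.
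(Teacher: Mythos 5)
Your argument is correct, and it reaches the same endpoint as the paper — corrections of $\e_j$ by terms $c\,(u-\pi)^m\e_k$ with $c\in W(k)$ and $v_p(c)\ge p-m$, exactly the shape of the assertion $(\star)$ in the paper's proof — but by a different mechanism. The paper runs an induction on the filtration level $i$: from $\e_j^{(i)}\in\Fil^i\fM^*$ it forms the Taylor-transport element $\tilde\e_j^{(i+1)}=\sum_{l\le i}H(u)^lN^l(\e_j^{(i)})/l!$ with $H(u)=(u-\pi)/\pi$, checks directly from~\eqref{formula-filtration} that $N(\tilde\e_j^{(i+1)})\in\Fil^i\D$, truncates the $(u-\pi)$-expansion at degree $i$, and then proves the divisibility $p^{p-m}\mid b_{m,k}$ of the resulting coefficients. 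You instead unwind~\eqref{formula-filtration} into the criterion $x\in\Fil^i\D\iff f_\pi(N^l(x))\in\Fil^{i-l}D_K$ for $0\le l\le i$, translate this into the vanishing of the $f_\pi(\e_k)$-components with $k<n_{i_j-m}$, and solve the resulting triangular system by induction on the monomial degree $m$, using that a level-$m$ correction does not affect $f_\pi\circ N^l$ for $l<m$ and contributes at level $m$ only along $f_\pi(\e_k)$ with the unit-times-$\pi^m$ factor $(-1)^m m!\,\pi^m$. The numerical inputs are identical in both proofs (Corollary~\ref{cor:technical1} and Lemmas~\ref{technical2},~\ref{technical3}), and the hypotheses enter the same way: $r\le p$ keeps every level $m\le p-1$, so $m!$ is a unit and the solved coefficients have $v_p\ge p-m\ge 1$, while $p\ge 3$ is needed for Corollary~\ref{cor:technical1}. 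What your route buys is that membership in $\Fil^{i}\D$ is verified coefficientwise against the flag in $D_K$ coming from Lemma~\ref{unramified-only}, avoiding the paper's construction-and-truncation step and the verification that $N(\tilde\e_j^{(i+1)})\in\Fil^i\D$; the price is the bookkeeping you carry out showing that previously chosen lower-level corrections contribute to level $m$ with valuation at least $p$ (via the $t=0$ Leibniz term of size $v_p(c^{(m',k')})+m'\ge p$ and the $t\ge1$ terms of size at least $2p$), which you do correctly, so the recursion closes.
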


\begin{proof}
Let $\e_1,\ldots,\e_d$ be a basis of $\fM$ as in Lemma~\ref{unramified-only},
and  set $e_j = f_\pi (\e_j) \in M_K$.  The desired statement is only
nontrivial for $1 \le i \le r$.  To prove $f_\pi (\Fil ^i \M^*) = \Fil ^i M_K$ for $1 \leq i \leq r$, we consider the following assertion:

$(\star)$ For each $i= 1, \dots , r$, there exist $\e^{(i)}_{n_i},
\dots, \e^{(i)}_d \in \Fil ^i \M^*$ such that for all $n_i \leq j \leq
d$ we have
$$\e^{(i)}_j = \e_j  + \sum_{n=1}^d \sum_{s=1}^{i-1 }\alpha^{(i)}_{j , n, s
}p^{p-s} (u - \pi)^s \e_n$$ with $\alpha_{j, n, s} ^{(i)}\in W(k)$.  (Recall
that the integers $n_i$ are defined immediately above the statement
of Lemma~\ref{subbasis}.)  Since $f_{\pi}(\e_j^{(i)}) = f_{\pi}(\e_j)
= e_j$, this assertion is sufficient to establish the result, taking
$\e'_j = \e_j^{(i)}$ whenever $n_i \le j < n_{i+1}$ (and using $i = r_d$ when $j \ge
  n_{r_d}$.

We will prove the statement $(\star)$ by induction on $i$. Let us
first treat the case that $i=1$. We just set $\e^{(1)}_j = \e_j $ for
$n_1 \leq j \leq d$. Note that $f_\pi (\e^{(1)}_j) = f_\pi (\e_j) =
e_j \in \Fil ^1 M_K \subset \Fil^1 D_K$. By the construction of $\Fil
^1 \D$, we see that $\e^{(1)}_j \in \Fil^1 \D$, and therefore $
\e^{(1)}_j \in
\M^* \cap \Fil ^1 \D = \Fil ^1 \M^*$. This settles the case that $i
=1$.

Now assume that $(\star)$ is valid for some $i  < r$, and let us consider
the case $i+1$.  Set $H(u)= \frac{u-\pi}{\pi} $. If $n_{i+1} \le j \le d$ we set
$$\tilde \e_j ^{(i+1)} :  = \sum _{l=0} ^{i} \frac{H(u)^l N^l (\e_j ^{(i)})}{l !}.$$
We claim that $\tilde \e^{(i+1)}_j \in \Fil ^{i+1}\D$.  Since
$f_{\pi}(\tilde  \e^{(i+1)}_j ) = e_j \in \Fil^{i+1} D_K$, from~\eqref{formula-filtration} it suffices to
check that $N(\tilde  \e^{(i+1)}_j ) \in \Fil^i \D$.   One computes,
after rearranging, that
$$ N(\tilde  \e^{(i+1)}_j ) = \frac{H(u)^i N^{i+1}(\e_j^{(i)})}{i!} +
\sum_{l=1}^i \frac{(1+N(H(u))) H(u)^{l-1} N^l(\e_j^{(i)})}{(l-1)!}.$$
Now the claim follows from the facts that $N^l(\e_j^{(i)}) \in
\Fil^{i-l} \D$ (apply \eqref{formula-filtration} again, together with
the inductive assumption that $\e_j^{(i)}\in\Fil^i\M^*\subset\Fil^i\D$) and $H(u)^l \in \Fil^l S_{K_0}$,
together with the observation that $1 + N(H(u))= 1
-u/\pi \in \Fil ^ 1 S_{K_0}$.

Now by induction, we have
$$\tilde \e_j ^{(i+1)}- \e^{(i)}_j  = \sum _{l=1} ^i  \frac{(u -\pi) ^l
}{\pi^l l !} N^l \left ( \e_j  + \sum_{n=1}^d \sum_{s=1}^{i-1
  }\alpha^{(i)}_{j , n, s}p^{p-s} (u - \pi)^s \e_n \right )$$
which rearranges to
\begin{multline}\label{eq:big}\tilde \e_j ^{(i+1)}- \e^{(i)}_j  =  \sum_{l=1} ^i \frac{(u -\pi) ^l }{\pi ^l l !} N ^l (\e_j) +\\
 \sum_{n=1}^d \sum _{l =1}^i \sum_{s=1 }^{i-1}\frac{p ^{p -s}
   (u-\pi)^{l}}{\pi^l l!} \alpha^{(i)}_{j, n , s}\sum_{t=0}^l \binom{l}{t}
 N^{l-t} ((u-\pi )^s ) N^{t}(\e_n).\end{multline}
Now write $N^{t}(\e_n)  = \sum\limits_{k=1}^d \sum\limits_{m=0}^\infty
c^{n, t}_{m , k} (u-\pi)^m\e_k $ with $c^{n, t}_{m , k} \in
K_0$.  Using Lemma \ref{technical3} and noting that we always have $l \ge 1$, we can write
$$\tilde \e_j ^{(i+1)}=  \e^{(i)}_j + \sum_{k=1}^d \sum_{m = 1}^\infty
b_{m, k} (u -\pi)^m \e_k $$
for some elements $b_{m,k} \in K_0$.  Now we remove all terms of $(u
-\pi)$-degree at least $i +1$ from this
expression, and define
$$\e^{(i+1)}_j= \e^{(i)}_j + \sum\limits_{k=1}^d \sum\limits_{m
  = 1}^{i} b_{m, k} (u -\pi)^m \e_k.$$
Since $(u-\pi)^{i+1} = E(u)^{i+1} \in \Fil^{i+1} S_{K_0}$, we see that $\e_j ^{(i+1)}$ is still in
$\Fil^{i+1}\D$.  Comparing with $(\star)$, it remains to prove that $p
^{p-m} \mid b _{m, k }$, which we do by showing that every occurrence
of $(u-\pi)^m \e_k$ on the right-hand side of \eqref{eq:big} has coefficient
divisible by~$p^{p-m}$.  There are two cases to consider.

We begin with terms coming from the first sum $\sum\limits_{l=1} ^i \frac{(u -\pi) ^l
}{\pi^l l !} N ^l (\e_j)$  in \eqref{eq:big}. By
Corollary~\ref{cor:technical1} and Lemma~\ref{technical2}, each term
coming from this sum is of the form $\frac{(u-\pi)^l}{\pi^l l!} \cdot a_h
(u-\pi)^h \e_k$ with $l+h \le i$, and with $a_h$ as in
Lemma~\ref{technical2}  applied to
$\mathcal I_l$.  In all cases $a_h$ is divisible by
$p^{p+l-h-1}$, and so this occurrence of $(u-\pi)^{l+h} \e_k$ has
coefficient divisible by $p^{p-h-1}$.  Since $l \ge 1$, the claim
follows in this case.

 For the large second sum in \eqref{eq:big}, by Corollary~\ref{cor:technical1}, Lemma~\ref{technical2},
 and Lemma~\ref{technical3}, each term coming from this sum is of the
 form
$$ \alpha^{(i)}_{j,n,s}\frac{p^{p-s} (u-\pi)^l}{\pi^l l!} \binom{l}{t}
\cdot \left[  c_m \pi^m (u-\pi)^{s-m} \right]  \cdot \left[ a_h
  (u-\pi)^h \right] \e_k$$ with $l+(s-m)+h \le i$, with $c_m \in \Z$ as in Lemma~\ref{technical3}, with
$a_h$ as in Lemma~\ref{technical2} applied to $\mathcal I_t$ if $t\ge 1$, and with $a_h =
\delta_{k,n} \delta_{h,0}$ if $t = 0$.  (Here $\delta_{x,y}$ is $1$ if
$x=y$ and $0$ otherwise.)  In all cases we have $a_h \in W(k)$, which
is all that we will need here.  In particular this occurrence of
$(u-\pi)^{l + s-m+h}$ has coefficient divisible by $p^{p-s}
\pi^{m-l}$, or equivalently by $p^{p-s+m-l}$.    Since $h \ge 0$, this
gives what we need.
\end{proof}

\begin{rem}
  \label{rem:why-not-p+1}
  If we had $r=p+1$, then the induction in the above argument would
  fail when trying to deduce the case $i=p+1$ from the case $i=p$.   Indeed if we had $i=p$ in the last
  paragraph of the proof, then the term with $l = p$, $t=h=0$, and
  $m=s$ and $k = n$ would have the form $\alpha^{p}_{j,n,s} c_s
  \frac{p^{p-s} (u-\pi)^p}{\pi^p p!} \pi^s \e_n$, whose coefficient
  need not be in $W(k)$.
\end{rem}

Combining Propositions~\ref{needsurjection} and~\ref{prop:surjectivity} immediately gives
the following.

\begin{cor} \label{key7}
  Suppose that $K$ is unramified, $V$ is crystalline, $r \le p$, and
  $p \ge 3$.   There exists an $\fS$-basis $\hat e_1 ,
  \dots , \hat e_d$ of $\M^*$ such that $\Fil ^{r_d} \M^* $ is generated by $(\hat
  e_1, \dots , \hat e_d)\Lambda^*$, where $\Lambda^*$ is the matrix $[
  E(u)^{r_d-r_1},\ldots,E(u)^{r_d-r_d}]$.
\end{cor}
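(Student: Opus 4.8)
The plan is to observe that this corollary is a direct concatenation of the two preceding propositions, so that no new argument is required. First I would invoke Proposition~\ref{prop:surjectivity}, whose hypotheses ($K$ unramified, $V$ crystalline, $r \le p$, $p \ge 3$) coincide exactly with those of the corollary; its conclusion asserts in particular that $f_\pi(\Fil^i \M^*) = \Fil^i M_K$ for all $i \ge 0$. This is the one substantive input, and it is where essentially all of the work has already been spent — namely the delicate induction on the Hodge filtration level in the proof of Proposition~\ref{prop:surjectivity}, resting on Corollary~\ref{cor:technical1} and Lemmas~\ref{technical2} and~\ref{technical3}. The present corollary contributes only bookkeeping on top of that.

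Next I would feed this equality into Proposition~\ref{needsurjection}, whose standing hypothesis is precisely that $f_\pi(\Fil^i \M^*) = \Fil^i M_K$ for all $i \in \Z_{\ge 0}$. Part~(1) of that proposition then produces an $\fS$-basis $\hat e_1, \dots, \hat e_d$ of $\M^*$ with $f_\pi(\hat e_j) = e_j$ for all $j$ and $\hat e_j \in \Fil^i \M^*$ whenever $j \ge n_i$; part~(2) asserts that, for any basis of this shape, $\Fil^{r_d} \M^*$ is generated by $(\hat e_1, \dots, \hat e_d)\Lambda^*$ with $\Lambda^* = [E(u)^{r_d - r_1}, \dots, E(u)^{r_d - r_d}]$. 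This is verbatim the statement of the corollary, so at that point the proof is complete.

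The one point requiring a moment's care is that the basis delivered by Proposition~\ref{needsurjection}(1) need not coincide with the basis $\e_1, \dots, \e_d$ of $\M$ coming from Lemma~\ref{unramified-only} (nor with its refinement $\e'_1, \dots, \e'_d$ of $\M^*$ produced inside the proof of Proposition~\ref{prop:surjectivity}); but this is harmless, since part~(2) of Proposition~\ref{needsurjection} is stated for \emph{any} basis satisfying the conditions of its part~(1). Consequently I do not expect any genuine obstacle here: the whole content of the corollary is already encoded in Propositions~\ref{needsurjection} and~\ref{prop:surjectivity}. If one instead wished to reprove it from scratch, the hard part would be exactly the $p$-adic divisibility estimate $p^{p-m} \mid b_{m,k}$ established in the last two paragraphs of the proof of Proposition~\ref{prop:surjectivity} — the step that genuinely uses the hypothesis $r \le p$ (cf.\ Remark~\ref{rem:why-not-p+1}) — together with the verification there that the filtration on $\M^*$ is correctly computed by $f_\pi$.
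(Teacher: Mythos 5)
Your proposal is correct and matches the paper's own proof, which states that the corollary follows immediately by combining Proposition~\ref{needsurjection} and Proposition~\ref{prop:surjectivity}: the latter supplies the hypothesis $f_\pi(\Fil^i\M^*)=\Fil^i M_K$ under which the former produces the desired basis and the generation statement for $\Fil^{r_d}\M^*$. Your remark that Proposition~\ref{needsurjection}(2) applies to any basis as in its part~(1) is an accurate and harmless clarification.
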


Finally, we can prove Theorem~\ref{shape}, which we re-state here for
the convenience of the reader.

\begin{thm} \label{shape-redux}
Assume that $K$ is unramified, $V$ is crystalline, $r \le p$, and $p \ge
3$. Then
there exists an $\fS$-basis  $e_1, \dots, e_d$ of $\fM$ such that the
matrix of $\varphi$ is $X\Lambda Y$ where $X$ and $Y$ are invertible
matrices such that $Y$
is congruent to the identity matrix modulo $p$, and where $\Lambda$ is the
matrix $[E(u)^{r_1}, \dots, E(u)^{r_d}]$.
\end{thm}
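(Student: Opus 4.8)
The plan is to deduce Theorem~\ref{shape-redux} from Propositions~\ref{needsurjection} and~\ref{prop:surjectivity}, by converting the statement about $\Fil^{r_d}\fM^*$ into a statement about the matrix of $\varphi$ on $\fM$ and then performing a single change of basis. First I would invoke Proposition~\ref{prop:surjectivity}: it supplies a $\varphi(\fS)$-basis $\e_1,\dots,\e_d$ of $\fM$ (coming from Lemma~\ref{unramified-only}) and an $\fS$-basis $\e'_1,\dots,\e'_d$ of $\fM^*$ with $f_\pi(\e'_j)=f_\pi(\e_j)$, $\e'_j-\e_j\in p\fM^*$, and $\e'_j\in\Fil^i\fM^*$ whenever $n_i\le j<n_{i+1}$. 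Unwinding $\dim_K\Fil^i D_K=d-n_i+1$ shows that $n_i\le j<n_{i+1}$ forces $r_j=i$, so the last condition says precisely $\e'_j\in\Fil^{r_j}\fM^*$ for all $j$. Since $\{\e'_j\}$ also satisfies the hypotheses of Proposition~\ref{needsurjection}(1) (the $f_\pi(\e'_j)$ form an adapted $\O_K$-basis of $M_K$, and $\e'_j\in\Fil^i\fM^*$ for every $j\ge n_i$ as $\Fil^\bullet$ is decreasing), Proposition~\ref{needsurjection}(2) — this is exactly the computation behind Corollary~\ref{key7} — gives that $\{E(u)^{r_d-r_j}\e'_j\}_{j=1}^d$ is an $\fS$-basis of $\Fil^{r_d}\fM^*$.

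Next I would push this through $1\otimes\varphi\colon\fM^*\to\fM$. This map is injective, and since $V$ has Hodge--Tate weights in $[0,r_d]$ the Kisin module $\fM$ has height $r_d$, so $\coker(1\otimes\varphi)$ is killed by $E(u)^{r_d}$ and $1\otimes\varphi$ restricts to an isomorphism $\Fil^{r_d}\fM^*\isomap E(u)^{r_d}\fM$. Writing $g_j:=(1\otimes\varphi)(\e'_j)$, we conclude that $\{E(u)^{r_d-r_j}g_j\}$ is an $\fS$-basis of $E(u)^{r_d}\fM$; and since $\e'_j\in\Fil^{r_j}\fM^*$ gives $g_j\in E(u)^{r_j}\fM$, the elements $e_j:=E(u)^{-r_j}g_j$ lie in $\fM$ and form an $\fS$-basis of it, with $(1\otimes\varphi)(\e'_j)=E(u)^{r_j}e_j$. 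In other words, in the bases $\{\e'_j\}$ of $\fM^*$ and $\{e_j\}$ of $\fM$, the $\fS$-linear map $1\otimes\varphi$ has matrix $\Lambda$.

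Finally I would convert this into the matrix of the $\varphi$-semilinear operator $\varphi$ on $\fM$ in the basis $\{e_j\}$. Writing $\e'_j=\sum_k P_{kj}(1\otimes e_k)$ with $P\in\GL_d(\fS)$ and using $(1\otimes\varphi)(1\otimes e_k)=\varphi(e_k)$, one gets $AP=\Lambda$, where $A$ is the matrix of $\varphi$ in the basis $\{e_j\}$. The crucial point is that $P$ factors nicely: because $\{\e_j\}$ is a $\varphi(\fS)$-basis of $\fM$ (viewed inside $\D$, with $\fM^*$ the $\fS$-span of $\fM$), its coordinates with respect to the $\fS$-basis $\{1\otimes e_k\}$ of $\fM^*$ lie in $\varphi(\fS)$, so the transition matrix from $\{1\otimes e_k\}$ to $\{\e_j\}$ has the form $\varphi(T_0)$ with $T_0\in\GL_d(\fS)$; and $\e'_j-\e_j\in p\fM^*$ then forces $P=\varphi(T_0)R$ with $R\in\GL_d(\fS)$ and $R\equiv I\pmod p$. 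Hence $A=\Lambda P^{-1}=\Lambda R^{-1}\varphi(T_0^{-1})$, and replacing $\{e_j\}$ by the basis $(e_1,\dots,e_d)T_0$ replaces $A$ by $T_0^{-1}A\varphi(T_0)=T_0^{-1}\Lambda R^{-1}$. This is $X\Lambda Y$ with $X=T_0^{-1}$ and $Y=R^{-1}$, and $Y\equiv I\pmod p$ since $R\equiv I\pmod p$, which is exactly the assertion of the theorem.

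All of the real work has already been done in Proposition~\ref{prop:surjectivity} and the monodromy estimates preceding it, so the argument above is essentially linear-algebra bookkeeping. The one delicate point is the factorization $P=\varphi(T_0)R$: it is the fact that a $\varphi(\fS)$-basis of $\fM$ meets an $\fS$-basis of $\fM^*$ in a matrix of the shape $\varphi(T_0)$ that pins the factor $\equiv I\pmod p$ to the right-hand side of $\Lambda$ (where we want $Y$) rather than the left — and this, via Lemma~\ref{unramified-only}, is the one spot in this final step where the unramifiedness of $K$ is used. One must also be careful to keep the nested objects $\fM\subset\fM^*\subset\D$ straight and to distinguish the $\varphi$-semilinear $\varphi$ on $\fM$ from the $\fS$-linear map $1\otimes\varphi\colon\fM^*\to\fM$.
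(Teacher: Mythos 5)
Your proposal is correct and follows essentially the same route as the paper: the same inputs (Lemma~\ref{unramified-only}, Proposition~\ref{prop:surjectivity}, Proposition~\ref{needsurjection}(2), and the fact that $\fM$ has height $\le r_d$) are assembled in the same way, and your final basis $(e_1,\dots,e_d)T_0$ is in fact exactly the basis from Lemma~\ref{unramified-only} that the paper's proof ends with. The only difference is cosmetic bookkeeping: you transport the basis $\{E(u)^{r_d-r_j}\e'_j\}$ of $\Fil^{r_d}\fM^*$ through the isomorphism $1\otimes\varphi\colon \Fil^{r_d}\fM^*\to E(u)^{r_d}\fM$ and factor the transition matrix as $\varphi(T_0)R$ with $R\equiv I \pmod p$, whereas the paper compares the two bases $(\e_1,\dots,\e_d)B$ and $(\e'_1,\dots,\e'_d)\Lambda^*$ of $\Fil^{r_d}\fM^*$ directly (using $AB=BA=E(u)^{r_d}I_d$), which amounts to the same computation.
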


\begin{proof}
Let $\e_1,\ldots,\e_d$ be a basis of $\fM$ as in
Lemma~\ref{unramified-only}.  For each $1 \le j \le d$ choose $i$ such
that $n_i \le j < n_{i+1}$ (taking $i=r_d$ when $n_{r_d} \le j$)
and set
$\e'_j = \e^{(i)}_{j}$ as in the proof of
Proposition~\ref{prop:surjectivity}.  
We have $\e'_j \in \Fil^i \fM^{*}$, and by construction Proposition~\ref{needsurjection}(2)
shows that $\Fil^{r_d} \fM^{*}$ is
generated by $(\e'_1,\ldots,\e'_d)\Lambda^*$.

We now consider $\M$ as an $\fS$-module in its own right, rather than
as a $\varphi(\fS)$-submodule of $\D$. Let
$A$ be the matrix of $\varphi$ on $\M$ with respect to the basis $\e_1,
\dots,\e_d$. Then there exists a matrix $B$ such that $AB = BA =
E(u)^{r_d} I _d$. It follows straightforwardly from the definition of $\Fil^{r_d}
\fM^{*}$ that $(\e_1, \dots, \e_d)B$ forms a basis of $\Fil
^{r_d} \M^*$, and therefore there exists a matrix $X^{-1} \in {\rm
  GL}_d (\fS)$ such that $(\e'_1 , \dots, \e'_d)\Lambda^* X^{-1} = (\e_1,
\dots, \e_d) B $. If we write $(\e'_1, \dots, \e'_d) = (\e_1, \dots ,
\e_d) Y^{-1}$ then we get $Y^{-1}\Lambda^* X^{-1}= B $. Hence $A= X
E(u)^{r_d} (\Lambda^*)^{-1} Y$, and since $E(u)^{r_d} (\Lambda^*)^{-1} =
[E(u)^{r_1},\ldots,E(u)^{r_d}] = \Lambda$ we have $A = X\Lambda Y$.

Finally, observe from the formula for $\e_j^{(i)}$ in $(\star)$ that
$\e'_j - \e_j$ is divisible by $p$ (since the index $s$ in $(\star)$ is always
at most $p-1$).  It follows that $Y$ is congruent to the identity
modulo $p$, as claimed, and $(\e_1,\ldots,\e_d)$ is the basis we want.
\end{proof}

\subsection{Coefficients}
\label{sec:coefficients}

We now prove an analogue of Theorem~\ref{shape} for representations
with nontrivial coefficients.  Assume as before that $K = K_0$ is
unramified, let $E$ be a finite extension of $\Qp$ containing the
images of all the embeddings $K\into\Qpbar$, and let $T$ be a
$G_K$-stable $\O_E$-lattice in a crystalline representation $V$ of
$E$-dimension~$d$ with Hodge--Tate weights in $[0,p]$.  Let $\fM$ be the
Kisin module with coefficients attached to $T$, so that $\fM$ is a
free module of rank $d$ over $(W(k) \otimes_{\Zp} \O_E)\llb u\rrb $ by
Proposition~\ref{prop:kisin-natural-action-stuff}(3).
Write $f = [K_0 : \Qp]$, and assume that $p \ge 3$.

Let $\mathcal S = \{ \hodgetateembedding \col K \into E \}$ be the set of embeddings of
$K$ into $E$.  Fix one such embedding $\hodgetateembedding_0$, and recursively define
$\hodgetateembedding_{s+1}$ to be the embedding such that $\hodgetateembedding_{s+1}^p \equiv \hodgetateembedding_s
\pmod{p}$; these subscripts are to be taken mod $f$, so that $\hodgetateembedding_f = \hodgetateembedding_0$.
Let $\ve_s \in W(k) \otimes_{\Zp} \O_E$ be the unique idempotent
element such that $(x \otimes 1)\ve_s = (1 \otimes \hodgetateembedding_s(x))\ve_s$
for all $x \in W(k)$.  Then we have $\ve_s(W(k) \otimes_{\Zp} \O_E) \simeq \O_E$.

\begin{defn}
  \label{defn:labeled-HT-wts}
The filtered $(\varphi,N)$-module $D$ is a $K\otimes E$-module, and  decomposes as a product $D = D_0
\times \cdots \times D_{f-1}$ with $D_s = \ve_s D$ an $E$-vector
space of dimension $d$. Since $D =
D_K$ we have a similar decomposition of $\Fil^i D_K$ for all $i$.
Write $0 \le r_{1,s} \le \cdots \le r_{d,s} \le p$ for the jumps in
the filtration $\Fil^i D_s := \ve_s(\Fil^i D_K)$ on $D_s$.  The
integers $r_{j,s}$ are  the \emph{$\hodgetateembedding_s$-labeled Hodge--Tate
 weights} of $V$, as defined in Section \ref{subsec:notation}.   Note that the multiset $\{ r_{j,s} : 1 \le j \le
d, \, 0 \le s \le f-1\}$ taken $[E : K_0]$ times is precisely the set of Hodge--Tate weights of
$V$ regarded as a $\Qp$-representation.
\end{defn}

The object $\D$ can be
formed from $\M$ by the same formula as in the preceding section, and since $\M$ is free as an $\fS \otimes_{\Zp}
\O_E$-module, $\D$ is free as an $S_{K_0} \otimes_{\Zp}
\O_E$-module and has a decomposition $\D = \D_0 \times \cdots \times
\D_{f-1}$ with $\D_s = \ve_s \D$.  Similar statements hold for $\M$, $\M^*$,
and $M := M_K$ (with $S_{K_0}$ replaced by $\fS$ and $W(k)$ respectively),
so in particular each $M_s \subset D_s$ is an $\O_E$-lattice.
However, note
that when we regard $\M$ as a $\varphi(\fS)$-submodule of $\M^*$,
we are regarding $\M_{s-1}$ (rather than
$\M_s$) as a submodule of $\M^*_s$ because $\varphi(\varepsilon_{s-1}) = \varepsilon_{s}$.

\begin{thm} \label{shape-redux-coeffs}
Assume that $K$ is unramified, $V$ is crystalline with Hodge--Tate
weights in $[0,p]$, and $p \ge
3$. Then
there exists an $\O_E \llb u \rrb$-basis $\{ e_{j,s}\}$ of $\fM$ such
  that
\begin{itemize}
\item
$e_{1,s},\ldots,e_{d,s}$ is an $\O_E \llb u \rrb$-basis of $\fM_s$ for each
$0 \le s \le f-1$, and
\item we have
$$ \varphi(e_{1,s-1},\ldots,e_{d,s-1}) = (e_{1,s},\ldots,e_{d,s})
X_s \Lambda_s Y_s$$ where $X_s$ and $Y_s$ are invertible
matrices, $Y_s$
is congruent to the identity matrix modulo $p$, and $\Lambda_s$ is the
matrix $[E(u)^{r_{1,s}}, \dots, E(u)^{r_{d,s}}]$.
\end{itemize}
\end{thm}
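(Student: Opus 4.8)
The plan is to run the proof of Theorem~\ref{shape-redux} componentwise with respect to the idempotent decomposition $1=\ve_0+\dots+\ve_{f-1}$ of $W(k)\otimes_{\Zp}\O_E$, the only genuinely new ingredient being careful bookkeeping of the way $\varphi$ permutes the components cyclically. First I would record that every construction of Section~\ref{sec:shape} is compatible with this decomposition. Since $\ve_s\in W(k)\otimes_{\Zp}\O_E\subset\fS\otimes_{\Zp}\O_E$ and $\varphi(\ve_{s-1})=\ve_s$, each of $\D$, $\fM$, $\fM^*$, $M_K$ acquires a product decomposition indexed by $s\in\Z/f\Z$, and these are respected by $f_\pi$, by the monodromy operator $N$, and by the operator $\tau$ of~\eqref{eq:tau-on-D} (both $N$ and $\tau$ are $W(k)\otimes_{\Zp}\O_E$-linear, as $N_S$ kills $K_0$ and the $\O_E$-action commutes with $N$), while $\varphi$ sends the $(s-1)$-component into the $s$-component; concretely $\fM^*_s\cong\fS\otimes_{\varphi,\fS}\fM_{s-1}$ (the Frobenius twist of $\fM_{s-1}$), so that $1\otimes\varphi\colon\fM^*_s\to\fM_s$ is exactly $\varphi\colon\fM_{s-1}\to\fM_s$, the submodule $\Fil^i\fM^*_s=\{x\in\fM^*_s:(1\otimes\varphi)(x)\in E(u)^i\fM_s\}$ maps under $f_\pi$ into $\Fil^iM_s:=M_s\cap\Fil^iD_s$, and $M_s\subset D_s$ is an $\O_E$-lattice. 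Because $E$ contains every embedding of $K_0$, we have $W(k)\otimes_{\Zp}\O_E\cong\prod_s\O_E$, so each $\fM_s$ is free of rank $d$ over $\O_E\llb u\rrb$ and $\fM=\bigoplus_s\fM_s$.

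Next I would check that every lemma and proposition of Section~\ref{sec:shape} applies verbatim to the $s$-component, with $\Zp$, $K_0$, $W(k)$ replaced throughout by $\O_E$ and with $D_s$ in the role of $D_K$; the jumps of $\Fil^\bullet D_s$ are then the $\hodgetateembedding_s$-labeled weights $0\le r_{1,s}\le\dots\le r_{d,s}\le p$, and I set $1=n_{0,s}\le\dots\le n_{r_{d,s},s}\le d$ by $\dim_E\Fil^iD_s=d-n_{i,s}+1$. This is essentially automatic: Lemmas~\ref{subbasis} and~\ref{intersection}, Propositions~\ref{needsurjection} and~\ref{prop:surjectivity}, and the appeal to a \emph{base adapt\'ee} are formal linear algebra over $W(k)$, here over $\O_E$; Proposition~\ref{imageofN}, Corollaries~\ref{cor:tau-corollary} and~\ref{cor:technical1}, and Lemmas~\ref{technical2} and~\ref{technical3} only involve $p$-adic and $u$-adic valuation estimates, which are unaffected by the flat, indeed \'etale, base change $\Zp\to\O_E$ that splits $W(k)\otimes_{\Zp}\O_E$ into copies of $\O_E$; and Lemma~\ref{unramified-only} goes through because $f_\pi$ of a matrix over $\fS\otimes_{\Zp}\O_E$ has entries in $W(k)\otimes_{\Zp}\O_E$, which is the only use of unramifiedness. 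Carrying out Lemma~\ref{unramified-only} on the components yields, for every $s$, an $\O_E\llb u\rrb$-basis $e_{1,s-1},\dots,e_{d,s-1}$ of $\fM_{s-1}$ whose $f_\pi$-images inside $\D_s$ are adapted to $\Fil^\bullet M_s$, i.e. $f_\pi(e_{n_{i,s},s-1}),\dots,f_\pi(e_{d,s-1})$ is an $\O_E$-basis of $\Fil^iM_s$ for $0\le i\le p$; running the induction of Proposition~\ref{prop:surjectivity} on $\fM^*_s$ then produces an $\fS\otimes_{\Zp}\O_E$-basis $\e'_{1,s},\dots,\e'_{d,s}$ of $\fM^*_s$ with $f_\pi(\e'_{j,s})=f_\pi(e_{j,s-1})$, with $\e'_{j,s}-(1\otimes e_{j,s-1})\in p\fM^*_s$, and with $\e'_{j,s}\in\Fil^i\fM^*_s$ whenever $n_{i,s}\le j<n_{i+1,s}$ (taking $i=r_{d,s}$ when $j\ge n_{r_{d,s},s}$); and Proposition~\ref{needsurjection} applied to $\fM^*_s$ shows $\Fil^{r_{d,s}}\fM^*_s$ is generated by $(\e'_{1,s},\dots,\e'_{d,s})\Lambda^*_s$ with $\Lambda^*_s=[E(u)^{r_{d,s}-r_{1,s}},\dots,E(u)^{r_{d,s}-r_{d,s}}]$.

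Finally I would repeat, for each $s$, the endgame of Theorem~\ref{shape-redux}: regarding $\fM_{s-1},\fM_s$ as free $\O_E\llb u\rrb$-modules in their own right, let $A_s\in\Md(\O_E\llb u\rrb)$ be the matrix of $\varphi\colon\fM_{s-1}\to\fM_s$ in the bases $e_{\bullet,s-1}$, $e_{\bullet,s}$ and $B_s$ the matrix with $A_sB_s=B_sA_s=E(u)^{r_{d,s}}I_d$; then $(1\otimes e_{1,s-1},\dots,1\otimes e_{d,s-1})B_s$ is an $\fS\otimes_{\Zp}\O_E$-basis of $\Fil^{r_{d,s}}\fM^*_s$, so there is $X_s^{-1}\in\GL_d(\fS\otimes_{\Zp}\O_E)$ with $(\e'_{1,s},\dots,\e'_{d,s})\Lambda^*_sX_s^{-1}=(1\otimes e_{1,s-1},\dots,1\otimes e_{d,s-1})B_s$, and writing $(\e'_{\bullet,s})=(1\otimes e_{\bullet,s-1})Y_s^{-1}$ yields $Y_s^{-1}\Lambda^*_sX_s^{-1}=B_s$, hence $A_s=X_sE(u)^{r_{d,s}}(\Lambda^*_s)^{-1}Y_s=X_s\Lambda_sY_s$ with $\Lambda_s=[E(u)^{r_{1,s}},\dots,E(u)^{r_{d,s}}]$ and $Y_s\equiv I_d\pmod p$, since each $\e'_{j,s}-(1\otimes e_{j,s-1})$ is divisible by $p$. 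The bases $\{e_{j,s}\}$ are then as required. The main obstacle is not any single hard step but the consistent bookkeeping of the cyclic action of $\varphi$ --- i.e. reading ``the matrix of $\varphi$'' as the $f$-tuple $(A_s)_s$ of maps $\fM_{s-1}\to\fM_s$, with the $\Fil^\bullet\fM^*_s$-calculation carried out on the twist of $\fM_{s-1}$ --- together with the routine verification that no argument in Section~\ref{sec:shape} used anything about the coefficients beyond flatness over $\Zp$ and the splitting of $W(k)\otimes_{\Zp}\O_E$ into copies of $\O_E$.
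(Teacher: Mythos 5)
Your overall strategy --- reducing to the machinery of Section~\ref{sec:shape} via the idempotent decomposition of $W(k)\otimes_{\Zp}\O_E$ --- is the same as the paper's, but the implementation differs. The paper does \emph{not} rerun Section~\ref{sec:shape} with $\O_E$-coefficients: after producing the componentwise bases $\e_{j,s}$ of $\fM_{s-1}$ as in Lemma~\ref{unramified-only}, it chooses an $\O_K$-basis $y_1,\dots,y_g$ of $\O_E$, applies the original ($\Zp$-coefficient) Proposition~\ref{prop:surjectivity} to the $\varphi(\fS)$-basis $\{y_m\e_{j,s}\}$ of $\fM$, and only then recovers componentwise data by applying the idempotents (setting $\e''_{j,s}=\ve_s\e'_{1,j,s}$). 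Your plan of redoing the estimates componentwise over $\O_E$ does work: Proposition~\ref{imageofN}, Corollary~\ref{cor:technical1} and Lemmas~\ref{technical2},~\ref{technical3} survive because only freeness of $\O_E$ over $W(k)$ and the fact that $\hodgetateembedding_s(\pi)$ still has $p$-adic valuation $1$ are used (note the base change $\Zp\to\O_E$ is flat but in general \emph{not} \'etale, since $E/\Qp$ may be ramified --- flatness is all you need, so this is only a slip of terminology). The paper's route buys the ability to quote Proposition~\ref{prop:surjectivity} as a black box; yours keeps every object $\O_E$-linear throughout at the cost of the re-verification you acknowledge.

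There is, however, one step in your endgame that is not justified as written and is in fact nearly circular: you take $B_s$ with $A_sB_s=B_sA_s=E(u)^{r_{d,s}}I_d$, where $r_{d,s}$ is the \emph{componentwise} largest weight. The existence of such a $B_s$ with entries in $\O_E\llb u\rrb$ is precisely the assertion that the cokernel of $1\otimes\varphi\col\fS\otimes_{\varphi,\fS}\fM_{s-1}\to\fM_s$ is killed by $E(u)^{r_{d,s}}$; what you actually have at that point (Theorem~\ref{thm:kisin-module-results}(3) projected to the $s$-component) is only that it is killed by $E(u)^{r_d}$ with $r_d=\max_s r_{d,s}$. The sharper componentwise bound is essentially part of the conclusion of the theorem (it follows from $A_s=X_s\Lambda_sY_s$ with $X_s,Y_s$ invertible), so it cannot be invoked here. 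The repair is exactly what the paper does: run the endgame at the global level $r_d$, i.e.\ generate $\Fil^{r_d}\fM^*_s$ by $(\e'_{1,s},\dots,\e'_{d,s})\,[E(u)^{r_d-r_{1,s}},\dots,E(u)^{r_d-r_{d,s}}]$ (the induction of Proposition~\ref{needsurjection} runs to level $r_d\le p$ in every component, with $n_{i,s}=d+1$ for $i>r_{d,s}$), take $B_s$ with $A_sB_s=E(u)^{r_d}I_d$, which is integral by the global height bound, and observe that $E(u)^{r_d}\,[E(u)^{r_d-r_{j,s}}]^{-1}=\Lambda_s$ all the same; invertibility of $X_s$ then comes from comparing two $\O_E\llb u\rrb$-bases of $\Fil^{r_d}\fM^*_s$, exactly as in the proof of Theorem~\ref{shape}. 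With that substitution your argument goes through.
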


\begin{proof}
  Setting $\Fil^i M = M \cap \Fil^i D$ as before, we have $\ve_s
\Fil^i M = M_s \cap \Fil^i D_s$, which must therefore be an
$\O_E$-lattice in $\Fil^i D_s$.  Let $1 = n_{0,s} \le n_{1,s} \le
\cdots \le n_{r_d,s} \le d$ be the positive integers such that $\dim_E
\Fil^i D_s = d-n_{i,s} + 1$.  By the same argument as in the
paragraph before Lemma~\ref{subbasis},  there exists an $\O_E$-basis
$e'_{1,s},\ldots,e'_{d,s}$ of $M_s$ such that $e'_{n_{i,s},s},\ldots,e'_{d,s}$
forms an $\O_E$-basis of $\ve_s \Fil^i M$.  Now the same argument as
in Lemma~\ref{unramified-only} produces an $\O_E\llb u^p \rrb$-basis
$\e_{1,s},\ldots,\e_{d,s}$ of $\M_{s-1} \subset \M_s^*$ such that
$f_{\pi}(\e_{n_{i,s},s}),\ldots,f_{\pi}(\e_{d,s})$ forms an $\O_E$-basis
for $\Fil^i M_s$, and $f_\pi(\e_{i,s})=e'_{i,s}$.

Choose any $\O_K$-basis $y_1,\ldots,y_g$ of $\O_E$ with $y_1 =1$.
Then $\{ y_m \e_{j,s} \}_{m,j,s}$ is a $\varphi(\fS)$-basis of $\M$
as in Lemma \ref{unramified-only}, and so
Proposition~\ref{prop:surjectivity}
produces an $\fS$-basis $\e'_{m,j,s}$ of
$\fM^*$ with the properties that $f_{\pi}(\e'_{m,j,s}) = y_m e'_{j,s}$,
$\e'_{m,j,s} - y_m \e_{j,s}
\in p  \sum_{m',j',s'} \fS y_{m'} \e_{j',s'} $, and $\e'_{m,j,s} \in \Fil^i \fM^*$
for $i$ as in the Proposition.

Set $\e''_{j,s} = \ve_s \e'_{1,j,s}$.  From the above we see that
$f_{\pi}(\e''_{j,s}) = e'_{j,s}$, $\e''_{j,s} \in (\Fil^i \fM^*)_s$, and
$\e''_{j,s} - \e_{j,s} \in p \sum_{j'} \O_E \llb u \rrb  \e_{j',s} $, and one
checks easily that $\{\e''_{j,s}\}$ forms an
$\O_E\llb u \rrb$-basis of $\M^*$.  Let $r_d = \max_s \{r_{d,s}\}$.  Now the argument of
Proposition~\ref{needsurjection} proves that
$\Fil^{r_d} \M^*$ is generated over $\O_E\llb u \rrb$ by the elements of the form $E(u)^{r_d - i}
\e''_{j,s}$ where $i$ is determined by $n_{i,s} \le j < n_{i+1,s}$ (or $i = r_{d,s}$
when $n_{r_{d,s}} \le j  \le d$), i.e. where $i = r_{j,s}$.

Let $A$ be the matrix of $\varphi : \fM_{s-1} \to \fM_s$ with respect to the
$\O_E\llb u \rrb$-bases $\e_{j,s}$ and $\e_{j,s+1}$.  
Let $B$ be the matrix such that $AB=BA =
E(u)^{r_d} I_{d}$.  It follows as in the proof of Theorem~\ref{shape}
that the image of $\{ \e_{j,s} \}$ under $B$ forms a basis of
$\Fil^{r_d} \fM^*$.  
It follows as in the proof 
of Theorem~\ref{shape} that the matrix $A$ has the form $X_s\Lambda_s Y_s$,
where the matrix $X_s$ is invertible, 
the matrix $Y_s$ is congruent to $I_{d}$
modulo $p$, 
and $\Lambda_s = [E(u)^{r_{1,s}},\ldots,E(u)^{r_{d,s}}]$.  
Therefore $e_{j,s-1} := \e_{j,s}$ is the basis that we want.
\end{proof}

\begin{rem}
 Theorem~\ref{shape-redux-coeffs} is best possible, in the sense that
 it is false if the Hodge--Tate weight range $[0,p]$ is replaced with
 $[0,r]$ for any $r > p$; see
 Example~\ref{ex:best-possible} for an explanation.
\end{rem}

\section{$(\varphi, \hat G)$-modules and crystalline
  representations}\label{sec: crystalline phi Ghat modules}

We recall that
the theory of $(\varphi,\hat G)$-modules,
introduced by the
second author  in \cite{LiuLattices},
has been used to
classify lattices in semi-stable Galois representations.  In this
section we review the theory of $(\varphi,\hat G)$-modules, and
discuss some properties of the
$(\varphi,\hat G)$-modules arising from crystalline representations.
As in Section~\ref{sec:coeffs}, we allow $K$ to be an arbitrary finite
extension of $\Qp$, and recall that $e = e(K/\Qp)$ is the ramification index
of $K$.

\subsection{$(\varphi, \hat G)$-modules}
Define a subring inside $B^+_\cris$:
$$\mc{R}_{K_0}: =\left\{x = \sum_{i=0 }^\infty f_i t^{\{i\}} : f_i \in S_{K_0} \text{
and } f_i \to 0\text{ as }i \to +\infty \right\}, $$ where $t ^{\{i\}}= \frac{t^i}{p^{\tilde q(i)}\tilde q(i)!}$ and $\tilde q(i)$ satisfies $i = \tilde q(i)(p-1) +r(i)$ with $0 \leq r(i )< p-1$. Define $\hR = W(R)\cap \mc{R}_{K_0}$. One
can show that $\mc{R}_{K_0}$ and $\hR$ are stable under the action of
$G_K$,
and that the $G_K$-action factors through  $\hat G$ (see \cite[\S 2.2]{LiuLattices}).
Recall that the ring $R$ is a valuation ring whose valuation we have
denoted $v_R$, and let $I_+ R= \{x \in R : v_R (x) >0\}$ be the maximal ideal of $R$.

We have an exact sequence
$$0 \lto W(I_+R) \lto W(R) \overset{\nu}{\lto} W(\overline k) \lto 0.$$  By the
discussion in the paragraphs leading up to \cite[Lem.~2.2.1]{LiuLattices}
one can naturally extend $\nu$ to a map $\nu \col B^+_\cris \to W(\overline
k)[\frac{1}{p}]$.

For any
subring $A$ of $B^+_\cris$, we write $I_+A =
\ker(\nu) \cap A$, and we also write $I_+= I_+ \hR$.  Since $\nu(u)=0$, it is not hard to see that $I_+ \fS = u \fS$
and $$I_+S= \left\{ x \in S :  x= \sum\limits_{i = 1 }^\infty a _i \frac{u ^i}{q (i)!},\  a_i \in W(k) \right\},$$  where $q(i)$ satisfies $i = q(i)e + r(i)$ with $0 \leq r(i) <e$.
By \cite[Lem.~2.2.1]{LiuLattices}, one has
$\hR / I_+  \simeq S/I_+S \simeq \fS/ u \fS \simeq W(k)$, and that
$\hR$ is $\varphi$-stable.

\begin{defn}
  \label{defn:phi-ghat-module}
 Following \cite{LiuLattices} and \cite{Car-Liu}, a \emph{$(\varphi, \hat
G)$-module of height $ r$} is a triple $(\M , \varphi_{\M},\hat \cG)$ in which:
\begin{enumerate}
\item $(\M, \varphi_{\M})$ is an (either finite free or  torsion) Kisin module  of height $ r$,
\item $\hat \cG$ is  an $\hR$-semi-linear $\hat G$-action on $\hat \M: =\hR
\otimes_{\varphi, \fS} \M$,
\item the $\hat G$-action commutes with $\varphi_{\hat \M} := \varphi
  \otimes \varphi_{\M}$ on $\hat \M$, i.e., for
any $g \in \hat G$ we have $g \varphi_{\hat \M} = \varphi_{\hat \M} g$,
\item regarding $\M$ as a $\varphi(\fS)$-submodule in $ \hat \M $, we have $\M
\subset \hat \M ^{H_{K}}$, and
\item $\hat G$ acts on the $W(k)$-module $M:= \hat \M/I_+\hat \M\simeq \M/u\M$ trivially.
\end{enumerate}
A morphism between two $(\varphi, \hat G)$-modules is a morphism of $\varphi$-modules that
commutes with the $\hat G$-actions on $\hR \otimes_{\varphi, \fS}
\M$.  We will generally allow $\hat \M$ to denote the $(\varphi,\hat G)$-module
$(\M,\varphi_{\M},\hat \cG)$, and (as usual) we will typically suppress
the subscripts on $\varphi_{\M}$ and $\varphi_{\hat\M}$.
\end{defn}

Let $\hat \M = (\M , \varphi, \hat \cG)$ be a $(\varphi, \hat
G)$-module.   We say that $(\M,\varphi)$ is the \emph{ambient Kisin
  module} of $\hat \M$, and we say that a sequence of $(\varphi, \hat
G)$-modules is exact if the sequence of ambient Kisin modules is
exact.  It turns out that the natural map
$$\M \simeq \fS \otimes_\fS \M \lto \fS \otimes_{\varphi, \fS} \M \lto
\hR \otimes_{\varphi, \fS} \M$$ is always injective (see
\cite[Lem.~3.1.2]{Car-Liu} and the discussion preceding it); as a result we
can
regard $\M$ as a $\varphi(\fS)$-submodule of $ \hR \otimes_{\varphi,
  \fS}\M$, and we always do so.

To a $(\varphi, \hat G)$-module $\hat \M= (\M , \varphi, \hat \cG)$,  we can attach a $\Z_p[G_K]$-module as follows:
$$
\hat T (\hat \M) := \Hom_{\hR, \varphi}(\hR \otimes_{\varphi, \fS}
\M, W(R)) \text{ if } \M \text{ is a finite free Kisin module}
$$
and
$$
\hat T (\hat \M) := \Hom_{\hR, \varphi}(\hR \otimes_{\varphi, \fS}
\M, \Q_p/\Zp \otimes_{\Z_p}W(R))\text{ if } \M \text{ is a torsion Kisin module,}  $$
where $G_K$ acts on $\hat T(\hat \M)$ via $g (f)(x) = g (f(g^{-1}(x)))$
for any $g \in G$ and $f \in \hat T(\hat \M)$. There is  a natural map $\theta\col T_\fS(\M) \to  \hat T (\hat \M)$ induced by
$\frakf\mapsto \varphi (\frakf)$.

Let $A$ be a finite commutative $\Z_p$-algebra. We say $\hat \M$ has a
\emph{natural $A$-action} if the ambient Kisin module $\M$ has a
natural $A$-action that also commutes with the $\hat G$-action on $\hR \otimes_{\varphi, \fS} \M$. If $\hat \M$ has a natural $A$-action then it is easy to see that $\hat T(\hat \M)$ is an $A[G_K]$-module.
Now we summarize some useful results about the functor
$\hat T$.
\begin{thm}\label{thm: old facts on (phi hatG)} $($\cite{LiuLattices,Car-Liu}$)$
\begin{enumerate}
\item There is a natural isomorphism $\theta\col T_\fS(\M) \to \hat
  T(\hat \M)|_{G_\infty}.$
\item The functor $\hat T$ is an anti-equivalence between the category of finite free $(\varphi, \hat G)$-modules and the category of $G_K$-stable $\Z_p$-lattices in semi-stable representations with Hodge--Tate weights in $\{0, \dots, r\}$.
\item The functor $\hat T$ is exact.
\item Let $A$ be a finite $\Z_p$-algebra that is free as a $\Zp$-module, and $L \subset L'$ two
  finite free $A$-modules with an action of $G_K$ such that $L[\frac 1 p]= L'[\frac 1 p] $ is
  a semi-stable representation with Hodge--Tate weights in $\{0, \dots
  ,r\}$. Then there exists
an exact sequence of $(\varphi, \hat G)$-modules
$$  0 \lto \hat\frakL' \lto \hat \frakL \lto \hat \M \lto 0 $$
   such that:
\begin{itemize}
\item $\hat \frakL $, $\hat \frakL'$ are finite free $(\varphi,
   \hat G)$-modules with natural $A$-actions,
\item $\hat \M$ is a torsion $(\varphi,\hat G)$-module with a natural
  $A$-action,
\item $\hat T(\hat \frakL' \into \hat \frakL)$ is the inclusion $L
  \into L'$, and
\item there is a natural isomorphism $L'/L = \hat T(\hat \frakL')/\hat
  T(\hat
  \frakL) \simeq \hat T(\hat \M).$
\end{itemize}
\end{enumerate}
\end{thm}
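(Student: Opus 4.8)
The plan is to cite (1) and (2) from the references, derive (3) formally, and assemble (4) from all three together with the Kisin-module results recorded above. For (1), the map $\theta\col\frakf\mapsto\varphi(\frakf)$ and the claim that it becomes an isomorphism after restriction to $G_\infty$ are the content of the construction of $\hat{T}$ in \cite[\S 2]{LiuLattices} (and its torsion variant in \cite{Car-Liu}): since $H_K$ acts trivially on $\fM$, the $\varphi$-equivariant $\hR$-linear maps $\hR\otimes_{\varphi,\fS}\fM\to W(R)$ restrict bijectively onto the $\varphi$-equivariant $\fS$-linear maps $\fM\to W(R)$ (respectively into $\Q_p/\Z_p\otimes W(R)$ in the torsion case), compatibly with the $G_\infty$-actions. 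Part (2) is the main anti-equivalence theorem of \cite{LiuLattices} for finite free $(\varphi,\hat{G})$-modules of height $r$. Part (3) is then formal: a sequence of $(\varphi,\hat{G})$-modules is exact by definition iff the underlying sequence of ambient Kisin modules is exact, a complex of $\Z_p[G_K]$-modules is exact iff it is so after restriction to $G_\infty$, and $\hat{T}(\hat{\fM})|_{G_\infty}\simeq T_{\fS}(\fM)$ by (1), so the exactness of $\hat{T}$ reduces to that of $T_{\fS}$ in Theorem~\ref{thm:kisin-module-results}(1); this is also recorded in \cite{Car-Liu}.

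For (4), I would proceed as follows. By (2), choose finite free $(\varphi,\hat{G})$-modules $\hat{\frakL},\hat{\frakL}'$ of height $r$ with $\hat{T}(\hat{\frakL})\simeq L$ and $\hat{T}(\hat{\frakL}')\simeq L'$ as $\Z_p[G_K]$-modules, with ambient Kisin modules $\frakL,\frakL'$. Mimicking Proposition~\ref{prop:kisin-natural-action-stuff}(1),(2) in the $(\varphi,\hat{G})$ setting, I would transport the $A$-actions: the ring homomorphism $A\to\End_{\Z_p[G_K]}(L)$ composed with the (anti-)isomorphism $\End_{\Z_p[G_K]}(L)\simeq\End(\hat{\frakL})$ from the full faithfulness in (2) (and the commutativity of $A$) makes $\hat{\frakL}$ into an $A$-module via $(\varphi,\hat{G})$-module endomorphisms, which one checks is \emph{natural} in the sense of Definition~\ref{defn:phi-ghat-module}, since each $a\in A$ acts $\fS$-linearly and commutes with $\varphi$ and $\hat{\cG}$, and the two induced $\Z_p$-structures agree because they agree on $L$ and the anti-equivalence is $\Z_p$-linear; similarly for $\hat{\frakL}'$. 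The inclusion $\iota\col L\into L'$ is $A[G_K]$-linear, so by full faithfulness it is $\hat{T}(g)$ for a unique $(\varphi,\hat{G})$-module morphism $g\col\hat{\frakL}'\to\hat{\frakL}$, which is then automatically $A$-linear. Restricting to $G_\infty$ and using Theorem~\ref{thm:kisin-module-results}(1),(2) — whose bijection is inclusion-reversing because $T_{\fS}$ is contravariant — the containment $L\subseteq L'$ forces $\frakL'\subseteq\frakL$, the morphism $g$ is this inclusion, and the cokernel $\frakL/\frakL'$ is a torsion Kisin module of height $r$.

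Finally, let $\hat{\M}$ be the torsion $(\varphi,\hat{G})$-module whose ambient Kisin module is $\frakL/\frakL'$, with $\varphi$ and $\hat{\cG}$ induced from $\hat{\frakL}$ through the surjection $\hR\otimes_{\varphi,\fS}\frakL\onto\hR\otimes_{\varphi,\fS}(\frakL/\frakL')$ — well defined since $g$ is $\varphi$- and $\hat{G}$-equivariant, so the image of $\hR\otimes_{\varphi,\fS}\frakL'$ is stable under both — and with the induced natural $A$-action; the axioms of Definition~\ref{defn:phi-ghat-module} for $\hat{\M}$ are then straightforward. This produces the exact sequence $0\to\hat{\frakL}'\to\hat{\frakL}\to\hat{\M}\to0$, and applying $\hat{T}$ with the exactness from (3) yields $0\to\hat{T}(\hat{\frakL})\to\hat{T}(\hat{\frakL}')\to\hat{T}(\hat{\M})\to0$, i.e.\ $0\to L\to L'\to\hat{T}(\hat{\M})\to0$ with first arrow $\hat{T}(g)=\iota$; this contains all the assertions of (4), including $L'/L\simeq\hat{T}(\hat{\M})$, and everything is $A[G_K]$-linear. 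The real work, I expect, is not any single deduction but the bookkeeping: checking that the transported $A$-action is genuinely natural (in particular the $\Z_p$-structure and $\hat{\cG}$-compatibilities), and that $\hR\otimes_{\varphi,\fS}(-)$ keeps the short exact sequence exact, so that $\hat{\M}$ is a bona fide object of the torsion $(\varphi,\hat{G})$-module category of \cite{Car-Liu} and the exactness of $\hat{T}$ applies to it directly — which is precisely what \cite{Car-Liu} is built to provide.
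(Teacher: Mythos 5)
Your handling of (1)--(3) and of the construction half of (4) runs parallel to the paper: (1), (2) are citations, (3) follows from exactness of $T_\fS$ together with (1), and the transport of the $A$-actions to $\hat\frakL$, $\hat\frakL'$ via full faithfulness of the anti-equivalence is exactly the mechanism the paper invokes (it cites the proof of Prop.~3.4.1 of the second author's earlier paper for this, and Caruso--Liu Thm.~3.1.3(3)/Lem.~3.1.4 for the construction of the quotient $(\varphi,\hat G)$-module). The genuine gap is your last step: ``applying $\hat T$ with the exactness from (3)'' to $0 \to \hat\frakL' \to \hat\frakL \to \hat\M \to 0$ to get $0 \to L \to L' \to \hat T(\hat\M) \to 0$. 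First, $\hat T$ is contravariant, so if you could formally apply an exactness statement to this three-term sequence you would get $0 \to \hat T(\hat\M) \to L \to L' \to 0$, which is false (the map $L \to L'$ is injective with torsion cokernel, not surjective); the sequence you wrote down is the desired conclusion, not the output of the functor. Second, and more fundamentally, statement (3) cannot be applied to a sequence mixing finite free and torsion objects at all: $\hat T$ is defined with values in $W(R)$ on finite free modules but in $\Qp/\Zp \otimes_{\Zp} W(R)$ on torsion modules, so there is no single abelian category in which this three-term sequence lives and on which ``$\hat T$ is exact'' makes sense. This mismatch is precisely where the content of the last bullet of (4) sits.

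The paper's remedy (following Caruso--Liu) is to stay inside the torsion category: if $p^n \hat\M = 0$, multiplication by $p^n$ and the snake lemma give the four-term exact sequence of torsion $(\varphi,\hat G)$-modules $0 \to \hat\M \to \hat\frakL'/p^n\hat\frakL' \to \hat\frakL/p^n\hat\frakL \to \hat\M \to 0$, to whose left-hand part one applies $\hat T$ (now legitimately exact), using $\hat T(\hat\frakL/p^n\hat\frakL) \simeq L/p^nL$ and $\hat T(\hat\frakL'/p^n\hat\frakL') \simeq L'/p^nL'$; since $p^nL' \subseteq L$, the resulting cokernel is $L'/(L+p^nL') = L'/L$, giving $\hat T(\hat\M) \simeq L'/L$ naturally and $A[G_K]$-linearly. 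If you replace your final sentence with this mod-$p^n$ argument (or a citation to it), the rest of your outline goes through.
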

\begin{proof} Parts (1) and (2) are proved in \cite[Thm.~2.3.1]{LiuLattices}.
The functor $T_{\fS}$ is exact from Theorem~\ref{thm:kisin-module-results}, and then (1) implies the exactness of $\hat
T$.  The proof of \cite[Thm.~3.1.3(3), Lem.~3.1.4]{Car-Liu} gives (4) except for
consideration of the
natural $A$-actions.  In particular if $p^n \hat \M = 0$ then the
snake lemma gives
a natural exact sequence of torsion $(\varphi,\hat G)$-modules \cite[Eq.~(3.1.4)]{Car-Liu}:
$$ 0 \to \hat \M \to \hat \frakL'/p^n \hat \frakL' \to \hat \frakL/p^n
\hat \frakL \to \hat \M \to 0$$
and the isomorphism $\hat T(\hat \frakL')/\hat
  T(\hat
  \frakL) \simeq \hat T(\hat \M)$ is induced by applying $\hat T$ to the left-hand part of
  this sequence.
For the $A$-actions, the proof of \cite[Prop.~3.4.1]{liulattice2} shows that there exist natural $A$-actions on $\hat \frakL$ and $\hat\frakL'$ such that the injection $\iota \col \hat\frakL' \hookrightarrow \hat\frakL$ is also a morphism of $A$-modules and
$\hat T(\iota)\col \hat T (\frakL)\hookrightarrow \hat T(\frakL')$ is just the injection $L \into L'$ as $A[G]$-modules. Hence $\hat \M$ has a natural $A$-action and $\hat T(\hat \M)\simeq L'/L$ as $A[G]$-modules.
\end{proof}

 We highlight the following consequence of Theorem~\ref{thm: old facts
  on (phi hatG)}(4).

\begin{prop}
  \label{prop:reduction-of-Kisin-modules}
   Let $V$ be a semi-stable representation of
$G_K$ with $E$-coefficients and Hodge--Tate weights in
$\{0,\ldots,r\}$, and let $L \subset V$ be a $G_K$-stable
$\O_E$-lattice inside $V$.  Let $\hat \frakL$ be a finite free $(\varphi,\hat
G)$-module with natural $\O_E$-action such that $\hat T(\hat \frakL)
\simeq L$.  Then $\hat \frakL/\m_E \hat \frakL$ is a torsion
$(\varphi,\hat G)$-module with natural $k_E$-action such that $\hat
T(\hat \frakL/\m_E \hat \frakL) \simeq L/\m_E L$.
\end{prop}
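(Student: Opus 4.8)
The plan is to obtain this as a special case of Theorem~\ref{thm: old facts on (phi hatG)}(4), applied to a well-chosen pair of lattices, and then to match the resulting torsion $(\varphi,\hat G)$-module with $\hat\frakL/\m_E\hat\frakL$. Concretely, I would apply Theorem~\ref{thm: old facts on (phi hatG)}(4) with $A = \O_E$ and the pair $L \subset L' := \varpi^{-1}L$ inside $V$: these are finite free $\O_E$-modules stable under $G_K$, with $L[\tfrac1p] = L'[\tfrac1p] = V$ semi-stable and Hodge--Tate weights in $\{0,\ldots,r\}$, so the hypotheses are satisfied. This produces an exact sequence $0 \to \hat\frakL_1 \to \hat\frakL_2 \to \hat\M \to 0$ of $(\varphi,\hat G)$-modules with natural $\O_E$-actions, with $\hat\frakL_1,\hat\frakL_2$ finite free, $\hat\M$ torsion, $\hat T(\hat\frakL_1 \to \hat\frakL_2)$ equal to the inclusion $L \into L'$ (so $\hat T(\hat\frakL_2)\simeq L$ and $\hat T(\hat\frakL_1)\simeq L'$ as $\O_E[G_K]$-modules), and $\hat T(\hat\M)\simeq L'/L$ as $\O_E[G_K]$-modules. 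Since multiplication by $\varpi$ carries $L'/L=\varpi^{-1}L/L$ isomorphically onto $L/\varpi L = L/\m_E L$, this already gives $\hat T(\hat\M)\simeq L/\m_E L$ as $\O_E[G_K]$-modules, hence as $k_E[G_K]$-modules.

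The remaining and main task is to identify $\hat\M$ with $\hat\frakL/\m_E\hat\frakL$. The tool here is that the restriction of $\hat T$ to finite free $(\varphi,\hat G)$-modules is fully faithful (Theorem~\ref{thm: old facts on (phi hatG)}(2)); moreover this full faithfulness is automatically compatible with $\O_E$-actions, since for $\lambda\in\O_E$ the morphisms $f\circ\lambda$ and $\lambda\circ f$ have the same image under $\hat T$ as soon as $\hat T(f)$ is $\O_E$-linear, and hence coincide. Thus from $\hat T(\hat\frakL_2)\simeq L\simeq\hat T(\hat\frakL)$ I would deduce $\hat\frakL_2\simeq\hat\frakL$ as $(\varphi,\hat G)$-modules with $\O_E$-action, so I may take $\hat\frakL_2=\hat\frakL$ and regard $\hat\frakL_1$ as a sub-$(\varphi,\hat G)$-module of $\hat\frakL$. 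On the other hand, multiplication by $\varpi$ is a morphism $\hat\frakL\xrightarrow{\varpi}\hat\frakL$ of $(\varphi,\hat G)$-modules with $\O_E$-action, and $\hat T(\varpi_{\hat\frakL})$ is multiplication by $\varpi$ on $L$, which --- via the isomorphism $\varpi\colon\varpi^{-1}L\xrightarrow{\sim}L$ --- is the same morphism of $\O_E[G_K]$-modules as the inclusion $L\into\varpi^{-1}L=\hat T(\hat\frakL_1)$. By full faithfulness the morphisms $\hat\frakL_1\into\hat\frakL$ and $\hat\frakL\xrightarrow{\varpi}\hat\frakL$ are therefore isomorphic (compatibly with $\O_E$-actions), so their cokernels agree: $\hat\M\simeq\hat\frakL/\varpi\hat\frakL=\hat\frakL/\m_E\hat\frakL$. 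In particular $\hat\frakL/\m_E\hat\frakL$ is a torsion $(\varphi,\hat G)$-module, its natural $\O_E$-action --- being the one inherited from $\hat\frakL$ --- kills $\varpi$ and so factors through $k_E$, and $\hat T(\hat\frakL/\m_E\hat\frakL)\simeq\hat T(\hat\M)\simeq L/\m_E L$ as $k_E[G_K]$-modules, as desired.

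I expect the main obstacle to be the bookkeeping in this identification step --- keeping the contravariance of $\hat T$ straight, and checking that the isomorphisms produced by full faithfulness may be chosen compatibly with the $\O_E$-structures and with the maps into $\hat\frakL$. Once one observes that both $\hat\M$ and $\hat\frakL/\m_E\hat\frakL$ arise as the cokernel of a morphism isomorphic to ``multiplication by $\varpi$'' on $\hat\frakL$, everything else is formal; in particular there is no need to verify directly that $\hat\frakL/\m_E\hat\frakL$ is a genuine torsion $(\varphi,\hat G)$-module, although this is clear anyway, since the ambient Kisin module of $\hat\frakL$ is finite free over $\fS\otimes_{\Zp}\O_E$ by Proposition~\ref{prop:kisin-natural-action-stuff}(3), so that $\hat\frakL/\varpi\hat\frakL$ is $u$-torsion-free and killed by $p$.
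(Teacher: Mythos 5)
Your proposal is correct and follows essentially the same route as the paper: apply Theorem~\ref{thm: old facts on (phi hatG)}(4) to the pair $L \subset \varpi^{-1}L$, then use the anti-equivalence of Theorem~\ref{thm: old facts on (phi hatG)}(2) to identify the resulting inclusion of finite free $(\varphi,\hat G)$-modules with multiplication by $\varpi$ on $\hat\frakL$, so that the torsion quotient is $\hat\frakL/\m_E\hat\frakL$ and computes $L/\m_E L$. The paper phrases this by producing an isomorphism $m$ with $\hat T(m)=\varpi$ and concluding $\hat\frakL'=\varpi\hat\frakL$ inside $\hat\frakL$, which is the same argument as your comparison of cokernels.
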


\begin{proof}
Write $L' = \frac{1}{\varpi} L$ and $\overline L = L'/ L \simeq
L/\fm_E L$.
Let $\iota \col \hat \frakL' \into \hat \frakL$
be the inclusion of $(\varphi,\hat G)$-modules inducing $L \into L'$,
as provided by Theorem
\ref{thm: old facts on (phi hatG)}(4).

Since $\hat T$ is an (anti-)equivalence of categories, there is an
isomorphism $m\col \hat \frakL \simeq \hat \frakL'$ such that $\hat T(m)$ is
the multiplication-by-$\varpi$ map $L' \simeq
L$.  Now $\hat T(m \circ \iota)$ is multiplication by $\varpi$ on $L'$,
hence $m \circ \iota$ is multiplication by $\varpi$ on $\hat{\frakL}'$,
and we deduce that $\hat{\frakL'} = \varpi \frakL = \fm_E
\frakL$.

Now the rest of Theorem
\ref{thm: old facts on (phi hatG)}(4)
 implies that $\hat \M := \hat \frakL / \fm_E \hat
\frakL$ is a $(\varphi,\hat G)$-module with natural $\O_E$-action such
that
\begin{equation}
  \label{eq:phi-ghat-module-reduction}
\hat T(\hat \M) = \hat T(\hat
\frakL/\fm_E \hat \frakL) \simeq \tfrac{1}{\varpi} L/L \simeq \overline{L}
  \end{equation}
as $\O_E$-modules.
The natural $\O_E$-action on the $(\varphi,\hat G)$-module $\hat \M$ evidently induces a natural $k_E$-action, and the isomorphisms
in~\eqref{eq:phi-ghat-module-reduction} are $k_E$-module isomorphisms.
  \end{proof}

\begin{lemma}\label{lem: reducible rep yield reducible Kisin module}
Let $\hat \M$ be a torsion $(\varphi,\hat G)$-module with natural
$k_E$-action, and assume further that $\hat \M$ arises as a quotient
$\hat \M \simeq \hat \frakL/\hat \frakL'$ of finite free
$(\varphi,\hat G)$-modules with natural $\O_E$-action as in Theorem~\ref{thm: old facts on (phi hatG)}(4).
Suppose that $\Lbar := \hat T(\hat \M)$ sits in a
short exact sequence of $k_E[G_K]$-modules
$$\mathcal{L} \col 0 \lto \Lbar' \lto \Lbar \lto \Lbar'' \lto 0.$$
Then there exists a short exact sequence of $(\varphi, \hat
G)$-modules with natural $k_E$-action
$$ \hat{\mathcal{M}} \col 0 \lto \hat \M '' \lto \hat \M \lto \hat \M' \lto 0$$
such that $\hat T(\hat{\mathcal{M}}) = \mathcal{L}$.
\end{lemma}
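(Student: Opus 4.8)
The plan is to translate the given short exact sequence $\mathcal L$ of $k_E[G_K]$-modules into a statement about lattices, solve it there using the anti-equivalence of Theorem~\ref{thm: old facts on (phi hatG)}, and transport the answer back through $\hat T$. By hypothesis we are handed finite free $(\varphi,\hat G)$-modules $\hat\frakL'\subseteq\hat\frakL$ with natural $\O_E$-action, arising as in Theorem~\ref{thm: old facts on (phi hatG)}(4), together with an isomorphism $\hat\M\simeq\hat\frakL/\hat\frakL'$. Thus $L:=\hat T(\hat\frakL)\subseteq L':=\hat T(\hat\frakL')$ are $\O_E$-lattices in a semi-stable $E$-representation $V$ with Hodge--Tate weights in $\{0,\dots,r\}$, and $\hat T(\hat\M)=L'/L=\Lbar$ as $k_E[G_K]$-modules. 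First I would take $M'$ to be the preimage of $\Lbar'\subseteq\Lbar=L'/L$ under the projection $L'\onto L'/L$; then $L\subseteq M'\subseteq L'$, the obvious maps identify $M'/L\simeq\Lbar'$ and $L'/M'\simeq\Lbar''$, and $M'$ is stable under $G_K$ and $\O_E$ because $\Lbar'$ is stable under $G_K$ and $k_E$.

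Next I would apply Theorem~\ref{thm: old facts on (phi hatG)}(4) to the pairs of $\O_E$-lattices $L\subseteq M'$ and $M'\subseteq L'$ in $V$, and use the full faithfulness of $\hat T$ to identify the resulting finite free $(\varphi,\hat G)$-modules with $\hat\frakL$, $\hat\frakL'$, and a common module $\hat\frakM'$ carrying a natural $\O_E$-action with $\hat T(\hat\frakM')\simeq M'$. This produces morphisms $\hat\frakL'\to\hat\frakM'\to\hat\frakL$ of $(\varphi,\hat G)$-modules with $\O_E$-action inducing the inclusions $L\into M'\into L'$ under $\hat T$; as all three modules give rise to $V$ after inverting $p$, these morphisms are isomorphisms after inverting $p$ and hence injective, so we may regard $\hat\frakL'\subseteq\hat\frakM'\subseteq\hat\frakL$. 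Setting $\hat\M'':=\hat\frakM'/\hat\frakL'$ and $\hat\M':=\hat\frakL/\hat\frakM'$ — both torsion $(\varphi,\hat G)$-modules, being cokernels of inclusions of finite free $(\varphi,\hat G)$-modules of equal rank (cf.\ Theorem~\ref{thm: old facts on (phi hatG)}(4) and Definition~\ref{defn:Kisin-module}) — the third isomorphism theorem yields a short exact sequence of $(\varphi,\hat G)$-modules
$$\hat{\mathcal M}\col 0\lto\hat\M''\lto\hat\M\lto\hat\M'\lto 0.$$
Since $\hat\M$ carries a natural $k_E$-action it is annihilated by $\m_E$, hence so are the submodule $\hat\M''$ and the quotient $\hat\M'$, which therefore acquire natural $k_E$-actions.

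It then remains to check that $\hat T(\hat{\mathcal M})=\mathcal L$. Applying the exact functor $\hat T$ (Theorem~\ref{thm: old facts on (phi hatG)}(3)), contravariance turns $\hat{\mathcal M}$ into
$$0\lto\hat T(\hat\M')\lto\hat T(\hat\M)\lto\hat T(\hat\M'')\lto 0,$$
while Theorem~\ref{thm: old facts on (phi hatG)}(4) computes $\hat T(\hat\M'')=\hat T(\hat\frakL')/\hat T(\hat\frakM')=L'/M'\simeq\Lbar''$ and $\hat T(\hat\M')=\hat T(\hat\frakM')/\hat T(\hat\frakL)=M'/L\simeq\Lbar'$; since $\hat T(\hat\M)=\Lbar$ and these identifications are compatible with the maps, this is precisely $\mathcal L$. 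The step I expect to require the most care is the middle one: one must make sure the $(\varphi,\hat G)$-module attached to the intermediate lattice $M'$ really does sit between the given $\hat\frakL'$ and $\hat\frakL$ — so that $\hat\M''$ and $\hat\M'$ are genuinely the sub and quotient of $\hat\M$ corresponding to $\Lbar'$ and $\Lbar''$ — and that every object and map in sight stays equivariant for the natural $\O_E$-action. This is exactly where the uniqueness coming from the anti-equivalence of Theorem~\ref{thm: old facts on (phi hatG)}(2) and the $A$-equivariance built into Theorem~\ref{thm: old facts on (phi hatG)}(4) are needed.
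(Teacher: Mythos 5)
Your argument is sound, and it takes a genuinely different route from the paper's. The paper works directly at the torsion level: it passes to the \'etale $\varphi$-modules $\barM[\frac1u]$, uses Fontaine's theory to realise $\mathcal{L}|_{G_\infty}$ by a surjection of \'etale $\varphi$-modules, takes image and kernel inside $\barM$ to get the sub- and quotient Kisin modules, and then spends most of the proof extending the $\Ghat$-action to these, using \cite[Lem.~3.2.6]{liulattice2} (this is where the hypothesis that $\hat\M\simeq\hat\frakL/\hat\frakL'$ enters there). You instead use that hypothesis at the outset to work with the lattices $L=\hat T(\hat\frakL)\subset L'=\hat T(\hat\frakL')$, lift $\Lbar'$ to the intermediate $\O_E$-lattice $M'$, and let the anti-equivalence (Theorem~\ref{thm: old facts on (phi hatG)}(2), with $\O_E$-equivariance obtained by full faithfulness as in Proposition~\ref{prop:kisin-natural-action-stuff}(2)) and Theorem~\ref{thm: old facts on (phi hatG)}(4), applied to the two pairs $L\subset M'$ and $M'\subset L'$, do the work of producing the $\Ghat$-structures. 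What your approach buys is that all the delicate semilinear-algebra verifications are outsourced to Theorem~\ref{thm: old facts on (phi hatG)}; what the paper's approach buys is that it constructs the subobject and quotient of $\hat\M$ intrinsically, so the only identification that has to be checked at the end is an isomorphism over $G_\infty$ via $T_\fS$, upgraded by the formal observation in the last parenthesis of the paper's proof.

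Two points should be tightened. First, the injectivity of $\hat\frakL'\to\hat\frakN\to\hat\frakL$ (writing $\hat\frakN$ for your intermediate free module with $\hat T(\hat\frakN)=M'$) is better justified by full faithfulness: since $\varpi L'\subset M'$ and $\varpi M'\subset L$, there are morphisms in the opposite direction whose composites with your maps induce multiplication by $\varpi$ on $\hat T$, hence equal $\varpi\cdot\mathrm{id}$; as the modules are $\fS$-free this gives injectivity (and also that the cokernels are killed by $\m_E$). Second, and this is the real crux, the final claim that ``these identifications are compatible with the maps'' is not contained in the quoted statement of Theorem~\ref{thm: old facts on (phi hatG)}(4): you need the isomorphisms $\hat T(\hat\frakL/\hat\frakL')\simeq L'/L$, $\hat T(\hat\frakL/\hat\frakN)\simeq M'/L$, $\hat T(\hat\frakN/\hat\frakL')\simeq L'/M'$ to be functorial for the inclusions of pairs, so that the image of $\hat T(\hat\M')$ in $\Lbar$ is the given $\Lbar'$ and not merely an abstractly isomorphic subrepresentation. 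This is a routine naturality check on the snake-lemma construction in the proof of Theorem~\ref{thm: old facts on (phi hatG)}(4) (i.e.\ in \cite{Car-Liu}), or it can be sidestepped by finishing as the paper does: compare $\hat T(\hat{\mathcal{M}})$ with $\mathcal{L}$ after restriction to $G_\infty$ using Theorem~\ref{thm: old facts on (phi hatG)}(1) and $T_\fS$, and then use the formal argument that an isomorphism of short exact sequences of vector spaces whose middle map is a $G_K$-isomorphism is an isomorphism of sequences of $G_K$-representations. With either patch your proof is complete.
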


\begin{proof} If $G$ is a group, $H < G$ is a subgroup, and $\mathcal{N}$ is a short exact sequence of
$G$-representations, let $\mathcal{N}|_{H}$
denote the short exact sequence of $H$-representations obtained from
$\mathcal{N}$ by restriction.

Let $\M$ be the ambient Kisin module of $\hat\M$ and let $M
=k(\!(u)\!)\otimes_{k \llb u \rrb}\M$.
By the  theory of
\'etale $\varphi$-modules (\cite[Proposition A.1.2.6]{fo4}, and see
also the exposition in \cite[\S 2.2]{liu-Fontaine}), there exists an exact sequence of \'etale $\varphi$-modules with natural $k_E$-actions
\begin{equation}\label{eq:exact4}
0 \lto M '' \lto M \overset {\mathfrak f} \lto M' \lto 0
 \end{equation}
 which corresponds to $\mathcal{L}|_{G_\infty}$ under the functor $T$ of \cite[(2.2.4)]{liu-Fontaine}.
Set $\M' := \frakf(\M)$ and $\M''
 : = \ker(\frakf |_{\M})$. By
\cite[Lem.~2.3.6]{liu-Fontaine} applied to the map $\frakf|_{\M} \col \M
\to M'$ we see that $\M',\M''$ are both Kisin
modules with natural $k_E$-actions, and evidently
$$ \mathcal{M} \col  0 \lto \M'' \lto \M \lto \M' \lto 0$$
is a short exact sequence.
It is easy to check that
$k (\!(u)\!) \otimes _{k \llb u \rrb } \mathcal{M}$  is the short
exact sequence of \eqref{eq:exact4}, so that by \cite[Cor
2.2.2]{liu-Fontaine} the short exact sequence $T_\fS(\mathcal{M})$ is
also isomorphic to $\mathcal{L}|_{G_\infty}$.
It remains to show that
 the short exact sequence of Kisin modules $\mathcal{M}$ extends to a
 short exact sequence of $(\varphi,\hat G)$-modules that yields~$\mathcal{L}$.

By
 \cite[Prop.~3.2.1]{liu-Fontaine}, we have the following commutative
 diagram
$$
\xymatrix{ 0 \ar[r] & \fS ^\text{ur} \otimes _\fS   \M '' \ar[r]  \ar@{^{(}->}[d] & \fS ^\text{ur} \otimes _\fS   \M  \ar[r]  \ar@{^{(}->}[d]^{\iota_\M} & \fS ^\text{ur} \otimes _\fS   \M ' \ar[r]  \ar@{^{(}->}[d]  & 0  \\
0 \ar[r] & \fS ^\text{ur}  \otimes_{\Z_p}  {\Lbar''} ^\vee   \ar[r] &
\fS ^\text{ur} \otimes_{\Z_p}   {\Lbar} ^\vee \ar[r] &   \fS
^\text{ur} \otimes_{\Z_p}   {\Lbar'} ^\vee \ar[r] & 0  }
$$
which is compatible with the $G_\infty$-actions,  $\varphi$-actions,
and $k_E$-actions, and where the superscript ${}^{\vee}$ denotes the
$\Qp/\Zp$-dual; the vertical arrows are injective by \cite[Thm.~3.2.2(2)]{liu-Fontaine}.
Now  tensoring with $W(R)$ and $\hR$ respectively, we get another
commutative diagram
$$
\xymatrix{ 0 \ar[r] & \hR  \otimes _{\varphi, \fS}   \M '' \ar[r]  \ar@{^{(}->}[d] &  \hR  \otimes _{\varphi, \fS}   \M   \ar[r]  \ar@{^{(}->}[d] & \ \hR  \otimes _{\varphi, \fS}   \M ' \ar[r]  \ar@{^{(}->}[d]  & 0 \\
0 \ar[r] & W(R) \otimes _{\varphi, \fS}   \M '' \ar[r]
\ar@{^{(}->}[d] & W(R) \otimes _{\varphi, \fS}   \M  \ar[r]
\ar@{^{(}->}[d]^{ W(R)
\otimes_{\varphi, \fS^{\text{ur}}} \iota_\M} & W(R) \otimes _{\varphi, \fS}   \M '\ar[r]  \ar@{^{(}->}[d]  & 0  \\
0 \ar[r] & W(R)  \otimes_{\Z_p}   {\Lbar''} ^\vee \ar[r] & W(R)
\otimes_{\Z_p} {\Lbar} ^\vee \ar[r]^-f & W(R) \otimes_{\Z_p}  {\Lbar'} ^\vee  \ar[r] & 0 .}
$$
The exactness of the rows and the vertical maps
follow from the facts that $\M''$, $\M$ and $\M'$ are all finite $k
\llb u \rrb $-free modules, and that $\hR / p \hR $ and
$\fS^{\text{ur}}/p\fS^{\text{ur}}$ inject into~$R$ (the latter by
\cite[Proposition B.1.8.3(iv)]{fo4}), 
which is a
domain.

Thanks to the hypothesis that $\hat \M$ is the quotient of two finite free
$(\varphi,\hat G)$-modules, \cite[Lem.~3.2.6]{liulattice2} shows that
the map $ W(R)
\otimes_{\varphi, \fS^{\text{ur}}} \iota_\M$ is equal to the map
$\hat\iota_{\hat\M}$ of the diagram \cite[(3.2.4)]{liulattice2}, and
so in particular is $G_K$-equivariant (see e.g. \cite[Thm.~2.2.2]{liulattice2}).
Note also that the $G_K$-actions in the middle column commute with the $k_E$-actions.

Regarding $ \hR \otimes_{\varphi, \fS} \M'$ and $ \hR \otimes_{\varphi,
  \fS} \M$ as submodules of $W(R) \otimes_{\Zp} \Lbar'^\vee $ and
$W(R) \otimes_{\Zp} \Lbar^\vee$ respectively, we have $ \hR
\otimes_{\varphi, \fS} \M'= (f \circ \hat \iota_\M )(\hR
\otimes_{\varphi, \fS} \M )$. So $\hR \otimes_{\varphi, \fS}\M'$
inherits a $G_K$-action which factors through $\hat G$, and then so
does $\hR \otimes_{\varphi, \fS}\M''$; moreover these
$\hat G$-actions commute with the $k_E$-actions. It is easy to check
that these $\hat G$-actions satisfy the axioms for $(\varphi, \hat
G)$-modules, so we obtain an exact sequence of $(\varphi, \hat
G)$-modules that we call $\hat{\mathcal{M}}$.

It remains to check that $\hat T(\hat{\mathcal{M}}) \simeq
\mathcal{L}$. To see this, we note that $\mathcal{M}$ is the
sequence of ambient Kisin modules underlying $\hat{\mathcal{M}}$, and
$T_\fS(\mathcal{M})$ is isomorphic to $\mathcal{L}|_{G_\infty}$.
We therefore have $\hat T(\hat{\mathcal{M}}) |_{G_{\infty}} \simeq T_{\fS}(\mathcal{M}) \simeq
\mathcal{L} |_{G_{\infty}}$, where the first isomorphism comes from
Theorem \ref{thm: old facts on (phi hatG)}(1).  But by hypothesis the middle map
$\hat T(\hat \M)|_{G_{\infty}} \to L |_{G_{\infty}}$ in that complex
is actually a $G_K$-isomorphism, and it follows that
$\hat T(\hat{\mathcal{M}})\simeq \mathcal{L}$ as short exact
sequences of
$k_E[G]$-modules.  (Suppose $G$ is a topological group,
  $\mathcal{L}',\mathcal{M'}$ are short exact sequences of
  continuous $G$-representations, and $f \col \mathcal{L}' \to \mathcal{M}'$ is an
  isomorphism between $\mathcal{L}'$ and $\mathcal{M}'$ regarded as short exact
  sequences of vector spaces.  If the map in the middle of $f$ is a
  isomorphism of continuous $G$-representations, it follows formally
  that the same is true of the two outer maps, and $f$ is an
  isomorphism from $\mathcal{L}'$ to $\mathcal{M}'$.)
\end{proof}

\begin{rem}
  \label{rem:without-quotient-hypothesis}
  Lemma~\ref{lem: reducible rep yield reducible Kisin module} may well
  remain true without the assumption that $\hat \M$ arises as a quotient
$\hat \M \simeq \hat \frakL/\hat \frakL'$ of finite free
$(\varphi,\hat G)$-modules with natural $k_E$-action, but the proof
would require additional work and we will only need the weaker statement.
\end{rem}

Before continuing, we note one additional consequence of the
relationship between torsion Kisin modules and the theory of
\'etale $\varphi$-modules.

\begin{lem}
  \label{lem:isomorphism-criterion}
  Suppose that $\hat f\col \hat\barM \to \hat\barMp$ is a map of torsion $(\varphi,\hat
  G)$-modules with natural $A$-action, and let $\barM,\barMp$ be the
  ambient Kisin modules of $\hat\barM,\hat\barMp$ respectively.  Then
  $\hat T(\hat f)$ is injective (resp. surjective, an isomorphism) if and only if the induced map $\barM[\frac 1u] \to
  \barMp[\frac 1u]$ is injective (resp. surjective, an isomorphism).
\end{lem}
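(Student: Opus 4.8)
The plan is to strip away first the $\hat G$-structure and then the $G_K$-action, reducing the assertion to the corresponding standard fact for \'etale $\varphi$-modules over $k(\!(u)\!)$.

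First I would pass from $\hat T$ to $T_\fS$. Injectivity, surjectivity, and bijectivity of a morphism of $A[G_K]$-modules are properties of the underlying morphism of $A$-modules, hence are unaffected by restricting the $G_K$-action to $G_\infty$. By Theorem~\ref{thm: old facts on (phi hatG)}(1) the isomorphism $\theta\col T_\fS(\M)\to\hat T(\hat\M)|_{G_\infty}$ is natural in $\hat\M$, so it identifies $\hat T(\hat f)$, regarded as a morphism of $A$-modules, with $T_\fS(f)$, where $f\col\barM\to\barMp$ is the underlying morphism of ambient Kisin modules. It therefore suffices to prove the statement with $\hat T(\hat f)$ replaced by $T_\fS(f)$.

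Next I would pass from $T_\fS$ to \'etale $\varphi$-modules. Because $\barM$ and $\barMp$ are torsion Kisin modules, the functor $T_\fS$ on them factors through localization: there is a natural isomorphism $T_\fS(\barM)\cong T(\barM[\frac{1}{u}])$, compatible with the $A$-action, where $\barM[\frac{1}{u}]=k(\!(u)\!)\otimes_{k\llb u\rrb}\barM$ is the associated \'etale $\varphi$-module and $T$ denotes the functor of \cite[(2.2.4)]{liu-Fontaine} from torsion \'etale $\varphi$-modules (with $A$-action) to finite $A[G_\infty]$-modules. This is exactly the comparison already used in the proof of Lemma~\ref{lem: reducible rep yield reducible Kisin module}; it comes from \cite[Cor.~2.2.2]{liu-Fontaine}, together with \cite[Proposition B 1.8.3]{fo4} for the passage between $W(R)$ and $\fS^{\ur}$, and (as there) one checks that it respects the $A$-actions. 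Under this identification $T_\fS(f)$ becomes $T(g)$, where $g\col\barM[\frac{1}{u}]\to\barMp[\frac{1}{u}]$ is the induced map, so we are reduced to relating the relevant properties of $g$ and of $T(g)$.

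Finally I would invoke the formal fact that $T$ is an exact and faithful functor --- being part of an (anti-)equivalence of abelian categories, by \cite[(2.2.4)]{liu-Fontaine} and \cite[Proposition A.1.2.6]{fo4} --- and such a functor detects the vanishing of an object, hence detects isomorphisms and, via kernels and cokernels, injectivity and surjectivity of morphisms. Applying this to $T(g)$ and tracing back through the two reductions above yields the lemma. I do not expect a genuine difficulty anywhere: the argument is a chain of reductions whose only delicate points are the naturality of $\theta$ in the first step and the compatibility of the comparison $T_\fS\cong T\circ(-)[\frac{1}{u}]$ with the $A$-action in the second, and both are supplied by the references already relied upon in the proof of Lemma~\ref{lem: reducible rep yield reducible Kisin module}.
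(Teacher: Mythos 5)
Your argument is essentially identical to the paper's proof: reduce $\hat T(\hat f)$ to $T_{\fS}(f)$ via the natural isomorphism of Theorem~\ref{thm: old facts on (phi hatG)}(1), identify $T_{\fS}(f)$ with $T(f_u)$ using \cite[Cor.~2.2.2]{liu-Fontaine}, and conclude because $T$ is an (anti-)equivalence of abelian categories of \'etale $\varphi$-modules. The extra care you take about naturality and compatibility with the $A$-action is fine but adds nothing beyond what the paper's citations already supply.
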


\begin{proof}
  Note that $\barM[\frac 1u] = k(\!(u)\!) \otimes_{k\llb u \rrb} \barM$
  and similarly for $\barMp$.  Let $f$ denote the map of Kisin modules
  underlying $\hat f$, and $f_u$ the map of \'etale $\varphi$-modules
   $\barM[\frac 1u] \to
  \barMp[\frac 1u]$ obtained by inverting $u$.  By Theorem~\ref{thm: old facts on (phi hatG)}(1), the map $\hat T(\hat f)$ is
  injective (resp. surjective)   if and only if the map $T_{\fS}(f)$
  is injective (resp. surjective).  By
  \cite[Cor.~2.2.2]{liu-Fontaine} the map $T_{\fS}(f)$ is naturally isomorphic
to $T(f_u)$.  But the functor $T$ is an equivalence of abelian categories.
\end{proof}

\subsection{$\tau$-actions for crystalline representations}

We now re-state Corollary~\ref{cor:tau-corollary} using the language
of $(\varphi,\hat G)$-modules.

\begin{prop}\label{prop:should be a corollary} Suppose $p > 2$.  Let
  $L$ be a $G$-stable $\Z_p$-lattice in a crystalline representation and
  $\hat \M $ the $(\varphi, \hat G)$-module corresponding to
  $L$, with ambient Kisin module $\M$. Then for any $x \in \M$ we have $\tau(x) - x \in \hat{\M} \cap
  u^p \varphi(\gt) (W(R) \otimes_{\varphi,\fS} \M)$.
\end{prop}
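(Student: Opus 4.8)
The plan is to deduce this directly from Corollary~\ref{cor:tau-corollary}, which already gives the essential content. First I would recall that by Corollary~\ref{cor:tau-corollary}, for any $x \in \M$ there exists $y \in W(R) \otimes_{\varphi,\fS} \M$ with $\tau(x) - x = u^p \varphi(\gt)\, y$, where $\tau(x)$ is the element defined by the series \eqref{eq:tau-on-D}. So $\tau(x) - x$ lies in $u^p\varphi(\gt)\bigl(W(R)\otimes_{\varphi,\fS}\M\bigr)$ essentially by definition. The only additional point to verify is that $\tau(x) - x$ lies in $\hat\M = \hR \otimes_{\varphi,\fS}\M$ as well, so that it lies in the intersection.

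To see this, I would argue as follows. The element $\tau(x)$ is defined via $\tau(x) = \sum_{i\ge 0}\gamma_i(t)\otimes N^i(x)$ inside $B^+_{\cris}\otimes_{\varphi,\fS}\M$, and this is precisely the formula by which the $\hat G$-action on $\hat\M$ is defined in the theory of $(\varphi,\hat G)$-modules (see \cite[\S 2]{LiuLattices}); indeed this is the formula governing $\tau$ on $\hat\M$, so $\tau(x) \in \hat\M$ by construction — the ring $\hR$ is stable under the $\hat G$-action, and $\M \subset \hat\M$ is $\tau$-stable precisely because $\hat\M$ carries the $\hat G$-action. Thus both $\tau(x)$ and $x$ lie in $\hat\M$, hence so does their difference. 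Combining this with Corollary~\ref{cor:tau-corollary}, we conclude $\tau(x) - x \in \hat\M \cap u^p\varphi(\gt)\bigl(W(R)\otimes_{\varphi,\fS}\M\bigr)$, as desired.

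The one subtlety worth spelling out is the compatibility of the two descriptions of $\tau(x)$: the expression in \eqref{eq:tau-on-D} was introduced in the proof of Proposition~\ref{imageofN} as an operator on $\D = S_{K_0}\otimes_{\varphi,\fS}\M$, whereas the $\hat G$-action lives on $\hat\M = \hR\otimes_{\varphi,\fS}\M$. I would note that these are compatible because $\mc{R}_{K_0}$ (hence $\hR$) contains the divided powers $t^{\{i\}} = \gamma_i(t)$ up to units, so the series $\sum_i \gamma_i(t)\otimes N^i(x)$ makes sense in $\hR\otimes_{\varphi,\fS}\M$ and agrees with the $\hat G$-action of $\tau$; this is exactly how the $(\varphi,\hat G)$-module structure is constructed for semi-stable (in particular crystalline) representations in \cite{LiuLattices}. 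The main (and only real) obstacle is simply making sure the bookkeeping between these two incarnations of $\tau$ is consistent, but this is essentially a matter of unwinding the definitions rather than a substantive difficulty — all the hard analytic work was already done in Proposition~\ref{imageofN} and Corollary~\ref{cor:tau-corollary}.
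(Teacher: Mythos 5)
Your proposal is correct and follows essentially the same route as the paper: the paper's proof is precisely the observation that Corollary~\ref{cor:tau-corollary} supplies the factor $u^p\varphi(\gt)$, while by \cite[Eq.~(3.2.1), Prop.~3.2.1]{LiuLattices} the series \eqref{eq:tau-on-D} is the $\tau$-action on the $(\varphi,\hat G)$-module, so $\tau(x)-x$ automatically lies in $\hat\M$. Your extra paragraph on the compatibility of the two incarnations of $\tau$ is just an expansion of that same citation.
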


\begin{proof}
  This is a direct consequence of Corollary~\ref{cor:tau-corollary}, since by \cite[Eq.~(3.2.1) and Prop.~3.2.1]{LiuLattices}, the formula~\eqref{eq:tau-on-D} defines the action of $\tau$ on the
  $(\varphi,\hat G)$-module $\frakL$.
\end{proof}

  We let ${\rm{Rep}}_{E}^{\st, r}(G_K)$
denote the category of finite $E$-vector spaces $V$ with an $E$-linear
$G_K$-action such that $V$ is a semi-stable representation with
Hodge--Tate weights in $\{0, \dots,r \}$. We denote by $\Rep_{\O_E}
^{\st, r}(G_K)$ the category of $G$-stable $\O_E$-lattices inside
objects in $\Rep_E ^{\st, r}(G_K)$, and by $\Rep_{\O_E} ^{\cris, r}$
the subcategory of $\Rep_{\O_E} ^{\st , r}$ whose objects are crystalline.

Now assume that $L$ is in  $\Rep_{\O_E} ^{\cris, r}$ and let $\hat
\barM$ be the $(\varphi, \hat G)$-module corresponding to the
reduction $L / \fm_E L$ via Theorem \ref{thm: old facts on (phi
  hatG)}(4), with ambient Kisin module $\barM$.

\begin{cor}\label{cor:valuation of torsion crystalline}  For any $x
  \in \barM$, there exist $\alpha \in R$ and $y \in R
  \otimes_{\varphi,\fS} \barM$ such that $\tau(x) -x = \alpha y $ and $v_R (\alpha) \geq \frac{p}{p-1} + \frac p e $.
\end{cor}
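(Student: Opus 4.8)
The plan is to deduce this from Proposition~\ref{prop:should be a corollary} by reducing modulo $\fm_E$. Choose a finite free $(\varphi,\hat G)$-module $\hat\frakL$ with natural $\O_E$-action such that $\hat T(\hat\frakL)\simeq L$, and let $\frakL$ be its ambient Kisin module. By Proposition~\ref{prop:reduction-of-Kisin-modules} and the construction underlying Theorem~\ref{thm: old facts on (phi hatG)}(4), we may take $\hat\barM=\hat\frakL/\fm_E\hat\frakL$, so that $\barM=\frakL/\fm_E\frakL$ and the reduction map $q\col\hat\frakL\to\hat\barM$ is a morphism of $(\varphi,\hat G)$-modules; in particular $q$ is $\hat G$-equivariant and hence commutes with the $\tau$-actions. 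Since $\barM$ is killed by $\fm_E$, hence by $p$, there is a canonical identification $W(R)\otimes_{\varphi,\fS}\barM=R\otimes_{\varphi,\fS}\barM$, and $q$ is compatible, via the inclusions $\hR\otimes_{\varphi,\fS}(-)\hookrightarrow W(R)\otimes_{\varphi,\fS}(-)$, with the natural reduction map $W(R)\otimes_{\varphi,\fS}\frakL\to W(R)\otimes_{\varphi,\fS}\barM$.

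Given $x\in\barM$, first lift it to an element $\tilde x\in\frakL$. Viewing $L$ as a $G_K$-stable $\Zp$-lattice in the underlying crystalline $\Qp$-representation, Proposition~\ref{prop:should be a corollary} (which needs only $p>2$) produces $w\in W(R)\otimes_{\varphi,\fS}\frakL$ with $\tau(\tilde x)-\tilde x=u^p\varphi(\gt)\,w$. Now push this identity through the reduction map $W(R)\otimes_{\varphi,\fS}\frakL\to W(R)\otimes_{\varphi,\fS}\barM=R\otimes_{\varphi,\fS}\barM$. Since $q$ intertwines the $\tau$-actions and $q(\tilde x)=x$, the left-hand side maps to $\tau(x)-x$; the right-hand side maps to $\alpha\,y$, where $y\in R\otimes_{\varphi,\fS}\barM$ is the image of $w$ and $\alpha\in R$ is the image of $u^p\varphi(\gt)\in W(R)$ under the projection $W(R)\to W(R)/pW(R)=R$. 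This gives $\tau(x)-x=\alpha\,y$ with $y\in R\otimes_{\varphi,\fS}\barM$.

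Finally I would compute $v_R(\alpha)$. The embedding $\fS\hookrightarrow W(R)$ sends $u$ to $[\underline\pi]$, hence $u^p$ to $[\underline\pi^p]$, whose image in $R$ is $\underline\pi^p$; from the definition of $v_R$ (and $\pi_n^{p^n}=\pi$, $v_p(\pi)=1/e$) one has $v_R(\underline\pi)=1/e$, so $v_R(\underline\pi^p)=p/e$. The Witt-vector Frobenius on $W(R)$ reduces to the $p$-power map on $R=W(R)/pW(R)$, so the image of $\varphi(\gt)$ in $R$ is $\bar\gt^{\,p}$, where $\bar\gt$ is the image of $\gt$; since $v_R(\bar\gt)=\tfrac1{p-1}$ (recalled in Section~\ref{sec:p-adic-period-rings}), this contributes $\tfrac p{p-1}$. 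Therefore $v_R(\alpha)=\tfrac pe+\tfrac p{p-1}\ge\tfrac p{p-1}+\tfrac pe$, as required (in fact one gets equality). The argument is essentially formal; the only point that needs a little care is the compatibility of $q$ with the $\tau$-actions and with the various inclusions of period rings, which is immediate since $q$ is a morphism of $(\varphi,\hat G)$-modules and the remaining maps are functorial base-changes, so there is no real obstacle.
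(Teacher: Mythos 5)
Your argument is correct and is essentially the paper's own proof of Corollary~\ref{cor:valuation of torsion crystalline}: the paper likewise deduces it directly from Proposition~\ref{prop:should be a corollary} together with the facts that the images of $u$ and $\mathfrak{t}$ in $R$ have valuations $\frac{1}{e}$ and $\frac{1}{p-1}$, merely leaving implicit the reduction-mod-$\fm_E$ compatibilities that you spell out.
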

\begin{proof}  Recall from Section~\ref{sec:p-adic-period-rings} that the
  image of $\gt$ in $R$ has valuation $\frac{1}{p-1}$, from which
it follows that the image of $\varphi(\gt) $ in $R$ has valuation
$\frac{p}{p-1}$.  Since the image of $u$ in $R$ is $\underline \pi$,
  and $v_R(\underline \pi) = \frac 1 e$, the result
  follows from Proposition~\ref{prop:should be a corollary}.
\end{proof}

\section{Kisin modules and $(\varphi,\hat G)$-modules of rank one}
\label{sec:kisin-modules-varphi}

We assume for the remainder of this article that $K/\Qp$ is
unramified, with $f = [K:\Qp]$. In this section we determine the isomorphism classes of $(\varphi,\hat
G)$-modules of rank one, compute their corresponding Galois
representations, and show that they arise as the reductions of
crystalline characters with specified Hodge--Tate weights.

 Recall that $E$ is a finite extension of $\Qp$, with ring
of integers $\O_E$ and residue field $k_E$.  As in
Section~\ref{sec:coefficients}, we fix (again for the remainder of the
article) an embedding $\hodgetateembedding_0 \col K \into E$ and recursively
define $\hodgetateembedding_{s+1} \col K \into E$ so that $\hodgetateembedding_{s+1}^p \equiv
\hodgetateembedding_s \pmod{p}$.  Let $\varepsilon_s \in W(k)
\otimes_{\Zp} \O_E$ be the
idempotent defined in Section~\ref{sec:coefficients}, and if $M$ is any module
that can naturally be regarded as a module over $W(k) \otimes_{\Zp}
\O_E$ we write $M_s$ for $\varepsilon_s M$.

\begin{defn}
  \label{defn:rank-one}
   Suppose $r_0,\ldots,r_{f-1}$ are non-negative integers and $a \in
   k_E^{\times}$.  Let $\barM(r_0,\ldots,r_{f-1}; a)$ be the Kisin
   module with natural $k_E$-action that is rank one
   over $\fS \otimes_{\Zp}
  k_E$ and satisfies
  \begin{itemize}
  \item $\barM(r_0,\ldots,r_{f-1}; a)_i$ is generated by $e_i$, and

\item $\varphi(e_{i-1}) = (a)_{i} u^{r_{i}} e_{i}$.
  \end{itemize}
Here $(a)_i = a$ if $i \equiv 0 \pmod{f}$ and $(a)_i = 1$ otherwise.
(For later use, we extend this notation as follows: if $S \subset \Z$,
we write $(a)_S = a$ if $S$ contains an integer divisible by $f$, and
$(a)_S = 1$ otherwise.)
\end{defn}

The following fact is proved by a standard change-of-variables
argument whose details we omit (but see for instance~the paragraph
before the statement of~\cite[Thm.~2.1]{SavittRaynaud} for an analogous argument).

\begin{lem}
  \label{lem:rankone}
  Any rank one $\varphi$-module over $\fS \otimes_{\Zp}
  k_E$ is isomorphic to (exactly) one of the form $\barM(r_0,\ldots,r_{f-1}; a)$.
\end{lem}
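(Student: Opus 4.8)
The plan is to prove Lemma~\ref{lem:rankone} by an explicit change-of-basis argument, working one idempotent component at a time around the cycle $\hodgetateembedding_0,\ldots,\hodgetateembedding_{f-1}$. First I would observe that a rank one $\varphi$-module $\M$ over $\fS\otimes_{\Zp}k_E = \prod_{i=0}^{f-1}k_E\llb u\rrb$ decomposes as $\M = \bigoplus_i \M_i$ with each $\M_i = \varepsilon_i\M$ free of rank one over $k_E\llb u\rrb$, say with generator $v_i$, and that $\varphi$ carries $\M_{i-1}$ into $\M_i$ (since $\varphi(\varepsilon_{i-1}) = \varepsilon_i$), so $\varphi(v_{i-1}) = c_i v_i$ for some $c_i \in k_E\llb u\rrb$. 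Because $\varphi$ is injective on $\M[\frac1u]$ (this is part of the definition of a torsion Kisin module, and here $\M$ is killed by $p$), each $c_i$ is nonzero; write $c_i = a_i u^{r_i}(1 + u g_i(u))$ with $a_i \in k_E^\times$, $r_i \ge 0$, and $g_i \in k_E\llb u\rrb$, using that $k_E\llb u\rrb$ is a complete DVR with uniformizer $u$.

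Next I would normalize the unit parts $a_i u^{r_i}\mapsto$ pure monomials by rescaling the generators: replacing $v_i$ by $\lambda_i v_i$ for suitable units $\lambda_i \in k_E\llb u\rrb^\times$ changes $c_i$ to $\varphi(\lambda_{i-1})\lambda_i^{-1}c_i = \varphi(\lambda_{i-1})\lambda_i^{-1}a_i u^{r_i}(1+ug_i)$. Since $\varphi$ on $k_E\llb u\rrb$ is $u\mapsto u^p$, the condition that we kill all the factors $1+ug_i$ and all but one of the scalars $a_i$ amounts to solving a system $\lambda_i = \varphi(\lambda_{i-1})\cdot(\text{known unit})$ cyclically; iterating $f$ times reduces this to finding a single $\lambda_0$ with $\lambda_0 = \varphi^f(\lambda_0)\cdot(\text{known unit})$, and since $\varphi^f$ raises the valuation (degree in $u$) it is topologically contracting, so this equation has a unique solution $\lambda_0\in k_E\llb u\rrb^\times$ by a successive-approximation/fixed-point argument (matching coefficients of $u^n$ inductively, with $\varphi^f$ only affecting higher-degree terms). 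The residual scalar that cannot be removed is $\prod_i a_i$, which one places on the single arrow $i\equiv 0\pmod f$; this yields $\M\simeq\barM(r_0,\ldots,r_{f-1};a)$ with $a = \prod_i a_i$.

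Finally I would address uniqueness. The integers $r_i$ are intrinsic: $r_i$ is the length of the cokernel of $1\otimes\varphi$ on the $i$-th component, equivalently the $u$-adic valuation of the determinant of $\varphi$ there, so it is an isomorphism invariant. For $a$, I would compute that an isomorphism $\barM(r_0,\ldots,r_{f-1};a)\to\barM(r_0,\ldots,r_{f-1};a')$ must be given componentwise by units $\mu_i\in k_E\llb u\rrb^\times$ satisfying $\mu_i (a')_i u^{r_i} = (a)_i u^{r_i}\varphi(\mu_{i-1})$, i.e. $\mu_i = \frac{(a)_i}{(a')_i}\varphi(\mu_{i-1})$; composing around the cycle forces $\mu_0 = \frac{a}{a'}\varphi^f(\mu_0)$, and comparing constant terms (the constant term of $\varphi^f(\mu_0)$ is that of $\mu_0$, which is a nonzero scalar) gives $a = a'$. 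Thus the normal form is unique.

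The main obstacle is the cyclic fixed-point step: one must be careful that the equation $\lambda_0 = \varphi^f(\lambda_0)\cdot w$ (with $w$ a known unit) genuinely has a unique solution in $k_E\llb u\rrb^\times$, which relies precisely on $\varphi^f$ being $u\mapsto u^{p^f}$ and hence strictly increasing $u$-adic valuations on the maximal ideal — so the iteration converges $u$-adically. This is entirely routine, which is why the paper omits it and cites the analogous argument before~\cite[Thm.~2.1]{SavittRaynaud}; I would likewise present it briefly rather than grinding through the coefficient recursion.
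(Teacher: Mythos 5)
Your proof is correct and is essentially the standard change-of-variables argument that the paper itself invokes and omits (pointing to the analogous argument before Thm.~2.1 of \cite{SavittRaynaud}): decompose via the idempotents, rescale generators by solving $\lambda_0=\varphi^f(\lambda_0)\cdot w$ coefficient by coefficient, and recover $r_i$ and $a$ as isomorphism invariants. The only quibble is that $\lambda_0$ is unique only up to a $k_E^\times$ scalar, and its existence uses that $w$ has constant term $1$ --- which is precisely what placing the residual scalar $\prod_i a_i$ on the arrow $i\equiv 0 \pmod f$ ensures --- so nothing in your argument is affected.
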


Now let $\hat a \in \O_E$ be a lift of $a$. Let $\M (r_0, \dots ,
r_{f-1}; \hat a )$ be the rank one $\varphi$-module over $\fS \otimes_{\Z_p}\O_E$ such that
\begin{enumerate}
\item $\M(r_0 , \dots, r_{f-1}; \hat a)_i $ is generated by $\e_i$, and
\item $\varphi(\e_{i-1}) = (\hat a)_i (u-\pi) ^{r_i} \e_{i}$.
\end{enumerate}

It is obvious that $\M:= \M(r_0 , \dots , r_{f-1}; \hat a )$ is a
finite free Kisin module such that $\M/\m_E\M = \barM(r_0, \dots,
r_{f-1}; a).$   We would like to show that the
$G_\infty$-representation $T_\fS (\M)$  can be uniquely extended
to a crystalline character of $G_K$.

\begin{lemma}\label{lem: lift rank-1 object} There exists a unique
  $(\varphi, \hat G)$-module $\hat\M := \hat \M(r_0,\ldots,r_{f-1};\hat a)$ such that the ambient Kisin
  module of $\hat\M$ is  $\M (r_0, \dots ,
r_{f-1}; \hat a )$ and $\hat T(\hat \M)$ is a crystalline character.
  The $\hodgetateembedding_s$-labeled Hodge--Tate weight of $\hat T(\hat
  \M)$ is $r_s$.
\end{lemma}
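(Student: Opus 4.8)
The plan is to prove existence by transporting a suitably normalised crystalline character along the anti-equivalence of Theorem~\ref{thm: old facts on (phi hatG)}(2), and to prove uniqueness by showing that the action of $\tau$ on $\hat \M$ is rigid.

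\emph{Existence and the Hodge--Tate weight.} It is standard (cf.~\cite{geesavitttotallyramified}) that there is a crystalline character $\chi \col G_K \to \O_E^\times$ whose $\hodgetateembedding_s$-labeled Hodge--Tate weight equals $r_s$ for every $s$, and that any two such characters differ by an unramified character $G_K \to \O_E^\times$. Applying Theorem~\ref{thm: old facts on (phi hatG)}(2) and Proposition~\ref{prop:kisin-natural-action-stuff}(1) to the lattice $L = \O_E(\chi)$ produces a finite free $(\varphi,\hat G)$-module $\hat\frakL$ with natural $\O_E$-action and $\hat T(\hat\frakL) \simeq L$; by Proposition~\ref{prop:kisin-natural-action-stuff}(3) its ambient Kisin module $\frakL$ is free of rank one over $\fS \otimes_{\Zp} \O_E$, of height $\le r := \max_s r_s$. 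Since $u-\pi$ is prime in $\fS\otimes_{\Zp}\O_E$, on each component the elementary divisor of $\varphi$ is a unit times $(u-\pi)^{r'_s}$ for some $0 \le r'_s \le r$, and the standard change-of-variables argument (as in Lemma~\ref{lem:rankone}) normalises these units to give $\frakL \simeq \M(r'_0,\ldots,r'_{f-1};\hat a')$ for suitable $r'_s$ and $\hat a' \in \O_E^\times$. To see that $r'_s = r_s$, let $\D = S_{K_0}\otimes_{\varphi,\fS}\frakL$ be the attached Breuil module, which by Theorem~\ref{thm:kisin-module-results}(4) is the Breuil module of $\chi$; by Lemma~\ref{intersection}(1) and the identification $\Fil^i D_K = f_\pi(\Fil^i\frakL^*)$, and because $f_\pi(u-\pi) = 0$, a direct rank-one computation shows $\Fil^i D_{K,s} = D_{K,s}$ for $i \le r'_s$ and $\Fil^i D_{K,s} = 0$ for $i > r'_s$; hence the jump of the filtration on $D_{K,s}$, i.e.\ the $\hodgetateembedding_s$-labeled Hodge--Tate weight of $\chi$, equals $r'_s$, forcing $r'_s = r_s$. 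This also establishes the last assertion of the lemma. Finally, to arrange $\hat a' = \hat a$: the ambient Kisin module of an unramified character $\psi$ with $\psi(\Frob_K) = b$ is of the form $\M(0,\ldots,0;b^{\pm1})$, and tensoring multiplies the constant, $\M(r_0,\ldots,r_{f-1};\hat a')\otimes\M(0,\ldots,0;c) \simeq \M(r_0,\ldots,r_{f-1};\hat a'c)$; choosing $\psi$ appropriately, $\chi\otimes\psi$ is crystalline with the same labeled Hodge--Tate weights and has ambient Kisin module isomorphic to $\M(r_0,\ldots,r_{f-1};\hat a)$. Transporting the $(\varphi,\hat G)$-structure along this isomorphism produces a $\hat\M$ as required.

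\emph{Uniqueness.} Suppose $\hat\cG$ is any $\hat G$-action making $\M := \M(r_0,\ldots,r_{f-1};\hat a)$ into a $(\varphi,\hat G)$-module with $\hat T$ a crystalline character; by axiom (4) of Definition~\ref{defn:phi-ghat-module} it suffices to determine the action of $\tau$ on $\hat\M = \hR\otimes_{\varphi,\fS}\M$. Since $\tau$ fixes $K_0$ and acts $\O_E$-linearly, it fixes the idempotents $\varepsilon_s$ and so preserves each component; thus we may write $\tau(1\otimes\e_s) = \lambda_s\,(1\otimes\e_s)$ for a unit $\lambda_s$ in the relevant component of $\hR\otimes_{\Zp}\O_E$. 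Imposing $\tau\varphi_{\hat\M} = \varphi_{\hat\M}\tau$ on $1\otimes\e_{s-1}$, and using $\tau(u) = [\ue]\,u$, yields a recursion of the shape
\[ \lambda_s = \varphi(\lambda_{s-1})\left(\frac{\varphi(u-\pi)}{\tau(\varphi(u-\pi))}\right)^{r_s}. \]
Let $\lambda_s^{(0)}$ be the analogous data attached to the module $\hat\M$ constructed above; then $\mu_s := \lambda_s/\lambda_s^{(0)}$ satisfies $\mu_s = \varphi(\mu_{s-1})$, so $\mu_0 \in (\hR\otimes_{\Zp}\O_E)^{\varphi^f = 1} = W(k)\otimes_{\Zp}\O_E$ (using $\hR^{\varphi^f = 1} = \hR \cap K_0 = W(k)$, as $[k:\F_p] = f$). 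But axiom (5) forces $\lambda_s \equiv \lambda_s^{(0)} \equiv 1 \pmod{I_+}$, hence $\mu_0 \equiv 1 \pmod{I_+}$; since $\nu$ restricts to the identity on $W(k)\otimes_{\Zp}\O_E$ we conclude $\mu_0 = 1$, so $\mu_s = 1$ for all $s$ and $\hat\cG$ agrees with the constructed action. (In fact, rank one being semi-stable forces it to be crystalline, so the hypothesis on $\hat T$ plays no role in this step.)

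\emph{Main obstacle.} The delicate point lies in the existence half, and specifically in pinning down the unramified constant $\hat a$ exactly rather than merely its reduction $a$: this rests on the fact that crystalline characters with prescribed labeled Hodge--Tate weights form a torsor under unramified twists and on the explicit description of the Kisin module of an unramified character, and one must keep the contravariance of $\hat T$ and the indexing of the embeddings $\hodgetateembedding_s$ straight throughout. A secondary point requiring care is the identification $(\hR\otimes_{\Zp}\O_E)^{\varphi^f = 1} = W(k)\otimes_{\Zp}\O_E$ used in the uniqueness argument, and the verification that the recursion above is consistent around the cycle $s \bmod f$ (which is automatic once existence is known).
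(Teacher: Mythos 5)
Your overall architecture is genuinely different from the paper's: you argue top-down, starting from an abstract crystalline character $\chi$ with the prescribed labeled weights, normalising its Kisin module to the shape $\M(r'_0,\ldots,r'_{f-1};\hat a')$, and then correcting the unramified constant by a twist; the paper argues bottom-up, computing Hodge--Tate weights only for the height-one modules $\frakN(j)=\M(0,\ldots,1,\ldots,0;1)$ and the unramified module $\M(0,\ldots,0;\hat a)$, and then obtaining the general $\M(r_0,\ldots,r_{f-1};\hat a)$ multiplicatively via Lemma~\ref{lem:product-of-characters}; its uniqueness is the one-line combination of Theorem~\ref{thm: old facts on (phi hatG)}(2) with Kisin's full faithfulness, whereas yours is a direct computation of the $\tau$-action (essentially the argument the paper uses for Corollary~\ref{cor:(phi, hatG) for rank 1}, carried out integrally). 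Your uniqueness step is fine, and the normalisation and twisting steps are repairable sketches.

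The genuine gap is in your identification $r'_s=r_s$. The equality you invoke, $\Fil^i D_K = f_\pi(\Fil^i\frakL^*)$, is not correct as stated (the right-hand side lies in the lattice $M_K$), and the statement you actually need, $f_\pi(\Fil^i\M^*)=\Fil^i M_K$, is Proposition~\ref{prop:surjectivity} — the most delicate result of Section~\ref{sec:shape}, proved only when all Hodge--Tate weights lie in $[0,p]$, and known to be the precise point where things break beyond that range (Remark~\ref{rem:why-not-p+1}, Example~\ref{ex:best-possible}). But Definition~\ref{defn:rank-one} and the present lemma allow arbitrary non-negative $r_s$, and Corollary~\ref{cor:(phi, hatG) for rank 1} is stated and used in that generality; so as written your weight computation only proves the lemma for $r_s\le p$. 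The paper avoids this entirely by computing weights only in the height-one case (where Theorem~\ref{shape-redux-coeffs}, or Lemma~\ref{intersection}(3) plus a \emph{base adapt\'ee}, applies) and then multiplying characters. To repair your route without restricting the weights, replace the surjectivity input by the dimension count of Lemma~\ref{intersection}(1),(3) (extended componentwise to $\O_E$-coefficients) together with the \emph{base adapt\'ee} for $\D$: for your rank-one module $\Fil^i\M^*_s$ equals $\M^*_s$ for $i\le r'_s$ and $(u-\pi)^{i-r'_s}\M^*_s$ for $i>r'_s$, and comparing graded ranks with $\mathrm{gr}^i\D_s$ pins the filtration jump of $D_{K,s}$ at $r'_s$ with no bound on $r'_s$. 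You should also spell out the cyclic-unit recursion that normalises the constant $\hat a'$ over $\fS\otimes_{\Zp}\O_E$ (the characteristic-zero analogue of Lemma~\ref{lem:rankone}) and fix the sign of the unramified twist rather than leaving it as $b^{\pm 1}$.
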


\begin{proof}
The uniqueness is a general fact, combining Theorem~\ref{thm: old
  facts on (phi hatG)}(2) with \cite[Thm.~(0.2)]{KisinCrys}.  For
existence, consider the Kisin module $\frakN(j) =\M(0,\ldots,1,\ldots,0;1)$ where
$r_j=1$ and $r_i = 0$ if $i \neq j$.  This is a Kisin
module of height 1, and it follows from
\cite[Thm.~(2.2.7)]{KisinCrys} that $T_\fS(\frakN(j))$ can be uniquely
extended to a crystalline character $\psi_j$ with  Hodge--Tate weights
in $\{0, 1\}$. By Theorem \ref{shape-redux-coeffs} (or, if one prefers,
from Lemma~\ref{intersection}(3) together with the existence of a
\emph{base adapt\'ee} for $\cD$),  $\psi_j$ has
$\hodgetateembedding_s$-labeled Hodge--Tate weights $0$ if $s \neq j$
and $1$ if $s= j$.

Next consider $\frakN(\hat a) = \M(0,\ldots,0;\hat a)$, and define
$\lambda_{\hat a} = T_{\fS}(\frakN(\hat a))$.  Let $\Zp^{\ur}$
denote the maximal unramified extension of $\Zp$.  Since there exists $x \in \Zp^{\ur}
\otimes_{\Zp} \O_E$ 
with
$\varphi^f(x) = (1 \otimes \hat a) x$, it is easy to check using the
functor $T_{\fS,\O_E}$ that $\frakN(\hat a)$ is the Kisin module
attached to the unramified character of $G_K$ sending arithmetic
Frobenius to $\hat a$.  Now it suffices to show that the Kisin module
associated to the crystalline character $\lambda_{\hat a} \psi_0
^{r_0} \cdots \psi_{f-1}^{r_{f-1}}$ is just $\M (r_0, \dots, r_{f-1};
\hat a)$.  This is a consequence of the following general fact.
\end{proof}

\begin{lem}
  \label{lem:product-of-characters}
   Let $\chi$ and $\chi'$ be two crystalline $\O_E$-characters of $G_K$
   whose Kisin modules $\frakN$, $\frakN'$ are defined by the conditions
\begin{itemize}
\item $\frakN_i$, $\frakN'_i$ are generated by $\e_i$, $\e'_i$ respectively, and
\item $\varphi
   (\e_{i-1})= \alpha_{i} \e_{i}$ and $\varphi(\e'_{i-1}) = \alpha'_i
   \e_{i}$ with $i = 0 , \dots , f-1$ and $\alpha_i,\alpha'_i \in
   \O_E \llbracket u \rrbracket$. 
\end{itemize}
Then
   the Kisin module $\tilde\frakN$ of $\chi \cdot \chi'$ has the form $\varphi
   (\mathfrak{f}_{i-1})= \alpha_ {i}\alpha'_{i} \mathfrak{f}_{i}$,
   with $\mathfrak{f}_i$ a generator of $\tilde\frakN_i$. \end{lem}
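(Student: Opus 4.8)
The plan is to realize the Kisin module of $\chi\cdot\chi'$ directly as a tensor product. Writing $\fS_{\O_E}=\fS\otimes_{\Zp}\O_E$, I would form the $\varphi$-module $\frakN'':=\frakN\otimes_{\fS_{\O_E}}\frakN'$ over $\fS_{\O_E}$, equipped with the diagonal Frobenius $x\otimes y\mapsto\varphi(x)\otimes\varphi(y)$. Since $\frakN$ and $\frakN'$ are each free of rank one over $\fS_{\O_E}$, so is $\frakN''$, with $\frakN''_i$ generated by $\mathfrak{f}_i:=\e_i\otimes\e'_i$ and $\varphi(\mathfrak{f}_{i-1})=\alpha_i\alpha'_i\mathfrak{f}_i$; and $\frakN''$ has finite height because $\frakN$ and $\frakN'$ do (by Theorem~\ref{thm:kisin-module-results}(3), as $\chi$ and $\chi'$ are crystalline with non-negative Hodge--Tate weights). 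Thus $\frakN''$ is a finite free Kisin module of exactly the shape claimed for $\tilde\frakN$, and — by the uniqueness up to isomorphism of the Kisin module attached to a lattice (Theorem~\ref{thm:kisin-module-results}(1) and Definition~\ref{defn:kisin-module-assoc-to-lattice}) — it will suffice to identify $\frakN''$ with the Kisin module attached to the rank-one $\O_E$-lattice $L:=\O_E(\chi)\otimes_{\O_E}\O_E(\chi')=\O_E(\chi\cdot\chi')$; that is, to produce an isomorphism $T_\fS(\frakN'')\simeq L|_{G_\infty}$ of $\Zp[G_\infty]$-modules.

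To get this isomorphism I would invert $u$ and pass to \'etale $\varphi$-modules. Setting $M:=\O_{\cE}\otimes_\fS\frakN$ and $M':=\O_{\cE}\otimes_\fS\frakN'$, one has $\O_{\cE}\otimes_\fS\frakN''=M\otimes_{\O_{\E, E}}M'$ over $\O_{\E, E}:=\O_{\cE}\otimes_{\Zp}\O_E$ (base change commutes with tensor product). By the standard compatibility $T_\fS\simeq T\circ(\O_{\cE}\otimes_\fS-)$ between $T_\fS$ and the \'etale $\varphi$-module functor $T$ (see~\cite[Cor.~2.2.2]{liu-Fontaine}), the claim reduces to the monoidality of $T$: the \'etale $\varphi$-module equivalence of \cite[Prop.~A.1.2.6]{fo4} (see also \cite[\S2.2]{liu-Fontaine}) is a tensor equivalence, and it remains one over $\O_{\E, E}$ with $\otimes_{\O_E}$ on the Galois side, so that $T(M\otimes_{\O_{\E, E}}M')\simeq T(M)\otimes_{\O_E}T(M')$. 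Since $\frakN$ is the Kisin module of the crystalline character $\chi$ we have $T(M)\simeq T_\fS(\frakN)\simeq\O_E(\chi)|_{G_\infty}$, and likewise $T(M')\simeq\O_E(\chi')|_{G_\infty}$, whence
\[ T_\fS(\frakN'')\simeq T(M)\otimes_{\O_E}T(M')\simeq\O_E(\chi)|_{G_\infty}\otimes_{\O_E}\O_E(\chi')|_{G_\infty}=L|_{G_\infty}. \]
This identifies $\frakN''$ with the Kisin module of $\chi\cdot\chi'$, and since $\varphi(\mathfrak{f}_{i-1})=\alpha_i\alpha'_i\mathfrak{f}_i$ with $\mathfrak{f}_i$ generating $\frakN''_i$, the lemma follows.

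The only real content here is the tensor compatibility of the Kisin-module functor, so the step I would be most careful about is precisely this reduction to the \'etale setting: it is important to route the argument through \'etale $\varphi$-modules — where the functor is genuinely monoidal — rather than to attempt a direct computation inside $W(R)$ or $\fS^{\ur}$, and to keep track of the $\O_E$-coefficients (for which the functor $T_{\fS,\O_E}$ of Proposition~\ref{prop:kisin-natural-action-stuff}(3) is the natural object, even though at the end only the underlying $\Zp[G_\infty]$-module structure is needed to invoke full faithfulness). Everything else — that $\frakN''$ is free of rank one over $\fS_{\O_E}$ with the stated Frobenius, that it has finite height, and that uniqueness of the attached Kisin module transports the conclusion — is routine bookkeeping.
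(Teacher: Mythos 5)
Your argument is correct, but it is routed differently from the paper's. The paper proves the lemma by exactly the ``direct computation inside $\fS^{\ur}$'' that you advise against: it picks generators $f,f'$ of the rank-one $\O_E$-modules $T_{\fS,\O_E}(\frakN)$, $T_{\fS,\O_E}(\frakN')$, sets $\beta_i=f(\e_i)$, $\beta'_i=f'(\e'_i)\in\fS^{\ur}\otimes_{\Zp}\O_E$, observes that $\varphi(\beta_{i-1})=\alpha_i\beta_i$ (and similarly for $\beta'$), and checks that $\mathfrak f_i\mapsto\beta_i\beta'_i$ is a $\varphi$-compatible map $\tilde\frakN\to\fS^{\ur}\otimes_{\Zp}\O_E$ on which $G_\infty$ acts through $\chi\chi'$; full faithfulness of $T_\fS$ then identifies $\tilde\frakN$ with the Kisin module of $\chi\chi'$, with no need to invert $u$. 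You instead build the tensor product $\frakN\otimes_{\fS\otimes_{\Zp}\O_E}\frakN'$ and transfer the identification of its Galois representation to the \'etale side, citing the compatibility $T_\fS\simeq T\circ(\O_\cE\otimes_\fS-)$ and the monoidality of Fontaine's equivalence. That works, and it is arguably more structural (it would give the tensor compatibility in any rank), but note two points: the monoidality you invoke is a tensor-equivalence statement for the \emph{covariant} normalisation, so in the contravariant setting used here (with $\O_E$-coefficients) what you actually need is that the product pairing $T(M)\otimes_{\O_E}T(M')\to T(M\otimes_{\O_{\E,E}}M')$, $f\otimes f'\mapsto(m\otimes m'\mapsto f(m)f'(m'))$, is an isomorphism --- which for rank-one free modules is immediate, and is precisely the multiplication-of-generators computation the paper carries out directly; and your appeal to it is a citation of a fact not stated in this exact coefficient-ed, contravariant form in the references, whereas the paper's hands-on version is self-contained and two lines long. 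So the trade-off is a cleaner conceptual packaging on your side versus a shorter, reference-light verification in the paper; both end with the same uniqueness/full-faithfulness step.
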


\begin{proof}
We compute using the functor $T_{\fS,\O_E}$.  Pick generators $f,f'$ of the rank one $\O_E$-modules $T_{\fS,\O_E}(\frakN)$ and
$T_{\fS,\O_E}(\frakN')$,
and write $\beta_i,\beta'_i$ for the elements $f(\e_i),f(\e'_i)$ in
$\fS^{\ur} \otimes_{\Zp}\O_E$.
Then $\varphi (\beta_{i-1})= \alpha_{i} \beta_{i}$ and similarly for
$\varphi(\beta'_{i-1})$.

Let $\tilde\frakN$ be as in the statement of the lemma, and consider the map
$\tilde f\col \tilde\frakN \to \fS^{\ur} \otimes_{\Zp} \O_E$ which sends $\mathfrak{f}_i$ to
$\beta_{i}\beta'_{i}$.  Evidently $\tilde f \in T_{\fS,\O_E}
(\tilde\frakN)$,
and the latter
is an $\O_E$-character of $G_\infty$. As $\tilde{f}= f \cdot f'$, we see that
$T_{\fS,\O_E}(\tilde\frakN)= (\chi \chi')|_{G_\infty}$ as
$\O_E[G_\infty]$-modules. That is, $\tilde\frakN$ is the Kisin module
associated to $\chi \cdot \chi'$.
\end{proof}

\begin{cor}\label{cor:(phi, hatG) for rank 1}There is a unique
  $(\varphi, \hat G)$-module $\hat \barM :=
  \hat\barM(r_0,\ldots,r_{f-1};a)$ whose ambient Kisin module is
  $\barM(r_0 , \dots ,r_{f-1}; a)$.  Furthermore, $\hat T(\hat \barM)$
  is the reduction of the crystalline character $\hat
  T(\hat\M(r_0,\ldots,r_{f-1};\hat a))$ for any lift $\hat a \in \O_E$
  of $a$.
\end{cor}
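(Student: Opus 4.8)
The plan is to produce $\hat\barM$ by reducing the crystalline character of Lemma~\ref{lem: lift rank-1 object} modulo $\m_E$, and then to prove uniqueness by pinning down the $\hat G$-action on $\hR\otimes_{\varphi,\fS}\barM(r_0,\dots,r_{f-1};a)$ by hand.

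For existence, fix a lift $\hat a\in\O_E$ of $a$ and set $r=\max_i r_i$. By Lemma~\ref{lem: lift rank-1 object} the Kisin module $\M(r_0,\dots,r_{f-1};\hat a)$ is the ambient Kisin module of a finite free $(\varphi,\hat G)$-module $\hat\M$ with natural $\O_E$-action whose $\hat T$ is a crystalline character $\chi$ with $\hodgetateembedding_s$-labeled Hodge--Tate weight $r_s$; in particular $\chi$ is semi-stable with Hodge--Tate weights in $\{0,\dots,r\}$. Writing $L$ for the underlying $\O_E$-lattice, Proposition~\ref{prop:reduction-of-Kisin-modules} shows that $\hat\barM:=\hat\M/\m_E\hat\M$ is a torsion $(\varphi,\hat G)$-module with natural $k_E$-action, $\hat T(\hat\barM)\simeq L/\m_E L$, and ambient Kisin module $\M(r_0,\dots,r_{f-1};\hat a)/\m_E\M(r_0,\dots,r_{f-1};\hat a)=\barM(r_0,\dots,r_{f-1};a)$, the last equality being the remark just before Lemma~\ref{lem: lift rank-1 object}. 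This disposes of existence and of the statement that $\hat T(\hat\barM)$ is the reduction of $\hat T(\hat\M(r_0,\dots,r_{f-1};\hat a))$, for any lift $\hat a$.

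For uniqueness, let $\hat\barM'$ be any $(\varphi,\hat G)$-module with ambient Kisin module $\barM:=\barM(r_0,\dots,r_{f-1};a)$; I would show that its $\hat G$-action on $\hat\barM:=\hR\otimes_{\varphi,\fS}\barM$ coincides with that of the module built above, forcing $\hat\barM'\simeq\hat\barM$. First, $\fS\subset\hR$ is fixed by $H_K$ (since $W(k)$ and $u=[\underline\pi]$ are), and $\hat\barM$ is generated over $\hR$ by the image of $\barM$; so axiom~(4) of Definition~\ref{defn:phi-ghat-module} together with $\hR$-semilinearity forces the $H_K$-action to be $h\otimes\mathrm{id}$, and the two $H_K$-actions thus agree. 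Since $\hat G\simeq\hat G_{p^\infty}\rtimes H_K$ is topologically generated by $H_K$ and the fixed $\tau$, it remains to compare the two $\tau$-actions. Here $\tau$ fixes $W(k)\subset\hR$ (it is trivial on the residue field of $\hat K$) and is $k_E$-linear, so it preserves the $W(k)\otimes_{\Zp}k_E$-grading and sends $1\otimes e_i\mapsto\lambda_i\,(1\otimes e_i)$ for a unit $\lambda_i$ in the relevant factor of $\hR\otimes_{\Zp}k_E$; axiom~(5) gives $\lambda_i\equiv1\pmod{I_+}$, and $\tau\varphi=\varphi\tau$, together with $\tau(u)=[\ue]\,u$ and $\varphi(1\otimes e_{i-1})=(a)_i u^{pr_i}(1\otimes e_i)$, yields the recursion $\varphi(\lambda_{i-1})=[\ue]^{pr_i}\lambda_i$ for all $i$ modulo~$f$. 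Given two such systems $(\lambda_i),(\lambda_i')$, the ratios $\nu_i:=\lambda_i'\lambda_i^{-1}$ satisfy $\varphi(\nu_{i-1})=\nu_i$ and $\nu_i\equiv1\pmod{I_+}$, hence $\varphi^f(\nu_i)=\nu_i$; reducing modulo $p$ and using the injection $\hR/p\hR\hookrightarrow W(R)/pW(R)=R$, the image of $\nu_i-1$ lies in the maximal ideal $I_+R$ and is fixed by a power of the Frobenius of $R$, which multiplies $v_R$ by a positive power of $p$. So $\nu_i=1$, the $\tau$-actions coincide, and $\hat\barM'\simeq\hat\barM$.

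The easy part is existence, which is purely formal given the results already available. The work is all in uniqueness, and the two places I expect to need care are: exploiting the decomposition $\hat G\simeq\hat G_{p^\infty}\rtimes H_K$ so that matching the $H_K$- and $\tau$-actions really does match the whole $\hat G$-action; and running the valuation estimate in $R$ while handling the coefficient ring $\hR\otimes_{\Zp}k_E$ (one can base change along $\overline{\F}_p$-algebra maps $\hR\otimes_{\Zp}k_E\to R$ to land in the valuation ring $R$ itself). The reason the recursion is rigid — so that its solution congruent to $1$ mod $I_+$ is unique — is exactly the crystallinity built into the lifts $\hat\M$, which is what makes the twist $[\ue]^{pr_i}$ appear.
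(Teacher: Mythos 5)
Your proof is correct and takes essentially the same route as the paper: existence by reducing the crystalline $(\varphi,\hat G)$-module of Lemma~\ref{lem: lift rank-1 object} modulo $\m_E$ (via Theorem~\ref{thm: old facts on (phi hatG)}(4)/Proposition~\ref{prop:reduction-of-Kisin-modules}), and uniqueness by noting the $H_K$-action is forced and that $\tau\varphi=\varphi\tau$ gives the recursion $\lambda_i=\ue^{-pr_i}\varphi(\lambda_{i-1})$, which together with triviality mod $I_+$ pins down $\tau$. The only cosmetic difference is in the last step: the paper solves $\varphi^f(\alpha_i)=\alpha_i\ue^{m_i}$ explicitly via Lemma~\ref{valuation}(2) and normalizes the constant, whereas you take the ratio of two putative solutions and kill it by the same Frobenius--valuation rigidity (with the coefficient issue over $k_E$ handled as you indicate).
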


\begin{proof} The existence of $\hat\barM$ follows from
  Lemma~\ref{lem: lift rank-1 object} and Theorem \ref{thm: old facts
    on (phi hatG)}(4).  For uniqueness, it suffices to see that the
  action of $\tau$ on $\barM$ is uniquely determined.  Write
  $\tau(e_i) = \alpha_i e_i$ with $\alpha_i \in R$.  We see that
  $\alpha_{i} = \ue^{-pr_i} \varphi(\alpha_{i-1})$, and it follows that
  $\varphi^{f}(\alpha_i) = \alpha_i \ue^{m_i}$ for some integer $m_i$
  which is determined by the $r_j$.  Lemma \ref{valuation} below shows
  that $\alpha_i = c \eta^{m_i}$ for some $c \in k$, where the element
  $\eta \in R$ is defined in Lemma~\ref{valuation}(2). Since $\eta-1
  \in I_{+} R$ and $\Ghat$ must act trivially on $\barM/u\barM$, we
  have $c=1$ and $\alpha_i$ is uniquely determined for all $i$.
\end{proof}

\begin{lemma}\label{valuation}
Recall that $\ue = \ue(\tau)$ is the image in $R$ of $\tau(u)/u $.
\begin{enumerate}
\item
Write $m = p^s m _0 \in \Z_p $ with $ m_0\in\Z_p^\times$ and $s \in
\Z_{\ge 0}$ a
non-negative integer. Then $v_R(\ue^{-m} -1)= p^s(\frac{p}{p-1})$.

\item If $m \in \Z$, then the solutions to the equation $\varphi^f (x)= x \ue^m $  with $x
  \in R$ are precisely the $c\eta^{m} $ where  $\eta= \prod\limits_{n=0}^\infty (\ue^{-1})^{p^{nf}}$ and $c\in k$.

\item If $m \in \Z$ then  $v_R(\eta^{m}-1) = v_R(\ue^m - 1)$.

\end{enumerate}
\end{lemma}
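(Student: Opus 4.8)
The plan is to exploit that $R$ has characteristic $p$, so that $\varphi$ and $\varphi^f$ act on $R$ as the maps $x\mapsto x^p$ and $x\mapsto x^{p^f}$, and that for any $1$-unit $x\in R$ (i.e.\ $v_R(x-1)>0$) and any $s\ge 0$ one has $x^{p^s}-1=(x-1)^{p^s}$, hence $v_R(x^{p^s}-1)=p^s\,v_R(x-1)$. First I would record the base computation $v_R(\ue-1)=\frac{p}{p-1}$: from the definition of $\ue$ one has $\ue-1=(\overline{\zeta_{p^n}-1})_{n\ge 0}\in R$ (the components are compatible because $(x-1)^p\equiv x^p-1\bmod p$), whence $v_R(\ue-1)=\lim_n p^n v_p(\zeta_{p^n}-1)=\lim_n \frac{p^n}{p^{n-1}(p-1)}=\frac{p}{p-1}$; this is also essentially the fact recalled in Section~\ref{sec:p-adic-period-rings} that the image of $\varphi(\mathfrak{t})$ in $R$ has valuation $\frac{p}{p-1}$.

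For part (1): write $m=p^s m_0$ with $m_0\in\Z_p^\times$. Since $\ue$ is a $v_R$-unit, $v_R(\ue^{-m}-1)=v_R(\ue^m-1)$, and $\ue^m=(\ue^{p^s})^{m_0}$. The one nontrivial input is the sublemma that raising a $1$-unit $w$ to a power $\lambda\in\Z_p^\times$ preserves $v_R(w-1)$: choosing integers $\lambda_j$ prime to $p$ with $\lambda_j\to\lambda$ and expanding $(1+(w-1))^{\lambda_j}$ by the binomial theorem, the linear term $\lambda_j(w-1)$ has valuation exactly $v_R(w-1)$ (its integer coefficient is a unit mod $p$ and $R$ has characteristic $p$) while all higher terms have strictly larger valuation, so $v_R(w^{\lambda_j}-1)=v_R(w-1)$; passing to the limit and using $w^{\lambda_j}\to w^{\lambda}$ gives the sublemma. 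Applying it with $w=\ue^{p^s}$, $\lambda=m_0$, and then the characteristic-$p$ identity above, we get $v_R(\ue^m-1)=v_R(\ue^{p^s}-1)=p^s v_R(\ue-1)=p^s\,\frac{p}{p-1}$, as claimed.

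For part (2): since $v_R\big((\ue^{-1})^{p^{nf}}-1\big)=p^{nf}\,\frac{p}{p-1}\to\infty$, the partial products of $\eta=\prod_{n\ge 0}(\ue^{-1})^{p^{nf}}$ are Cauchy, so $\eta$ converges in the ($v_R$-adically complete) domain $R$, and a telescoping computation using the continuity of $\varphi^f$ gives $\varphi^f(\eta)=\eta\,\ue$; hence $\varphi^f(\eta^m)=\eta^m\ue^m$ for all $m\in\Z$, so $c\eta^m$ solves the equation for every $c\in k$ (note $k\subset R$ and $k=\F_{p^f}$ since $K/\Qp$ is unramified of degree $f$, so $c^{p^f}=c$). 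Conversely, if $x\ne 0$ satisfies $\varphi^f(x)=x\ue^m$ then $\varphi^f(x/\eta^m)=x/\eta^m$; since $R$ is a domain the polynomial $Y^{p^f}-Y$ has at most $p^f$ roots in $R$, all of them already supplied by $k$, so $x/\eta^m\in k^\times$, i.e.\ $x=c\eta^m$ with $c\in k^\times$. Together with the trivial solution $x=0$ (the case $c=0$) this gives the stated list.

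For part (3): from $\varphi^f(\eta^m)=\eta^m\ue^m$ we obtain $\ue^m=(\eta^m)^{p^f}/\eta^m=(\eta^m)^{p^f-1}$, and since $p^f-1\in\Z_p^\times$ the sublemma from part (1), applied to the $1$-unit $\eta^m$, yields $v_R(\ue^m-1)=v_R(\eta^m-1)$ (for $m=0$ both sides are $+\infty$). The only genuinely delicate step in the whole argument is the $\Z_p^\times$-power sublemma, which requires a little care precisely because $R$ has characteristic $p$, so that $\Z_p$ acts on $1$-units only through limits of integer powers rather than through the ring structure; everything else is formal manipulation inside $R$.
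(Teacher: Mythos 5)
Your proof is correct, and its skeleton matches the paper's: the base valuation is computed from the components $\zeta_{p^n}$ of $\ue$ together with the characteristic-$p$ identity $x^{p^s}-1=(x-1)^{p^s}$, and part (2) rests on convergence of $\eta$ and the telescoping identity $\varphi^f(\eta)=\eta\ue$. Two sub-steps are, however, handled by genuinely different (equally valid) arguments. For the prime-to-$p$ part of the exponent in (1), the paper simply observes that $\zeta_{p^n}^{m_0}$ is again a primitive $p^n$-th root of unity, so no extra lemma is needed, whereas you prove a reusable sublemma that $\Z_p^{\times}$-powers of $1$-units preserve $v_R(\,\cdot\,-1)$. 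For the converse direction in (2), the paper compares valuations to see that any nonzero solution is a unit of $R$, deduces that a solution is determined by its image in the residue field $\overline{k}$, and notes this image must lie in $k$ (the fixed field of $\varphi^f$); you instead note that $x/\eta^m$ is a root of $Y^{p^f}-Y$ in the domain $R$ and count roots, which is slightly slicker. The paper's formulation has the small advantage of exhibiting $c$ as the residue of the solution, which is exactly what is used in the proof of Corollary~\ref{cor:(phi, hatG) for rank 1} to force $c=1$ from triviality of the $\Ghat$-action modulo $I_+R$; your description recovers the same thing since $\eta\equiv 1\pmod{\fm_R}$. Finally, for (3) the paper uses the additive identity $\varphi^f(\eta^m-1)=\eta^m(\ue^m-1)+(\eta^m-1)$ and an ultrametric comparison of valuations, while you use $\ue^m=(\eta^m)^{p^f-1}$ together with your sublemma; your route is a pleasant by-product of having proved that sublemma, and the paper's avoids needing it.
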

\begin{proof}
(1)  It suffices to prove that $v_R(\ue^m -1) =
p^s(\frac{p}{p-1})$.  If $m=m_0 \in \Zp^{\times}$, then
$v_R(\ue^m - 1) = \lim_{n\to \infty} p^n v_p (\zeta_{p^n}^{m} - 1) =
\frac{p}{p-1}$ where $\zeta_{p^n}^m$ is defined in the usual way for
$m \in \Zp^{\times}$.   For the general case, note that $\ue^m -1 =
(\ue^{m_0})^{p^s}-1= (\ue^{m_0}-1)^{p^s}$.

(2) One checks that (1) implies the convergence of $\eta$ in $R$, and
that $c\eta^m$ is a solution to the equation.  Comparing valuations
on both sides of the equation $\varphi^f(x) = x \ue^m$, one sees that if
$x \neq 0$ then $v_R(x) = 0$; it follows that if $x,y$ are two solutions with the
same image in $\overline{k} \simeq R/\fm_R$, then $x-y=0$.  Also note that since $\ue
\equiv 1 \pmod{\fm_R}$ and $k$ is the fixed field of $\varphi^f$ in $\overline{k}$, the image of $x$ in $\overline{k}$ must lie in~$k$.
It is easy to see that $\eta \equiv 1 \pmod{\fm_R}$, and we
conclude that if $c \in k$ then $c \eta^m$ is the unique solution with
image $c$ in $R/\fm_R$.

(3) Write $\varphi^f(\eta^m - 1) = \eta^m(\ue^m - 1) + (\eta^m-1)$.
Since $\eta^m-1$ has positive valuation, the term on the left-hand
side has greater valuation than the second term on the right-hand
side; therefore the two terms on the right-hand side must have equal valuation.
\end{proof}

Recall that  in
Section~\ref{sec:galois-theory}, for each $\sigma \in \Hom(k,\Fpbar)$ we have defined the fundamental character
$\omega_{\sigma} \col I_K \to \Fpbar^{\times}$ corresponding to $\sigma$.
 Let $\hodgetateembeddingbar_s \col k \into \Fpbar$ be
the embedding obtained by reducing
$\hodgetateembedding_s$ modulo $p$, and for brevity we write
$\omega_s$ for $\omega_{\hodgetateembeddingbar_s}$ (throughout the
rest of the paper).

\begin{prop}
  \label{prop:calculation-on-inertia}
Write $\hat{\barM} = \hat\barM(r_0,\ldots,r_{f-1};a)$
and $\hat{\barMp} = \hat\barM(r'_0,\ldots,r'_{f-1};a')$
 for some $a,a' \in k_E$ and
non-negative integers $r_0,r'_0,\ldots,r_{f-1},r'_{f-1}$.
Let $\barM,\barMp$ denote the ambient Kisin modules of
 $\hat\barM,\hat\barMp$. 

\begin{enumerate}
\item We have $\hat T(\hat\barM)|_{I_K} \simeq \omega_0^{r_0} \cdots
  \omega_{f-1}^{r_{f-1}}.$

\item We have $\hat T(\hat\barM) \simeq \hat T(\hat\barMp)$ if and
  only if  $T_{\fS}(\barM) \simeq T_{\fS}(\barMp)$.

\item The isomorphism in (2) occurs  if and
  only if $a=a'$ and $\sum_{i=0}^{f-1} p^{f-i-1} r_i \equiv
  \sum_{i=0}^{f-1} p^{f-i-1} r'_i \pmod{p^f-1}$.

\end{enumerate}
\end{prop}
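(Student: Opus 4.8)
The plan is to compute $\hat T(\hat\barM)|_{I_K}$ explicitly using the functor $T_{\fS}$ on the ambient Kisin module $\barM$, and then deduce the isomorphism criteria. First I would prove (1). By Theorem~\ref{thm: old facts on (phi hatG)}(1), $\hat T(\hat\barM)|_{G_\infty} \simeq T_{\fS}(\barM)$, so it suffices to compute $T_{\fS}(\barM)|_{I_K}$; since $I_K \subset G_\infty$ (as $K_\infty/K$ is totally ramified, inertia surjects appropriately), this determines the restriction to inertia. Using $T_{\fS,\O_E}$ (equivalently $T_{\fS}$ for the $k_E$-version, via Proposition~\ref{prop:kisin-natural-action-stuff}(3) and Remark~\ref{rem:natural-not-canonical}), an element of $T_{\fS}(\barM)$ is determined by $f(e_i) = \beta_i \in (\fS^{\ur}/p)\otimes_{\Zp} k_E = k(\!(u)\!)^{\sep} \otimes \cdots$, satisfying $\varphi(\beta_{i-1}) = (a)_i u^{r_i}\beta_i$. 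Iterating $f$ times gives $\varphi^f(\beta_i) = a\, u^{m_i}\beta_i$ with $m_i = \sum_{j} p^{?} r_j$ the appropriate $p$-adically weighted sum (indices shifted cyclically starting from $i$); the Galois action on such $\beta_i$ is through the character which on $I_K$ is $\prod_\sigma \omega_\sigma^{r_\sigma}$ — this is the standard computation identifying solutions of $\varphi^f(\beta) = a u^m \beta$ with (unramified twist of) a power of the fundamental character, exactly as in the niveau one case of Serre's analysis of $(\varphi,\Gamma)$-modules or Kisin modules over $k(\!(u)\!)$. I would carry this out for one index $i$ (say $i$ with $\hodgetateembeddingbar_i = \sigma$ running over $\Hom(k,\Fpbar)$) and read off $\hat T(\hat\barM)|_{I_K} \simeq \omega_0^{r_0}\cdots\omega_{f-1}^{r_{f-1}}$.

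For (2): the "only if" direction is trivial since $T_{\fS}$ is the restriction of $\hat T$ to $G_\infty$, so $\hat T(\hat\barM)\simeq\hat T(\hat\barMp)$ forces $T_{\fS}(\barM)\simeq T_{\fS}(\barMp)$. For the "if" direction, suppose $T_{\fS}(\barM) \simeq T_{\fS}(\barMp)$ as $k_E[G_\infty]$-modules. By Corollary~\ref{cor:(phi, hatG) for rank 1} each of $\hat\barM,\hat\barMp$ is the \emph{unique} $(\varphi,\hat G)$-module with the given ambient Kisin module, and since $T_{\fS}$ is fully faithful on Kisin modules (Theorem~\ref{thm:kisin-module-results}(1), in the torsion case via~\cite{liu-Fontaine}), an isomorphism $T_{\fS}(\barM) \simeq T_{\fS}(\barMp)$ comes from an isomorphism $\barM \simeq \barMp$ of Kisin modules; this isomorphism automatically intertwines the unique $\hat G$-structures (uniqueness again), hence is an isomorphism of $(\varphi,\hat G)$-modules, so $\hat T(\hat\barM) \simeq \hat T(\hat\barMp)$ as $G_K$-representations. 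Alternatively one can invoke Lemma~\ref{lem:isomorphism-criterion} applied to the putative map.

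For (3): by Lemma~\ref{lem:rankone} we know $\barM \simeq \barMp$ (as $\varphi$-modules over $\fS\otimes_{\Zp} k_E$) if and only if $(r_0,\ldots,r_{f-1};a)$ and $(r'_0,\ldots,r'_{f-1};a')$ give the same normal form. But the normal forms in Definition~\ref{defn:rank-one} are \emph{not} all distinct as $G_\infty$-representations — one must account for the relation $u^{f} \leftrightarrow$ twist, i.e. the change of variables $e_i \mapsto u^{c_i} e_i$ that preserves being of the stated shape. Tracking this, replacing $e_i$ by $u^{c_i}e_i$ sends $r_i \mapsto r_i + pc_{i-1} - c_i$ (cyclically), and the invariant of the tuple $(r_i)$ under all integer choices of $(c_i)$ is precisely $\sum_{i=0}^{f-1} p^{f-i-1} r_i \bmod (p^f - 1)$ (this is the content of the standard "Serre weight" combinatorics: the $p$-adic cyclic sum modulo $p^f-1$), while $a$ is unchanged. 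So $T_{\fS}(\barM) \simeq T_{\fS}(\barMp)$ iff $a = a'$ and the weighted sums agree mod $p^f - 1$; combining with (2) gives the claim. The main obstacle I anticipate is bookkeeping the index conventions (the cyclic shift in the definition $\varphi(e_{i-1}) = (a)_i u^{r_i} e_i$, and whether the weighted sum starts at $i$ or $0$, and the direction of the exponents $p^{f-i-1}$ versus $p^i$) so that the final congruence matches the stated one exactly; the underlying mathematics (fundamental characters, change of variables, full faithfulness) is all standard and available from the cited results.
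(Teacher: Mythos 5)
Your argument for part (2) has a genuine gap, and it is the central one. In the ``if'' direction you invoke full faithfulness of $T_{\fS}$ to promote an isomorphism $T_{\fS}(\barM)\simeq T_{\fS}(\barMp)$ to an isomorphism $\barM\simeq\barMp$ of Kisin modules, and then use uniqueness of the $\Ghat$-structure. But full faithfulness (Theorem~\ref{thm:kisin-module-results}(1)) is a statement about \emph{finite free} Kisin modules; for torsion Kisin modules an isomorphism of $G_\infty$-representations only gives an isomorphism of the associated \'etale $\varphi$-modules $\barM[\frac1u]\simeq\barMp[\frac1u]$, and this need not carry the lattice $\barM$ onto $\barMp$. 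Indeed, if your claim were true it would contradict part (3) itself: by Lemma~\ref{lem:rankone} the normal forms are pairwise non-isomorphic as Kisin modules, yet (3) asserts, e.g., that $\barM(0,\ldots,0;a)$ and $\barM(p-1,\ldots,p-1;a)$ have isomorphic $T_{\fS}$. Your fallback via Lemma~\ref{lem:isomorphism-criterion} does not repair this, since that lemma presupposes a map of $(\varphi,\Ghat)$-modules, which is exactly what you do not yet have. The correct argument (the one the paper gives) uses no Kisin-module input at all: $\hat T(\hat\barM)$ and $\hat T(\hat\barMp)$ are mod $p$ characters of $G_K$ agreeing on $G_\infty$, and a mod $p$ character trivial on $G_\infty$ is trivial because its kernel cuts out a tame extension while $K_\infty/K$ is totally wildly ramified.

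There is a related slip in (1): the claim ``$I_K\subset G_\infty$'' is false — $K_\infty/K$ being totally ramified means $G_\infty$ surjects onto $G_K/I_K$, not that it contains inertia. What you need to pass from a computation over $G_\infty$ to the restriction to $I_K$ is again the injectivity of restriction of mod $p$ characters from $G_K$ to $G_\infty$ (the same tame-versus-wildly-ramified point as above), so your ``standard computation'' with solutions of $\varphi^f(\beta)=au^m\beta$ can be rescued but not with the justification you give; the paper instead identifies $\hat T(\hat\barM)$ as the reduction of the explicit crystalline character $\lambda_{\hat a}\psi_0^{r_0}\cdots\psi_{f-1}^{r_{f-1}}$ (Lemma~\ref{lem: lift rank-1 object}, Corollary~\ref{cor:(phi, hatG) for rank 1}) and computes $\overline\psi_s|_{I_K}=\omega_s$ via local class field theory. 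Part (3) of your sketch is essentially sound and parallel to the paper's (the paper realizes the sufficiency by explicit maps $e''_i\mapsto u^{m_i}e_i$ through an auxiliary dominating Kisin module, and gets necessity from (1) together with the unramified twist $\lambda_a$), but for the ``complete invariant'' claim you should allow arbitrary unit scalings of the basis, not just powers of $u$, and you need Fontaine's equivalence to convert the hypothesis $T_{\fS}(\barM)\simeq T_{\fS}(\barMp)$ into an isomorphism of \'etale $\varphi$-modules before reading off the invariants; with $\varphi$ acting $k_E$-linearly on coefficients this works out, but it is part of what must be said.
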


\begin{proof}
  (1) By Lemma~\ref{lem: lift rank-1 object} and
  Corollary~\ref{cor:(phi, hatG) for rank 1}, it suffices to check
  that $\overline\psi_s |_{I_K} = \omega_s$, where $\overline\psi_s$ is the
  reduction modulo $p$ of the character $\psi_s$ whose
  $\hodgetateembedding_j$-labeled Hodge--Tate weight is $1$ if $j=s$
  and $0$ otherwise.  By \cite[Prop.~B.3]{conradlifting} 
 we have
 $(\psi_s \circ \Art_K)|_{\O_K^{\times}}  \simeq \hodgetateembedding_s
 |_{\O_K^{\times}}$; comparing with the definition of $\omega_s$, the
 result follows.

(2) Since $K_{\infty}/K$ is totally wildly ramified but the kernels of
mod $p$ characters of $G_K$ correspond to tame extensions, a mod $p$
character of $G_K$ that is trivial on $G_{\infty}$ must be trivial.

(3) Let us first check that the given conditions are sufficient.
Choose any integers $r''_0,\ldots,r''_{f-1}$ such that $r''_i \ge
\max(r_i,r'_i)$ and $\sum_{i=0}^{f-1} p^{f-i-1} r''_i \equiv
\sum_{i=0}^{f-1} p^{f-i-1} r_i \pmod{p^f-1}$, and define $\barMpp =
\barM(r''_0,\ldots,r''_{f-1};a)$.   It is enough to
check that  $T_{\fS}(\barM) \simeq T_{\fS}(\barMpp)$ (for then we must
have $
T_{\fS}(\barMp) \simeq  T_{\fS}(\barMpp)$ by the same argument).
Set $m_i = \frac{1}{p^f-1} \sum_{j=0}^{f-1} p^{f-i-1}(r''_{i+j+1} - r_{i+j+1})$,
which by construction is a non-negative integer for all $i$.  Then
there is a map $f \col \barMpp \to \barM$ sending $e''_i \mapsto u^{m_i} e_i$
(with the obvious meaning for $e''_i$).  Since $f$ is an isomorphism
after inverting $u$, it follows from the theory of \'etale
$\varphi$-modules (as in Section~\ref{sec: crystalline phi Ghat
  modules}) that $T_{\fS}(f)$ is an isomorphism.

In the reverse direction, it follows from (1) that the condition  $\sum_{i=0}^{f-1} p^{f-i-1} r_i \equiv
  \sum_{i=0}^{f-1} p^{f-i-1} r'_i \pmod{p^f-1}$ is necessary.  The
  calculation of the unramified character $\lambda_{\hat a}$ in the proof of
  Lemma~\ref{lem: lift rank-1 object}, together with
  Lemma~\ref{lem:product-of-characters} and Corollary~\ref{cor:(phi,
    hatG) for rank 1}, shows that changing $a'$ must change $\hat
  T(\hat\barMp)$.  Thus for fixed values of $r_0,r'_0,\ldots,r_{f-1},r'_{f-1}$
  and $a$  the isomorphism in (2) holds for at most one
  value of $a'$, and so the necessity of $a=a'$ follows from the
  result of the previous paragraph.
\end{proof}

\begin{ex}
  \label{ex:best-possible}  We can now show that
  Theorem~\ref{shape-redux-coeffs} is best possible.
  Suppose that $V$ is a two-dimensional crystalline representation of
  $G_{\Qp}$ with Hodge-Tate weights $(0,r)$ for some $r > 0$, and
  assume that the reduction mod $p$ of $V$ is reducible.  Possibly after
  extending the coefficients of $V$, it is possible to choose a
  lattice $T \subset V$ with associated Kisin module $ \M$ such that $\barM$ is a direct sum $\barM(h; a)\oplus
  \barM(h'; a')$ for some $h,h'$ with $h+h'=r$.  (This follows by
  essentially the same argument by which it is possible to choose a
  lattice in $V$ whose reduction is split, again after possibly
  extending the coefficients.)  

If the conclusion of
  Theorem~\ref{shape-redux-coeffs} were to hold for the Kisin module
  $\M$, then~$\varphi$ on
  $\barM$ would be nontrivial mod $u$.  It would then follow that
  $\{h,h'\} = \{0,r\}$, and $\overline{V}^{\mathrm{ss}} \cong 1 \oplus
  \overline{\varepsilon}^r$.  But if $r=p+1$, it is well-known that there exists $V$ as
  above with $\overline{V}^{\mathrm{ss}} \cong \overline{\varepsilon}
  \oplus \overline{\varepsilon}$, a contradiction. 
  \end{ex}

\section{Extensions of rank one $\varphi$-modules}
\label{sec:extensions-rank-one}

Recall that we have assumed that $K/\Qp$ is unramified. In this
section we consider possible extensions of Kisin modules. Our analysis
in this section, combined with the results of Section~\ref{sec:shape},
is already sufficient to prove our main results for semisimple
representations; in Section~\ref{sec:extensions-rank-one-G-hat}, we
will extend this analysis to $(\varphi,\hat G)$-modules, in order to
be able to handle extension classes.  

Before we begin our analysis of
extensions of rank one $\varphi$-modules, we give some combinatorial
lemmas, which will be used to determine when an extension of
Kisin modules corresponds to a Galois representation with scalar
semisimplification.  (See Remark~\ref{remark: exceptional case only
  with equal characters} below, and see also the discussion in the opening pages of \cite[\S
3.2]{bdj}.)

\begin{lem}\label{lem:minus-p-to-p}
  Suppose that $r_0,\ldots,r_{f-1}$ are integers in the range $[-p,p]$
  that satisfy $\sum_{i=0}^{f-1} p^{f-1-i} r_i \equiv 0 \pmod{p^f-1}$.  Then
  either:
\begin{enumerate}
\item $(r_0,\ldots,r_{f-1}) = \pm (p-1,\ldots,p-1)$,

\item  the numbers $r_0,\ldots,r_{f-1}$, considered as a cyclic list,
  can be broken up into strings of the form $\pm
  (-1,p-1,\ldots,p-1,p)$ (where there may not be any occurrences of
  $p-1$) and strings of the form $(0,\ldots,0)$, or else

\item $p=2$ and $(r_0,\ldots,r_{f-1}) = \pm (2,\ldots,2)$.

\end{enumerate}
\end{lem}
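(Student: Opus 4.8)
The plan is to think of the congruence $\sum_{i=0}^{f-1} p^{f-1-i} r_i \equiv 0 \pmod{p^f-1}$ as a statement about the base-$p$ expansion of an integer, treating the $r_i$ as "generalized digits" lying in $[-p,p]$. First I would compute the range of $N := \sum_{i=0}^{f-1} p^{f-1-i} r_i$: since $|r_i| \le p$, we have $|N| \le p \sum_{i=0}^{f-1} p^{f-1-i} = p \cdot \frac{p^f-1}{p-1}$, and one checks this is at most $(p+1)(p^f-1)$ (with a little care one can even get the bound $\le \frac{p}{p-1}(p^f - 1)$, which for $p \ge 2$ lies in $[0, 2(p^f-1)]$ after recentering). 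Hence the congruence forces $N \in \{0, \pm(p^f-1)\}$ — and if one is careful about the sharp bound, essentially $N = c(p^f-1)$ for a small integer $c$. By symmetry ($(r_i) \mapsto -(r_i)$ negates $N$) it suffices to handle $N = 0$ and $N = p^f - 1$.

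The case $N = p^f - 1$ is the "generic" one. Here I would argue by a carrying/normalization procedure: starting from the least significant position, repeatedly replace a digit $r_i$ outside $[0,p-1]$ by its residue mod $p$ while carrying $\lfloor r_i/p \rfloor$ into position $i-1$ (cyclically, which is legitimate precisely because we are working mod $p^f-1$). Since each $r_i \in [-p,p]$, the carries are in $\{-1,0,1\}$, so the process terminates and yields the genuine base-$p$ digits of $N \bmod (p^f-1)$, namely all digits equal to $p-1$. Unwinding which original tuples $(r_i)$ normalize to $(p-1,\ldots,p-1)$ gives exactly the two listed possibilities: either $(r_i) = (p-1,\ldots,p-1)$ already (case (1), with the $+$ sign), or the list decomposes cyclically into blocks, where a block contributing a carry of $+1$ out of its top must look like $(-1, p-1, \ldots, p-1, p)$ (reading so that the $-1$ absorbs an incoming carry of $+1$ and the $p$ at the top position emits a carry; the interior $p-1$'s just pass the carry along), interspersed with blocks $(0,\ldots,0)$ which neither absorb nor emit carries — this is case (2). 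The case $N = 0$ similarly forces, after the same normalization, all digits to be $0$; tracking the allowed carry patterns shows the list breaks into blocks $(0,\ldots,0)$ and paired blocks $(-1,p-1,\ldots,p-1,p)$ together with their negatives $(1, -(p-1), \ldots, -(p-1), -p)$, which is again subsumed by case (2) (a "string of the form $\pm(-1,p-1,\ldots,p-1,p)$" — the $\pm$ being applied blockwise). Finally, when $p = 2$ the digit range $[-p,p] = [-2,2]$ contains $2 = p$, so the block $(-1, 2)$ and the all-$2$'s tuple coincide with degenerate instances, and one must separately record the extra solution $(r_i) = \pm(2,\ldots,2)$ (which has $N = 2 \cdot (2^f-1) = 2^{f+1} - 2 \equiv 1 \not\equiv 0$? — no: recompute, $\sum 2^{f-1-i} \cdot 2 = 2(2^f-1) \equiv 0$), accounting for case (3).

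The main obstacle is the bookkeeping in the carrying argument: one must verify that with digits constrained to $[-p,p]$ the only carries that can occur are $0$ and $\pm 1$, that the cyclic normalization is well-defined (no infinite loop), and — the genuinely delicate point — that one correctly enumerates *all* cyclic tuples that normalize to the all-$(p-1)$ or all-$0$ string, getting precisely the block structure in (2) and no more. I would organize this by defining, for each position, the "incoming carry" $c_i \in \{-1,0,1\}$, writing $r_i + c_i = p \cdot c_{i-1} + d_i$ with $d_i$ the target digit ($p-1$ or $0$), and then doing a finite case analysis on $(c_i, c_{i-1}, d_i)$ given the constraint $r_i \in [-p,p]$; each admissible transition corresponds to one step inside one of the allowed blocks, and closing up the cycle of carries forces the global block decomposition. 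The $p=2$ edge cases and the sign conventions for "$\pm$ applied to a cyclic list / to individual blocks" require a sentence of clarification but no real difficulty.
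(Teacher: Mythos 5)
Your overall strategy (treat the $r_i$ as generalized base-$p$ digits and analyze carries) is the same circle of ideas as the paper's proof, but as written your plan has several concrete gaps. First, the reduction ``$N\in\{0,\pm(p^f-1)\}$'' is false for $p=2$: the sharp bound is $|N|\le \frac{p}{p-1}(p^f-1)$, so $N=\pm 2(2^f-1)$ is possible, attained exactly at $\pm(2,\ldots,2)$ --- this is where case (3) comes from, and your later $p=2$ discussion (``the block $(-1,2)$ and the all-$2$'s tuple coincide with degenerate instances'') does not repair the logic; $(2,\ldots,2)$ is precisely \emph{not} decomposable into blocks, which is why it is listed separately. Second, your carry bookkeeping is not self-consistent: for the target string $(p-1,\ldots,p-1)$ the relation $r_i+c_i=pc_{i-1}+(p-1)$ with $r_i\in[-p,p]$ forces $c_{i-1}\in\{-2,-1,0\}$ (e.g.\ $r_i=-p$ with incoming carry $-1$ emits carry $-2$), so the finite transition table with carries in $\{-1,0,1\}$ that you propose is wrong for one of your two targets. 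Third, the existence of a consistent cyclic carry assignment (equivalently, termination of your fully cyclic normalization) is asserted, not proved; unlike ordinary base-$p$ expansion there is no closed formula for these carries, and with wrap-around carries the integer value is only preserved modulo $p^f-1$, so which canonical string you land on is itself part of what must be argued. Finally, the heart of the lemma --- that the only preimages of the canonical strings are exactly the shapes in (1)--(3) --- is stated as ``unwinding \ldots gives exactly the two listed possibilities'' rather than derived; the transition analysis could in principle deliver this, but only after the carry-range and existence issues above are fixed.

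For comparison, the paper's organization sidesteps all of these points. It first treats tuples with no entry equal to $\pm p$ directly: then $|N|\le p^f-1$, so $N=0$ or $\pm(p^f-1)$, the latter forcing $\pm(p-1,\ldots,p-1)$ and the former forcing the zero tuple by repeatedly dividing by $p$. If some $r_i=\pm p$, it performs a \emph{single} pass of carrying around the cycle, carrying only at positions whose current entry has absolute value at least $p$; one checks that consecutive carries necessarily have the same sign, so the pass subtracts exactly non-overlapping blocks $\pm(-1,p-1,\ldots,p-1,p)$ (or $\pm(p-1,\ldots,p-1)$ from the whole list), and leaves a tuple with entries in $[-(p-1),p-1]$ and value in $\{0,\pm1\}$ at the starting position, to which the first case applies; for $p=2$ the residual all-$\pm1$ option is then analyzed by hand, producing case (3). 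This single-pass design makes termination trivial, keeps every carry in $\{0,\pm1\}$ by construction, and produces the block decomposition of (2) directly rather than via an enumeration of preimages. If you want to salvage your version, you would need to prove termination of the cyclic normalization, redo the transition analysis with the correct carry sets for each target string, and handle $p=2$ (including $N=\pm2(2^f-1)$) separately.
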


\begin{proof}
  First suppose that none of the $r_i$ are equal to $\pm p$.  Then
  $|\sum_{i=0}^{f-1} p^{f-1-i} r_i| \le p^f-1$; so the only possibilities for
  that sum are $0$ and $\pm (p^f-1)$, and the latter can occur only for
  $(r_0,\ldots,r_{f-1}) = \pm(p-1,\ldots,p-1)$.  If instead $\sum_{i=0}^{f-1}
  p^{f-1-i} r_i = 0$ then considering divisibility by $p$ we have
  $r_{f-1} = 0$.  Dividing by $p$ and repeating, we see that $r_i = 0$
  for all $i$ in this case.

  Next suppose that $r_i = \pm p$ for some $i$.  We perform a
  ``carrying'' operation, by adding $\mp p$ to $r_i$ and adding $\pm
  1$ to $r_{i-1}$; this preserves the given congruence.  Now move
  left, and if the new $|r_{i-1}|$ is at least $p$ we perform the carrying
  operation there.  Continue this process with
  $r_{i-2},\ldots,r_0,r_{f-1},\ldots,r_{i+1}$ until we have returned to $r_i$
  again.  Note that if we have had to carry for both $r_j$ and
  $r_{j-1}$, then the two carries necessarily had the same sign; so a
  string of consecutive carries has the effect of subtracting $\pm
  (-1,p-1,\ldots,p-1,p)$ from a subsequence of the $r_j$'s, or else $\pm
  (p-1,\ldots,p-1)$ from the full list.

  At the end of this carrying process, we have a new sequence
  $r'_0,\ldots,r'_{f-1}$ satisfying the original congruence condition,
  but with all $r'_j \in [-(p-1),(p-1)]$.  Note also that $r_i \in \{0,\pm
  1\}$ at our starting point.
 If $p > 2$, then the first paragraph implies that $r'_i = 0$ for all
 $i$, and the last sentence of the second paragraph shows that
 $(r_0,\ldots,r_{f-1})$ has the desired shape.  If $p=2$ then it is
 also possible that $r'_i = 1$ for all $i$, or $r'_i = -1$ for all
 $i$.  But note that if we add some number of (non-overlapping)
 strings of the form $(1,-1,\ldots,-1,-2)$ to $(1,\ldots,1)$, the result
 actually has the form (2) again; so the only new possibility when
 $p=2$ is (3).
 \end{proof}

\begin{defn}
  \label{defn:cal-P}
  Let $\mathcal{P}$ be the set of $f$-tuples $(r_0,\ldots,r_{f-1})$
  with $r_i \in \{1,p-1,p\}$ for all $i$, and such that
\begin{itemize}
\item if $r_i = p$ then $r_{i+1} = 1$, and

\item if $r_i \in \{1,p-1\}$ then $r_{i+1}$ in $\{p-1,p\}$,
\end{itemize}
conventionally taking $r_f = r_0$.   (If $p  > 2$, these conditions
are equivalent to: $r_i = p$ if and only if $r_{i+1} = 1$.)
\end{defn}

The preceding definition is motivated by the following Lemma.

\begin{lem}
 \label{lem:combinatorial-J}
Let $r_0,\ldots,r_{f-1}$ be integers in the range $[1,p]$.  Let
  $J$ be a subset of  $\{0,\ldots,f-1\}$, and set $h_i = r_i$ if $i \in
  J$ and $h_i = 0$ if $i \not\in J$.  Then $$\sum_{i=0}^{f-1} p^{f-1-i} h_i \equiv \sum_{i=0}^{f-1}
 p^{f-1-i}(r_i-h_i) \pmod{p^f-1}$$ if and only if $(r_0,\ldots,r_{f-1})
 \in \mathcal{P}$ and $J$ satisfies:
\begin{itemize}
\item if $(r_{i-1},r_i) = (p,1)$ then $i+1 \in J$ if and only if $i \not\in J$
\item if $(r_{i-1},r_i) = (1,p-1)$ or $(p-1,p-1)$ then $i+1 \in J$ if and only if $i \in J$,
\end{itemize}
or else $p=2$,  $(r_0,\ldots,r_{f-1}) = (2,\ldots,2)$, and $J =
\varnothing$ or $\{0,1,\ldots,f-1\}$.
\end{lem}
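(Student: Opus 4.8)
The plan is to reduce everything to Lemma~\ref{lem:minus-p-to-p}. Write $s_i = 2h_i - r_i$, so that $s_i = r_i$ when $i\in J$ and $s_i = -r_i$ when $i\notin J$; then $|s_i| = r_i\in[1,p]$, hence $s_i\in[-p,p]\smallsetminus\{0\}$, and $i\in J$ if and only if $s_i>0$. Rearranging, the congruence in the statement is equivalent to $\sum_{i=0}^{f-1} p^{f-1-i}s_i\equiv 0\pmod{p^f-1}$. I claim this congruence holds if and only if the cyclic list $(s_0,\ldots,s_{f-1})$ has one of the following three shapes: (a) $\pm(p-1,\ldots,p-1)$; (b) a concatenation of strings of the form $\pm(-1,p-1,\ldots,p-1,p)$ with the run of $p-1$'s possibly empty; or (c) $p=2$ and $(s_i)=\pm(2,\ldots,2)$. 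The forward implication is exactly Lemma~\ref{lem:minus-p-to-p}, together with the observation that since every $s_i$ is nonzero there can be no strings $(0,\ldots,0)$ in conclusion~(2) of that lemma. The reverse implication is a short direct computation: cyclically shifting indices only multiplies the sum by a power of $p$ modulo $p^f-1$, so it suffices to note that a (non-wrapping) string $\pm(-1,p-1,\ldots,p-1,p)$ of length $L$ contributes $\pm p^{m}\bigl(-p^{L-1}+(p-1)(p^{L-2}+\cdots+p)+p\bigr)=0$, while $\pm(p-1,\ldots,p-1)$ contributes $\pm(p^f-1)$ and $\pm(2,\ldots,2)$ contributes $\pm 2(2^f-1)$.

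It then remains to match the three shapes of $(s_i)$ with the asserted conditions on $(r_i)$ and $J$; combining this dictionary with the previous paragraph proves both directions of the lemma. Going from shapes to conditions: in case~(a), $r_i=p-1$ for all $i$ and all $s_i$ share a sign, so $J\in\{\varnothing,\{0,\ldots,f-1\}\}$; one checks $(p-1,\ldots,p-1)\in\mathcal P$ (Definition~\ref{defn:cal-P}) and that the only $J$-condition occurring, namely the one for pairs $(p-1,p-1)$, is precisely ``$J$ constant''. Case~(c) is literally the exceptional clause. In case~(b), reading off absolute values and successive pairs (a string end $p$ is followed by a string start $1$, a $1$ by $p-1$ or $p$, a $p-1$ by $p-1$ or $p$) shows $(r_i)\in\mathcal P$; moreover each string $\pm(-1,p-1,\ldots,p-1,p)$ has the sign pattern ``first entry one sign, every later entry the opposite sign'', which says exactly that the first position of each string lies in $J$ iff the string is negative and every other position lies in $J$ iff the string is positive. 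Verifying the two bulleted $J$-conditions then reduces to the observation that a pair $(r_{i-1},r_i)=(p,1)$ straddles a string boundary (whence $i$ and $i{+}1$ are on opposite sides of $J$, as required), while pairs $(1,p-1)$ and $(p-1,p-1)$ occur inside a single string among its non-first positions (whence $i$ and $i{+}1$ are on the same side).

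For the converse direction, the exceptional case is immediate, so assume $(r_i)\in\mathcal P$ with $J$ satisfying the two conditions. If no $r_i$ equals $p$, then the structure of $\mathcal P$ forces $r_i=p-1$ for all $i$ (a value $1$ would be followed by $p-1$'s indefinitely around the cycle), and the $(p-1,p-1)$-condition forces $J$ constant, giving shape~(a). Otherwise cut the cyclic list just after each occurrence of $p$; by the definition of $\mathcal P$ every block has the form $(1,p-1,\ldots,p-1,p)$, say at positions $j_0,\ldots,j_0+k+1$. The condition attached to the pair $(r_{j_0-1},r_{j_0})=(p,1)$ puts $j_0$ and $j_0{+}1$ on opposite sides of $J$, and the conditions attached to the pairs $(1,p-1)$ and $(p-1,p-1)$ occurring at positions $j_0{+}1,\ldots,j_0{+}k$ chain together to put $j_0{+}1,\ldots,j_0{+}k{+}1$ all on the same side; hence $s$ restricted to this block is $\pm(-1,p-1,\ldots,p-1,p)$, so $(s_i)$ has shape~(b). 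This completes the proof.

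The only genuinely fiddly point is the block-level bookkeeping in the last step: for $p=2$ the values $1$ and $p-1$ coincide, so ``first position of a block'' is no longer detected by its $r$-value alone — but it is still detected by the fact that it immediately follows a position with $r$-value $p$, and the chain of $J$-conditions goes through unchanged. I would also check separately the short blocks $k=0$ (the block $(1,p)$, handled by the $(p,1)$-condition alone) and $k=1$ (the block $(1,p-1,p)$, handled by the $(p,1)$- and $(1,p-1)$-conditions).
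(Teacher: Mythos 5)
Your proposal is correct and takes essentially the same route as the paper: rewrite the congruence as the signed sum $\sum_i p^{f-1-i}s_i\equiv 0$ and invoke Lemma~\ref{lem:minus-p-to-p}. The paper's proof simply asserts that the resulting string shapes are ``equivalent to the description in the statement,'' whereas you spell out that dictionary (and the converse computation that each string $\pm(-1,p-1,\ldots,p-1,p)$ sums to zero), including the $p=2$ wrinkle, and these details check out.
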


\begin{proof}
  The congruence is equivalent to $\sum_{i=0}^{f-1} (-1)^{[i
    \in J]}
  p^{f-1-i} r_i \equiv 0 \pmod{p^f-1}$, where we write $[i \in J] = 1$
  if $i \in J$ and $[i \in J] = 0$ otherwise.
Since none of the $r_i$ are zero, by Lemma \ref{lem:minus-p-to-p} we
see that if $p > 2$ the sequence $((-1)^{[i \in J]}
r_i)_{0 \le i \le f-1}$ must either be $\pm (p-1,\ldots,p-1)$ or else
break up into subsequences of the form $\pm(-1,p-1,\ldots,p-1,p)$;
when $p=2$ we have the additional possibilities $\pm (2,\ldots,2)$.
This is equivalent to the description in the statement of the lemma.
\end{proof}
The following result gives a structure theorem for extensions of Kisin
modules; we will build on it in the following section to prove
Proposition~\ref{prop:maximal-model}, which is the main result we will
need on extensions of $(\varphi,\hat G)$-modules.
\begin{prop}
  \label{prop:extensions-of-phi-modules}
 Let $r_0,\ldots,r_{f-1}$ be integers in the range $[1,p]$.  Let
  $J$ be a subset of  $\{0,\ldots,f-1\}$, and set $h_i = r_i$ if $i \in
  J$ and $h_i = 0$ if $i \not\in J$.   Fix $a,b \in k_E^{\times}$.
  Let $\barM$ be  an
  extension of $\barM(h_0,\ldots,h_{f-1};a)$ by
  $\barM(r_0-h_0,\ldots,r_{f-1}-h_{f-1};b)$; then we can choose bases $e_i,f_i$
  of the $\barM_i$ so that $\varphi$  has the form
  \begin{eqnarray*}
  \varphi(e_{i-1}) & =& (b)_i u^{r_i-h_i} e_{i}\\
    \varphi(f_{i-1})& =& (a)_i u^{h_i} f_{i} + x_i e_{i}
 \end{eqnarray*}
with $x_i \in k_E \llb u \rrb$ a polynomial with $\deg(x_i) < h_i$,
except in the following cases:
\begin{itemize}
\item $(r_0,\ldots,r_{f-1}) \in \mathcal{P}$, $J = \{ i : r_{i-1}
 \neq p \}$, and $a=b$, or

\item $p=2$, $(r_0,\ldots,r_{f-1}) = (2,\ldots,2)$, $J =
  \{0,\ldots,f-1\}$, and
  $a=b$.
\end{itemize}
In that case fix $i_0 \in J$; then $x_i$ may be taken to be a
polynomial of degree $\deg(x_i) < h_i$ for all $i$ except $i=i_0$,
where $x_{i_0}$ is the sum of a polynomial of degree less than
$h_{i_0}$ and a (possibly trivial) term of degree $p$ (for the first
exceptional case) or degree~$4$ (for the second exceptional case).
\end{prop}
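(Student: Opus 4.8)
The plan is to work out the extension class concretely using the \'etale $\varphi$-module picture together with the height constraint coming from the defining property of a height-$r$ Kisin module. First I would fix bases $e_i$ of $\barM(r_0-h_0,\ldots;b)_i$ and $f_i$ of a set-theoretic lift of $\barM(h_0,\ldots;a)_i$ inside $\barM_i$, so that automatically $\varphi(e_{i-1}) = (b)_i u^{r_i-h_i} e_i$ and $\varphi(f_{i-1}) = (a)_i u^{h_i} f_i + x_i e_i$ for some $x_i \in k_E\llb u\rrb$. The freedom in choosing the $f_i$ is exactly replacing $f_i$ by $f_i + c_i e_i$ for $c_i \in k_E\llb u\rrb$; under this change $x_i$ is replaced by $x_i + (a)_i u^{h_i} c_i - (b)_i u^{r_i - h_i} \varphi(c_{i-1})$. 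The goal is to use this gauge freedom to reduce each $x_i$ to a polynomial of degree $< h_i$, and to identify precisely the obstruction.

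The key step is a term-by-term reduction on the $u$-adic expansion. Since $\varphi(c_{i-1})$ only involves powers $u^{pm}$, the ``correction'' we can make to the coefficient of $u^k$ in $x_i$ comes from: (a) the coefficient of $u^{k-h_i}$ in $c_i$ via the $(a)_i u^{h_i} c_i$ term (available whenever $k \ge h_i$), and (b) when $p \mid k - (r_i - h_i)$, from the coefficient of $u^{(k-r_i+h_i)/p}$ in $c_{i-1}$ via the $(b)_i u^{r_i-h_i}\varphi(c_{i-1})$ term. For $k \ge h_i$ the first mechanism already kills the coefficient, so we can assume $\deg x_i < h_i$ up to the interference created when we later use $c_{i-1}$ to fix $x_{i-1}$ — but choosing $c_i$ to clear the high-degree part of $x_i$ feeds back into $x_{i+1}$ only through $\varphi(c_i)$, i.e. through powers of $u^p$, and one has to chase this around the cycle of indices $i = 0,\ldots,f-1$. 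Running this elimination around the cycle, the only genuinely uncontrollable feedback occurs when clearing a coefficient forces creation of a coefficient in the same ``slot'' one full loop later; tracking which pairs $(h_i, r_i - h_i)$ allow the cycle to close up is exactly the combinatorial condition ``$(r_0,\ldots,r_{f-1}) \in \mathcal{P}$ and $J = \{ i : r_{i-1} \neq p\}$'', together with the constraint that at the step $(r_{i-1},r_i) = (p,1)$ the two characters $(a)_\bullet$ and $(b)_\bullet$ must agree for the obstruction to be nonzero (hence the hypothesis $a=b$); the $p=2$, all-$2$'s case is the analogous degenerate loop flagged by Lemma~\ref{lem:minus-p-to-p}(3) and Lemma~\ref{lem:combinatorial-J}. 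I would organize this as: (i) reduce to $\deg x_i < h_i$ for all $i$ ``generically''; (ii) analyze the residual cyclic obstruction, showing it lives in a one-dimensional space indexed by a single chosen $i_0 \in J$ and is represented by a term of degree $p$ (respectively $4$ when $p=2$); (iii) invoke Lemma~\ref{lem:combinatorial-J} to see that this obstruction space is nonzero precisely in the two listed exceptional cases, and that outside them the reduction is complete.

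The main obstacle will be step (ii): bookkeeping the propagation of corrections around the index cycle while respecting both the degree bounds $\deg x_i < h_i$ and the $\varphi$-semilinearity (powers of $p$ in exponents), and pinning down why the leftover term has degree exactly $p$ (or $4$) rather than something larger. Concretely, one needs to check that in the exceptional configuration the chain of eliminations, when closed up, produces a fixed-point equation of the form $x_{i_0} \equiv x_{i_0} + (\text{unit}) \cdot u^{p}(\ldots) \pmod{\text{lower order}}$ whose failure is measured by a single scalar — this is where the constraint $r_i \le p$ (so that $u^{r_i}$ and $u^{p}$ interact in a controlled way) is essential, paralleling the role of $r \le p$ in Section~\ref{sec:shape}. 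I would also double-check the edge behavior when some $h_i = 0$ or $h_i = r_i$ (i.e. $i \not\in J$ or the full $r_i$ sits in $J$), since then the degree bound ``$\deg x_i < h_i$'' either forces $x_i = 0$ or imposes no restriction at that index, which must be consistent with the description of $\mathcal{P}$ and of the exceptional $J$.
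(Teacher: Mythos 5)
Your setup is the same as the paper's: the change of basis $f_i \mapsto f_i + \alpha_i e_i$, the transformation rule $x_i \mapsto x_i + (b)_i u^{r_i-h_i}\varphi(\alpha_{i-1}) - (a)_i u^{h_i}\alpha_i$, and the idea that the only genuine obstruction is a cyclic feedback when the elimination is chased around the indices. But the decisive content of the proposition is exactly the part you defer to "bookkeeping" in step (ii) and then propose to settle in step (iii) by invoking Lemma~\ref{lem:combinatorial-J}, and that is where the argument breaks. Lemma~\ref{lem:combinatorial-J} characterises when $\sum_i p^{f-1-i}h_i \equiv \sum_i p^{f-1-i}(r_i-h_i) \pmod{p^f-1}$, i.e.\ when the two rank-one pieces have the same restriction to inertia; its conditions on $J$ are satisfied by several subsets $J$ for a given $\vec r \in \mathcal{P}$ (for instance both members of each complementary pair allowed by Lemma~\ref{lem:minus-p-to-p}), whereas the failure of the degree reduction occurs for exactly one subset, namely $J=\{i : r_{i-1}\neq p\}$. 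In the paper that lemma is only used afterwards (Remark~\ref{remark: exceptional case only with equal characters}) to observe that the exceptional case forces an extension of a character by itself; it cannot be used to identify the exceptional cases, and doing so would produce the wrong (larger) list.

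What is actually needed, and what the paper supplies, is a direct analysis of how degrees propagate: the $u^d$-term of $x_i$ ($i\in J$, $d\ge r_i$) "affects" the $u^{d'}$-term of $x_{i+\delta_i}$ with $d' = p^{\delta_i}(d-r_i)+\sum_{j=1}^{\delta_i-1}r_{i+j}p^{\delta_i-j}$; since each pair affects exactly one pair and is affected by at most one, the pairs partition into loops, stubs and paths, and only loops obstruct the reduction. One then shows that in a loop all $d_j$ are divisible by $p$ and that $d\ge 2p$ forces $d'>d$ (using $r_i\le p$) except when $p=2$, $d=4$, $\delta_i=1$; hence every loop has $d_j=p$, forcing $\delta_i\in\{1,2\}$ with $r_i=p-1$ when $\delta_i=1$ and $(r_i,r_{i+1})=(p,1)$ when $\delta_i=2$ — precisely $\vec r\in\mathcal{P}$ and $J=\{i:r_{i-1}\neq p\}$ (or the $p=2$ all-$2$'s loop). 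Your treatment of the condition $a=b$ is also misplaced: it is not a local condition at the step $(r_{i-1},r_i)=(p,1)$, but comes from running the substitution $\alpha_{i_0}=cu^{p-r_{i_0}}$ around the whole loop, which changes $x_{i_0}$ by $(a)_{i_0}(b/a-1)c\,u^p$; the ratio $b/a$ is the product of all the unit factors around the loop, so the residual degree-$p$ term can be removed whenever $a\neq b$. Without these two ingredients — the loop classification and the explicit around-the-loop computation — the statement you are trying to prove (in particular the precise exceptional cases and the exact degrees $p$ and $4$) is not established.
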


\begin{proof}
Let $\barM$ be an extension of  $\barM(h_0,\ldots,h_{f-1};a)$ by
  $\barM(r_0-h_0,\ldots,r_{f-1}-h_{f-1};b)$; then we can choose bases $e_i,f_i$
  of the $\barM_i$ so that $\varphi$  has the form
  \begin{eqnarray*}
  \varphi(e_{i-1}) & =& (b)_i u^{r_i-h_i} e_{i}\\
    \varphi(f_{i-1})& =& (a)_i u^{h_i} f_{i} + x_i e_{i} .
 \end{eqnarray*}
We wish to determine to what extent the $x_i$'s can be simultaneously simplified via
a change of basis of the form $f'_i = f_i + \alpha_{i} e_i$ for some
elements $\alpha_{i} \in k_E \llb u \rrb$.   If $\alpha = \alpha(u) \in k_E \llb u
\rrb$ let $\varphi(\alpha) = \alpha(u^p)$.   Observing that
$$ \varphi(f_{i-1} + \alpha_{i-1} e_{i-1}) = (a)_i u^{h_i} (f_{i} + \alpha_{i}
e_{i}) + (x_i + (b)_i u^{r_i-h_i} \varphi(\alpha_{i-1})  - (a)_i u^{h_i}
\alpha_{i} ) e_{i},$$
we see that such a change of basis replaces each $x_i$ with
$$ x'_i = x_i +  (b)_i u^{r_i-h_i} \varphi(\alpha_{i-1})  - (a)_i u^{h_i}
\alpha_{i} .$$
Observe that we may make $x'_i = 0$ if $i \not\in J$ (at least for any individual
such $i$) by choosing
\begin{equation}\label{eq:splitzero}
\alpha_{i} = (a)_i^{-1} ( x_i + (b)_i u^{r_i}
\varphi(\alpha_{i-1})).
\end{equation}   If $J \neq \varnothing$ then we can take
$x'_i = 0$ simultaneously for all $i \not\in J$ by choosing $\alpha_{i}$
arbitrarily for each $i \in J$ and determining $\alpha_{i}$ recursively
by the formula~\eqref{eq:splitzero} for $i \not\in J$.    If $J = \varnothing$ then
the preceding sentence shows that we can at least have $x'_i = 0$ for
$i \neq f-1$ by choosing $\alpha_{f-1}$ arbitrarily and choosing $\alpha_{i}$
recursively for $i = 0,\ldots,f-2$ using~\eqref{eq:splitzero}.
Suppose now that $x_0 = \cdots = x_{f-2} = 0$.   Taking $\alpha_{f-1}$
arbitrary and choosing $\alpha_{i} = (b/a)_i u^{r_i}
\varphi(\alpha_{i-1})$ for $i = 0,\ldots,f-2$, one computes that
$$x'_{f-1} = x_{f-1} + (b/a) u^{r_{f-1} + p r_{f-2} + \cdots + p^{f-1} r_0}
\varphi^f(\alpha_{f-1}) - \alpha_{f-1}.$$
It is possible to
choose $\alpha_{f-1}$ in the above equation so that $x'_{f-1} = 0$:
indeed, if we set the right-hand side of the above expression
equal to zero, the resulting equation 
$$ \alpha_{f-1} = x_{f-1} + (b/a) u^{r_{f-1} + p r_{f-2} + \cdots + p^{f-1} r_0}
\varphi^f(\alpha_{f-1}) $$
can be regarded as a 
system  of equations for the coefficients of $\alpha_{f-1}$. Since $r_{f-1} + p r_{f-2} + \cdots + p^{f-1} r_0 > 0$, 
the coefficient of $u^i$ on the left-hand side depends only on
lower-degree coefficients of $\alpha_{f-1}$ on the right-hand side, and so this system
can be solved recursively.  With such a choice of $\alpha_{f-1}$ we have $x'_i = 0$ for all $i$.

The preceding paragraph shows that in all cases, we can assume (possibly after a
change of variables) that $x_i = 0$ if $i \not\in J$.  At this point we are
done with the case $J = \varnothing$, so we assume from now on that
$J \neq \varnothing$.  For the
remainder of the argument, whenever we consider a simultaneous change
of basis of the form $f'_i = f_i + \alpha_{i} e_i$, we will make some
choice of $\alpha_{i}$'s for $i \in J$ and then (without further comment) define
$\alpha_{i}$ for $i \not\in J$ by the recursive formula
$\alpha_{i} = (b/a)_i u^{r_i}
\varphi(\alpha_{i-1})$; then the
resulting change of variables preserves the property that $x_i = 0$ if $i \not\in J$.

If $i \in J$, let $\delta_i$ be the least positive integer such that
$i + \delta_i \in J$ (taken modulo~$f$, as usual); then a simultaneous change
of basis of the form $f'_i = f_i + \alpha_{i} e_i$ has the effect
\begin{equation}\label{eq:bigchange}
x'_{i + \delta_i} = x_{i + \delta_i} +
\frac{(b)_{\{i+1,\ldots,i+\delta_i\}}}{(a)_{\{i+1,\ldots,i+\delta_i-1\}}}
    u^{\sum_{j=1}^{\delta_i-1} r_{i+j} p^{\delta_i-j}}
    \varphi^{\delta_i} (\alpha_i) - (a)_{i+\delta_i}
    u^{r_{i+\delta_i}} \alpha_{i+\delta_i}.\end{equation}
If $i \in J$ and $d \ge r_i$, we shall say that the $u^d$-term in
$x_i$ \emph{affects} the $u^{d'}$-term in $x_{i+\delta_i}$ if the
change of variables $f'_{i} = f_{i} + c u^{d-r_i} e_{i}$ (for
just the single $i \in J$)
alters the term of degree $u^{d'}$ in $x'_{i+\delta_i}$, or in other
words if \begin{equation}\label{eq:dprime}
d' = p^{\delta_i} (d-r_i) + \sum_{j=1}^{\delta_i-1} r_{i+j} p^{\delta_i-j}.\end{equation}
In that case,
for brevity
we will write that $(i,d)$ affects $(i+\delta_i,d')$.

Observe that each pair $(i,d)$ affects exactly one pair $(i',d')$ (though possibly with
$d' < r_{i'}$) and similarly is affected by at most one pair (though
often by none).  Observe also, e.g. from~\eqref{eq:dprime}, that if
$(i,d)$ affects $(i+\delta_i,d')$ then $(i,d+1)$ affects
$(i+\delta_i,d' + p^{\delta_i})$; one deduces that there are at most finitely many
pairs $(i,d)$ that affect a pair $(i',d')$ with $d' \le d$.    It
follows that the set of all pairs $(i,d)$ with $i \in J$
and $d \ge r_i$ is partitioned into:
\begin{itemize}
\item a finite number of \emph{loops} $(i_0,d_0),\ldots,(i_{|J|-1},d_{|J|-1})$ in
  which $(i_j,d_j)$ affects $(i_{j+1},d_{j+1})$ (and
  $(i_{|J|-1},d_{|J|-1})$ affects $(i_0,d_0)$),

\item a finite number of \emph{stubs} $(i_0,d_0),\ldots,(i_m,d_m)$ in which
  $(i_0,d_0)$ is not affected by any $(i,d)$, while $(i_m,d_m)$
  affects some $(i',d')$ with $d' < r_{i'}$,

\item a collection of \emph{paths} $(i_0,d_0),\ldots,(i_j,d_j),\ldots$ in
 which $(i_0,d_0)$ is not affected by any $(i,d)$, and in which  $(i_j,d_j)$ affects $(i_{j+1},d_{j+1})$.
\end{itemize}
It is straightforward to see that by making a suitable choice of
$u^{d_0 - r_{i_0}}$-coefficient in $\alpha_{i_0}$ (in the second and third cases) or an arbitrary choice of
$u^{d_0 - r_{i_0}}$-coefficient in $\alpha_{i_0}$ (in the first case),
recursively making suitable choices for $u^{d_j -
  r_{i_j}}$-coefficient in $\alpha_{i_j}$ for $j > 0$ (stopping at $j
= |J|-1$ in the first case and at $j = m$ in the second case), and
doing this simultaneously for all loops, stubs, and paths, the
resulting change of basis ensures that $x'_i$ has degree less than
$r_i$ for all $i \in J$, with the exception that for each loop, $x'_{i_0}$ may
also have a term of degree $d_0$.

 Assume that we have made such a
change of basis, so that now $x_i$ is a polynomial of degree less than
$h_i$ for all $i$, except possibly for a term of degree $d_0$ in $x_{i_0}$
for each loop as above.

It remains to analyze any possible loops more closely.  It follows
immediately from~\eqref{eq:dprime} that in a loop
$(i_0,d_0),\ldots,(i_{|J|-1},d_{|J|-1})$ we have $p \mid d_j$ for all
$j$.  But note that if $d \ge 2p$ and $(i,d)$ affects
$(i+\delta_i,d')$, then since $d' \ge p^{\delta_i} (d -
p)$ we have $d' > d$ unless
$p=2$, $d=4$ and $\delta_i=1$.   It follows that there is at most one
loop, and in any loop we either have $d_i = p$ for all $i$, or else $p=2$ and
$(\delta_i,r_i,d_i) = (1,2,4)$ for all $i$.

The latter is the second exceptional case described in the statement
(except for the condition that $a=b$); now consider the former.  If $\delta_i
> 2$ then $\sum_{j=1}^{\delta_i-1} r_{i+j} p^{\delta_i - j} > p$ since
$r_{i+1} > 0$, so any loop with $d_i = p$ for all $i$ requires
$\delta_i \le 2$ for all $i \in J$.  The possibilities, then, are
either $\delta_i = 1$ and $r_i = p-1$ or else $\delta_i = 2$ and
$$ p = p^2(p-r_i) + p r_{i+1},$$
i.e., $r_i = p$ and $r_{i+1} = 1$.   Conversely, if $\delta_i \in
\{1,2\}$ for all $i \in J$, with $r_i=p-1$ whenever $\delta_i = 1$
and $(r_i,r_{i+1}) = (p,1)$ whenever $\delta_i= 2$, we indeed have a
loop $\{ (i,p) : i \in J\}$.  Observe that this is precisely the
first exceptional case described in the statement of the Proposition, again modulo
the condition that $a=b$.

In fact one checks without difficulty for the first exceptional case
in the statement
(with $d_i = p$ for all $i$)  that making the change of
variables $\alpha_{i_0} = cu^{p-r_{i_0}}$ (and choosing $\alpha_{i_j}$
accordingly for $1 \le j < |J|$ to ensure that $x_{i_j}$ does not acquire a
nonzero term of degree $p$), we find that $x'_{i_0} = x_{i_0} +
(a)_{i_0} (b/a - 1) c u^p$.  Thus if $a\neq b$ we can always choose
$c$ to kill the term of degree $p$ in $x_{i_0}$, and the exceptional
case only occurs when $a=b$.  The argument in the second exceptional
case is analogous.
\end{proof}

Note that in Proposition~\ref{prop:extensions-of-phi-modules} we made
no assumption about $\barM$ having a lift to some $\M$ of characteristic zero (let
alone having a lift to some $\M$ coming from a crystalline
representation).   We now examine what happens when we make such an
assumption.  For the remainder of this section we re-assume the
notation of Section~\ref{sec:coefficients},  so that $p >2$, $T$ is a
$G_K$-stable $\O_E$-lattice in a crystalline representation $V$ of
$E$-dimension $d$ with
Hodge--Tate weights in $[0,p]$, and $\M$ is the associated Kisin
module.  Write $r_{1,s},\ldots,r_{d,s}$ for the $\hodgetateembedding_s$-labeled
Hodge--Tate weights of $V$, and let $\barM := \M \otimes_{\O_E} k_E$, with $k_E$ the residue
field of $E$.

\begin{prop}
  \label{prop:rank-one-subs}
With notation as above (in particular $p > 2$), suppose that $\barN \subset \barM$ is a sub-$\varphi$-module such that
$\barM/\barN$ is free of rank $d-1$ as a $W(k)\llb u \rrb \otimes_{\Zp} k_E$-module.  Then
$\barN \simeq \barM(r_0,\ldots,r_{f-1};a)$ with $r_s \in
\{r_{1,s},\ldots,r_{d,s}\}$ for all $s$, and for some $a \in k_E^{\times}$.
\end{prop}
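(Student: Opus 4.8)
The plan is to reduce the statement to a $u$-adic valuation computation using the normal form for $\fM$ provided by Theorem~\ref{shape-redux-coeffs}.

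First I would record some structure of $\barN$. The ring $W(k)\llb u \rrb \otimes_{\Zp} k_E$ is a finite product of copies of $k_E\llb u \rrb$, indexed by the embeddings $\hodgetateembeddingbar_s$. Since $\barM/\barN$ is free (hence projective) over this ring, the sequence $0 \to \barN \to \barM \to \barM/\barN \to 0$ splits, so $\barN$ is free of rank one over $W(k)\llb u \rrb \otimes_{\Zp} k_E$ and each component $\barN_s$ is a $k_E\llb u \rrb$-direct summand of $\barM_s$; in particular a $k_E\llb u \rrb$-generator $v_s$ of $\barN_s$ is a primitive vector in $\barM_s$ (it is part of a $k_E\llb u\rrb$-basis). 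Write $\varphi(v_{s-1}) = c_s v_s$ with $c_s \in k_E\llb u \rrb$. Since $\fM$ has height $\le p$, so does $\barM$, and as $E(u) \equiv u \pmod{\m_E}$ the module $\barM[\tfrac{1}{u}]$ is an étale $\varphi$-module over $k(\!(u)\!)\otimes_{\F_p} k_E$; then $\barN[\tfrac{1}{u}] \subset \barM[\tfrac{1}{u}]$ is a $\varphi$-stable subspace, and writing the matrix of $\varphi$ on $\barM[\tfrac{1}{u}]$ in block upper-triangular form adapted to $\barN[\tfrac{1}{u}]$ shows, by comparison of determinants, that $\barN[\tfrac{1}{u}]$ is again étale, so $c_s \neq 0$ for all $s$. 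Hence $\barN$ is a torsion Kisin module of finite height, and by Lemma~\ref{lem:rankone} it is isomorphic to $\barM(r_0,\ldots,r_{f-1};a)$ with $r_s = v_u(c_s)$ and some $a \in k_E^{\times}$; these invariants do not depend on the choices made.

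It remains to prove $r_s = v_u(c_s) \in \{r_{1,s},\ldots,r_{d,s}\}$ for each $s$. Reducing the basis of Theorem~\ref{shape-redux-coeffs} modulo $\m_E$, and using that $E(u) \equiv u \pmod{\m_E}$ (as $K$ is unramified we may take $\pi = p$) and that $Y_s \equiv I_d \pmod p$, hence $\bar Y_s = I_d$ over $k_E\llb u\rrb$, the matrix of $\varphi \colon \barM_{s-1} \to \barM_s$ in the reduced basis $\{\bar e_{j,s}\}$ is $\bar X_s D_s$, where $\bar X_s \in \GL_d(k_E\llb u \rrb)$ and $D_s = [u^{r_{1,s}},\ldots,u^{r_{d,s}}]$. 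Writing $v_{s-1} = \sum_j \lambda_{j,s-1} \bar e_{j,s-1}$ and $v_s = \sum_k \lambda_{k,s}\bar e_{k,s}$ (with $(\lambda_{j,s-1})_j$ and $(\lambda_{k,s})_k$ primitive) and comparing the two expressions for $\varphi(v_{s-1})$, we get $c_s (\lambda_{k,s})_k = \bar X_s D_s (\varphi(\lambda_{j,s-1}))_j$. Multiplication by the unit matrix $\bar X_s$ preserves the content of a column vector (the ideal generated by its entries, equivalently $u$ to the minimum of the $u$-adic valuations of the entries), and $v_u(\varphi(\lambda)) = p\,v_u(\lambda)$; taking contents of both sides therefore gives
$$ v_u(c_s) \;=\; v_u(c_s) + \min_k v_u(\lambda_{k,s}) \;=\; \min_{1 \le j \le d}\bigl( r_{j,s} + p\,v_u(\lambda_{j,s-1}) \bigr).$$
Any index $j$ with $v_u(\lambda_{j,s-1}) \ge 1$ contributes at least $p$ to the right-hand minimum, while by primitivity some index $j_0$ has $v_u(\lambda_{j_0,s-1}) = 0$, contributing $r_{j_0,s} \le p$; hence the minimum equals $\min\{\, r_{j,s} : v_u(\lambda_{j,s-1}) = 0 \,\}$, which lies in $\{r_{1,s},\ldots,r_{d,s}\}$. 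This proves the claim.

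The key input — and the only real obstacle — is Theorem~\ref{shape-redux-coeffs}: it is precisely the fact that $\varphi$ has the form $X_s \Lambda_s Y_s$ with $Y_s$ congruent to the identity modulo $p$ that forces $v_u(c_s)$ to be one of the labeled Hodge--Tate weights rather than an arbitrary integer in $[0,p]$, and this is exactly why the hypothesis that the Hodge--Tate weights lie in $[0,p]$ is essential (compare Example~\ref{ex:best-possible}). Granting that structure theorem, the argument above is a formal computation with contents of vectors over the discrete valuation rings $k_E\llb u \rrb$.
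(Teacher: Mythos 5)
Your proof is correct and takes essentially the same approach as the paper's: reduce the basis of Theorem~\ref{shape-redux-coeffs} modulo $\m_E$ and compare $u$-adic contents, using that the coordinates of a primitive generator of $\barN_{s-1}$ become either units or multiples of $u^p$ after applying $\varphi$, while all $r_{j,s}\le p$. The only difference is presentational: you make explicit the splitting/primitivity of $\barN_s$ and the nonvanishing of the $c_s$ (via the \'etale $\varphi$-module argument), points the paper's proof leaves implicit.
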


\begin{proof}
Choose a basis $\{e_{i,s}\}$ for $\M$ as in Theorem~\ref{shape-redux-coeffs}.
 Since we will work in $\barM$ for the remainder of the proof, no
 confusion will arise if we write $\{e_{i,s}\}$ also for the image of
   that basis in $\barM$.

 A generator $f_{s-1}$ of $\barN_{s-1}$ has the form $(e_{1,s-1},\ldots,e_{d,s-1})
 \cdot (v_{1,s},\ldots,v_{d,s})^T$ for some $v_{1,s},\ldots,v_{d,s}
 \in k_E\llb u \rrb$, by hypothesis at least one of which is a unit.  We know from
 Theorem~\ref{shape-redux-coeffs} that
$$ \varphi(f_{s-1}) = (e_{1,{s}},\ldots,e_{d,{s}}) \overline{X}_{s}
\overline{\Lambda}_{s} \cdot
(\varphi(v_{1,s}),\ldots,\varphi(v_{d,s}))^T$$
where $\overline{X}_{s}$ is the reduction mod $\m_E$ of $X_{s}$
and $\overline{\Lambda}_{s} = [u^{r_{1,s}},\ldots,u^{r_{d,s}}]$
Now, observe that each entry of
$(\varphi(v_{1,s}),\ldots,\varphi(v_{d,s}))^T$ is either a unit or
divisible by $u^p$, and at least one is a unit.   Since we have
$r_{i,s} \le p$ for all $i$, it follows that the largest power of
$u$ dividing the column vector $\overline{\Lambda}_{s} \cdot
(\varphi(v_{1,s}),\ldots,\varphi(v_{d,s}))^T$ is $u^{r_{i,s}}$ for
some $i$.  Noting that $\overline{X}_{s}$ is invertible, the same is
true of $\varphi(f_{s-1})$, and the proposition follows.
\end{proof}

\begin{thm}
  \label{thm:extension-crystalline}
 Suppose that $K/\Qp$ is unramified and $p > 2$.  Let $T$ be a
  $G_K$-stable $\O_E$-lattice in a crystalline
  representation $V$ of $E$-dimension $2$ whose $\hodgetateembedding_s$-labeled
  Hodge--Tate weights are $\{0,r_s\}$ with $r_s \in [1,p]$ for all
  $s$.  Let $\M$ be the Kisin module associated to $T$, and  let
  $\barM := \M \otimes_{\O_E} k_E$.

  Assume that the $k_E[G_K]$-module $\overline{T}:=T/\m_ET$ is
  reducible. Then $\barM$ is an extension of two $\varphi$-modules of
  rank one, and there exist $a,b \in k_E^{\times}$ and a subset $J
  \subset \{0,\ldots,f-1\}$ so that $\barM$ is as follows.

 Set $h_i = r_i$ if $i \in
  J$ and $h_i = 0$ if $i \not\in J$.  Then $\barM$ is  an
  extension of $\barM(h_0,\ldots,h_{f-1};a)$ by
  $\barM(r_0-h_0,\ldots,r_{f-1}-h_{f-1};b)$, and we can choose bases $e_i,f_i$
  of the $\barM_i$ so that $\varphi$  has the form
  \begin{eqnarray*}
  \varphi(e_{i-1}) & =& (b)_i u^{r_i-h_i} e_{i}\\
    \varphi(f_{i-1})& =& (a)_i u^{h_i} f_{i} + x_i e_{i}
 \end{eqnarray*}
with $x_i = 0$ if $i \not\in J$ and $x_i \in k_E$ constant if $i \in
J$,
except in the following case:
\begin{itemize}
\item $(r_0,\ldots,r_{f-1}) \in \mathcal{P}$,
\item $J = \{ i : r_i = p-1,p\}$, and
\item $a=b$.
\end{itemize}
In that case fix $i_0 \in J$; then $x_i$ may be taken to be $0$ for
all $i \not\in J$, to be a
constant for all $i$ except $i=i_0$, and to be the sum of a constant
and a term of degree $p$ if $i=i_0$.

Finally, $\overline{T}|_{I_K}\simeq \begin{pmatrix} \prod_{ i\in
    J}\omega_{i}^{r_i}&*\\ 0& \prod_{i\notin
    J}\omega_{i}^{r_i} \end{pmatrix} $.
\end{thm}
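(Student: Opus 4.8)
The plan is to push the reducibility of $\overline{T}$ down to the associated $(\varphi,\hat G)$-module, identify its two rank-one constituents using the structure theorem of Section~\ref{sec:shape}, put the extension cocycle into the normal form of Proposition~\ref{prop:extensions-of-phi-modules}, and then use the $\tau$-action — which is tightly constrained because $V$ is crystalline — to cut the cocycle down to a constant; the inertial shape then drops out of the rank-one computation of Section~\ref{sec:kisin-modules-varphi}.

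First I would apply Proposition~\ref{prop:reduction-of-Kisin-modules} to realise the $(\varphi,\hat G)$-module $\hat\barM$ attached to $\overline{T}=T/\m_ET$ as a quotient $\hat\frakL/\m_E\hat\frakL$ of finite free $(\varphi,\hat G)$-modules with natural $\O_E$-action, so that Lemma~\ref{lem: reducible rep yield reducible Kisin module} applies to the given short exact sequence $\mathcal{L}\col 0\to\Lbar'\to\overline{T}\to\Lbar''\to0$: it lifts to a short exact sequence of $(\varphi,\hat G)$-modules $0\to\hat\barM''\to\hat\barM\to\hat\barM'\to0$ whose image under $\hat T$ is $\mathcal{L}$, with $\barM''$ the sub of the ambient Kisin module $\barM$. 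Since $\barM$ has rank $2$, $\barM/\barM''$ is free of rank $1$; Proposition~\ref{prop:rank-one-subs} then gives $\barM''\simeq\barM(r_0-h_0,\dots,r_{f-1}-h_{f-1};b)$ for some $b\in k_E^\times$ and a subset $J\subseteq\{0,\dots,f-1\}$, where $h_i=r_i$ for $i\in J$ and $h_i=0$ otherwise (the constituent's labelled weights lying in $\{0,r_i\}$). To identify the quotient, I would pick a basis of $\barM$ as in Theorem~\ref{shape-redux-coeffs}: there $\varphi\col\barM_{s-1}\to\barM_s$ has matrix $\overline{X}_s[u^{r_{1,s}},u^{r_{2,s}}]$ with $\overline{X}_s$ invertible (because $\overline{Y}_s\equiv I$ mod $p$ and $E(u)\equiv u$ as $K$ is unramified), so $\varphi$ on $\wedge^2\barM$ has $u$-valuation exactly $r_s$; as the sub accounts for $u$-valuation $r_s-h_s$, the rank-one quotient $\barM'$ has $\varphi$ of $u$-valuation $h_s$ at each $s$, whence $\barM'\simeq\barM(h_0,\dots,h_{f-1};a)$ for some $a\in k_E^\times$ by Lemma~\ref{lem:rankone}. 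Now $\barM$ is an extension of $\barM(h_\bullet;a)$ by $\barM(r_\bullet-h_\bullet;b)$, and Proposition~\ref{prop:extensions-of-phi-modules} produces bases $e_i,f_i$ putting $\varphi$ in the stated triangular form with $x_i$ a polynomial of degree $<h_i$ — so $x_i=0$ for $i\notin J$ — except when $(r_0,\dots,r_{f-1})\in\mathcal{P}$, $a=b$, and $J=\{i:r_{i-1}\neq p\}$; and since $r_{i-1}=p\Leftrightarrow r_i=1$ for tuples in $\mathcal{P}$, that set equals $\{i:r_i\in\{p-1,p\}\}$, matching the exceptional case in the statement (the $p=2$ clause of Proposition~\ref{prop:extensions-of-phi-modules} being vacuous here, as $p>2$).

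The crux is then to improve ``$\deg x_i<r_i$'' to ``$x_i$ constant'' for $i\in J$. By the uniqueness in Corollary~\ref{cor:(phi, hatG) for rank 1}, $\hat\barM'$ and $\hat\barM''$ are the standard rank-one $(\varphi,\hat G)$-modules, so $\tau(e_i)=\alpha_i e_i$ where $\alpha_i=\eta^{m}$ is the scalar attached to the weights $(r_\bullet-h_\bullet)$ as in the proof of that corollary (and $p\mid m$), and one may write $\tau(f_i)=\lambda_i f_i+\mu_i e_i$ in $\hat\barM$ with $\lambda_i$ the analogous scalar for $(h_\bullet)$ and $\mu_i\in\hR$. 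Expanding $\tau\varphi(f_{i-1})=\varphi\tau(f_{i-1})$ in $\hat\barM$ — remembering that $\barM\hookrightarrow\hat\barM$ introduces a Frobenius twist, that $\tau(u)=\ue u$, and that $\varphi(\lambda_{i-1})=\lambda_i\ue^{ph_i}$ — the $f_i$-component is automatic and the $e_i$-component gives, with $x_i=\sum_d c_d u^d$,
\[
\sum_d c_d\,u^{pd}\bigl(\alpha_i\ue^{pd}-\lambda_i\ue^{ph_i}\bigr)=(b)_iu^{p(r_i-h_i)}\varphi(\mu_{i-1})-(a)_i\ue^{ph_i}u^{ph_i}\mu_i.
\]
For $i\in J$ (so $h_i=r_i$, $r_i-h_i=0$), Corollary~\ref{cor:valuation of torsion crystalline} applied to $f_i$ and to $f_{i-1}$ yields $v_R(\mu_i),v_R(\mu_{i-1})\ge\tfrac{p}{p-1}+p$ (here $e=1$), so the right-hand side has $v_R\ge pr_i+\tfrac{p}{p-1}+p$; on the other hand, by Lemma~\ref{valuation}(1) one has $v_R(\ue^{p(r_i-d)}-1)=\tfrac{p^2}{p-1}$ for $1\le d\le r_i-1$, while $v_R(\alpha_i-1),v_R(\lambda_i-1)\ge\tfrac{p^2}{p-1}$ with computable leading terms, and a short calculation identifies the $v_R$ of the $d$-th summand on the left as exactly $pd+\tfrac{p^2}{p-1}$. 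These valuations are mutually distinct and all strictly below $v_R$ of the right-hand side, forcing $c_d=0$ for $1\le d\le r_i-1$; applying this uniformly, in the exceptional case each $x_i$ ($i\in J$) becomes constant except that $x_{i_0}$ retains a degree-$p$ term (not forced to vanish because, for that $i_0$, its valuation merely matches rather than undercuts the right-hand side). The step I expect to be the hardest is exactly this valuation bookkeeping: it rests on the precise formula $v_R(\ue^m-1)=p^{v_p(m)}\tfrac{p}{p-1}$, on tracking the leading terms of the rank-one $\tau$-scalars $\alpha_i,\lambda_i$, and on a delicate separate treatment of the $\mathcal{P}$-tuples.

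Finally, $\hat\barM'$ and $\hat\barM''$ are the rank-one $(\varphi,\hat G)$-modules attached to $\barM(h_\bullet;a)$ and $\barM(r_\bullet-h_\bullet;b)$, so Proposition~\ref{prop:calculation-on-inertia}(1) gives $\hat T(\hat\barM')|_{I_K}\simeq\prod_i\omega_i^{h_i}=\prod_{i\in J}\omega_i^{r_i}$ and $\hat T(\hat\barM'')|_{I_K}\simeq\prod_i\omega_i^{r_i-h_i}=\prod_{i\notin J}\omega_i^{r_i}$. Since $\hat T$ is exact and contravariant (Theorem~\ref{thm: old facts on (phi hatG)}), applying it to $0\to\hat\barM''\to\hat\barM\to\hat\barM'\to0$ recovers $\mathcal{L}\col 0\to\hat T(\hat\barM')\to\overline{T}\to\hat T(\hat\barM'')\to0$, so the sub-line of $\overline{T}|_{I_K}$ is $\prod_{i\in J}\omega_i^{r_i}$ and
\[
\overline{T}|_{I_K}\simeq\begin{pmatrix}\prod_{i\in J}\omega_i^{r_i}&*\\0&\prod_{i\notin J}\omega_i^{r_i}\end{pmatrix},
\]
completing the proof.
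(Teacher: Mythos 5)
Your proposal is correct, and its opening and closing steps (Proposition~\ref{prop:reduction-of-Kisin-modules} plus Lemma~\ref{lem: reducible rep yield reducible Kisin module} to split $\barM$, Proposition~\ref{prop:rank-one-subs} and the determinant of $\varphi$ from Theorem~\ref{shape-redux-coeffs} to pin down the rank-one pieces, Proposition~\ref{prop:extensions-of-phi-modules} for the normal form, and Proposition~\ref{prop:calculation-on-inertia} for the inertial shape) coincide with the paper's proof. Where you genuinely diverge is the crux, namely improving $\deg x_i < r_i$ to $x_i$ constant for $i \in J$: the paper does this with a second, purely Kisin-module-level application of Theorem~\ref{shape-redux-coeffs} --- the image $\varphi(\barM_{i-1}) \subset \barM_i$ is spanned over $k_E\llb u^p\rrb$ by elements exactly divisible by $u^0$ and by $u^{r_i}$, and a middle-degree term in $x_i$ would make an element exactly divisible by $u^{r_i}$ impossible --- whereas you instead exploit the $\tau$-action, combining the commutation relation $\tau\varphi=\varphi\tau$ with the crystalline valuation bound of Corollary~\ref{cor:valuation of torsion crystalline} and the exact formula $v_R(\ue^m-1)=p^{v_p(m)}\tfrac{p}{p-1}$ of Lemma~\ref{valuation}. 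I checked your valuation bookkeeping: with $\alpha_i,\lambda_i$ the standard $\eta$-powers (forced by Corollary~\ref{cor:(phi, hatG) for rank 1}, or directly by the positivity of $v_R(\alpha_i-1)$ as in Lemma~\ref{lem: hat G is determined by phi}), the relevant exponent for the $d$-th summand is $p$ times an integer congruent to $\pm d \bmod p$, so for $1\le d\le r_i-1$ the summand has valuation exactly $pd+\tfrac{p^2}{p-1}$, strictly below the bound $pr_i+\tfrac{p^2}{p-1}$ on the right-hand side, and the surviving $d=0$ and $d=p$ terms have valuation at least $p^2+\tfrac{p^2}{p-1}$, so there is no cancellation; the argument goes through, including the exceptional case. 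The trade-off: the paper's route is shorter and stays at the level of Kisin modules, so the constraint on $x_i$ needs no knowledge of the $\Ghat$-action (which the paper only analyses afterwards, in Section~\ref{sec:extensions-rank-one-G-hat}); your route requires the full $(\varphi,\Ghat)$-machinery and the estimate of Corollary~\ref{cor:valuation of torsion crystalline} up front, but the computation you perform is essentially the one that powers Lemma~\ref{lem: hat G is determined by phi}, so it unifies the ``constancy of $x_i$'' and the ``uniqueness of the $\tau$-action'' statements into a single analysis of equation~\eqref{eq: unique tau 2}, at the cost of being more delicate (your degree-$p$ exception is exactly where the valuation comparison degenerates, matching the paper's exceptional case). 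Note that you do not thereby avoid Theorem~\ref{shape-redux-coeffs}: it is still needed, via Proposition~\ref{prop:rank-one-subs} and the determinant argument, to identify the sub and quotient in the first place.
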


\begin{proof}
  It follows from (for example) Lemma \ref{lem: reducible rep yield
    reducible Kisin module} that $\barM$ is an extension of two rank
  one $\varphi$-modules. Then Proposition~\ref{prop:rank-one-subs}
  guarantees that if $\barM$ is an extension of $\barMp$ by $\barMpp$,
  then $\barMpp$ has the form $\barM(r'_0,\ldots,r'_{f-1}; b)$ with
  $r'_i \in \{0,r_i\}$ for all $i$.  Taking $i \in J$ if $r'_i = 0$ and
  $i \not\in J$ if $r'_i = r_i$ puts $\barMpp$ into the correct form;
  considering the determinant of $\varphi$ in
  Theorem~\ref{shape-redux-coeffs} one finds that $\barMp$ then also
  has the correct form.

Now $\barM$ can be taken to have the form given by
Proposition~\ref{prop:extensions-of-phi-modules}, and it remains to
show that each $x_i$ with $i \in J$ cannot have any nonzero terms of
degree between~$1$ and $r_{i}-1$.  But
Theorem~\ref{shape-redux-coeffs} implies
that the image $\varphi(\barM_{i-1}) \subset \barM_i$ is spanned over
$k_E\llb u^p \rrb$ by an
element divisible exactly by $u^0$ and an element divisible exactly by
$u^{r_i}$.  On the other hand, if $x_i$ were to have a term of degree
between~$1$ and~$r_i-1$ then neither $(b)_i e_{i} + \varphi(c) ( (a)_i
u^{r_i} f_{i} + x_i e_{i})$ nor $(a)_i u^{r_i} f_{i} + x_i e_{i} + \varphi(c)
(b)_i e_{i}$ would be divisible exactly by $u^{r_i}$ for any $c \in k_E \llb
u\rrb$.   This is a contradiction.

Finally, that
$\overline{T}|_{I_K}$ is as claimed follows from parts (1) and (2)  of
Proposition~\ref{prop:calculation-on-inertia}, together with the fact
that two mod $p$ characters of $G_K$ that are equal on $G_{\infty}$
must be equal. \end{proof}

\begin{remark}\label{remark: exceptional case only with equal characters}
  It follows easily from Proposition \ref{prop:calculation-on-inertia}
  and Lemma \ref{lem:combinatorial-J} that the exceptional case of
  Theorem \ref{thm:extension-crystalline} in which we allow a term of
  degree $p$ can only occur if $\overline{T}$ is an extension of a
  character by itself.
\end{remark}

\begin{cor}
  \label{cor:form of the characters in reducible crystalline
    reduction, nothing about extensions}
  Suppose that $K/\Qp$ is unramified and $p > 2$. Let
  $\rhobar\col G_K\to\GL_2(\Fpbar)$ be the reduction mod $p$ of a
  $G_K$-stable $\Zpbar$-lattice in a crystalline representation
  $\Qpbar$-representation of dimension $2$ whose
  $\hodgetateembedding$-labeled Hodge--Tate weights are
  $\{0,r_\hodgetateembedding\}$ with $r_\hodgetateembedding \in [1,p]$
  for all $\hodgetateembedding$.

Assume that $\rhobar$ is reducible. Let $S=\Hom(k,\Fpbar)$, and
identify the set $S$ with $\Hom_{\Qp}(K,\Qpbar)$. Then there is
a subset $J\subset S$ such that \[\rhobar|_{I_K}\simeq
  \begin{pmatrix}
 \prod_{ \sigma\in
      J}\omega_{\sigma}^{r_\sigma}&*\\ 0&  \prod_{\sigma\notin
      J}\omega_\sigma^{r_\sigma}  \end{pmatrix}.\]
\end{cor}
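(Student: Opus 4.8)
The plan is to deduce this directly from Theorem~\ref{thm:extension-crystalline}. That theorem is precisely the assertion we want, except that it is phrased over a finite coefficient field $E$ with residue field $k_E$ rather than over $\Qpbar$ and $\Fpbar$; so the work remaining is a routine descent of the coefficients.

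Write $V_{\Qpbar}$ for the ambient crystalline $\Qpbar$-representation and $L\subset V_{\Qpbar}$ for the given $G_K$-stable $\Zpbar$-lattice, so that $\rhobar=L/\m_{\Zpbar}L$. First I would choose a finite extension $E/\Qp$ inside $\Qpbar$ large enough to arrange all of the following. (i) $E$ contains the image of every embedding $K\into\Qpbar$, so that we are in the situation of Section~\ref{sec:coefficients}. (ii) $V_{\Qpbar}$ is defined over $E$, i.e.\ there is a crystalline $E$-representation $V$ with $V\otimes_E\Qpbar\cong V_{\Qpbar}$; this is standard (for instance because $G_K$ is topologically finitely generated), and after enlarging $E$ once more the lattice $L$ descends to a $G_K$-stable $\O_E$-lattice $T\subset V$, so that $\rhobar\cong(T/\m_E T)\otimes_{k_E}\Fpbar$. (iii) After possibly enlarging $E$ a final time we may assume that $\overline T:=T/\m_E T$ is reducible as a $k_E[G_K]$-module, since a $G_K$-stable line in the reducible representation $\rhobar$ is defined over some finite extension of $k_E$.

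Next I would observe that the $\hodgetateembedding_s$-labeled Hodge--Tate weights of $V$ (for the embeddings $\hodgetateembedding_s\col K\into E$ fixed in Section~\ref{sec:coefficients}) are again $\{0,r_{\hodgetateembedding_s}\}$, since labeled Hodge--Tate weights are unchanged by extension of the coefficient field. Thus $T$ satisfies the hypotheses of Theorem~\ref{thm:extension-crystalline}, which produces a subset $J\subset\{0,\ldots,f-1\}$ with
\[\overline T|_{I_K}\simeq\begin{pmatrix}\prod_{i\in J}\omega_i^{r_i}&*\\ 0&\prod_{i\notin J}\omega_i^{r_i}\end{pmatrix},\]
where $\omega_i=\omega_{\hodgetateembeddingbar_i}$ and $\hodgetateembeddingbar_i$ is the reduction of $\hodgetateembedding_i$ modulo $p$. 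Transporting $J$ along the bijection $\{0,\ldots,f-1\}\isomap S=\Hom(k,\Fpbar)$, $i\mapsto\hodgetateembeddingbar_i$ (under which $r_i$ becomes $r_\sigma$), and then extending scalars from $k_E$ to $\Fpbar$ — which changes nothing, as the $\omega_\sigma$ already take values in $\Fpbar^\times$ and $\rhobar|_{I_K}=\overline T|_{I_K}\otimes_{k_E}\Fpbar$ — gives the claimed form of $\rhobar|_{I_K}$.

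There is no genuine obstacle at this stage: all of the real content has already gone into Theorem~\ref{thm:extension-crystalline} (and, beneath it, into the structure Theorem~\ref{shape} and the extension analysis of Section~\ref{sec:extensions-rank-one}). The only point requiring mild care is the simultaneous choice of a single finite $E$ that realises $V_{\Qpbar}$, realises the lattice, and witnesses the reducibility of the mod-$p$ reduction; each of these is a standard finiteness argument, and none of them disturbs the labeled Hodge--Tate weights.
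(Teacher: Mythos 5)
Your argument is correct and is essentially the paper's own proof: the paper likewise deduces the corollary in one line from Theorem~\ref{thm:extension-crystalline}, observing that $\rho$ is defined over some finite extension $E/\Qp$. Your write-up simply makes explicit the routine descent steps (choice of $E$, descent of the lattice, reducibility of $\overline T$ over $k_E$, invariance of labeled Hodge--Tate weights) that the paper leaves implicit.
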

\begin{proof}
This follows immediately from Theorem \ref{thm:extension-crystalline},
since $\rho$ is necessarily defined over some finite extension $E/\Qp$.
\end{proof}
Note that Corollary \ref{cor:form of the characters in reducible
  crystalline reduction, nothing about extensions} does not suffice to
prove Theorem \ref{thm: crystalline lift implies explicit crystalline
  lift} in the reducible case, because it says nothing about the
extension classes. In the following sections we will improve on this
result by making a more detailed study of the full $(\varphi,\hat
G)$-modules, rather than just the underlying Kisin modules. However,
Corollary \ref{cor:form of the characters in reducible crystalline
  reduction, nothing about extensions} can be combined with a
combinatorial argument to deduce Theorem \ref{thm: crystalline lift
  implies explicit crystalline lift} in the irreducible case (see
Theorem \ref{thm: reduction mod p for GL2 in the irreducible case}
below).

\section{Extensions of rank one $(\varphi,\hat G)$-modules}
\label{sec:extensions-rank-one-G-hat}

\subsection{From Kisin modules to $(\varphi,\Ghat)$-modules}
\label{sec:from-kisin-modules}
We will now
study how (and whether) the rank two $\varphi$-modules of Section~\ref{sec:extensions-rank-one}
can be extended to $(\varphi,\hat G)$-modules.   Return to the situation of the previous section: that is, suppose
$K=K_0$ and $p > 2$, and let $T$ be a $G_K$-stable $\O_E$-lattice as
in Theorem~\ref{thm:extension-crystalline}.  Let  $\hat {\overline
  \M}= (\bar \M , \varphi, \hat G)$ be the $(\varphi, \hat G)$-module
associated to $\overline T = T/\m_E T$ via Theorem \ref{thm: old facts
  on (phi hatG)}(4). We \emph{further assume}
that $\overline T$ is reducible and sits in an exact sequence $$0
\to \overline \psi_1 \to \overline T \to \overline \psi_2 \to 0.$$ By
Lemma~\ref{lem: reducible rep yield reducible Kisin module}, the
$(\varphi,\hat G)$-module $\hat \barM$ sits in an exact sequence of
$(\varphi, \hat G)$-modules, whose ambient Kisin module is an exact
sequence $0 \to \barM_2  \to  \barM \to \barM_1 \to 0$.  In the
notation of
Theorem \ref{thm:extension-crystalline}, it follows from that result that $\barM_1 =\barM(h_0 ,
\dots, h_{f-1}; a) $ and $\barM_2 = \barM(r_0 -h_0 , \dots, r_{f-1}-
h_{f-1}; b ) $ for some choice of $a$, $b$, and $J$.

\begin{lemma}\label{lem: hat G is determined by phi}
 Except possibly for the case that $r_i = h_i = p$ for all $i
  = 0 , \dots, f-1$, there is at most one way to extend the
 exact
sequence $$0 \lto \barM_2  \lto  \barM \lto \barM_1 \lto 0$$
to an exact sequence of $(\varphi,\hat G)$-modules with natural
$k_E$-action satisfying the conclusion of Corollary~\ref{cor:valuation of torsion crystalline}.
In particular the $\hat
  G$-action on $\hat {\overline \M}  $ is uniquely determined by
  $\overline \M$, except possibly  for the case that $r_i = h_i = p$ for all $i
  = 0 , \dots, f-1$.
\end{lemma}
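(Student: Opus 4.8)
The plan is to reduce to the uniqueness of the $\tau$-action and then play the $\varphi$-equivariance of that action against the valuation estimate of Corollary~\ref{cor:valuation of torsion crystalline}. Since $p>2$ we have $\hat G\simeq\hat G_{p^\infty}\rtimes H_K$ with $\hat G_{p^\infty}$ topologically generated by $\tau$, and by axiom (4) of Definition~\ref{defn:phi-ghat-module} the subgroup $H_K$ acts trivially on $\barM\subset\hat\barM$; as $\barM$ generates $\hat\barM$ over $\hR$ and the $\hat G$-action on $\hR$ is fixed, it is enough to show that the $\tau$-action on $\hat\barM$ is uniquely determined (outside the excluded case). So suppose $\tau,\tau'$ are two $\tau$-actions making $\hat\barM$ a $(\varphi,\hat G)$-module with natural $k_E$-action, both extending the given sequence $0\to\barM_2\to\barM\to\barM_1\to 0$ and both satisfying Corollary~\ref{cor:valuation of torsion crystalline}. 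Choose bases $e_i,f_i$ of the components of $\barM$ as in Theorem~\ref{thm:extension-crystalline}, so that $e_i$ spans the $i$-th component of $\barM_2$, $\varphi(e_{i-1})=(b)_iu^{r_i-h_i}e_i$, and $\varphi(f_{i-1})=(a)_iu^{h_i}f_i+x_ie_i$. The submodule $\hR\otimes_{\varphi,\fS}\barM_2\subset\hat\barM$ is the base change of $\barM_2$, hence the same subset for both structures, and as a $(\varphi,\hat G)$-module it has ambient Kisin module $\barM(r_0-h_0,\dots,r_{f-1}-h_{f-1};b)$, so Corollary~\ref{cor:(phi, hatG) for rank 1} forces its $\tau$-action, and likewise the $\tau$-action induced on the rank one quotient, to be the unique one. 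Hence $\tau(e_i)=\tau'(e_i)=:\alpha_ie_i$, the two actions induce the same $\overline f_i\mapsto\beta_i\overline f_i$ on the quotient, and $\tau(f_i)-\tau'(f_i)=c_ie_i$ for elements $c_i$ in the coefficient ring of $\hat\barM$; the lemma amounts to showing all $c_i$ vanish.

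Next I would extract two relations on the $c_i$. Applying Corollary~\ref{cor:valuation of torsion crystalline} to $x=f_i$ for $\tau$ and for $\tau'$ and subtracting, the $e_i$-component of $c_ie_i=(\tau(f_i)-f_i)-(\tau'(f_i)-f_i)$ shows $v_R(c_i)\ge\frac{p}{p-1}+\frac{p}{e}=\frac{p^2}{p-1}$, using $e=1$. Separately, comparing the $e_i$-components of the identities $\tau\varphi(f_{i-1})=\varphi\tau(f_{i-1})$ and $\tau'\varphi(f_{i-1})=\varphi\tau'(f_{i-1})$ and subtracting, all terms depending only on the already-determined data $\alpha_i$, $\beta_i$, $x_i$ and the fixed action of $\tau$ on $\hR$ cancel, leaving
$$(a)_i\,\tau(u)^{ph_i}\,c_i=(b)_i\,u^{p(r_i-h_i)}\,\varphi(c_{i-1}).$$
If some $c_i\neq 0$ then all $c_i\neq 0$ by this recursion; taking $v_R$ and using that $\tau(u)/u$ is a unit of $R$ (so $v_R(\tau(u))=v_R(u)=1/e=1$) together with $v_R(\varphi(x))=p\,v_R(x)$ yields the linear recursion $v_R(c_i)=p\,v_R(c_{i-1})+p(r_i-2h_i)$.

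Finally I would solve the cyclic recursion and compare. Iterating it $f$ times gives $(p^f-1)\,v_R(c_0)=\sum_{m=0}^{f-1}p^{m+1}\bigl(2h_{-m}-r_{-m}\bigr)$, indices taken mod $f$. Since $2h_j-r_j$ equals $r_j$ when $j\in J$ and $-r_j$ when $j\notin J$, we have $|2h_j-r_j|\le r_j\le p$, so $v_R(c_0)\le\frac{p^2}{p-1}$, with equality precisely when $2h_j-r_j=p$ for every $j$, i.e.\ when $j\in J$ and $r_j=p$ for every $j$ --- which is exactly the case $r_i=h_i=p$ for all $i$. Combined with $v_R(c_0)\ge\frac{p^2}{p-1}$ from the previous step, this forces $c_i=0$ for all $i$ outside that case, so $\tau=\tau'$. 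The ``in particular'' clause then follows, since the $(\varphi,\hat G)$-module attached to the reduction of a crystalline lattice does satisfy Corollary~\ref{cor:valuation of torsion crystalline}, whence it must be this unique extension and its $\hat G$-action is determined by $\barM$.

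The hard part will be the second step: unwinding the $(\varphi,\hat G)$-module formalism carefully (the Frobenius twist in $\hR\otimes_{\varphi,\fS}(-)$, the powers $u^{ph_i}$ versus $u^{h_i}$, and the identity $\tau(u)=\underline\varepsilon(\tau)u$) to obtain the clean recursion, and verifying that the correction terms really cancel between $\tau$ and $\tau'$. The reason the case $r_i=h_i=p$ must be excluded --- there the valuation is pinned to exactly $\frac{p^2}{p-1}$, leaving no room for a contradiction --- is the same boundary phenomenon that makes the range $r\le p$ sharp in Remark~\ref{rem:why-not-p+1} and Example~\ref{ex:best-possible}.
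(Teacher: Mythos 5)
Your proposal is correct and follows essentially the same route as the paper: reduce to uniqueness of the $\tau$-action, bound the off-diagonal entries via Corollary~\ref{cor:valuation of torsion crystalline}, use $\varphi\tau=\tau\varphi$ to get the cyclic recursion on the difference, and observe that its forced valuation $\frac{1}{p^f-1}\sum p^{j+1}(2h_j-r_j)$ can reach $\frac{p^2}{p-1}$ only when $r_i=h_i=p$ for all $i$. The only cosmetic difference is that you pin down the diagonal entries by citing the rank-one uniqueness of Corollary~\ref{cor:(phi, hatG) for rank 1}, whereas the paper rederives them directly from the commutation relations and Lemma~\ref{valuation}(2); the cancellation of the $x_i$-terms you flag as the delicate step is exactly what happens when one subtracts the two instances of the paper's equation~\eqref{eq: unique tau 2}.
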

\begin{proof}
Since
  $\hat\barM$ is assumed to come from a crystalline
  representation, the conclusion of
  Corollary~\ref{cor:valuation of torsion crystalline} holds for $\hat\barM$.
Since by definition $\overline \M$ is contained in the
  $H_K$-invariants of $\hat{\overline{\M}}$, it suffices to show that the
  $\tau$-action on $ \hR \otimes_{\varphi, \fS} \barM$ is uniquely
  determined by  the condition of Corollary~\ref{cor:valuation of torsion crystalline}.  Since  $\hat{\overline{\M}}$ is reducible,
we can write $$\tau (e_{i-1}, f_{i-1}) = (e_{ i-1 },
  f_{i-1}) \begin{pmatrix} \alpha_{i}  & \beta_{i} \\ 0 & \gamma_{i} \end{pmatrix}$$ with
  $\alpha_i, \beta _i,  \gamma_i \in (\hR/ p \hR) \otimes_{\Fp} k_E
  \subset R \otimes_{\Fp} k_E $.   If $\zeta \in R \otimes_{\Fp} k_E$
  is written $\zeta = \sum_{i=1}^{n} y_i \otimes z_i$ with
  $z_1,\ldots,z_n \in k_E$ linearly independent over $\Fp$, write $v_R(\zeta) =
  \min_i \{
  v_R(y_i)\}.$  One checks without difficulty that this is independent
  of the sum representing $\zeta$, so is well-defined and satisfies the
  usual inequality $v_R(\zeta_1 + \zeta_2) \ge
  \min(v_R(\zeta_1),v_R(\zeta_2))$.   The condition of
  Corollary~\ref{cor:valuation of torsion crystalline} implies that
  $v_R(\alpha_i-1), v_R(\gamma_i-1),v_R(\beta_i) \ge \frac{p^2}{p-1}$
  for all $i$.

Recalling that $\barM$
  is regarded as a $\varphi(\fS)$-submodule of $R \otimes_{\varphi,
    \fS}\barM$, by Theorem \ref{thm:extension-crystalline} we may write $\varphi (e_{ i-1}, f_{ i-1 }) = (e_{ i }, f_{i}) \varphi (A_i)$ with $A_i = \begin{pmatrix} (b)_i u ^{r_i -h_i} &  x_i \\ 0 & (a)_i u ^{h_i} \end{pmatrix} $. Since $\varphi$ and $\tau$ commute, we have
$$ \varphi(A_i) \begin{pmatrix} \varphi(\alpha_{i})  & \varphi(\beta_{i})  \\ 0 & \varphi(\gamma_{i}) \end{pmatrix} =  \begin{pmatrix} \alpha_{i+1} & \beta_{i+1} \\ 0 & \gamma_{i+1} \end{pmatrix} \tau (\varphi(A_i)) . $$
Recall that $\tau(u) = \ue u$, and once again let $\eta \in R$ be the element
defined in Lemma~\ref{valuation}(2), so that $\varphi^f(\eta) = \ue \eta$.  We obtain the following formulas:
\begin{equation} \label{eq: unique tau 1}
   u ^{p(r_i -h_i) } \varphi (\alpha_{i})= \alpha_{i+1} (\ue u)
  ^{p(r_i -h_i)},  \ \   u ^{ph_i } \varphi (\gamma_{i})= (\ue u)
  ^{ph_i} \gamma_{i+1}
\end{equation}
 and
 \begin{equation}\label{eq: unique tau 2}  (b)_i u ^{p(r_i -h_i) }
   \varphi (\beta_{i}) + \varphi(x_i)  \varphi(\gamma_{i}) =
   \alpha_{i+1}\tau( \varphi (x _i)) + (a)_i (\ue u )^{ph_i} \beta_{i+1}
 \end{equation}
where for succinctness we have written $(a)_i$, $(b)_i$ in lieu of
$1\otimes (a)_i$, $1\otimes (b)_i$ in the preceding equation.

From~\eqref{eq: unique tau 1} we see that $\varphi^f(\alpha_i) =
\alpha_i \ue^{\sum_{j=0}^{f-1} p^{f-j} (r_{i+j}-h_{i+j})}$, and now
Lemma~\ref{valuation}(2) together with the requirement that
$v_R(\alpha_i-1) > 0$
imply that
$$ \alpha_i = \eta^{\sum_{j=0}^{f-1} p^{f-j} (r_{i+j}-h_{i+j})} \otimes 1$$ for
all $i$.  Similarly we must have $\gamma_i = \eta^{\sum_{j=0}^{f-1}
  p^{f-j} h_{i+j}} \otimes 1$ for all $i$.  So at least the $\alpha_i,\gamma_i$
are uniquely determined.

 Now suppose that there exists some other extension of $\barM$ to
a $(\varphi, \hat
 G)$-module $\hat \barMp$.  
Then the $\tau$-action on $\hat \barMp$ is given by some $\alpha'_i$,
 $\beta'_{i}$ and $\gamma'_i$ that also satisfy~\eqref{eq: unique tau
   1} and~\eqref{eq: unique tau 2}, and indeed we have already seen that
 $\alpha'_i = \alpha_i$ and $\gamma'_i = \gamma_i$.

Let $\tilde \beta_i = \beta_i - \beta'_i$.  Taking the difference
between~\eqref{eq: unique tau 2} for $\hat \barM$ and $\hat \barMp$
gives
$$ (b)_i u^{p(r_i-h_i)} \varphi(\tilde \beta_{i}) = (a)_i (\ue u)^{p
  h_i} \tilde \beta_{i+1},$$
which implies that
$$b u^{\sum_{j = 0}^{f-1} u^{p^{f-j} (r_{i+j} - h_{i+j})}} \varphi^f (\tilde
\beta_i) = a (\ue u)^{\sum_{j = 0}^{f-1} u^{p^{f-j} h_{i+j}}} \tilde
\beta_i.$$
Considering the valuations of both sides, we see that if $\betat_i\ne
0$, then \begin{equation}\label{eq:beta-valuation}  v_R(\tilde
\beta_i) = \frac{1}{p^f-1} \sum_{j=0}^{f-1} p^{f-j} (2h_{i+j} - r_{i+j}). \end{equation}
But since $2h_i - r_i  \in \{\pm r_i\}$ is at most $p$ with equality
if and only if $h_i = r_i = p$,  the right-hand side
of~\eqref{eq:beta-valuation} is at most $\frac{p^2}{p-1}$ with
equality if and only if $h_i = r_i = p$ for all $i$.  In particular,
since $v_R(\beta_i),v_R(\beta'_i) \ge \frac{p^2}{p-1}$, either
$\beta_i = \beta'_i$ for all $i$, or else $h_i=r_i = p$ for all $i$,
as required.
\end{proof}
\begin{remark}
  In the case that each $r_i$ is at most $p-2$, the results of
  \cite{MR1695849} show that there is a canonical $\Ghat$-action on
  $\hat {\overline \M} $.
\end{remark}

\subsection{Comparison of extensions of rank one $(\varphi,\hat G)$-modules}
\label{sec:comp-extens-rank}
We are now in a position to prove our main result on extensions of
rank one $(\varphi,\hat G)$-modules, namely
Proposition~\ref{prop:maximal-model} below, which we will use in the
following section to prove our main local result
(Theorem~\ref{thm: elimination in the reducible case}), showing that
(under appropriate hypotheses) the existence of a crystalline lift
implies the existence of a reducible crystalline lift with the same
Hodge--Tate weights.
\begin{defn}
  \label{defn:model-of-type-J}
Write  $\vec{r} := (r_0,\ldots,r_{f-1})$ with $r_i \in [1,p]$ for all $i$.
  We say that a Kisin module $\barM$  is an extension of type
  $(\vec{r},a,b,J)$ if it has the same shape as the Kisin modules
  described by
  Theorem~\ref{thm:extension-crystalline}; that is, $\barM$
sits in a short exact sequence
$$ 0 \lto \barM(r_0-h_0,\ldots,r_{f-1}-h_{f-1};b) \lto \barM \lto
\barM(h_0,\ldots,h_{f-1};a) \lto 0 $$
in which the extension parameters $x_i$ satisfy $x_i = 0$ if $i
\not\in J$ and $x_i \in k_E$ if $i \in J$ (except that $x_{i_0}$
is allowed to have a term of degree $p$ for one $i_0 \in J$ when
$\vec{r} \in \mathcal{P}$, $J = \{i :  r_i = p-1,p\}$, and $a=b$).
We say that a $(\varphi,\hat G)$-module $\hat \barM$ with natural $k_E$-action is
  of type  $(\vec{r},a,b,J)$ if it is an extension
$$ 0 \lto \hat\barMpp \lto \hat\barM \lto \hat\barMp \lto 0$$ such
that the ambient short exact sequence of Kisin modules is an extension
of type $(\vec{r},a,b,J)$, and if for all $x \in \barM$ there exist
$\alpha \in R$ and $y \in R \otimes_{\varphi,\fS} \barM$ such that $\tau(x)-x = \alpha y$ and
$v_R(\alpha) \ge \frac{p^2}{p-1}$.
\end{defn}

\begin{rem}
 \label{rem:type-remarks}  Thanks to our work in previous sections, we
 have the following.

(1)  Unless $\vec{r} = (p,p,\ldots,p)$ and $J =
\{0,\ldots,f-1\}$, we know from Lemma~\ref{lem: hat G is determined by
  phi} that each extension $\barM$ of type $(\vec{r},a,b,J)$ extends
to an extension $\hat\barM$ of type $(\vec{r},a,b,J)$ in at most one
way.

(2) Suppose
$K=K_0$, $p > 2$, and $T$ is a $G_K$-stable $\O_E$-lattice as
in Theorem~\ref{thm:extension-crystalline}.   If $\overline{T} =
T/\m_E T$ is reducible, then the $(\varphi, \hat G)$-module
$\hat {\overline
  \M}$
associated to $\overline T = T/\m_E T$ via Theorem \ref{thm: old facts
  on (phi hatG)}(4) is of type $(\vec{r},a,b,J)$ for some $a$, $b$,
and $J$.
\end{rem}

Suppose $p > 2$, fix integers $r_0,\ldots,r_{f-1} \in [1,p]$, and let $r =
\sum_{i=0}^{f-1} p^{f-1-i} r_i \pmod{p^f-1}$, so that $r$ is an
element of $\Z/(p^f-1)\Z$.   If $J \subset \{0,\ldots,f-1\}$, let
the integers $h_i$ be defined as in Theorem~\ref{thm:extension-crystalline}, and
write $h(J) := \sum_{i=0}^{f-1} p^{f-1-i} h_i  \pmod{p^f-1}$.

Fix $h \in \Z/(p^f-1)\Z$ and suppose
that there exists a subset $J \subset \{0,\ldots,f-1\}$ such that $h = h(J)$.
For
fixed $h$ there may be several such choices of $J$, as we now
describe.  Let $i_1,\ldots,i_{\delta}$ be the distinct integers in the
range $\{0,\ldots,f-1\}$ such that:
\begin{itemize}
\item $(r_{i_j},\ldots,r_{i_j + s_j}) = (1,p-1,\ldots,p-1,p)$ for some
  $s_j > 0$, and
\item either $i_j \in J$ and $i_j + 1,\ldots,i_j + s_j \not\in J$, or
  vice versa.
\end{itemize}
According to Lemma~\ref{lem:minus-p-to-p}, all other sets $J'$ such
that $h = h(J')$ are obtained from~$J$ by choosing some integers $j \in [1,\delta]$,
and removing $i_j$ from $J$ and adding $i_j+1,\ldots,i_j+s_j$ to $J$
(if $i_j \in J$ to begin with), or vice versa (if $i_j \not\in J$ to begin with); or else $r_i =
p-1$ for all $i$ and $J$, $J' = \varnothing$ or $\{0,\ldots,f-1\}$.

In particular, for each $h$ such that $h = h(J)$ for at least one $J$,
we can define $J_{\max}$ to be the unique subset of $\{0,\ldots,f-1\}$
such that
\begin{itemize}
\item $h = h(J_{\max})$, and
\item $i_j \not\in J$ and $i_j+1,\ldots,i_j + s_j \in J$ for all $1\le
  j \le \delta$.
\end{itemize}
When  $r_i =
p-1$ for all $i$ and $J = \varnothing$ or $\{0,\ldots,f-1\}$, we set
$J_{\max} = \{0,\ldots,f-1\}$.  (Strictly speaking we should write
$J_{\max}(h)$ instead of $J_{\max}$, but $h$ will always be fixed in
any discussion involving $J_{\max}$.)

The main result of this subsection is the following.

\begin{prop}
  \label{prop:maximal-model}
  Let $\hat\barM$ be a $(\varphi,\hat G)$-module of type $(\vec{r},a,b,J)$, and set $h =
  h(J)$.  Then there exists a $(\varphi, \hat G)$-module $\hat\barN$ of type
  $(\vec{r},a,b,J_{\max})$ such that $\hat T(\hat \barN) \simeq
  \hat T(\hat \barM)$.
\end{prop}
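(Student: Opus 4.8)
The plan is to argue by induction on the number of index-blocks on which $J$ and $J_{\max}$ differ. Recall from the discussion preceding the statement that, unless $\vec r=(p-1,\dots,p-1)$ (a degenerate case treated separately), $J_{\max}$ is obtained from $J$ by a collection of independent moves, one for each block $(r_{i_j},\dots,r_{i_j+s_j})=(1,p-1,\dots,p-1,p)$, replacing $\{i_j\}$ by $\{i_j+1,\dots,i_j+s_j\}$ inside $J$ (or vice versa); since distinct such blocks occupy disjoint sets of indices, it suffices to treat the case where $J$ and $J_{\max}$ differ by a single such block (together with the degenerate case, where $J_{\max}=\{0,\dots,f-1\}$ and all $h_i=r_i$).

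So fix such a block, say at indices $i_0,i_0+1,\dots,i_0+s$ with $(r_{i_0},\dots,r_{i_0+s})=(1,p-1,\dots,p-1,p)$, and write $\barM$ for the ambient Kisin module of $\hat\barM$, with bases $e_i$ of the sub $\barM(r_0-h_0,\dots;b)$ and $f_i$ of the quotient $\barM(h_0,\dots;a)$ as in Definition~\ref{defn:model-of-type-J} (after possibly replacing $\hat\barM$ by an isomorphic module). The first and main step is to produce a Kisin module $\barN$ of type $(\vec r,a,b,J_{\max})$ together with a morphism of $\varphi$-modules $g\colon\barN\to\barM$ (or $\barM\to\barN$; whichever direction introduces no denominators) which becomes an isomorphism after inverting $u$. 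Concretely one builds $g$ out of the monomial isomorphisms $\barM(\vec h';c)[1/u]\simeq\barM(\vec h;c)[1/u]$ (as in the proof of Proposition~\ref{prop:calculation-on-inertia}(3)) on the sub- and quotient-lines, bearing in mind that these two lines exchange roles across the block. The content of this step is twofold: (i) one checks that, for a suitable choice of the extension parameters $x'_i$ of $\barN$, the resulting $g$ genuinely commutes with $\varphi$ without introducing negative powers of $u$ — this is exactly where the hypothesis $r_i\le p$ and the precise shape $(1,p-1,\dots,p-1,p)$ of the block are used; and (ii) one checks that the $x'_i$ so obtained have the shape required by Definition~\ref{defn:model-of-type-J} (constants, with a degree-$p$ term permitted only in the $\mathcal{P}$-exceptional case with $a=b$, matched against Lemma~\ref{lem:combinatorial-J}). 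Granting this, the theory of \'etale $\varphi$-modules together with Theorem~\ref{thm:kisin-module-results} (cf.\ the proof of Lemma~\ref{lem:isomorphism-criterion}) shows that $T_\fS(g)$ is an isomorphism of $G_\infty$-representations.

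Next one promotes $\barN$ to a $(\varphi,\hat G)$-module of type $(\vec r,a,b,J_{\max})$. One transports the $\hat G$-action of $\hat\barM$ along $g$ after applying $\hR\otimes_{\varphi,\fS}(-)$ and $W(R)\otimes_{\varphi,\fS}(-)$, checks the axioms of Definition~\ref{defn:phi-ghat-module}, and verifies the valuation estimate $v_R(\alpha)\ge\frac{p^2}{p-1}$ of Definition~\ref{defn:model-of-type-J}: the latter is immediate because $g$ is built from powers of $u=[\underline\pi]$ with $v_R(\underline\pi)=1/e=1$ (as $K$ is unramified) and the estimate already holds for $\hat\barM$, coming from Corollary~\ref{cor:valuation of torsion crystalline}. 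Outside the single degenerate possibility $r_i=h_i=p$ for all $i$ — which forces $J_{\max}=J$, so that there is nothing to prove — Lemma~\ref{lem: hat G is determined by phi} then says this $\hat G$-structure is the unique one of the required type, and in particular $g$ is automatically $\hat G$-equivariant, hence a morphism $\hat g\colon\hat\barN\to\hat\barM$ of $(\varphi,\hat G)$-modules. Applying $\hat T$ and using Theorem~\ref{thm: old facts on (phi hatG)}(1), $\hat T(\hat g)$ is an isomorphism on restriction to $G_\infty$; since $\hat g$ is $\hat G$-equivariant this upgrades to an isomorphism of $G_K$-representations by the elementary argument recorded at the end of the proof of Lemma~\ref{lem: reducible rep yield reducible Kisin module}. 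The degenerate case $\vec r=(p-1,\dots,p-1)$, $J=\varnothing$, $J_{\max}=\{0,\dots,f-1\}$ is handled by the same monomial-comparison recipe, with the simplification that all $h_i=r_i$ on the $J_{\max}$ side.

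The step I expect to be the main obstacle is the construction and verification of $g$ in the single-block case: because the sub-line and quotient-line swap roles inside the block $(1,p-1,\dots,p-1,p)$, the comparison map is not diagonal in the natural bases, and one must carefully propagate the monomial factors around the block and check simultaneously that no denominators appear and that the transformed extension parameters still satisfy the degree bounds of Definition~\ref{defn:model-of-type-J}; this is precisely where the bound $r_i\le p$ and the combinatorics of $\mathcal{P}$ (Lemmas~\ref{lem:minus-p-to-p} and~\ref{lem:combinatorial-J}) are indispensable.
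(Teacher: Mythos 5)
Your strategy is essentially the paper's (block-by-block induction, rescaling basis vectors by powers of $u$, an \'etale-$\varphi$-module comparison via Lemma~\ref{lem:isomorphism-criterion}, transport and verification of the $\tau$-action, and a final change of variables using the loop/stub/path analysis of Proposition~\ref{prop:extensions-of-phi-modules}), but there is a concrete gap at the central step. You posit a single morphism of $\varphi$-modules $g\colon\barN\to\barM$ (or $\barM\to\barN$, ``whichever direction introduces no denominators'') that becomes an isomorphism after inverting $u$. In general neither direction works: passing from $J$ to $J'$ across a block $(1,p-1,\ldots,p-1,p)$ rescales the sub-line by $u^{-1}$ and the quotient-line by $u$ at the indices $i,\ldots,i+s-1$, so the two rank-one comparisons want to go in opposite directions, and neither of $\barM$, $\barN$ contains the other inside $\barM[1/u]$. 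Moreover (at least when the sub and quotient characters are distinct, so that any such $g$ is forced to be triangular) the $\varphi$-equivariance bookkeeping kills one of the two diagonal components in either direction: already for $f=2$, $\vec r=(1,p)$, $J=\{0\}$, $J'=\{1\}$, writing $g(e_j)=\mu_j e'_j+\cdots$ and comparing with $\varphi$ forces relations of the shape $\mu_j=u^{m}\varphi^f(\mu_j)\cdot(\mathrm{unit})$ with $m>0$ on one of the two lines, hence $\mu_j=0$. This is exactly why the paper's proof of Proposition~\ref{prop:maximal-model} uses a zig-zag rather than a single map: besides the module $\barMp$ of type $(\vec r,a,b,J')$ (your $\barN$, with $e'_j=u^{-1}e_j$, $f'_j=uf_j$ in the block), it introduces the intermediate module $\barMpp\subset\barM[1/u]$ obtained by rescaling only the quotient-line ($e''_j=e_j$, $f''_j=uf_j$ in the block), which is contained in \emph{both} $\barM$ and $\barMp$; the two inclusions are $\varphi$- and $\tau$-compatible $u$-isogenies, and Lemma~\ref{lem:isomorphism-criterion} applied to each gives $\hat T(\hat\barMp)\simeq\hat T(\hat\barMpp)\simeq\hat T(\hat\barM)$. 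The same comparison-through-a-third-module device already appears in the proof of Proposition~\ref{prop:calculation-on-inertia}(3), and with this fix the rest of your outline goes through as in the paper.

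Two smaller points. First, the verification of the valuation bound is a short explicit computation rather than something immediate: under the rescaling the $\tau$-matrix entries become $\alpha\ue^{-p}$, $\beta u^{2p}\ue^{p}$, $\gamma\ue^{p}$, and the diagonal entries deviate from $1$ by elements of valuation exactly $\tfrac{p^2}{p-1}$ (Lemma~\ref{valuation}(1)), i.e.\ the bound survives only just, while the off-diagonal entry gains the factor $u^{2p}$; one also needs this to see that $\hat G$ still acts trivially modulo $I_+$. Second, once the $\hat G$-action on the new lattices is defined by restriction of the action on $\hR\otimes_{\varphi,\fS}\barM[1/u]$, equivariance of the comparison maps is automatic, so the appeal to Lemma~\ref{lem: hat G is determined by phi} is not needed at that point; what genuinely has to be checked is that the action preserves the new lattices, which is the computation just described, and then that the stray term $x_iue'_i$ can be removed by the loop/stub/path change of variables so that $\barMp$ really is of type $(\vec r,a,b,J')$.
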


\begin{proof}
  If $\vec{r} = (p-1,\ldots,p-1)$ and $J = \varnothing$ then the ambient Kisin module $\barM$ is split, and by
  Corollary~\ref{cor:(phi, hatG) for rank 1} and
  Remark~\ref{rem:type-remarks}(1) 
 the extension $\hat\barM$ is
  also split. So in this case there is nothing to prove.
 If $J = J_{\max}$ (e.g. if $\delta=0$) there is
  again nothing to prove, so we assume for the remainder
  of the proof that $\vec{r} \neq (p-1,\ldots,p-1)$ and $J\neq
  J_{\max}$.   In particular $\delta > 0$ and there exists some $j$
  such that $i_j \in J$ and $i_{j}+1,\ldots,i_j + s_j \not\in J$.  Let
  $J' = J \cup \{i_j+1,\ldots,i_j+s_j\} \setminus \{i_j\}$.  By
  induction on $\#\{j \in [1,\delta] \, : \, i_j \in J\}$, it
  suffices to prove that there exists an extension $\hat\barMp$ of type
  $(\vec{r},a,b,J')$ with $T(\hat\barMp) \simeq T(\hat\barM)$.   For simplicity
  write $i,s$ for $i_j,s_j$.

 Take a basis of $\barM$ with notation as in
 Theorem~\ref{thm:extension-crystalline}, so that in particular we
 have
 \begin{align*}
   \varphi(e_{i-1}) &= (b)_i e_{i} &   \varphi(e_{i+t-1}) & =
  (b)_{i+t} u^{p-1} e_{i+t} & \varphi(e_{i+s-1}) & = (b)_{i+s} u^p e_{i+s}\\
   \varphi(f_{i-1}) &= (a)_i u f_{i} + x_i e_{i} &   \varphi(f_{i+t-1}) & =
   (a)_{i+t} f_{i+t} & \varphi(f_{i+s-1}) & =  (a)_{i+s} f_{i+s}
\end{align*}
with the middle set of equations holding for $1 \le t \le s-1$.

We will now construct two $\varphi$-submodules $\barMp$ and $\barMpp$ of
$\barM[1/u]$, and check that they are the ambient Kisin modules of
$(\varphi,\Ghat)$-modules that satisfy the conclusion of Corollary~\ref{cor:valuation of torsion crystalline}.

Set  $e'_j = e_j$ and $f'_j = f_j$ for all $j$ except $i \le j \le
i+s-1$, and take $e'_j = u^{-1} e_j$ and $f'_j = u f_j$ for $i\le j \le i+s-1$.
Let $\barMp$ be the $\fS \otimes_{\Zp} k_E$-submodule of $\barM[1/u]$ spanned by the $e'_j$'s and
$f'_j$'s.  Then $\barMp$ is a $\varphi$-submodule of $\barM[1/u]$
with
\begin{align*}
   \varphi(e'_{i-1}) &= (b)_i u e'_{i} &   \varphi(e'_{i+t-1}) & =
  (b)_{i+t} e'_{i+t} & \varphi(e'_{i+s-1}) & = (b)_{i+s} e'_{i+s}\\
   \varphi(f'_{i-1}) &= (a)_i  f'_{i} + x_i u e'_{i} &   \varphi(f'_{i+t-1}) & =
   (a)_{i+t} u^{p-1} f'_{i+t} & \varphi(f'_{i+s-1}) & =  (a)_{i+s} u^p f'_{i+s}
\end{align*}
together with defining equations for $\varphi$ on $\overline{\M'_j}$ with $j \not\in
\{i-1,\ldots,i+s-1\}$ that are identical to those of $\varphi$ on
$\barM_j$.

Next set
$e_j'' = e_j$ for all $j$, set $f_j'' = f_j$ for all $j$ except $i \le
j \le i+s-1$, and take $f_j'' = u f_j$ for $i \le j \le i+s-1$.     Let
$\barMpp$ be the $\fS \otimes_{\Zp} k_E$-submodule of $\barM[1/u]$ spanned by the $e''_j$'s and
$f''_j$'s.  Then $\barMpp$ is a $\varphi$-submodule of $\barM[1/u]$ with
 \begin{align*}
   \varphi(e''_{i-1}) &= (b)_i e''_{i} &   \varphi(e''_{i+t-1}) & =
  (b)_{i+t} u^{p-1} e''_{i+t} & \varphi(e''_{i+s-1}) & = (b)_{i+s} u^p e''_{i+s}\\
   \varphi(f''_{i-1}) &= (a)_i  f''_{i} + x_i e''_{i} &   \varphi(f''_{i+t-1}) & =
   (a)_{i+t} u^{p-1} f''_{i+t} & \varphi(f''_{i+s-1}) & =  (a)_{i+s} u^p f''_{i+s}
\end{align*}
and defining equations for $\varphi$ on $\overline{\M''_j}$ with $j \not\in
\{i-1,\ldots,i+s-1\}$ that are identical to those of $\varphi$ on $\barM_j$.

Let us check that the $\Ghat$-action on $\hR \otimes_{\varphi,\fS}
\barM[1/u]$ preserves $\barMp$ and makes it into a
$(\varphi,\Ghat)$-module of type $(\vec{r},a,b,J')$.
Since $H_K$ acts trivially on $u$ and $\barM$, and since $\ue - 1 \in
I_{+}$, the only nontrivial part of of this
claim is that $\barMp$ is preserved by $\tau$.  This is immediate for
the action of $\tau$ on $(e'_j,f'_j)$ if $j \not\in [i,i+s-1]$.  If $i \le j \le
i+s-1$ and
$$\tau(e_j,f_j) = (e_j,f_j)
\begin{pmatrix}
 \alpha_{j+1} & \beta_{j+1} \\ 0 & \gamma_{j+1}
\end{pmatrix}$$
then an easy calculation shows that
$$\tau(e'_j,f'_j) = (e'_j,f'_j)
\begin{pmatrix}
 \alpha_{j+1} \ue^{-p} & \beta_{j+1} u^{2p} \ue^p \\ 0 & \gamma_{j+1} \ue^p
\end{pmatrix}$$
and again the conclusion is clear.
In fact, since $v_R(\beta_{j+1} u^{2p} \ue^p) \ge v_R(\beta_{j+1}) \ge
p^2/(p-1)$, not only do we obtain a $(\varphi,\Ghat)$-module
$\hat\barMp$ with ambient Kisin module $\barMp$, we have also shown that
for all $x \in \barMp$ there exist $\alpha \in R$ and $y \in R
\otimes_{\varphi,\fS} \barMp$ such that $\tau(x)-x = \alpha y$ and
$v_R(\alpha) \ge \frac{p^2}{p-1}$.  The argument for $\hat\barMpp$ is
essentially the same, with the same conclusion.

By construction we have natural inclusions $\hat\barMpp \into
\hat\barM$ and $\hat\barMpp \into \hat\barMp$.  It follows from
Lemma~\ref{lem:isomorphism-criterion} that $\hat T(\hat\barMp) \simeq
\hat T(\hat\barMpp) \simeq
\hat T(\hat\barM)$.

Note that we have not quite finished showing that $\hat\barMp$ is an
extension of type $(\vec{r},a,b,J')$: because of the presence of the
term $x_i u e'_{i}$ in $\varphi(f'_{i-1})$, the presentation for
$\barMp$ that we have given does not have exactly the same shape as in
Theorem~\ref{thm:extension-crystalline}.   To conclude, we must show
that there is a change of variables as in the proof of
Proposition~\ref{prop:extensions-of-phi-modules} that puts $\barMp$
into the correct form.  First replace $f'_{i}$ with $f'_{i} + x_i u
(a)_i^{-1} e'_{i}$, so that now $\varphi(f'_{i-1}) = (a)_i f'_{i}$; this
introduces a term of the form $c u^p e'_{i+1}$ into the formula for
$\varphi(f'_{i})$, with $c \in k_E$.  Noting that $i+1\in J'$, we can now use the terminology of the proof of
Proposition~\ref{prop:extensions-of-phi-modules}: since $p \ge
h_{i+1} \in \{p-1,p\}$, the pair $(i+1,p)$ affects some other pair.
We now distinguish three possibilities.
\begin{itemize}
\item If the pair $(i+1,p)$ is part of a \emph{loop}, then $J'$ must
  be as in the exceptional case of
  Proposition~\ref{prop:extensions-of-phi-modules}; in that case
  $\barMp$ is already written as an extension of type  $(\vec{r},a,b,J')$,
 because the
 term $cu^p e'_{i+1}$ is permitted.

\item If the pair $(i+1,p)$ is part of a \emph{stub}, suppose that the
 last term $(i_m,d_m)$ in the stub affects $(i',d')$ with $d' <
 r_{i'}$.   Then in fact $d' = 0$, because $p \mid d'$
 by~\eqref{eq:dprime}  and $r_{i'} \le p$. It follows that there is a change of
 variables as in the proof of
 Proposition~\ref{prop:extensions-of-phi-modules} that removes the
 term $cu^p e'_{i+1}$ from $\varphi(f'_{i})$, and adds a term of the form
 $x e'_{j}$ into some $\varphi(f'_{j-1})$ with
 $j \in J'$ and $x \in k_E$.  After such a change of variables,
 $\barMp$ is written as an  extension of type  $(\vec{r},a,b,J')$.

\item If the pair $(i+1,p)$ is part of a \emph{path}, then just as in
  the proof of Proposition~\ref{prop:extensions-of-phi-modules} there
  is a change of variables which eliminates the  $c u^p
  e'_{i+1}$  term.   After such a change of variables,
 $\barMp$ is written as an  extension of type  $(\vec{r},a,b,J')$.
\end{itemize}
This completes the proof.
\end{proof}

\section{The reducible case}\label{sec:reducible}We now prove Theorem
\ref{thm: crystalline lift implies explicit crystalline lift} in the
reducible case. Let $K/\Qp$ be a finite unramified extension with
residue field $k$.  As usual, we will identify $\Hom_{\Qp}(K,\Qpbar)$
with $\Hom(k,\Fpbar)$. From Definition \ref{defn: W? niveau 1}, we see
that we need to prove the following result, whose proof will occupy
the remainder of this section.
\begin{thm}
  \label{thm: elimination in the reducible case}  Suppose $p > 2$.  Let
  $\rho\col G_K\to\GL_2(\Zpbar)$ be a continuous representation such that
  $\rhobar\col G_K\to\GL_2(\Fpbar)$ is reducible. Suppose that $\rho$ is
  crystalline with $\hodgetateembedding$-Hodge--Tate weights $\{b_{\hodgetateembedding,1},b_{\hodgetateembedding,2}\}$
   for each $\hodgetateembedding\in\Hom_{\Qp}(K,\Qpbar)$, and
  suppose further that $1\le b_{\hodgetateembedding,1}-b_{\hodgetateembedding,2}\le p$ for each
  $\hodgetateembedding$.

  Then there is a reducible crystalline representation
  $\rho'\col G_K\to\GL_2(\Zpbar)$ with the same  $\hodgetateembedding$-Hodge--Tate weights as $\rho$ for each
  $\hodgetateembedding$, such that $\rhobar\simeq\rhobar'$.
\end{thm}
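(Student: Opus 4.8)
The plan is to reduce, by twisting and by enlarging the coefficient field, to the normal form for $(\varphi,\hat G)$-modules furnished by the previous sections, replace $J$ by $J_{\max}$, and then lift to characteristic zero. \emph{First I would normalise the Hodge--Tate weights.} After enlarging $E$ if necessary, pick a crystalline character $\chi$ of $G_K$ whose $\hodgetateembedding$-labeled Hodge--Tate weight is $b_{\hodgetateembedding,2}$ for every $\hodgetateembedding$ (such a $\chi$ exists by taking a suitable product of the characters $\psi_j$ of Lemma~\ref{lem: lift rank-1 object} and their inverses, twisted by an unramified character). Replacing $\rho$ by $\rho\otimes\chi^{-1}$, I may assume the $\hodgetateembedding$-Hodge--Tate weights of $\rho$ are $\{0,r_{\hodgetateembedding}\}$ with $r_{\hodgetateembedding}\in[1,p]$; a reducible crystalline $\rho'$ with these weights and with $\rhobar'\simeq\rhobar$ yields $\rho'\otimes\chi$ in general, since twisting a reducible (resp.\ crystalline) representation by a character (resp.\ crystalline character) preserves both properties and does not change the reduction. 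Enlarging $E$ further, I may also assume $\rho$ is defined over $E$ and that $\overline T:=T/\m_ET$ is a reducible $k_E[G_K]$-module, where $T\subset V$ is a fixed $G_K$-stable $\O_E$-lattice with $\rhobar\simeq\overline T\otimes_{k_E}\Fpbar$.

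\emph{Next I would put the associated modules in standard form.} Let $\fM$ be the Kisin module with $\O_E$-coefficients attached to $T$ and $\barM=\fM\otimes_{\O_E}k_E$. By Theorem~\ref{thm:extension-crystalline} there are $a,b\in k_E^{\times}$ and $J\subset\{0,\dots,f-1\}$ so that $\barM$ is an extension of rank one $\varphi$-modules of type $(\vec{r},a,b,J)$ in the sense of Definition~\ref{defn:model-of-type-J}, and by Remark~\ref{rem:type-remarks}(2) the $(\varphi,\hat G)$-module $\hat\barM$ attached to $\overline T$ is of type $(\vec{r},a,b,J)$. If $\barM$ (equivalently $\overline T$, using Corollary~\ref{cor:form of the characters in reducible crystalline reduction, nothing about extensions} and the fact that a mod $p$ character of $G_K$ trivial on $G_{\infty}$ is trivial) is split, then $\rho':=\chi_1\oplus\chi_2$ works, where $\chi_1,\chi_2$ are the crystalline characters of Lemma~\ref{lem: lift rank-1 object} whose Kisin modules are the two rank one pieces; here Proposition~\ref{prop:calculation-on-inertia} and Lemma~\ref{lem: lift rank-1 object} show $\overline{\chi_1}\oplus\overline{\chi_2}\simeq\overline T$ and that the $\hodgetateembedding$-weights come out as $\{0,r_{\hodgetateembedding}\}$. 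So I may assume $\overline T$ indecomposable. Now apply Proposition~\ref{prop:maximal-model} to obtain a $(\varphi,\hat G)$-module $\hat\barN$ of type $(\vec{r},a,b,J_{\max})$ with $\hat T(\hat\barN)\simeq\hat T(\hat\barM)$, hence defining the same representation $\rhobar$ by Theorem~\ref{thm: old facts on (phi hatG)}.

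\emph{The final and main step is to lift $\hat\barN$ to characteristic zero.} I would construct an explicit finite free $(\varphi,\hat G)$-module $\hat\frakL$ with natural $\O_E$-action (possibly after one more extension of $E$) sitting in a short exact sequence $0\to\hat\frakL_2\to\hat\frakL\to\hat\frakL_1\to0$, with $\hat\frakL_1=\hat\M(h_0,\dots,h_{f-1};\hat a)$ and $\hat\frakL_2=\hat\M(r_0-h_0,\dots,r_{f-1}-h_{f-1};\hat b)$ the rank one $(\varphi,\hat G)$-modules of Lemma~\ref{lem: lift rank-1 object}, where $\hat a,\hat b\in\O_E$ lift $a,b$ and $h_i=r_i$ for $i\in J_{\max}$, $h_i=0$ otherwise. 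Concretely, in a basis $\e_i,\f_i$ of the ambient Kisin module $\frakL$ one sets $\varphi(\e_{i-1})=(\hat b)_i(u-\pi)^{r_i-h_i}\e_i$ and $\varphi(\f_{i-1})=(\hat a)_i(u-\pi)^{h_i}\f_i+\hat x_i\e_i$, with $\hat x_i\in\O_E$ any lift of the extension parameter $x_i$ of $\hat\barN$ (and the single degree $p$ term in the exceptional case $\vec{r}\in\mathcal{P}$, $J_{\max}=\{i:r_i=p-1,p\}$, $a=b$ lifted likewise), and then solves $\varphi$-semilinearly for a compatible $\tau$-action $\tau(\f_i)=\gamma_{i+1}\f_i+\beta_{i+1}\e_i$ on $\hR\otimes_{\varphi,\fS}\frakL$ exactly as in the proof of Lemma~\ref{lem: hat G is determined by phi}, the recursion for $\beta$ being solvable since the relevant $u$-exponents are positive; the choice $J=J_{\max}$ is precisely what ensures this $\tau$-action preserves $\frakL$ and satisfies the $(\varphi,\hat G)$ axioms, by the same kind of case analysis (loops/stubs/paths) as in the proof of Proposition~\ref{prop:maximal-model}. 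One checks the solution obeys $\tau(x)-x\in u^p\varphi(\gt)(W(R)\otimes_{\varphi,\fS}\frakL)$, so that by the crystallinity criterion behind Proposition~\ref{prop:should be a corollary} and Corollary~\ref{cor:tau-corollary} (equivalently, vanishing of the monodromy operator on the associated filtered module) the representation $\rho':=\hat T(\hat\frakL)$ is crystalline; it lies in a semi-stable representation with Hodge--Tate weights in $\{0,\dots,p\}$ by Theorem~\ref{thm: old facts on (phi hatG)}(2), it is reducible by exactness of $\hat T$ (Theorem~\ref{thm: old facts on (phi hatG)}(3)), its $\hodgetateembedding$-weights are $\{0,r_{\hodgetateembedding}\}$ by Lemma~\ref{lem: lift rank-1 object}, and $\hat\frakL/\m_E\hat\frakL\simeq\hat\barN$ by construction, so Proposition~\ref{prop:reduction-of-Kisin-modules} gives $\rhobar'\simeq\hat T(\hat\barN)\simeq\rhobar$. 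Untwisting by $\chi$ completes the proof.

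\emph{On the main obstacle:} granting Proposition~\ref{prop:maximal-model} (the genuine technical heart, already established in the excerpt), the remaining difficulty is entirely in this last step --- producing the characteristic zero lift $\hat\frakL$ together with a $\tau$-action, and verifying that the resulting representation is \emph{crystalline} rather than merely semi-stable, including the careful treatment of the exceptional case in which an extension parameter carries a degree $p$ term. The verification that the solved-for $\tau$-action has the required valuation is exactly where one again uses $r_{\hodgetateembedding}\le p$ and the combinatorics of $J_{\max}$, and it is the part of the argument I would expect to require the most work.
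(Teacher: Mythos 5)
Your reductions (twisting to weights $\{0,r_{\hodgetateembedding}\}$, putting the reduction in the form of Theorem~\ref{thm:extension-crystalline}, and passing to type $(\vec{r},a,b,J_{\max})$ via Proposition~\ref{prop:maximal-model}) match the paper, but your final step --- directly lifting the torsion $(\varphi,\hat G)$-module $\hat\barN$ to a finite free $(\varphi,\hat G)$-module $\hat\frakL$ over $\O_E$ and declaring $\hat T(\hat\frakL)$ crystalline --- is not justified by anything in the paper, and this is precisely the step the paper is engineered to avoid. Two concrete problems. First, crystallinity: Theorem~\ref{thm: old facts on (phi hatG)}(2) only tells you that a finite free $(\varphi,\hat G)$-module of height $\le p$ gives a lattice in a \emph{semi-stable} representation, and Proposition~\ref{prop:should be a corollary} / Corollary~\ref{cor:tau-corollary} are \emph{necessary} conditions satisfied by crystalline representations, not sufficient criteria; no converse ("if $\tau(x)-x\in u^p\varphi(\gt)W(R)\otimes\M$ then $V$ is crystalline") is stated or proved anywhere in the paper, so your appeal to "the crystallinity criterion behind" these results has no support. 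Second, even granting the existence of a $\tau$-action on your $\hat\frakL$ (the convergence of the recursion, preservation of $\hR\otimes_{\varphi,\fS}\frakL$, triviality mod $I_+$, etc.\ are asserted, not proved), you still need $\hat\frakL/\m_E\hat\frakL\simeq\hat\barN$ \emph{as $(\varphi,\hat G)$-modules}, not merely that the ambient Kisin modules agree; the natural way to get this is the uniqueness statement of Lemma~\ref{lem: hat G is determined by phi}, which fails exactly when $h_i=r_i=p$ for all $i$, i.e.\ when $(\psibar_1\psibar_2^{-1})|_{I_K}=\varepsilonbar$ and all $b_{\hodgetateembedding,1}-b_{\hodgetateembedding,2}=p$. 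Your proposal silently drops this case, whereas the paper has to treat it by a completely separate Galois-cohomology argument (Lemma~\ref{lem: we can always lift in the cyclotomic parallel weight p case}, with the peu/tr\`es ramifi\'e analysis and unramified twisting).

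For comparison, the paper never lifts $\hat\barN$: it counts. Using Theorem~\ref{thm:extension-crystalline}, Proposition~\ref{prop:maximal-model} and the uniqueness of the $\tau$-action, it shows that at most $(\#k_E)^{|J_{\max}|}$ classes in $H^1(G_K,\psibar_1\psibar_2^{-1})$ (one more power of $\#k_E$ if $\psibar_1=\psibar_2$) can arise as reductions of crystalline representations with the given Hodge--Tate weights, and then Lemma~\ref{lem: dimension of H^1_f spaces} shows that reductions of \emph{reducible} crystalline representations $\begin{pmatrix}\psi_1&*\\0&\psi_2\end{pmatrix}$ already account for exactly that many classes; pigeonhole finishes the argument. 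In particular, the assertion implicit in your construction --- that \emph{every} tuple of extension parameters of type $(\vec{r},a,b,J_{\max})$ lifts to a crystalline object --- is never proved in the paper and would require new input (e.g.\ a sufficiency criterion for crystallinity in terms of the $\tau$-action) beyond what is available here. As it stands your proof has a genuine gap at the lifting/crystallinity step and omits the cyclotomic parallel-weight-$p$ case.
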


  Write $\rhobar\simeq
  \begin{pmatrix}
    \psibar_1&*\\0&\psibar_2
  \end{pmatrix}.$ Note that by Corollary \ref{cor:form of the characters in reducible crystalline
    reduction, nothing about extensions} there is a decomposition $\Hom_{\Qp}(K,\Qpbar)=J\coprod
  J^c$ such that $\psibar_1|_{I_K}= \prod_{ \hodgetateembedding\in
    J}\omega_{\hodgetateembeddingbar}^{b_{\hodgetateembedding,1}}\prod_{\hodgetateembedding\in
    J^c}\omega_{\hodgetateembeddingbar}^{b_{\hodgetateembedding,2}}$ and $\psibar_2= \prod_{\hodgetateembedding\in
    J^c}\omega_{\hodgetateembeddingbar}^{b_{\hodgetateembedding,1}}\prod_{\hodgetateembedding\in
    J}\omega_{\hodgetateembeddingbar}^{b_{\hodgetateembedding,2}}$. In
  fact there may be several such $J$; temporarily fix one choice.

  Let $\psi_1$, $\psi_2 \col G_K\to\Zpbartimes$ be crystalline lifts of $\psibar_1$,
  $\psibar_2$ respectively with the properties that
  $\HT_\hodgetateembedding(\psi_1)=b_{\hodgetateembedding,1} $ if $\hodgetateembedding\in J$ and $b_{\hodgetateembedding,2}$
  otherwise, and $\HT_\hodgetateembedding(\psi_2)=b_{\hodgetateembedding,2}$ if $\hodgetateembedding\in J$ and
  $b_{\hodgetateembedding,1}$ otherwise. (The characters $\psi_1$ and
  $\psi_2$ exist by Corollary~\ref{cor:(phi, hatG) for rank 1} and Proposition~\ref{prop:calculation-on-inertia},
  and are easily seen to be unique up to an unramified twist.)

 We
naturally identify $\Ext_{G_K}(\psibar_2,\psibar_1)$ with
$H^1(G_K,\psibar_1 \psibar_2^{-1})$ from now on.
\begin{defn}
  Let $L_{\psi_1,\psi_2}$ be the subset of
  $H^1(G_K,\psibar_1\psibar_2^{-1})$ consisting of all elements such that the corresponding
  representation has a crystalline lift of the form \[
  \begin{pmatrix}
    \psi_1&*\\0&\psi_2
  \end{pmatrix}.\]
\end{defn}
We have the following variant of \cite[Lem.~4.2.2]{GLS11} (which is
in turn a variant of \cite[Lem.~3.12]{bdj}).
\begin{lem}
  \label{lem: dimension of H^1_f spaces} $L_{\psi_1,\psi_2}$ is an
  $\Fpbar$-vector subspace of $ H^1(G_K,\psibar_1\psibar_2^{-1})$ of
  dimension $|J|$, unless $\psibar_1=\psibar_2$, in which case it has
  dimension $|J|+1$.
\end{lem}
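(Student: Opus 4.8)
The plan is to realise $L_{\psi_1,\psi_2}$ as the reduction modulo $\varpi$ of an integral Bloch--Kato subgroup inside an $H^1$, and then to compute its dimension as an $\O_E$-rank plus a torsion contribution; the torsion contribution is precisely what produces the exceptional $+1$ when $\psibar_1=\psibar_2$. First I would enlarge $E$ so that $\psi_1,\psi_2$ take values in $\O_E^\times$, set $W=\psi_1\psi_2^{-1}$, let $T$ be the rank one $\O_E$-lattice realising $W$, put $V=T[1/p]$ and $\overline T=T/\varpi T$ (a one-dimensional $k_E$-space realising $\psibar_1\psibar_2^{-1}$). Extensions of $\psi_2$ by $\psi_1$ as $\O_E[G_K]$-modules are classified by $H^1(G_K,T)$ (via $\Ext^1\simeq H^1(G_K,W)$), and such an extension is crystalline if and only if the corresponding class maps into $H^1_f(G_K,V)$ under $H^1(G_K,T)\to H^1(G_K,V)=H^1(G_K,T)\otimes_{\O_E}E$. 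Passing to reductions mod $\varpi$ therefore identifies $L_{\psi_1,\psi_2}$ with the image of $\Lambda:=\{\,c\in H^1(G_K,T): \text{the image of }c\text{ in }H^1(G_K,V)\text{ lies in }H^1_f(G_K,V)\,\}$ under the reduction $H^1(G_K,T)\to H^1(G_K,\overline T)$ (after a harmless enlargement of coefficients, addressed at the end).

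Since $H^1_f(G_K,V)$ is an $E$-subspace, $\Lambda$ is a saturated $\O_E$-submodule of the finitely generated $\O_E$-module $H^1(G_K,T)$. Hence, using that the kernel of $H^1(G_K,T)\to H^1(G_K,\overline T)$ equals $\varpi H^1(G_K,T)$ (exactness of the long exact sequence for $0\to T\xrightarrow{\varpi}T\to\overline T\to 0$), we get $\Lambda\cap\varpi H^1(G_K,T)=\varpi\Lambda$, so the reduction map induces an isomorphism $\Lambda/\varpi\Lambda\xrightarrow{\ \sim\ }L_{\psi_1,\psi_2}$. In particular $L_{\psi_1,\psi_2}$ is a subspace and $\dim_{k_E}L_{\psi_1,\psi_2}=\rank_{\O_E}\Lambda+\dim_{k_E}\Lambda[\varpi]$. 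For the rank I would use that $\Lambda[1/p]=H^1_f(G_K,V)$ and invoke the Bloch--Kato dimension formula $\dim_E H^1_f(G_K,V)=\dim_E H^0(G_K,V)+\dim_E\bigl(D_{\dR}(V)/\Fil^0 D_{\dR}(V)\bigr)$: here $V$ is a de Rham character whose $\hodgetateembedding$-Hodge--Tate weight is $b_{\hodgetateembedding,1}-b_{\hodgetateembedding,2}\ge 1$ for $\hodgetateembedding\in J$ and $b_{\hodgetateembedding,2}-b_{\hodgetateembedding,1}\le -1$ for $\hodgetateembedding\notin J$, so the $\hodgetateembedding$-part of $D_{\dR}(V)$ fails to lie in $\Fil^0$ exactly for $\hodgetateembedding\in J$, giving $\dim_E D_{\dR}(V)/\Fil^0=|J|$; and $H^0(G_K,V)=0$ since $V$ has a nonzero Hodge--Tate weight and is thus a nontrivial character. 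Hence $\rank_{\O_E}\Lambda=|J|$.

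For the torsion term, the same long exact sequence together with $H^0(G_K,T)=0$ yields $H^1(G_K,T)[\varpi]\cong H^0(G_K,\overline T)$; as $\Lambda$ contains the torsion submodule of $H^1(G_K,T)$, this gives $\Lambda[\varpi]=H^0(G_K,\overline T)=(\psibar_1\psibar_2^{-1})^{G_K}$, of dimension $1$ if $\psibar_1=\psibar_2$ and $0$ otherwise. Combining, $\dim_{k_E}L_{\psi_1,\psi_2}$ is $|J|$ in general and $|J|+1$ when $\psibar_1=\psibar_2$. Finally, to obtain the $\Fpbar$-statement: if $\overline c\in H^1(G_K,\psibar_1\psibar_2^{-1})$ admits a crystalline lift $\begin{pmatrix}\psi_1&*\\0&\psi_2\end{pmatrix}$, that lift is defined over some finite $E'\supseteq E$, and the entire construction above base-changes flatly along $E\hookrightarrow E'$, so $L_{\psi_1,\psi_2}=L_E\otimes_{k_E}\Fpbar$ for the $k_E$-subspace $L_E$ just analysed; this gives both the $\Fpbar$-subspace claim and the dimension. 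I expect no serious obstacle, only two points needing care: first, the bookkeeping that ``$\overline c$ admits a crystalline lift of the prescribed shape'' is equivalent to ``$\overline c$ lies in the image of $\Lambda$'' — in particular that reducing lattices mod $\varpi$ neither adds nor loses such classes, and that one may work over a fixed $E$; and second, the appeal to the Bloch--Kato formula in a Hodge--Tate range that can be as large as $[1,p]$, which is in fact harmless since that formula holds for arbitrary de Rham $V$, so the hypothesis $r\le p$ plays no role in this particular lemma.
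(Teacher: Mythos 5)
Your proof is correct and takes essentially the same route as the paper's: both identify $L_{\psi_1,\psi_2}$ with the image under reduction of the integral Bloch--Kato group (the preimage $\Lambda$ of $H^1_f(G_K,V)$ in $H^1(G_K,T)$), get rank $|J|$ from the Bloch--Kato/Nekov\'a\v{r} dimension formula using $b_{\hodgetateembedding,1}>b_{\hodgetateembedding,2}$ and $\psi_1\psi_2^{-1}\neq 1$, and see that the torsion of $H^1(G_K,T)$, identified with $H^0(G_K,\overline{T})$ via the long exact sequence, contributes the extra dimension exactly when $\psibar_1=\psibar_2$. The only difference is bookkeeping: you fix a finite coefficient field $E$ and base-change to $\Fpbar$ at the end, whereas the paper argues directly with $\Zpbar$-coefficients.
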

\begin{proof} Let $\psi=\psi_1\psi_2^{-1}$.
  Recall that
  $H^1_f(G_K,\Zpbar(\psi))$ is by definition the preimage of
  $H^1_f(G_K,\Qpbar(\psi))$ under the natural map
  $\eta \col H^1(G_K,\Zpbar(\psi))\to H^1(G_K,\Qpbar(\psi))$, so that
  $L_{\psi_1,\psi_2}$ is the image of $H^1_f(G_K,\Zpbar(\psi))$ in
  $H^1(G_K,\psibar)$.  The kernel of $\eta$ is precisely the torsion
  part of $H^1(G_K,\Zpbar(\psi))$.  Since $\psi\neq 1$,
  e.g. by examining Hodge--Tate weights, this torsion is non-zero  if and only if
  $\psibar=1$, in which case it has the form $\lambda^{-1} \Zpbar/\Zpbar$
  for some  $\lambda \in \mf{m}_{\Zpbar}$.  (To see this, note that if
  $\psi \neq 1$ is defined over $E$, then
  the long exact sequence associated to $0 \to
  \cO_E(\psi)  \stackrel{\varpi}{\to} \cO_E(\psi) \to k_E(\psibar) \to 0$ identifies
  $k_E(\psibar)^{G_K}$ with the $\varpi$-torsion in $\ker(\eta)$.)

  By \cite[Prop.~1.24(2)]{nekovar} and the assumption that
  $b_{\hodgetateembedding,1}>b_{\hodgetateembedding,2}$ for each $\hodgetateembedding$, we see that $\dim_{\Qpbar}
  H^1_f(G_K,\Qpbar(\psi))=|J|$, again using $\psi \neq 1$. Since
  $H^1(G_K,\Zpbar(\psi))$ is a finitely generated $\Zpbar$-module, the
  result follows.
  \end{proof}

The following Lemma is a slight variant of
\cite[Lem.~6.1.6]{blggord} and \cite[Prop.~5.2.9]{GLS11}, and has an
almost identical proof.
\begin{lem}
  \label{lem: we can always lift in the cyclotomic parallel weight p
    case}Suppose that for each $\hodgetateembedding$ we have
  $b_{\hodgetateembedding,1}-b_{\hodgetateembedding,2}=p$ and that
  $(\psibar_1\psibar_2^{-1})|_{I_K}=\varepsilonbar$. Then $\rhobar$ has
  a reducible crystalline lift $\rho'$ with
  $\HT_\hodgetateembedding(\rho')=\{b_{\hodgetateembedding,1},b_{\hodgetateembedding,2}\}$
  for each $\hodgetateembedding$.
\end{lem}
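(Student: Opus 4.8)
The plan is to follow the proofs of \cite[Lem.~6.1.6]{blggord} and \cite[Prop.~5.2.9]{GLS11} almost verbatim. First I would twist: replacing $\rho$, $\rhobar$, $\psi_1$ and $\psi_2$ by their twists by a single fixed crystalline character of $G_K$ changes neither the hypotheses (parallel weight $p$, and $(\psibar_1\psibar_2^{-1})|_{I_K}=\varepsilonbar$) nor the conclusion, so I may pass to a convenient normalization. The key structural observation, valid because $p>2$, is that $(\psibar_1\psibar_2^{-1})|_{I_K}=\varepsilonbar$ together with the description of $\psibar_1|_{I_K}$ from Corollary~\ref{cor:form of the characters in reducible crystalline reduction, nothing about extensions} forces (essentially) $J=\Hom_{\Qp}(K,\Qpbar)$, and hence --- using Corollary~\ref{cor:(phi, hatG) for rank 1} and Proposition~\ref{prop:calculation-on-inertia} to produce the lifts $\psi_1,\psi_2$ of $\psibar_1,\psibar_2$ --- that $\chi:=\psi_1\psi_2^{-1}$ may be taken to have \emph{every} $\hodgetateembedding$-labeled Hodge--Tate weight equal to $p$.

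Granting this, I would run the dimension count exactly as in Lemma~\ref{lem: dimension of H^1_f spaces}. All labeled Hodge--Tate weights of $\chi$ equal $p\ge 1$, and $\chi\ne 1,\varepsilon$, so $H^0(G_K,\Qpbar(\chi))=H^2(G_K,\Qpbar(\chi))=0$; the local Euler characteristic formula and \cite[Prop.~1.24(2)]{nekovar} then give
\[
\dim_{\Qpbar}H^1_f(G_K,\Qpbar(\chi))=\dim_{\Qpbar}H^1(G_K,\Qpbar(\chi))=[K:\Qp],
\]
i.e.\ in this boundary situation every extension class is automatically crystalline. Hence $L_{\psi_1,\psi_2}$ is precisely the image of the reduction map $H^1(G_K,\Zpbar(\chi))\to H^1(G_K,\psibar_1\psibar_2^{-1})$, whose cokernel is $H^2(G_K,\Zpbar(\chi))[\varpi]$. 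If $\psibar_1\psibar_2^{-1}\ne\varepsilonbar$ as a character of $G_K$ (the unramified part is nontrivial) then $H^2(G_K,\Zpbar(\chi))=0$, so $L_{\psi_1,\psi_2}=H^1(G_K,\psibar_1\psibar_2^{-1})$; in particular the extension class cutting out $\rhobar$ lies in $L_{\psi_1,\psi_2}$, producing the required reducible crystalline $\rho'$.

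The main obstacle is the remaining case $\psibar_1\psibar_2^{-1}=\varepsilonbar$ on all of $G_K$ (the split case being trivial, assume $\rhobar$ non-split). Now $H^2(G_K,\Zpbar(\varepsilon^p))[\varpi]$ is one-dimensional, so $\dim H^1(G_K,\Fpbar(\varepsilonbar))=[K:\Qp]+1$ exceeds $\dim L_{\psi_1,\psi_2}$ by one for any single choice. To capture the missing direction I would vary the unramified twist of $\psi_1$ (equivalently let $\chi$ run over $\varepsilon^p\mu$ with $\mu$ unramified and $\mu\equiv 1\pmod p$, which leaves $\psibar_1\psibar_2^{-1}$ unchanged), and check --- just as in \cite[Lem.~6.1.6]{blggord} and \cite[Prop.~5.2.9]{GLS11} --- that the resulting codimension-one subspaces all contain the subspace of classes admitting a crystalline lift of Hodge type $\{0,1\}$, while their remaining directions sweep out everything, so that the union of the corresponding spaces $L$ is all of $H^1(G_K,\Fpbar(\varepsilonbar))$ and again catches the class of $\rhobar$. (This is also precisely the $(\varphi,\Ghat)$-module situation $r_i=h_i=p$ for all $i$ that is excluded in Lemma~\ref{lem: hat G is determined by phi}, which is why some extra care is needed here.) In every case one obtains a reducible crystalline $\rho'$ lifting $\rhobar$ with $\HT_\hodgetateembedding(\rho')=\{b_{\hodgetateembedding,1},b_{\hodgetateembedding,2}\}$ for all $\hodgetateembedding$.
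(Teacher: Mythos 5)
Your handling of the case where $\psibar_1\psibar_2^{-1}$ agrees with $\varepsilonbar$ only on inertia is correct and essentially the paper's own argument: the reduction of $\chi=\psi_1\psi_2^{-1}$ is then neither trivial nor cyclotomic, so the relevant $H^0$ and $H^2$ vanish, every characteristic-zero extension class is crystalline, and $L_{\psi_1,\psi_2}$ is all of $H^1(G_K,\psibar_1\psibar_2^{-1})$. The gap is in the only genuinely hard case, $\psibar_1\psibar_2^{-1}=\varepsilonbar$ on all of $G_K$, where your proposal replaces the needed argument by an assertion that is false. You claim that, as $\mu$ varies over unramified characters with trivial reduction, the subspaces $L_{\psi_1\mu,\psi_2}$ \emph{all contain} the subspace of classes admitting a crystalline lift of Hodge--Tate weights $\{0,1\}$, with their ``remaining directions'' sweeping out everything. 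Already for $K=\Qp$ this cannot happen: each $L_{\psi_1\mu,\psi_2}$ is a line in the two-dimensional space $H^1(G_{\Qp},\Fpbar(\varepsilonbar))$ (Lemma~\ref{lem: dimension of H^1_f spaces}, since $\psibar_1\ne\psibar_2$ as $p>2$), while the peu ramifi\'ee classes --- which do admit crystalline lifts of weight $\{0,1\}$, e.g.\ by Fontaine--Laffaille theory --- form a nonzero line; if every $L_{\psi_1\mu,\psi_2}$ contained that line they would all \emph{equal} it, their union would be a single line, and no tr\`es ramifi\'ee class (in particular a tr\`es ramifi\'ee $\rhobar$) could ever be caught. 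So the mechanism you describe cannot yield the covering statement you need, and the phrase ``check just as in \cite{blggord}, \cite{GLS11}'' is carrying all of the actual content.

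What is required (and what the paper does) is an obstruction computation for each fixed class. After twisting to weights $(p,0)$, every extension of $1$ by $\chi\varepsilon^p$ is automatically crystalline, so it suffices to choose the unramified twist $\chi$ with trivial reduction so that the given line $L\subset H^1(G_K,\F(\varepsilonbar))$ lifts integrally, i.e.\ so that $\delta_1(L)=0$ in $H^2(G_K,\cO(\chi\varepsilon^p))$. By Tate duality this is the condition that the line spanned by the cocycle $\overline{\alpha}_\chi$, defined by $\chi^{-1}\varepsilon^{1-p}=1+\varpi^n\alpha_\chi$, lie in the annihilator hyperplane of $L$; arranging this is where the real work lies (an iterative adjustment of $\chi$ to force $\overline{\alpha}_\chi$ to be ramified followed by a unique unramified correction when $L$ is tr\`es ramifi\'ee, and a ramified enlargement of the coefficients together with an unramified twist when $L$ is peu ramifi\'ee). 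None of this appears in, or follows from, your sketch, so the crucial case of the lemma is not proved as written.
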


\begin{proof} Suppose firstly that $\psibar_1\ne\psibar_2\varepsilonbar$. By
  assumption, we can take $J=S$ in the above. Then for any choice of
  $\psi_1$, $\psi_2$, we have
  $L_{\psi_1,\psi_2}=H^1(G_K,\psibar_1\psibar_2^{-1})$ by Lemma
  \ref{lem: dimension of H^1_f spaces} and the local Euler
  characteristic formula, completing the proof in this case.

  Assume now that $\psibar_1\psibar_2^{-1}=\varepsilonbar$. By
  twisting we can reduce to the case
  $(b_{\hodgetateembedding,1},b_{\hodgetateembedding,2})=(p,0)$ for
  each $\hodgetateembedding$.  Let $L$ be a given line in
  $H^1(G_K,\varepsilonbar)$, and choose an unramified character $\chi$
  with trivial reduction.  Let $E/\Qp$ be a finite extension with ring
  of integers $\cO$, uniformiser $\varpi$ and residue field $\F$, such
  that $\chi$ is defined over $E$ and $L$ is defined over $\F$ (that
  is, there is a basis for $L$ which corresponds to an extension
  defined over $\F$). Since any extension of $1$ by $\chi\varepsilon^p$
  is automatically crystalline, it suffices to show that we can choose
  $\chi$ so that $L$ lifts to $H^1(G_K,\cO(\chi\varepsilon^p))$.

Let $H$ be the
  hyperplane in $H^1(G_K,\F)$ which annihilates $L$ under the Tate
  pairing. Let $\delta_1 \col H^1(G_K,\F(\overline{\varepsilon})) \to
  H^2(G_K,\mc{O}(\chi\varepsilon^p))$ be the map coming from
  the exact sequence $0\to \mc{O}(\chi\varepsilon^p)\stackrel{\varpi}{\to}\mc
  O(\chi\varepsilon^p)\to \F(\overline{\varepsilon})\to 0$ of
  $G_K$-modules. We need to show that $\delta_1(L)=0$ for some choice
  of $\chi$.

  Let $\delta_0$ be the map
  $H^0(G_K,(E/\mc{O})(\chi^{-1}\varepsilon^{1-p})) \to
  H^{1}(G_K,\F)$ coming from the exact sequence $0 \to \F \to
  (E/\mc{O})(\chi^{-1}\varepsilon^{1-p}) \stackrel{\varpi}{\to}
  (E/\mc{O})(\chi^{-1}\varepsilon^{1-p}) \to 0$ of $G_K$-modules.  By
  Tate local duality, the condition that $L$ vanish under the map
  $\delta_1$ is equivalent to the condition that the image of the map
  $\delta_0$ be contained in $H$.  Let $n \geq 1$ be the largest
  integer with the property that $\chi^{-1}\varepsilon^{1-p} \equiv 1
  \pmod{\varpi^n}$. Then we can write $\chi^{-1}\varepsilon^{1-p}(x)=
  1+\varpi^n \alpha_\chi(x)$ for some function $\alpha_\chi \col G_K \to
  \mc{O}$. Let $\overline{\alpha}_\chi \col G_K
  \to \F$ denote $\alpha_\chi \pmod{\varpi}$. Then $\overline{\alpha}_\chi$ is a group homomorphism
  (i.e. a $1$-cocycle), and the choice of
  $n$ ensures that it is non-trivial. It is straightforward to check
  that the image of the map $\delta_0$ is the line spanned by
  $\overline{\alpha}_\chi$. If $\overline{\alpha}_\chi$ is in $H$ for some $\chi$, we are
  done. Suppose this is not the case. We break the rest of the proof
  into two cases.

  \medskip{\sl Case 1: $L$ is
    tr\`es ramifi\'e:}  To begin, we observe that it is
  possible to have chosen
  $\chi$ so that
  $\overline{\alpha}_\chi$ is ramified. 
To see this, let $m$ be the largest integer with the property that
$(\chi^{-1} \varepsilon^{1-p})|_{I_K} \equiv 1 \pmod{\varpi^m}$.   Note that $m$ exists since the
Hodge--Tate weights of $\chi^{-1}\varepsilon^{1-p}$ are not all $0$.
If $m = n$ then we are done, so assume instead that $m >n$. Let $g\in
G_K$ be a fixed lift of $\Frob_K$. We claim that
$\chi^{-1}\varepsilon^{1-p}(g)= 1 +\varpi^{n} \alpha_\chi(g)$ such that
$\alpha_\chi (g) \not \equiv 0 \pmod{\varpi}$. In fact, if $\alpha_\chi
(g)\equiv 0 \pmod{\varpi}$ then $\chi^{-1}\varepsilon^{1-p}(g) \in  1
+ \varpi^{n+1} \mc{O}_K$. Since $m > n$ we see that
$\chi^{-1}\varepsilon^{1-p}(G_K) \subset 1 + \varpi^{n+1} \mc{O}_K$
and this contradicts the selection of $n$. Now let $\chi'$ be the
unramified character sending our fixed $g$ to $1+\varpi^n \alpha_\chi(g)$.
Then $\chi'$ has trivial reduction, and after replacing $\chi$ by $\chi \chi'$ we
see that $n$ has increased but $m$ has not changed.  After finitely
many iterations of this procedure we have  $m=n$, completing the
claim.

Suppose, then, that $\overline{\alpha}_\chi$ is ramified.    The fact that $L$ is tr\`es
  ramifi\'e implies that $H$ does not contain the unramified line in
  $H^1(G_K,\F)$. Thus there is a unique $\overline{x} \in
  \F^\times$ such that $\overline{\alpha}_\chi+u_{\overline{x}} \in H$
  where $u_{\overline{x}} \col G_K\to \F$ is the unramified
  homomorphism sending $\Frob_K$ to $\overline{x}$. Replacing $\chi$ with $\chi$ times
  the unramified character sending $\Frob_K$ to $(1+\varpi^n x)^{-1}$,
  for $x$ a lift of $\overline{x}$, we are done.

  \medskip{\sl Case 2: $L$ is peu ramifi\'e:} Making a ramified
  extension of $\mc{O}$ if necessary, we can and do assume that $n\geq
  2$ (for example, replacing $E$ by $E(\varpi^{1/2})$ has the effect
  of replacing $n$ by $2n$). The fact that $L$ is peu ramifi\'e implies that $H$ contains the
  unramified line. It follows that if we replace $\chi$ with $\chi$
  times the unramified character sending $\Frob_K$ to $1+\varpi$, then
  we are done (as the new $\overline{\alpha}_\chi$ will be unramified).
\end{proof}

\begin{proof}[Proof of Theorem \ref{thm: elimination in the reducible
    case}.] We maintain the notation established above, so that
in
particular  we have $\rhobar\simeq
  \begin{pmatrix}
    \psibar_1&*\\0&\psibar_2
  \end{pmatrix}.$ If $(\psibar_1\psibar_2^{-1})|_{I_K}=\varepsilonbar$
  and $b_{\hodgetateembedding,1}-b_{\hodgetateembedding,2}=p$ for all $\hodgetateembedding$ then the result
  follows from Lemma \ref{lem: we can always lift in the cyclotomic
    parallel weight p case}, so assume from now on that either
  $(\psibar_1\psibar_2^{-1})|_{I_K}\ne\varepsilonbar$ or
  $b_{\hodgetateembedding,1}-b_{\hodgetateembedding,2}\ne p$ for some $\hodgetateembedding$. Twisting, we can
  and do assume in addition that $b_{\hodgetateembedding,2}=0$ for each
  $\hodgetateembedding$. Write
  $r_\hodgetateembedding:=b_{\hodgetateembedding,1}$ for each
  $\hodgetateembedding$.

Choose a finite extension $E/\Qp$ which is sufficiently large.  In
particular, choose $E$ such that: $\rho$ is defined over
$\cO_E$;  and for each tuple of integers $\{s_\hodgetateembedding\}$ in the
  range $[0,p]$ such that if $\overline{\psi}_i$ ($i=1,2$) has a
  crystalline lift
 $\psi_i$  with $\HT_{\hodgetateembedding}(\psi_i) =
 s_{\hodgetateembedding}$ for all~$\hodgetateembedding$,
  it has such a lift defined over~$\O_E$.  Fixing one choice for each possible
  $\psi_i$ (for each choice of Hodge--Tate weights) in the previous
  clause, further enlarge $E$ so that each space
  $H^1_f(G_K,\Zpbar(\psi_1 \psi_2^{-1}))$ is defined over $\O_E$.

From now on, we will allow $\rho$ (and thus $\rhobar$) to vary over all
crystalline representations $G_K\to\GL_2(\cO_E)$ which have  $\rhobar\simeq
  \begin{pmatrix}
    \psibar_1&*\\0&\psibar_2
  \end{pmatrix}$ (where the extension class~$*$ is allowed to vary)
  and have $\hodgetateembedding$-labelled Hodge--Tate weights $\{0,
  r_\hodgetateembedding\}$ for each $\hodgetateembedding$. By
  Theorem~\ref{thm:extension-crystalline} together with Remark
  \ref{rem:type-remarks}(2), Proposition \ref{prop:maximal-model}, and
  the discussion between them,
  we see that there exist $a$, $b\in k_E$ and a subset
  $J_{\max}\subset\{0,\dots,f-1\}$ so that for any such $\rho$,
  there is a $(\varphi,\Ghat)$-module $\hat\barN$ of type
  $(\vec{r},a,b,J_{\max})$ such that $\hat T(\hat \barN) \simeq
  \rhobar$.  (Apply Proposition~\ref{prop:calculation-on-inertia} to
  see that $a,b$ are uniquely determined.)   By Theorem \ref{thm:extension-crystalline} and the
  assumption that we are not in the case that
  $(\psibar_1\psibar_2^{-1})|_{I_K}=\varepsilonbar$ and each
  $r_\hodgetateembedding=p$, we see that we are not in the exceptional
  case in Lemma \ref{lem: hat G is determined by phi}; there are
  thus at most $(\#k_E)^{|J_{\max}|}$ isomorphism classes of
  $(\varphi,\Ghat)$-modules $\hat\barN$ of type $(\vec{r},a,b,J_{\max})$,
  and thus (by Theorem \ref{thm:extension-crystalline} and Remark
  \ref{remark: exceptional case only with equal characters}) at most
  $(\#k_E)^{|J_{\max}|}$ elements of $H^1(G_K,\psibar_1\psibar_2^{-1})$
  corresponding to representations $\rhobar$, \emph{unless}
  $\psibar_1=\psibar_2$, in which case $(\#k_E)^{|J_{\max}|}$ must be
  replaced with $(\#k_E)^{|J_{\max}|+1}$.

  Now apply the discussion at the beginning of this section with
  $J=J_{\max}$; that is, choose (as we may, by, for example,
  Proposition \ref{prop:rank-one-subs}) crystalline characters
  $\psi_1$, $\psi_2$ lifting $\psibar_1$, $\psibar_2$ respectively
  such that $\HT_\hodgetateembedding(\psi_1)=r_\hodgetateembedding$ if
  $\hodgetateembedding\in J_{\max}$ and $0$ otherwise, and
  $\HT_\hodgetateembedding(\psi_2)=0$ if $\hodgetateembedding\in J_{\max}$
  and $r_\hodgetateembedding$ otherwise.  Note that by our choice of
  $E$ we may further
  suppose
that $\psi_1$, $\psi_2$, and $H^1_f(G_K,\Zpbar(\psi_1\psi_2^{-1}))$
are all defined over $\O_E$.

By Lemma \ref{lem: dimension
    of H^1_f spaces}
 we see that there are $(\#k_E)^{|J_{\max}|}$
  extension classes which arise as the reductions of crystalline
  representations which are extensions of $\psi_2$ by $\psi_1$, unless
  $\psibar_1=\psibar_2$, in which case there are
  $(\#k_E)^{|J_{\max}|+1}$ extension classes. Since we have already
  shown that there are at most $(\#k_E)^{|J_{\max}|}$ (or
  $(\#k_E)^{|J_{\max}|+1}$ if $\psibar_1=\psibar_2$) extension classes
  arising from the reduction of crystalline representations with
  $\hodgetateembedding$-labelled Hodge--Tate weights $\{0,
  r_\hodgetateembedding\}$, the result follows.
\end{proof}
 
\section{The irreducible case}\label{sec:irreducible} We now explain how to deduce the
irreducible case of Theorem \ref{thm: crystalline lift implies
  explicit crystalline lift} from the reducible one. A usual, let $K=K_0$ be the unramified extension of
$\Qp$ of degree $f$, and let \[\rho \col G_K\to\GL_2(\Qpbar)\] be a
continuous irreducible representation such that
$\rhobar \col G_K\to\GL_2(\Fpbar)$ is also irreducible. Suppose that $\rho$
is crystalline with $\hodgetateembedding$-Hodge--Tate weights
$\{b_{\hodgetateembedding,1},b_{\hodgetateembedding,2}\}$ for each $\hodgetateembedding\in\Hom(K,\Qpbar)$, and
suppose further that $1\le b_{\hodgetateembedding,1}-b_{\hodgetateembedding,2}\le p$ for each
$\hodgetateembedding$.

Let $k$ denote the residue field of $K$, and let $K_2$ be the
quadratic unramified extension of $K$, with residue field $k_2$. We
write $S=\Hom(k,\Fpbar)$ and $S_2=\Hom(k_2,\Fpbar)$. We say that
$J\subset S_2$ is a \emph{balanced subset} if it consists of precisely
one element of $S_2$ extending each element of $S$. If $\sigma\in S$
is the reduction mod $p$ of $\hodgetateembedding\in\Hom(K,\Qpbar)$, we write
$b_{\sigma,i}$ for $b_{\hodgetateembedding,i}$. Recalling the
definition of $W^{\textrm{BDJ}}(\rhobar)$ when $\rhobar$ is
irreducible (Definition~\ref{defn: W? niveau 2}) we see that in order to complete the proof of
Theorem \ref{thm: crystalline lift implies explicit crystalline lift},
we need to prove the following result.
\begin{thm}
  \label{thm: reduction mod p for GL2 in the irreducible case}There is
  a balanced subset $J\subset S_2$ such that \[\rhobar|_{I_K}\simeq
  \begin{pmatrix}
    \prod_{\sigma\in J}\omega_\sigma^{b_{\sigma|_k,1}}\prod_{\sigma\notin
      J}\omega_\sigma^{b_{\sigma|_k,2}}&0\\0&\prod_{\sigma\in J}\omega_\sigma^{b_{\sigma|_k,2}}\prod_{\sigma\notin
      J}\omega_\sigma^{b_{\sigma|_k,1}}
  \end{pmatrix}.\]

\end{thm}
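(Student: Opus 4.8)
The plan is to reduce the irreducible case to the reducible case already established in Theorem~\ref{thm: elimination in the reducible case}, exploiting the fact that an irreducible $\rhobar\col G_K\to\GL_2(\Fpbar)$ becomes reducible upon restriction to $G_{K_2}$, where $K_2/K$ is the quadratic unramified extension. First I would observe that $\rho|_{G_{K_2}}$ is crystalline with the same Hodge--Tate weights (relabeled via the two extensions of each $\hodgetateembedding\in\Hom(K,\Qpbar)$ to $\Hom(K_2,\Qpbar)$), and that $\rhobar|_{G_{K_2}}$ is reducible: indeed an irreducible two-dimensional mod~$p$ representation of $G_K$ is induced from a character of $G_{K_2}$, so its restriction to $G_{K_2}$ is a sum of two characters $\chibar\oplus\chibar^{\sigma}$ where $\sigma$ is the nontrivial element of $\Gal(K_2/K)$ (equivalently, $\chibar^{\sigma}=\chibar^{p^f}$ up to the appropriate twist). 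Applying Corollary~\ref{cor:form of the characters in reducible crystalline reduction, nothing about extensions} to $\rho|_{G_{K_2}}$ gives a subset $J\subset\Hom(k_2,\Fpbar)=S_2$ with
\[\rhobar|_{I_{K_2}}\simeq\begin{pmatrix}\prod_{\sigma\in J}\omega_\sigma^{b_{\sigma|_k,1}}\prod_{\sigma\notin J}\omega_\sigma^{b_{\sigma|_k,2}}&*\\0&\prod_{\sigma\notin J}\omega_\sigma^{b_{\sigma|_k,1}}\prod_{\sigma\in J}\omega_\sigma^{b_{\sigma|_k,2}}\end{pmatrix},\]
and since $I_{K_2}=I_K$ (as $K_2/K$ is unramified) this already describes $\rhobar|_{I_K}$ up to knowing that the extension is trivial on inertia and that $J$ can be taken balanced.

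Next I would use the induced structure of $\rhobar$ to force $J$ to be balanced. Because $\rhobar\simeq\Ind_{G_{K_2}}^{G_K}\chibar$, the two Jordan--H\"older characters of $\rhobar|_{G_{K_2}}$ are $\chibar$ and $\chibar^\sigma$, which are conjugate under $\Gal(k_2/k)$; concretely, the action of $\sigma$ permutes $\Hom(k_2,\Fpbar)$ by precomposition with the $p^f$-power Frobenius, and swaps the two diagonal characters above. Comparing the exponents of $\omega_\sigma$ in the two diagonal entries and using that $\omega_{\sigma\circ\Fr} = \omega_\sigma^p$, the condition that the top character and the $\sigma$-twist of the bottom character agree translates (via Proposition~\ref{prop:calculation-on-inertia}(3), i.e. matching $\sum p^{f-1-i}$-weighted exponents modulo $p^{2f}-1$) precisely into the statement that $J$ contains exactly one of the two lifts of each element of $S$ — that is, $J$ is balanced. (If the naive $J$ produced by the Corollary is not already balanced, one adjusts it using the ambiguity in Proposition~\ref{prop:calculation-on-inertia}(3), exactly as in the reducible case; the key point is that the constraint coming from being induced pins down a balanced representative.) Finally, since $\rhobar$ is irreducible its restriction to $G_{K_2}$, while reducible, is semisimple (it is the direct sum $\chibar\oplus\chibar^\sigma$ because $\chibar\ne\chibar^\sigma$), so the extension class $*$ is zero and $\rhobar|_{I_K}$ is actually diagonal as in the statement of Theorem~\ref{thm: reduction mod p for GL2 in the irreducible case}.

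The main obstacle I expect is the bookkeeping in the second step: one must carefully track how $\Gal(K_2/K)$ acts on the labelled Hodge--Tate weights and on the fundamental characters $\omega_\sigma$ for $\sigma\in S_2$, and verify that the constraint imposed by $\rhobar$ being induced is equivalent — modulo the $p^{2f}-1$ ambiguity of Proposition~\ref{prop:calculation-on-inertia}(3) — to $J$ being (choosable as) balanced. This is a purely combinatorial argument of the kind already carried out in \cite{bdj} and in the reducible sections above, but it requires being precise about which power of $p$ indexes each embedding. The rest is formal: the reduction to $G_{K_2}$, the application of Corollary~\ref{cor:form of the characters in reducible crystalline reduction, nothing about extensions}, and the semisimplicity of $\rhobar|_{G_{K_2}}$ are all immediate. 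One should also note at the outset that one may freely enlarge the coefficient field $\Fpbar$ and pass to a lattice, exactly as in the reducible case, so that the crystalline representation $\rho|_{G_{K_2}}$ and its reduction are defined over a suitable finite extension $E/\Qp$, allowing Theorem~\ref{thm:extension-crystalline} and its corollary to apply verbatim.
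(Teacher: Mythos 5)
Your first step coincides with the paper's: restrict to $G_{K_2}$ (noting $I_{K_2}=I_K$ and that $\rhobar|_{G_{K_2}}\simeq\chibar\oplus\chibar^{\sigma}$ is split since $\rhobar$ is induced and irreducible), and apply Corollary~\ref{cor:form of the characters in reducible crystalline reduction, nothing about extensions} to get a subset $J\subset S_2$ realising the displayed shape on inertia. The genuine gap is in your second step. You assert that the constraint coming from $\rhobar$ being induced ``translates precisely into the statement that $J$ \ldots is balanced,'' with the fallback that otherwise ``one adjusts it using the ambiguity in Proposition~\ref{prop:calculation-on-inertia}(3), exactly as in the reducible case.'' Neither claim is right as stated: the subset $J$ produced by the Corollary need not be balanced (the paper stresses this is ``not completely automatic''), and there is no re-balancing step in the reducible case to imitate. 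The irreducibility constraint only says that the two diagonal characters are swapped by Frobenius conjugation, i.e.\ $\prod_{\sigma\in J}\omega_\sigma^{b_{\sigma|_k,1}}\prod_{\sigma\notin J}\omega_\sigma^{b_{\sigma|_k,2}}=\prod_{\sigma\in J}\omega_{\sigma\circ\varphi^f}^{b_{\sigma|_k,2}}\prod_{\sigma\notin J}\omega_{\sigma\circ\varphi^f}^{b_{\sigma|_k,1}}$, a congruence modulo $p^{2f}-1$ that is compatible with many unbalanced $J$; the content of the theorem is that one can \emph{modify} $J$ without changing $\rhobar|_{I_K}$ so as to make it balanced, and you never produce that argument — indeed you flag it yourself as ``the main obstacle.''

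Concretely, the missing argument (which is the heart of the paper's proof) runs as follows. Let $J_1\subset S$ be the embeddings both of whose lifts lie in $J$ and $J_2$ those with neither lift in $J$; the displayed identity gives $\prod_{\sigma\in J_1}\omega_\sigma^{b_{\sigma,1}-b_{\sigma,2}}=\prod_{\sigma\in J_2}\omega_\sigma^{b_{\sigma,1}-b_{\sigma,2}}$. Setting $x_\sigma=\pm(b_{\sigma,1}-b_{\sigma,2})$ on $J_1\cup J_2$ and $0$ elsewhere, one has $\prod_\sigma\omega_\sigma^{x_\sigma}=1$ with all $x_\sigma\in[-p,p]$, and (using irreducibility to ensure some $x_\sigma=0$) Lemma~\ref{lem:minus-p-to-p} shows the support of $(x_\sigma)$ breaks into cyclic strings of the form $\pm(-1,p-1,\dots,p-1,p)$; for each such string $\{i,\dots,i+j\}$ one replaces $J$ by $J\Delta\{i,\dots,i+j\}$ with a suitable choice of lifts, and checks this leaves $\rhobar|_{I_K}$ unchanged while removing the string from $J_1\cup J_2$, yielding a balanced $J$. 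Without this (or an equivalent combinatorial analysis — Proposition~\ref{prop:calculation-on-inertia}(3) alone does not do it, since it concerns rank-one Kisin modules over $K$ and only records the congruence, not how to resolve it compatibly with balancedness), your proof does not establish the theorem.
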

\begin{proof}
 Since $\rhobar|_{G_{K_2}}$ is
reducible, by Corollary \ref{cor:form of the characters in reducible crystalline
    reduction, nothing about extensions} we certainly have a decomposition as in the statement of
the Theorem for some $J\subset
S_2$, but we do not know that $J$ is balanced. Indeed, this is not
completely automatic, but we will show that a balanced choice of $J$
always exists.

To see this, note that since $\rhobar|_{I_K}$ is irreducible, we must
have \[\prod_{\sigma\in
  J}\omega_\sigma^{b_{\sigma|_k,1}}\prod_{\sigma\notin
  J}\omega_\sigma^{b_{\sigma|_k,2}}=\prod_{\sigma\in
  J}\omega_{\sigma\circ\varphi^f}^{b_{\sigma|_k,2}}\prod_{\sigma\notin
  J}\omega_{\sigma\circ\varphi^f}^{b_{\sigma|_k,1}}.\] Write $J_1$ for
the set of places in $S$ both of whose extensions to $S_2$ are in $J$,
and $J_2$ for the set of places in $S$ neither of whose extensions to
$S_2$ are in $J$. Then we see that we have \[\prod_{\sigma\in
  J_1}\omega_\sigma^{b_{\sigma,1}-b_{\sigma,2}}=\prod_{\sigma\in
  J_2}\omega_\sigma^{b_{\sigma,1}-b_{\sigma,2}}.\] If both $J_1$,
$J_2$ are empty, then $J$ is balanced, and we are done. Assume
therefore that this is not the case.

Define $x_\sigma$ as follows: $x_\sigma=b_{\sigma,1}-b_{\sigma,2}$ if
$\sigma\in J_1$, $x_\sigma=b_{\sigma,2}-b_{\sigma,1}$ if $\sigma\in
J_2$, and $x_\sigma=0$ otherwise.  Note that since $\rhobar$ is
irreducible, there is at least one place $\sigma$ with
$x_\sigma=0$. We have $\prod_{\sigma\in S}\omega_\sigma^{x_\sigma}=1$,
and each $x_\sigma\in[-p,p]$. Choose an element $\sigma_0\in S$, and
recursively define $\sigma_i=\sigma_{i+1}^p$. Writing $\omega_i$ for
$\omega_{\sigma_i}$, we have $\omega^p_{i+1}=\omega_i$. From now on,
we identify $S$ with $\{0,\dots,f-1\}$ by identifying $\sigma_i$ with
$i$. 
By Lemma~\ref{lem:minus-p-to-p}, the cyclic set of those $i$
with $x_i\ne 0$ must break up as a disjoint union of sets of the form
$(i,i+1,\dots,i+j)$ with
$(x_i,x_{i+1},\dots,x_{i+j})=\pm(-1,p-1,p-1,\dots,p-1,p)$ (where there
may not be any occurrences of $p-1$). For each such interval $(i,i+j)$,
we may choose a lift of $i$ to $S_2$, and replace $J$ with
$J\Delta\{i,\dots,i+j\}$. It is easy to see that this choice does not
change $\rhobar|_{I_K}$, and results in a balanced choice of $J$, as
required.
\end{proof}
\begin{rem}It is perhaps worth illustrating the proof of Theorem
  \ref{thm: reduction mod p for GL2 in the irreducible case} with an
  example. Take $f=4$, and consider a representation of the form \[
  \begin{pmatrix}
    \omega_1^{p-1}\omega_2^p\omega_3^b\omega_5^{p-1}\omega_6^p&0\\0&\omega_0\omega_4\omega_7^b
  \end{pmatrix},\]with $0<b\le p-1$. This is certainly a possible
  restriction to inertia of an irreducible representation, but it is
  not written in the balanced form of the statement of Theorem
  \ref{thm: reduction mod p for GL2 in the irreducible case}. However,
  if we write it as \[\begin{pmatrix}
    \omega_1^{p-1}\omega_2^p\omega_3^b\omega_4&0\\0&\omega_0\omega_5^{p-1}\omega_6^p\omega_7^b
  \end{pmatrix},\] then we obtain a balanced expression, as required.
\end{rem}

\bibliographystyle{amsalpha}
\bibliography{savitt}

\end{document}